\DeclareMathAlphabet{\mathpzc}{OT1}{pzc}{m}{it}
\theoremstyle{plain}
\newtheorem{Th}{Theorem}[section]
\newtheorem{Lemma}[Th]{Lemma}
\newtheorem{Cor}[Th]{Corollary}
\newtheorem{Prop}[Th]{Proposition}
 \theoremstyle{definition}
\newtheorem{Def}[Th]{Definition}
\newtheorem{Rem}[Th]{Remark}
\newtheorem{?}[Th]{Question}
\newcommand{\im}{\operatorname{im}}
\newcommand{\la}{\langle}
\newcommand{\ra}{\rangle}
\newcommand\norm[1]{\left\lVert#1\right\rVert}
\newcommand\ind{{\rm ind}}
\newcommand\R{\mathbf{R}}
\newcommand\Z{\mathbf{Z}}
\newcommand\Q{\mathbf{Q}}
\numberwithin{equation}{section}
\def\l@subsection{\@tocline{2}{0pt}{2.5pc}{5pc}{}}
\def\l@subsubsection{\@tocline{2}{0pt}{2.5pc}{5pc}{}}
\title{\large \bf Spectral invariants and equivariant monopole Floer homology for rational homology three-spheres}
\author{Minh Lam Nguyen}
\newcommand{\Addresses}{{
\bigskip
\footnotesize
Minh Lam Nguyen , \textsc{ Department of Mathematics,
  Washington University in St. Louis,
  St. Louis, MO 63130}\par\nopagebreak
  \textit{E-mail address}: \texttt{minhn@wustl.edu}
  }}
\subjclass[2020]{57Rxx, 57Mxx, 57Kxx}
\begin{document}
\maketitle
	
	\begin{abstract}
		In this paper, we study a model for $S^1$-equivariant monopole Floer homology for rational homology three-spheres via a homological device called $\mathcal{S}$-complex. Using the Chern-Simons-Dirac functional, we define an $\mathbf{R}$-filtration on the (equivariant) complex of monopole Floer homology $HM$. This $\mathbf{R}$-filtration fits $HM$ into a persistent homology theory, from which one can define a numerical quantity called the spectral invariant $\rho$. The spectral invariant $\rho$ is tied with the geometry of the underlying manifold. The main result of the papers shows that $\rho$ provides an obstruction to the existence of positive scalar curvature metric on a ribbon homology cobordism.
				
				\noindent\textbf{Keywords:} Homology cobordism, ribbon cobordism, monopole Floer homology, positive scalar curvature.
	\end{abstract}


	\tableofcontents
	
    \section{Introduction}

    Persistent homology has become an important tool in the field of topological data analysis and machine learning \cite{MR2572029, MR3774757}. One of the basic motivations for the technique is that often a large collection of data, e.g., signals and images, comes with a certain shape where intrinsic properties are typically captured most efficiently by notions from algebraic topology. Besides the applied settings, persistent homology and its various numerical-type invariants like barcodes or spectral invariants have recently played an increasingly notable role in understanding some of the purely geometrical and topological problems that arise in the area of symplectic topology/geometry, e.g., see \cite{MR3590354, MR4368349, MR4413744}.

    On the other hand, various Floer homology theories have provided an innovative approach to studying low-dimensional topology \cite{MR956166, MR2113019, MR2388043}. The basic idea behind the construction of any Floer homology theory is that it is an infinite-dimensional Morse theory applied to a functional $\mathcal{F}: \mathcal{C}(Y) \to \mathbf{R}$ defined on some infinite-dimensional configuration space $\mathcal{C}$ that takes in topological (and sometimes geometrical) data of the underlying closed smooth oriented $3$-manifold $Y$. As a Morse theory, over some field, the differential complex $C(Y)$ is generated by the critical points of $\mathcal{F}$. It can be shown that the functional $\mathcal{F}$ induces an $\mathbf{R}$-filtration on $C(Y)$, $\{C^t(Y)\}_{t\in \mathbf{R}}$, where there is a map $f_{ts}: C^t(Y) \to C^s(Y)$ whenever $t \leq s$, $f_{tt}:=\text{id}_{C^t(Y)}$, and $f_{ts} =  f_{rs}f_{tr}$. In other words, $C(Y)$ is a persistent module. From this, we can define the spectral invariant $\rho_\ast(C(Y), \mathcal{F})$, which is the infimum of all possible $t \in \mathbf{R}$ such that the induced map $ f_{t\ast}: H_\ast(C^t(Y)) \to H_\ast(C(Y))$ is non-trivial (cf. Definition \ref{Def1.2}). 
    
    Depending on the context, $\rho_\ast(C(Y), \mathcal{F})$ (and many other similarly defined numerical quantities) is an actual (diffeomorphism and homology cobordism) invariant of $Y$. For example, in instanton Floer homology theory, if $Y$ is an integral homology three-spheres, $\mathcal{C}(Y)$ is taken to be the configuration space of gauge equivalence classes of $SU(2)$-connections on $Y$ and $\mathcal{F}$ is the Chern-Simons functional $CS$, then the instanton Floer homology $I(Y)$ is a persistent module, and the various associated numerical quantities $\rho_\ast(I(Y), CS)$, $r_s(Y)$ (defined in \cite{Nozaki2023}), and $\Gamma_Y$ (defined in \cite{MR4158669}) end up being homology cobordism invariants of $Y$.

    Naturally, one should try to extend this theme to monopole Floer homology, where  $\mathcal{F}$ is a Chern-Simons-Dirac functional $\mathcal{L}$ defined on the configuration space of gauge equivalence classes of spinors and $U(1)$-connections on a rational homology three-sphere $Y$. This is, in fact, one of the main tasks we take up in the current paper. Note that, unlike $CS$, the definition of $\mathcal{L}$ depends on an auxiliary choice given by a Riemannian metric $g$ on $Y$. As a result, the associated spectral invariant $\rho_\ast(Y, g)$ is not expected to satisfy many formal properties of its counterparts on the instanton side. It turns out that $\rho_\ast(Y,g)$ is only an invariant of $(Y,g)$. However, this is a feature and not a bug. Being an invariant of $(Y,g)$, $\rho_\ast(Y,g)$ is a suitable tool to study geometry on a cobordism.

    \subsection{Main results}

    For simplicity and convenience, we work with the field $\mathbf{F} = \mathbf{F}_2$ of characteristic $2$. Let $Y$ be a closed, oriented, connected, smooth rational homology three-spheres, i.e., $b_1(Y) = 0$. There is a classical invariant of $Y$, denoted by $\mathfrak{m}(Y) \in \mathbf{Q}/\mathbf{Z}$ (cf. Subsection \ref{Sub4.2}) such that for each $spin^c$ structure $\mathfrak{s}$ over $Y$, one can define a representative element $m_{\mathfrak{s}}$ of a "chamber" in $\mathfrak{m}(Y)$. Associated to $(Y, m_{\mathfrak{s}})$, we construct a differential complex $(\widetilde{CM}(Y, m_\mathfrak{s}), \widetilde{d})$ that is a module over $H(BS^1; \mathbf{F}) = \mathbf{F}[x]/(x^2)$. This module structure will not be important for our paper. However, we mention it here to allude to the fact that $(\widetilde{CM}(Y, m_\mathfrak{s}), \widetilde{d})$ can be thought of as an $S^1$-equivariant chain complex defined on a based configuration space of gauge equivalence classes of spinors and $U(1)$-connections on $\det\,\mathfrak{s}$. In general, this structure arises whenever one works with an $S^1$-manifold (see Section \ref{Sec3} for more details).

    The complex $(\widetilde{CM}(Y, m_\mathfrak{s}), \widetilde{d})$ decomposes as
    $$\widetilde{CM}(Y, m_\mathfrak{s}) = CM(Y, m_\mathfrak{s}) \oplus CM(Y, m_\mathfrak{s})[1]\oplus \mathbf{F}$$
    Here, $CM(Y, m_\mathfrak{s})$ is a $\mathbf{Z}$-graded differential complex generated by the gauge equivalence classes of irreducible solutions of a $3$-dimensional (perturbed) Seiberg-Witten equations defined on $Y$. The last summand $\mathbf{F}$ is generated by the reducible solutions, which in this case there is only one up to gauge transformations since $b_1(Y) = 0$. One can formalize this algebraic structure and view $(\widetilde{CM}(Y, m_\mathfrak{s}), \widetilde{d})$ as an object of the category of $\mathcal{S}$-complexes (cf. Subsection \ref{Sub3.4}). We refer to $\widetilde{CM}$ as an $\mathcal{S}$-complex of monopoles.

    The construction of the $\mathcal{S}$-complex of monopoles depends on auxiliary data such as Riemannian metric $g$ on $Y$ and a generic perturbation $\eta \in i\Omega^2(Y)$. However, for an allowable range of homological degree $q \in \mathcal{I}$, the $\mathcal{S}$-chain homotopy type of $\widetilde{CM}_q(Y, m_\mathfrak{s})$ (see Subsection \ref{Sub3.4} for more details) is an invariant of $Y$.

    \begin{Th}[cf. Theorem \ref{Th4.10}, Corollary \ref{Cor4.11}]\label{mainTh1}
        For an allowable range of homological degree $q\in \mathcal{I}$, $\widetilde{HM}_q(Y, m_\mathfrak{s}; \mathbf{F}) := H_q(\widetilde{CM}(Y, m_\mathfrak{s}))$ is an invariant of $Y$. Furthermore, $HM_q(Y, m_\mathfrak{s}; \mathbf{F}) := H_q(CM(Y, m_\mathfrak{s}))$ is also an invariant of $Y$ for $q \in \mathcal{I}$.
    \end{Th}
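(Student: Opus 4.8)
The plan is to show that any two admissible choices of auxiliary data $(g_0,\eta_0)$ and $(g_1,\eta_1)$ produce $\mathcal{S}$-chain homotopy equivalent complexes after restriction to the allowable range of degrees, and that the resulting isomorphisms on homology are canonical; taking homology then yields both assertions. First I would fix a generic path $\gamma=(g_t,\eta_t)_{t\in[0,1]}$ joining the two endpoints in the space of pairs (metric, perturbation), chosen so that (i) the four-dimensional Seiberg--Witten moduli spaces on the cylinder $Y\times\mathbf{R}$, with translation-invariant data near the ends and data pulled back from $\gamma$ in the middle, are cut out transversally, and (ii) the path stays within the fixed chamber determined by $m_\mathfrak{s}$, i.e.\ avoids the walls on which the reducible solution of the three-dimensional equations becomes degenerate. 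Since $b_1(Y)=0$, that wall set has positive codimension, so a generic $\gamma$ meets it in a way that does not affect the portion of the complex in degrees $q\in\mathcal{I}$.

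Next I would define the continuation map $\Phi_\gamma\colon \widetilde{CM}(Y,m_\mathfrak{s};g_0,\eta_0)\to \widetilde{CM}(Y,m_\mathfrak{s};g_1,\eta_1)$ by counting, mod $2$, the isolated points of these cylinder moduli spaces. Energy bounds, coming from the topological energy together with boundedness of the variation of $\mathcal{L}$ along the compact path $\gamma$, give the usual broken-trajectory compactness, and gluing shows that $\Phi_\gamma$ is a chain map. The essential point, specific to the present setting, is that the matrix entries relating irreducibles to the single reducible assemble, together with the irreducible-to-irreducible entry, into the triangular shape required for a morphism of $\mathcal{S}$-complexes; this is read off from the structure of the cylinder moduli spaces near the reducible and from the $S^1$-equivariance. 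One then checks, via parametrized moduli spaces over a generic homotopy of paths, that $\Phi_\gamma$ is independent of $\gamma$ up to $\mathcal{S}$-chain homotopy, and via a neck-stretching argument that $\Phi_{\overline{\gamma}}\circ\Phi_\gamma$ is $\mathcal{S}$-chain homotopic to the identity; hence $\Phi_\gamma$ is an $\mathcal{S}$-chain homotopy equivalence.

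Finally I would restrict to the allowable range. A priori $\Phi_\gamma$ need not preserve the $\mathbf{Z}$-grading, because the grading of the reducible, and hence the overall grading of $\widetilde{CM}$, shifts by the spectral flow of the family of Dirac operators along $\gamma$. But on the irreducible summand $CM(Y,m_\mathfrak{s})$ the grading is well-defined modulo exactly this overall shift, and the range $\mathcal{I}$ is precisely the interval of degrees in which the reducible makes no contribution and the shift is pinned down by the chamber $m_\mathfrak{s}$; there $\Phi_\gamma$ restricts to a degree-preserving isomorphism. Compatibility of these maps under concatenation of paths makes the isomorphisms canonical, so that $\widetilde{HM}_q(Y,m_\mathfrak{s};\mathbf{F})$ and $HM_q(Y,m_\mathfrak{s};\mathbf{F})$ are genuine invariants of $Y$ for $q\in\mathcal{I}$; this last step is Corollary~\ref{Cor4.11}.

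I expect the main obstacle to be the bookkeeping around the reducible: verifying that the continuation maps are genuinely morphisms of $\mathcal{S}$-complexes (the triangular and equivariant compatibility, including the chain homotopies), controlling the grading shift through the spectral flow so as to identify the allowable range $\mathcal{I}$, and arranging the paths to remain in the fixed chamber $m_\mathfrak{s}$ so that wall-crossing does not spoil invariance. The transversality and compactness inputs for the cylinder moduli spaces are by now standard, following Kronheimer--Mrowka, so the real content lies in the interaction of the $S^1$-equivariant $\mathcal{S}$-complex structure with the continuation maps.
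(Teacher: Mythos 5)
Your proposal follows essentially the same route the paper takes, namely the continuation-map argument of Fr\o yshov (Proposition~4 of Section~7 in \cite{MR2738582}), which the paper invokes for Theorem~\ref{Th4.10} and specializes in Corollary~\ref{Cor4.11}: construct cylinder moduli spaces over a generic path of auxiliary data, show they assemble into a morphism of $\mathcal{S}$-complexes, prove it is an $\mathcal{S}$-chain homotopy equivalence by a parametrized-family and neck-stretching argument, and then restrict to the range of degrees insulated from the reducible. One clarification worth making: your condition (ii), that the path can be arranged to stay in a fixed chamber, is not generically achievable --- the wall where the reducible Dirac operator acquires kernel is codimension one in the space of pairs $(g,\eta)$, so a generic path crosses it, and indeed Theorem~\ref{Th4.10} is explicitly stated for two possibly different chambers $m^1_\mathfrak{s}\neq m^2_\mathfrak{s}$, with the allowable range $q\leq 2m^1_\mathfrak{s}-1$ or $q\geq 2m^2_\mathfrak{s}$ phrased in terms of both; your fallback sentence (a generic $\gamma$ meets the wall set but this does not affect the degrees in $\mathcal{I}$) is the correct substitute and is what the paper's formulation encodes, so the argument goes through once the phrasing is fixed.
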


    \begin{Rem}
        The fact that $HM_q$ is an invariant of $Y$ for $q$ in the allowable range has been proved by Fr\o yshov in \cite{MR2738582}. In fact, Fr\o yshov proved something even stronger, as we let the chamber $m_\mathfrak{s} \to \infty$, the resulting homology group is an invariant of $Y$ regardless of the homological degree. Conjecturally, this group is expected to be isomorphic to one of the flavors of monopole Floer homology defined by Kronheimer-Mrowka in \cite{MR2388043}. 
    \end{Rem}

    \begin{Rem}
        The terminology "$\mathcal{S}$-complex" is not original in this paper and has appeared elsewhere. Roughly, the category of $\mathcal{S}$-complexes is a formalism that describes many $S^1$-equivariant (co)homology theories that arise naturally in differential topology. A full definition can be given as we replace $S^1$ with some other group $G$. For example, when $G = SO(3)$, we have the formalism of $\mathcal{SO}$-complexes that is investigated in \cite{MR1910040, MR1883043, daemi2022instantonsrationalhomologyspheres}, when $G = \mathbf{Z}/2$, an analogous formalism appears in  \cite{MR2739000}. For $G = S^1$, the particular definition of $\mathcal{S}$-complexes we use here appeared in, e.g., \cite{MR4742808} for singular instanton homology.
    \end{Rem}

    It turns out that the perturbed Chern-Simons-Dirac functional $\mathcal{L}_{\eta}$ induces an $\mathbf{R}$-filtration on $CM, \widetilde{CM}$ (cf. Proposition \ref{Prop5.1}). Thus, $CM, \widetilde{CM}$ are persistent modules. As a result, we can define the following numerical quantities.

    \begin{Def}[cf. Definition \ref{Def5.2}]\label{mainDef1}
        Let $Y$ be a rational homology three-sphere that is smooth, closed, connected, and oriented. Suppose $g$ is a fixed Riemannian metric of $Y$, and $\mathfrak{s}$ is $spin^c$ structure on $Y$. Let $m_\mathfrak{s}$ be a representative of a chamber and $\eta \in i\Omega^2(Y)$ that is closed and generic. For each $q$, we define
        $$\rho_q(Y, m_{\mathfrak{s}}, g) = \lim_{\norm{\eta} \to 0} \inf\{t : \iota^t_\ast: \widetilde{HM}^t_q(Y, m_\mathfrak{s}, g, \eta; \mathbf{F}) \to \widetilde{HM}_q(Y, m_\mathfrak{s}, g, \eta; \mathbf{F}) \text{ non-trivial}\}.$$
        Similarly, we define $\lambda_q(Y, m_\mathfrak{s}, g)$ associated with $HM_q$.
    \end{Def}
    
    As mentioned previously, $\rho_q, \lambda_q$ cannot be an invariant of $Y$ because of the dependence on $g$ in the definition of the Chern-Simons-Dirac functional. However, what remains true is that

    \begin{Th}[cf. Theorem \ref{Th5.4}]\label{mainTh2}
        With the same set up as Theorem \ref{mainTh1}, for allowable range of $q$, and $\mathfrak{s}$ is a torsion $spin^c$ structure, then $\rho_q(Y, m_\mathfrak{s}, g)$ and $\lambda_q(Y, m_\mathfrak{s}, g)$ are invariants of $(Y,g)$.
    \end{Th}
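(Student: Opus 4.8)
The plan is to show that the numerical quantity $\rho_q(Y, m_\mathfrak{s}, g)$ does not depend on the remaining auxiliary choice, namely the generic closed perturbation $\eta \in i\Omega^2(Y)$ (the $g$-dependence is already built into the definition, and dependence on $m_\mathfrak{s}$ is fixed by choosing a chamber). By Theorem \ref{mainTh1} the $\mathcal{S}$-chain homotopy type of $\widetilde{CM}_q(Y, m_\mathfrak{s})$ in the allowable range is already an invariant of $Y$; what is new here is that the $\mathbf{R}$-filtration coming from $\mathcal{L}_\eta$ survives the limit $\norm{\eta} \to 0$ in a way that is independent of how one takes that limit, so the $\inf$ in Definition \ref{mainDef1} is well-defined and metric-dependent only. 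First, I would fix two generic closed perturbations $\eta_0, \eta_1$ with small norm and consider a generic path $(\eta_s)_{s\in[0,1]}$ between them. The standard continuation/cobordism argument for Seiberg-Witten-Floer theory produces $\mathcal{S}$-chain homotopy equivalences between $\widetilde{CM}(Y, m_\mathfrak{s}, g, \eta_0)$ and $\widetilde{CM}(Y, m_\mathfrak{s}, g, \eta_1)$ in the allowable range of $q$; the key point is to track how these maps interact with the $\mathbf{R}$-filtration.

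The central estimate is the standard energy/action inequality: a continuation map defined by counting solutions of the $4$-dimensional equations on $\mathbf{R}\times Y$ with $s$-dependent perturbation shifts the $\mathcal{L}_\eta$-filtration level by an amount controlled by $\sup_{s}\norm{\eta_s - \eta_0}$ (more precisely by the integral of the time derivative of the perturbation term along the cylinder). Concretely, the continuation map $\Phi_{01}$ sends $\widetilde{CM}^t(Y, g, \eta_0)$ into $\widetilde{CM}^{t + C\epsilon}(Y, g, \eta_1)$ where $\epsilon$ bounds the perturbation size and $C$ depends only on $(Y,g)$; the reverse map $\Phi_{10}$ satisfies the analogous bound, and $\Phi_{10}\Phi_{01}$ is $\mathcal{S}$-chain homotopic to the identity through a homotopy that also shifts filtration by $O(\epsilon)$. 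Passing to homology and using the naturality square relating $\iota^t_\ast$ for $\eta_0$ and $\eta_1$, one concludes that the two spectral invariants $\rho_q(Y, m_\mathfrak{s}, g, \eta_0)$ and $\rho_q(Y, m_\mathfrak{s}, g, \eta_1)$ differ by at most $C\epsilon$. Taking $\norm{\eta_0}, \norm{\eta_1}\to 0$ forces the limits to agree, so $\lim_{\norm{\eta}\to 0}$ exists and is independent of the approximating family; this is exactly the claim that $\rho_q(Y, m_\mathfrak{s}, g)$ is a well-defined invariant of $(Y,g)$. The argument for $\lambda_q$ attached to the subcomplex $CM$ is identical, using that the decomposition $\widetilde{CM} = CM \oplus CM[1] \oplus \mathbf{F}$ is respected by the continuation maps up to the same filtered homotopies (the reducible summand sits at a controlled filtration level as well).

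I would also need to handle two technical wrinkles. First, one must check that the allowable range $\mathcal{I}$ is stable under the constructions: the continuation maps are only $\mathcal{S}$-chain homotopy equivalences in a range of degrees depending on the chamber $m_\mathfrak{s}$ and the perturbation bound, so I would argue that for $\norm{\eta}$ sufficiently small this range contains any fixed $q \in \mathcal{I}$, exactly as in the proof of Theorem \ref{mainTh1}. Second, genericity: the set of closed $\eta$ for which $\widetilde{CM}$ is defined is residual, and one must ensure that small generic perturbations and generic paths between them exist — this is standard Sard-Smale transversality for the Seiberg-Witten equations on $Y$ and on $\mathbf{R}\times Y$. The main obstacle I expect is the filtered version of the chain homotopy $\Phi_{10}\Phi_{01}\simeq \mathrm{id}$: one has to produce the homotopy operator by counting solutions in a one-parameter family of perturbations and verify that \emph{its} action shift is also $O(\epsilon)$, which requires a uniform energy bound on the relevant moduli spaces as the perturbation path degenerates to the constant path as $\epsilon\to 0$ — a compactness argument that is routine in spirit but needs the reducible locus to be handled with care since $b_1(Y)=0$ makes the reducible a genuine (blown-up) critical point contributing to $\widetilde{CM}$.
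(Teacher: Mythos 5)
Your outline follows the same overall strategy as the paper's proof of Theorem~\ref{Th5.4}: compare two small perturbations $\eta_1, \eta_2$ via a continuation cobordism on the cylinder, bound the filtration shift of the continuation map by $O(\epsilon)$, and propagate this through the naturality square to conclude $|\rho^\circ_q(\eta_1) - \rho^\circ_q(\eta_2)| \leq O(\epsilon)$. However, there are two points worth flagging.

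First, you take the ``central estimate'' --- that the continuation map shifts the $\mathcal{L}_\eta$-filtration by $O(\norm{\eta_1 - \eta_2})$ --- as standard, but this is exactly where the real work of the paper is and where the torsion hypothesis on $\mathfrak{s}$ (which you never invoke) enters. The paper derives the bound via the topological/analytical energy identity on $Y\times[-T,T]$ (Lemma~\ref{Lem5.6} and the computation \eqref{eq:5.3}--\eqref{eq:5.5}), taking $T\to\infty$ and applying Chern--Weil to absorb the $\int F_B\wedge\mu$ term; the hypothesis $c_1(\det\mathfrak{s})^2 = 0$ kills the topological contribution that would otherwise produce a $\norm{\eta}$-independent shift. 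Your sketch ``by the integral of the time derivative of the perturbation term'' gestures at the mechanism but does not reveal the delicate cancellation, and it would not be obvious from your writeup that the torsion assumption is doing anything.

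Second, the concern you flag as the ``main obstacle'' --- bounding the filtration shift of the homotopy $\Phi_{10}\Phi_{01}\simeq\mathrm{id}$ --- is in fact unnecessary for this argument. Since $\Phi_{01}$ and $\Phi_{10}$ are already isomorphisms on the \emph{unfiltered} homology in the allowable degree range (by Theorem~\ref{Th4.10}), one only needs each of them individually to shift the filtration by $O(\epsilon)$: if $\iota^t_\ast \colon HM^{\circ t}_q(\eta_1) \to HM^\circ_q(\eta_1)$ is nonzero at some $x$, then $\Phi_{01}(\iota^t_\ast x) \neq 0$ by injectivity of $\Phi_{01}$, and commutativity of the naturality square forces $\iota^{t+O(\epsilon)}_\ast \colon HM^{\circ t+O(\epsilon)}_q(\eta_2) \to HM^\circ_q(\eta_2)$ to be nonzero. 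Swapping the roles of $\eta_1, \eta_2$ gives the reverse inequality. No filtered chain homotopy estimate is required, and the paper does not attempt one. Your proposal is correct in outline but conflates what is needed with what is hard: the homotopy estimate you worry about can simply be dropped, while the energy/Chern--Weil estimate you treat as routine is the part that actually requires care.
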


    Note that $\rho_q, \lambda_q$ take values in $\mathbf{R} \cup \{\infty\}$. One of the immediate observations we see from Definition \ref{mainDef1} is that if $\lambda_q$ or $\rho_q$ is finite, then $HM$ or $\widetilde{HM}$ is non-trivial, respectively. Thus, it is natural to investigate the relationship between $\lambda_q$ and $\rho_q$. Ideally, from a relation between the two spectral invariants, one gets some information about $\widetilde{HM}$ from $HM$. The differential $\widetilde{d}$ of $\widetilde{CM}$ decomposes as follows
    $$\widetilde{d}= \begin{bmatrix}
        d & 0 & 0 \\ u & \delta_2 & d \\ \delta_1 & 0 & 0
    \end{bmatrix},$$
    where $u: CM \to CM[2]$, $\delta_1 : CM \to \mathbf{F}$, and $\delta_2 : \mathbf{F} \to CM$. The fact that $\widetilde{d}^2 = 0$ yields for us a certain algebraic relations between $d, \delta_1, \delta_2$, and $u$ (cf. Proposition \ref{Prop4.8}). This leads to a finite filtration of $\widetilde{CM}$ that is given as follows
    $$\mathscr{F}^0 \widetilde{CM}_\ast = 0 \subset \mathscr{F}^1 \widetilde{CM}_\ast = CM_{\ast-1} \subset \mathscr{F}^2\widetilde{CM}_\ast= \mathbf{F} \oplus CM_{\ast -1} \subset \mathscr{F}^3\widetilde{C}_\ast = \widetilde{C}_\ast.$$
    The differential for each sub-complex is induced by $\widetilde{d}$, which respectively is
    $$\mathscr{F}^0 \widetilde{d} = 0, \quad  \mathscr{F}^1 \widetilde{d} = d, \quad \mathscr{F}^2 \widetilde{d} = \begin{bmatrix} 0 & 0 \\ \delta_2 & d \end{bmatrix}, \quad \mathscr{F}^3 \widetilde{d} = \widetilde{d}.$$
    By exploiting the above filtration, we show that

    \begin{Th}[cf. Theorem \ref{Th5.19}]\label{mainTh3}
        Let $Y$ be a rational homology three-sphere equipped with a $spin^c$ structure $\mathfrak{s}$. Suppose $m_{\mathfrak{s}} \in \mathfrak{m}(Y)$ is a chamber of $Y$. Fix a metric $g$ on $Y$ associated with $m_{\mathfrak{s}}$. For each $q$ in an allowable range, 
    \begin{enumerate}
        \item If $\delta_{2\ast} = 0$ and $u_\ast : ker(\delta_{1\ast} : HM_{q+1}(Y, m_\mathfrak{s}; \mathbf{F}) \to \mathbf{F}) \to HM_{q-1}(Y, m_\mathfrak{s}; \mathbf{F})$ is trivial, then $\lambda_{q-1}(Y, m_{\mathfrak{s}}, g) \geq \rho_q (Y, m_{\mathfrak{s}}, g)$.
        \item If $\delta_{1\ast} = 0$ and $u_{\ast} : HM_{q-3}(Y, m_{\mathfrak{s}}; \mathbf{F}) \to HM_{q-5}(Y, m_{\mathfrak{s}}; \mathbf{F})/im(\delta_{2\ast})$ is trivial, then $\lambda_{q-3}(Y, m_\mathfrak{s}, g) \geq \rho_{q}(Y, m_\mathfrak{s},g)$.
    \end{enumerate}
    \end{Th}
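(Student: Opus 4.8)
The plan is to work directly with the finite filtration $\mathscr{F}^\bullet\widetilde{CM}$ exhibited in the introduction, combined with the $\mathbf{R}$-filtration coming from $\mathcal{L}_\eta$. Observe that the $\mathbf{R}$-filtration is compatible with the finite filtration, because all of $d,\delta_1,\delta_2,u$ either preserve or decrease the action $\mathcal{L}_\eta$ along the relevant gradient-flow lines (the reducible being assigned its own action value), so each $\mathscr{F}^j\widetilde{CM}$ is itself a persistent submodule and the associated graded pieces $\mathscr{F}^j/\mathscr{F}^{j-1}$ are persistent modules with their own spectral invariants. The short exact sequences
\begin{equation*}
0 \to \mathscr{F}^{j-1}\widetilde{CM} \to \mathscr{F}^j\widetilde{CM} \to \mathscr{F}^j\widetilde{CM}/\mathscr{F}^{j-1}\widetilde{CM} \to 0
\end{equation*}
are short exact sequences of persistent complexes, hence induce long exact sequences of persistent homology groups, compatible with the maps $\iota^t$ to the total homology. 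I would first record the general principle: if $0\to A\to B\to C\to 0$ is such a short exact sequence and the connecting map $H_\ast(C)\to H_{\ast-1}(A)$ vanishes on homology, then a class surviving to $\widetilde{HM}$ that comes from $A$ gives $\rho^B \le \rho^A$-type inequalities, whereas if the class maps nontrivially to $C$ one gets a bound from $\rho^C$.

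Next I would identify the graded pieces. By construction $\mathscr{F}^1\widetilde{CM}_\ast = CM_{\ast-1}$ with differential $d$, so its persistent homology is $HM^t_{\ast-1}$ and its spectral invariant is $\lambda_{\ast-1}$; the quotient $\mathscr{F}^3/\mathscr{F}^2 \cong CM_\ast$ with differential $d$ again contributes $\lambda_\ast$; and $\mathscr{F}^2/\mathscr{F}^1 = \mathbf{F}$ is the reducible line, whose persistent homology is concentrated at the action value of the reducible and which in the limit $\|\eta\|\to 0$ contributes the relevant constant. Then I would trace through the long exact sequences. For case (1), the hypothesis $\delta_{2\ast}=0$ kills the contribution of the reducible line to $H_\ast(\mathscr{F}^2)$ coming from $\mathscr{F}^1$, and the hypothesis that $u_\ast$ is trivial on $\ker\delta_{1\ast}$ is precisely the statement that the connecting homomorphism $H_\ast(\mathscr{F}^3/\mathscr{F}^2)\to H_{\ast-1}(\mathscr{F}^2)$, restricted appropriately, lands in a piece that dies, so that the generator realizing $\rho_q(\widetilde{HM})$ is forced to originate from the sub-complex $\mathscr{F}^1\widetilde{CM}$, i.e.\ from $HM_{q-1}$; this gives $\lambda_{q-1}\ge\rho_q$. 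Case (2) is the mirror image: $\delta_{1\ast}=0$ removes the reducible from the other side, and triviality of $u_\ast : HM_{q-3}\to HM_{q-5}/\im\delta_{2\ast}$ identifies the relevant connecting map as zero, forcing the $\rho_q$-generator to come from the $\mathscr{F}^3/\mathscr{F}^2$-piece shifted down, which is $HM_{q-3}$, giving $\lambda_{q-3}\ge\rho_q$. The degree shifts ($q-1$ versus $q-3$, $q-5$) should fall out of the internal gradings of $CM$, $CM[1]$, and the placement of $\mathbf{F}$ in $\widetilde{CM}$, together with the degree $+2$ of $u$.

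The key point to make rigorous, and what I expect to be the main obstacle, is the interaction between the two filtrations at the level of spectral invariants: one must check that taking associated graded with respect to $\mathscr{F}^\bullet$ commutes suitably with the $\mathbf{R}$-filtration, i.e.\ that the action value of a homology class in $\widetilde{HM}^t_q$ is controlled by the action values of its images and preimages in the long exact sequence, with no loss. Concretely, if $[\alpha]\in\widetilde{HM}_q$ is the class defining $\rho_q$ and it lifts to a class in the sub or quotient, one needs a representative cycle of the same or smaller action realizing that lift; this is where one uses that the filtration maps are strict with respect to $\mathcal{L}_\eta$ and that the connecting map does not increase action. Managing this carefully — including the limit $\|\eta\|\to 0$, where the reducible's action value must be shown not to interfere — is the delicate part; the rest is bookkeeping with long exact sequences. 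I would finish by taking the limit, invoking Theorem \ref{mainTh2} to ensure the inequalities descend to the $(Y,g)$-invariants.
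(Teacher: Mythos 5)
Your overall strategy — exploiting the finite filtration $\mathscr{F}^\bullet\widetilde{CM}$ together with the $\mathbf{R}$-filtration coming from $\mathcal{L}_\eta$ — is the right one, and your stated "general principle" (a split long exact sequence of persistent complexes turns survival in a sub-object into survival in the ambient one, giving $\rho^B \leq \rho^A$) is correct and is exactly the mechanism the argument needs. The paper implements this principle not through iterated long exact sequences but through the spectral sequence of the finite filtration: it shows that the spectral sequence degenerates at the third page and produces a filtration-compatible (though non-canonical) direct-sum decomposition of $H_q(\widetilde{C})$ whose third summand, under the hypotheses on $\delta_{2\ast}$ and $u_\ast$, becomes exactly $H_{q-1}(C)$; the commutative square comparing the decompositions for $H^t_q(\widetilde{C})$ and $H_q(\widetilde{C})$ then makes the "no loss of action" issue you flag essentially automatic rather than a separate delicate check.

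However, there is a genuine logical gap in how you apply the principle. You conclude by asserting that the generator realizing $\rho_q(\widetilde{HM})$ is "forced to originate from the sub-complex $\mathscr{F}^1\widetilde{CM}$, i.e.\ from $HM_{q-1}$; this gives $\lambda_{q-1}\ge\rho_q$." This is backwards. If the $\rho_q$-realizing class lifts to a cycle $\alpha \in CM^{\rho_q}_{q-1}$, then, because the inclusion $\mathscr{F}^1\widetilde{CM}\hookrightarrow\widetilde{CM}$ is a filtration-preserving chain map commuting with the maps $\iota^t$, the class of $\alpha$ also survives from level $\rho_q$ in $HM_{q-1}$, which would yield $\lambda_{q-1}\le\rho_q$ — the \emph{opposite} inequality. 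Nor do the hypotheses actually "force" the $\rho_q$-realizing class to lie in $\mathscr{F}^1$; it may sit in either of the other two summands. What is needed, and what the paper's Proposition \ref{Prop1stinequality} does, is the reverse direction: start from a witness for $\lambda_{q-1}$, namely a class in $HM^t_{q-1}$ surviving to $HM_{q-1}$; feed it into the third summand of the (filtered and unfiltered) direct-sum decomposition; conclude that $\iota^t_\ast:\widetilde{HM}^t_q\to\widetilde{HM}_q$ is non-trivial, whence $\rho_q\le t=\lambda_{q-1}$. In short, the relevant fact is that $HM_{q-1}$ injects as a persistent-module summand of $\widetilde{HM}_q$, not that every $\rho_q$-witness factors through $HM_{q-1}$.
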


    \subsection{An application}

    Given a smooth manifold equipped with a Riemannian metric, there are different associated notions of curvature, e.g., sectional curvature, Ricci curvature, and scalar curvature. The topology of manifolds with controlled sectional and Ricci curvature tends to be rigid. However, manifolds with bounded from below scalar curvature tend to be more flexible topologically \cite{gromov2022perspectives} (see also, e.g., \cite{MR3911569, MR4637974, chodosh2024completeriemannian4manifoldsuniformly, DemetreKazaras24}). In particular, understanding (topological) obstruction to the existence of positive scalar curvature (psc) on a manifold is an active area of research in geometric analysis. Traditionally, there are two approaches to studying manifolds with psc: theories involved with the Dirac operators and minimal surfaces theory. An application of the main results of the current paper falls within the former framework. 

    Let $W$ be any smooth compact $spin^c$ $4$-manifold with boundary $\partial W = -Y_1 \cup Y_2$, where $Y_i$ is a rational homology sphere. Then we can view $W: Y_1 \to Y_2$ as a cobordism from $Y_1$ to $Y_2$. Let $\mathfrak{s}_W$ be a $spin^c$ structure on $W$ and its restriction to $Y_i$ is denoted by $\mathfrak{s}_i$. For each $\mathfrak{s}_i$, we choose a representative $m_{\mathfrak{s}_i}$ of a chamber in $\mathfrak{m}(Y_i)$. Let $d(W) = (c_1(\det\, \mathfrak{s}_W)^2 - \sigma(W))/4 + b_1(W) - b^+(W)$ and $k = m_{\mathfrak{s}_1} - m_{\mathfrak{s}_2} - d(W)/2$. We define

    \begin{Def}[cf. Definition \ref{Def5.8}]\label{mainDef2}
        Let $W: Y_1 \to Y_2$ be a cobordism between smooth, closed, connected oriented rational homology spheres. For $b^+(W) \leq 1$ and $k \leq -1$, we say that $W$ is an injective cobordism if the induced map $CM(W)_\ast : HM_q(Y_1, m_{\mathfrak{s}_1}) \to HM_{q-d(W)}(Y_2, m_{\mathfrak{s}_2})$ is injective for allowable range of the homological degree. 
    \end{Def}

    An example of an injective cobordism is a ribbon rational homology cobordism (cf. Section \ref{Sec6}). One of the nice features of ribbon cobordism $W$ is that its "double" $D(W) = -W \cup_{Y_2} W$ can be described by a surgery on $(Y_1 \times I) \# m (S^1 \times S^3)$, where $m$ is the number of $1$-handles in $W$ along certain closed curves (cf. Proposition 5.1 in \cite{MR4467148}). This is a crucial observation in our proof of the following theorem.

    \begin{Th}[cf. Theorem \ref{Th6.4}]\label{mainTh4}
    Suppose $W: Y_1 \to Y_2$ is a ribbon rational homology cobordism, where $Y_i$ is a smooth, closed, oriented rational homology three-sphere. Let $\mathfrak{s}_W$ be a $spin^c$ structure on $W$ and its restriction to $Y_i$ is denoted by $\mathfrak{s}_i$. Let $m_{\mathfrak{s}_i} \in \mathfrak{m}(Y_i)$ be a chamber. Then, the induced cobordism map $CM(D(W))_{\ast} : HM(Y_1, m_{\mathfrak{s}_1}; \mathbf{F}) \to HM(Y_1, m_{\mathfrak{s}_2}; \mathbf{F})$ satisfies
    $$CM(D(W))_{\ast} = |H_1(W, Y_1)| CM(Y_1 \times I)_{\ast}.$$
    The map on the right-hand side is some non-zero scalar multiple of the identity map on $HM(Y_1, m_{\mathfrak{s}_1}; \mathbf{F})$. Consequently, $CM(W)_\ast$ is injective.
\end{Th}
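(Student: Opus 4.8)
The plan is to deduce the injectivity of $CM(W)_\ast$ from an explicit computation of the self-cobordism map $CM(D(W))_\ast$, and to carry out that computation from the surgery presentation of the double. First I would record the numerical bookkeeping: since $W$ is a rational homology cobordism we have $b_1(W)=b^+(W)=0$, $\sigma(W)=0$, and $c_1(\det\,\mathfrak{s}_W)$ is torsion, so $d(W)=0$; the same applies to $-W$ and to $D(W)$, while the chamber shifts $k$ for $W$ and for $-W$ are negatives of one another and add to $0$ for $D(W)$. Hence $CM(W)_\ast$, $CM(-W)_\ast$ and $CM(D(W))_\ast$ are all defined and degree-preserving on the allowable range $\mathcal I$, and by functoriality of the cobordism maps, composing the maps of the two pieces of $D(W)=-W\cup_{Y_2}W$ gives $CM(D(W))_\ast=CM(-W)_\ast\circ CM(W)_\ast$. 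Therefore, once we know that $CM(D(W))_\ast$ is a nonzero scalar multiple of the identity on $HM(Y_1,m_{\mathfrak{s}_1};\mathbf{F})$, the kernel of $CM(W)_\ast$ is contained in that of $CM(D(W))_\ast$ and hence is trivial; this is the injectivity of $CM(W)_\ast$ asserted in the theorem, and (for suitable chamber representatives) it exhibits $W$ as an injective cobordism in the sense of Definition~\ref{mainDef2}.

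Next I would invoke Proposition~5.1 of \cite{MR4467148}: writing $m$ for the number of one-handles in a handle decomposition of $W$, the double $D(W)$ is diffeomorphic rel boundary to the cobordism obtained from $(Y_1\times I)\#m(S^1\times S^3)$ by surgery along $m$ disjoint embedded circles $\gamma_1,\dots,\gamma_m$ whose framings and homology classes are governed by the linking matrix $A$ of the decomposition, with $\det A\neq 0$ and $|\det A|=|H_1(W,Y_1)|$. This exhibits $CM(D(W))_\ast$ as the composition of the cobordism map of $(Y_1\times I)\#m(S^1\times S^3)$ followed by the surgery cobordism maps of the $\gamma_i$. The manifold $(Y_1\times I)\#m(S^1\times S^3)$, regarded as a relative handle cobordism from $Y_1$, is built by attaching $m$ one-handles and then $m$ three-handles realising the $S^1\times S^3$-summands, so its cobordism map is a composition of one-handle maps $HM(Y_1)\to HM(Y_1\#m(S^1\times S^2))$ with three-handle maps $HM(Y_1\#m(S^1\times S^2))\to HM(Y_1)$; this composite I would evaluate using the standard description of one- and three-handle maps as in \cite{MR2388043}, adapted to the present $HM$ along the lines of \cite{MR2738582}. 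The remaining $m$ surgeries along the $\gamma_i$ I would treat by the surgery exact triangle (or by a direct handle computation), their combined effect being multiplication by a factor read off from the entries of $A$. Putting the two stages together yields $CM(D(W))_\ast=|H_1(W,Y_1)|\cdot CM(Y_1\times I)_\ast$; and since the product cobordism $Y_1\times I$ induces a nonzero scalar multiple of the identity on $HM$ (over $\mathbf{F}=\mathbf{F}_2$ simply the identity), the right-hand side is a nonzero multiple of the identity, whence the first paragraph gives injectivity of $CM(W)_\ast$.

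The part I expect to be the main obstacle is twofold. The conceptually delicate point is to set up all the auxiliary cobordism maps — one-handle, three-handle and surgery maps — together with their functoriality within the chamber-dependent, $\mathbf{R}$-filtered framework used here, rather than in the usual Kronheimer--Mrowka setting: one must keep the chamber parameters $m_{\mathfrak{s}}$, the representative perturbations, and the degree shifts all inside the allowable range $\mathcal I$ along the handle decomposition, and verify that a connected-sum/surgery formula for $CM$ holds in exactly the form needed. The more computational obstacle is to show that the multiplicity contributed by the $\gamma_i$-surgeries is precisely $|H_1(W,Y_1)|=|\det A|$ (and not merely some nonzero integer), which calls for a careful accounting of the reducible solutions over the rational homology cobordism $D(W)$; granting this, the cited property that $CM(Y_1\times I)_\ast$ is a nonzero multiple of the identity closes the argument.
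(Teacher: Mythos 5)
Your first paragraph (reduce to showing $CM(D(W))_\ast$ is a nonzero multiple of the identity, then deduce injectivity of $CM(W)_\ast$ via $CM(D(W))_\ast = CM(-W)_\ast\circ CM(W)_\ast$) is sound and matches the paper. You also correctly invoke the surgery presentation of $D(W)$ from Proposition~5.1 of \cite{MR4467148} (Proposition~\ref{Prop6.1} here). After that point, however, your plan diverges from the paper's and contains a genuine gap.

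The gap is in the sentence ``This exhibits $CM(D(W))_\ast$ as the composition of the cobordism map of $(Y_1\times I)\#m(S^1\times S^3)$ followed by the surgery cobordism maps of the $\gamma_i$.'' There is no such composition: $D(W)$ and $X=(Y_1\times I)\#m(S^1\times S^3)$ are two \emph{different} cobordisms from $Y_1$ to $Y_1$, and $D(W)$ is obtained from $X$ by surgery along embedded circles in the \emph{interior} of the four-manifold $X$. This is not a composition of cobordisms, so there are no ``surgery cobordism maps of the $\gamma_i$'' to post-compose with $CM(X)_\ast$; relatedly, the Floer-theoretic surgery exact triangle concerns surgery on a knot in a boundary $3$-manifold and does not apply to interior surgeries in a $4$-manifold. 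The key ingredient the paper uses instead, and which your sketch omits, is the holonomy/$\mu$-class construction $CM(X,\gamma)$ and the comparison Proposition~\ref{Prop6.3}: surgery along an interior loop $\gamma\subset\operatorname{int}(X)$ changes the cobordism map to the $s(\gamma)$-modified map, i.e.\ $CM(X',\mathfrak{s}_{X'})=CM(X,\gamma,\mathfrak{s}_X)$, which is proved by factoring both moduli spaces through the cylindrical piece over $S^1\times S^2$ using its psc metric. That lemma, combined with $[\gamma_1]\wedge\cdots\wedge[\gamma_m]=\det(c_{ij})\,\alpha_1\wedge\cdots\wedge\alpha_m$ and the observation that surgery on $X$ along the $\alpha_j$ returns $Y_1\times I$, yields the identity $CM(D(W))_\ast=\det(c_{ij})\,CM(Y_1\times I)_\ast$ directly, with the correct multiplicity $|H_1(W,Y_1)|$ coming for free from the homology computation rather than from a separate count of reducibles. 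Your proposal's idea of first evaluating $CM(X)_\ast$ via one- and three-handle maps is also a detour: the paper never computes $CM(X)_\ast$ on its own; it only ever needs the holonomy-modified map $CM(X,\alpha_1\wedge\cdots\wedge\alpha_m)_\ast$, which it identifies with $CM(Y_1\times I)_\ast$ by applying Proposition~\ref{Prop6.3} a second time.
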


\begin{Rem}
    Analogous statements to Theorem \ref{mainTh4} have been proved for Heegaard Floer homology, instanton Floer homology, and sutured instanton Floer homology in \cite{MR4467148}. Furthermore, once Theorem \ref{mainTh4} is proved, it is just a matter of algebra to extend the result to the $\mathcal{S}$-complex of monopoles (cf. Theorem \ref{Th6.5}).
\end{Rem}

We introduce a notation: let $\rho^{\circ}_q(Y,\mathfrak{s}, g)$ refer to $\rho_q(Y,\mathfrak{s}, g)$ or $\lambda_q(Y, \mathfrak{s}, g)$, depending on whether $\circ = \emptyset$ or $\sim$. Taking advantage of the fact that a ribbon homology cobordism induces an injective homomorphism at the level of homology and that the induced map changes the $\mathbf{R}$-filtration by a quantity involved the scalar curvature of a metric on a cobordism, we will prove that

\begin{Th}[cf. Theorem \ref{Th6.6}]\label{mainTh5}
    Let $W : Y_1 \to Y_2$ be a ribbon rational cobordism between smooth, closed, connected, oriented rational homology three-sphere. Suppose $W$ is spin, and let $\mathfrak{s}_W$ be a spin structure on $W$ that, when restricted to $Y_i$, gives a spin structure $\mathfrak{s}_i$ on $Y_i$. Let $g$ be a metric on $W$ and denote $s$ by its scalar curvature. Then for allowable range of degree $q$ (cf. Corollary \ref{Cor4.11}) we have 
    $$\dfrac{1}{32}\int_{W} s^2 \geq \rho^{\circ}_{q - d(W)}(Y_2; \mathfrak{s}_2, g_2) - \rho^{\circ}_{q}(Y_1, \mathfrak{s}_1, g_1) - C.$$
    Here $g_i$ is the induced metric on the boundary components of $Y$, $d(W)$ is a topological constant of $W$, $C$ is a constant that only depends on the canonical (trivial) $spin^c$ connection $B_0$ on $W$ and the spin structure $\mathfrak{s}_W$.

    Furthermore, if we have $\rho^{\circ}_{q-d(W)}(Y_2,\mathfrak{s}_2, g_2) > \rho^{\circ}_{q}(Y_1, \mathfrak{s}_1, g_1) + C$, then $W$ can never have a metric with positive scalar curvature.
\end{Th}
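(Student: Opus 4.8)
The plan is to run the standard ``injective cobordism map $+$ filtered continuity $+$ commuting square'' argument for spectral invariants, the one genuinely analytic input being a Weitzenb\"ock/energy estimate bounding how far the cobordism map moves the $\mathbf{R}$-filtration in terms of $\int_W s^2$. I would combine three facts: (i) because $W$ is spin and ribbon, Theorem \ref{mainTh4} together with its $\mathcal S$-complex refinement (Theorem \ref{Th6.5}) gives that the cobordism maps $CM(W)_\ast\colon HM_q(Y_1,m_{\mathfrak s_1};\mathbf F)\to HM_{q-d(W)}(Y_2,m_{\mathfrak s_2};\mathbf F)$ and $\widetilde{CM}(W)_\ast\colon\widetilde{HM}_q\to\widetilde{HM}_{q-d(W)}$ are injective in the allowable range; (ii) a metric $g$ on $W$ (a product near $\partial W$, restricting to $g_i$ there, with a small generic perturbation matching $\eta_i$) makes $CM(W)$ and $\widetilde{CM}(W)$ into \emph{filtered} chain maps whose shift on the $\mathbf{R}$-filtration is at most $\tfrac1{32}\int_W s^2+C(\eta_1,\eta_2)$; (iii) $\rho_q$ and $\lambda_q$ are defined through the comparison maps $\iota^t_\ast$ of the persistence modules $\widetilde{CM},CM$. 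Since $W$ is spin, each $\mathfrak s_i$ is a torsion $spin^c$ structure, so Theorem \ref{mainTh2} applies and $\rho^\circ_q(Y_i,\mathfrak s_i,g_i)$ is a well-defined invariant of $(Y_i,g_i)$ for $q$ in the allowable range; I fix $q$ so that both $q$ and $q-d(W)$ are allowable (cf. Corollary \ref{Cor4.11}), and everything below is uniform in $\circ\in\{\emptyset,\sim\}$ since the filtration comes from the same functional $\mathcal L_\eta$ in both cases.

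\emph{Step 1: the energy bound for the cobordism map (the analytic heart).} The cobordism map counts finite-energy solutions $(A,\Phi)$ of the perturbed $4$-dimensional Seiberg--Witten equations on the cylindrical-end manifold $\widehat W$ converging to a critical point $a$ on the $Y_1$-end and $b$ on the $Y_2$-end, with $B_0$ the canonical (flat, since $W$ is spin) $spin^c$ reference connection. Pairing the Weitzenb\"ock formula $D_A^\ast D_A=\nabla_A^\ast\nabla_A+\tfrac s4+\tfrac12\rho(F_{A^t}^+)$ with $\Phi$, substituting the curvature equation $F_{A^t}^+=\rho^{-1}(\Phi\Phi^\ast)_0$, and integrating by parts over the cylindrical ends so that the boundary terms become exactly the Chern--Simons--Dirac values entering the $\mathbf{R}$-filtration, produces an energy identity of the shape
\[ \mathcal L_{\eta_2}(b)-\mathcal L_{\eta_1}(a)\;=\;\mathcal E^{\mathrm{top}}_W(\mathfrak s_W,B_0)\;-\;Q(A,\Phi)\;-\;\tfrac14\int_W s\,|\Phi|^2, \]
where $Q(A,\Phi)\ge0$ collects the $L^2$-norms of $F_{A^t}$, $\nabla_A\Phi$ and $|\Phi|^2$ (so in particular $Q$ controls $\int_W|\Phi|^4$ on-shell), and $\mathcal E^{\mathrm{top}}_W(\mathfrak s_W,B_0)$ depends only on $W$, $\mathfrak s_W$, $B_0$ and the chambers. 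Estimating the sign-indefinite term by $-\tfrac14\int_W s|\Phi|^2\le\tfrac14\int_W|s|\,|\Phi|^2\le\tfrac14\norm{s}_{L^2}\norm{\Phi}_{L^4}^2$ and optimizing against the quartic term in $Q$ yields
\[ \mathcal L_{\eta_2}(b)-\mathcal L_{\eta_1}(a)\;\le\;\tfrac1{32}\int_W s^2\;+\;C(\eta_1,\eta_2), \]
with $C(\eta_1,\eta_2)\to C$ (a constant depending only on $\mathfrak s_W$ and $B_0$) as $\norm{\eta_1},\norm{\eta_2}\to0$. Since the right-hand side is independent of the particular solution, $CM(W)$ and $\widetilde{CM}(W)$ carry level $t$ at $Y_1$ into level $t+\sigma$ at $Y_2$, where $\sigma:=\tfrac1{32}\int_W s^2+C(\eta_1,\eta_2)$, hence the same on homology.

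\emph{Steps 2--3: the comparison argument and the psc refinement.} Fix $t>\rho^\circ_q(Y_1,\mathfrak s_1,g_1,\eta_1)$ and choose $0\ne\alpha\in\widetilde{HM}^t_q(Y_1)$ (resp. in $HM^t_q$) with $\iota^t_\ast\alpha\ne0$. Functoriality of the cobordism map with respect to the filtration gives a commuting square with horizontal arrows $\iota^t_\ast$ and $\iota^{t+\sigma}_\ast$ and vertical arrows $\widetilde{CM}(W)_\ast$ (resp. $CM(W)_\ast$) from degree $q$ at $Y_1$ to degree $q-d(W)$ at $Y_2$. Injectivity of the right vertical map (fact (i)) forces $\widetilde{CM}(W)_\ast(\iota^t_\ast\alpha)\ne0$, and by commutativity this equals $\iota^{t+\sigma}_\ast(\widetilde{CM}(W)_\ast\alpha)$, so $\iota^{t+\sigma}_\ast$ is non-trivial and $\rho^\circ_{q-d(W)}(Y_2,\mathfrak s_2,g_2,\eta_2)\le t+\sigma$. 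Letting $t\downarrow\rho^\circ_q(Y_1,\mathfrak s_1,g_1,\eta_1)$ and then $\norm{\eta_i}\to0$ (invoking Theorem \ref{mainTh2}, and $C(\eta_1,\eta_2)\to C$) gives $\rho^\circ_{q-d(W)}(Y_2,\mathfrak s_2,g_2)\le\rho^\circ_q(Y_1,\mathfrak s_1,g_1)+\tfrac1{32}\int_W s^2+C$, which is the asserted inequality. If moreover $s>0$ on $W$, the term $-\tfrac14\int_W s|\Phi|^2$ in the energy identity is $\le0$ and may simply be discarded rather than estimated, so the bound of Step 1 improves to a filtration shift $\le C(\eta_1,\eta_2)$ with \emph{no} $\int_W s^2$ term; rerunning the square argument with $\sigma=C(\eta_1,\eta_2)$ and passing to the limit yields $\rho^\circ_{q-d(W)}(Y_2,\mathfrak s_2,g_2)\le\rho^\circ_q(Y_1,\mathfrak s_1,g_1)+C$, contradicting the hypothesis. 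Hence $W$ admits no metric of positive scalar curvature.

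\emph{Where the difficulty lies.} Steps 2--3 are routine persistence-module bookkeeping once Theorem \ref{mainTh4} and Theorem \ref{Th6.5} are available. The real work is Step 1: (a) identifying the boundary contributions at the cylindrical ends with the Chern--Simons--Dirac values defining the $\mathbf{R}$-filtration, so that ``filtration shift'' has literal meaning; (b) checking that $\mathcal E^{\mathrm{top}}_W(\mathfrak s_W,B_0)$, together with the perturbation terms, assembles into a constant $C(\eta_1,\eta_2)$ that is \emph{independent of the solution} and has a finite limit $C$ as $\norm{\eta_i}\to0$, which requires the relevant compactness for the filtered moduli spaces on $\widehat W$; and (c) getting the sign of the $s|\Phi|^2$ term correct, so that positive scalar curvature is the favourable case that powers Step 3. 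A secondary point is to set up the cylindrical metric and perturbations on $\widehat W$ so that the cobordism map is both defined and filtered with the stated shift, compatibly with the limit $\lim_{\norm{\eta}\to0}$ in Definition \ref{mainDef1}.
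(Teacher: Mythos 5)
Your proposal is correct and follows essentially the paper's own route: it combines the injectivity of the ribbon cobordism map (Theorems \ref{Th6.4} and \ref{Th6.5}) with the Weitzenb\"ock energy estimate controlling the filtration shift (Lemma \ref{Lem5.6}, Proposition \ref{Prop5.7}, and the bound \eqref{eq:5.9}), and closes with the same commutative-square argument underlying Theorems \ref{Th5.9}, \ref{Th5.10}, and Corollary \ref{Cor5.11}. Your handling of the $\int_W s\,|\Phi|^2$ cross term via Cauchy--Schwarz and Young is algebraically equivalent to the paper's explicit completion of squares, though note that once the factor of $\tfrac12$ from $\tfrac12\mathscr{E}^{an}$ is carried through, the coefficient of $\int_W s\,|\Phi|^2$ in the energy identity should be $\tfrac18$ rather than $\tfrac14$; this does not affect the final $\tfrac1{32}\int_W s^2$ bound.
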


If $Y_1, Y_2$ are hyperbolic rational homology three-spheres equipped with their respective canonical hyperbolic metric $g_1, g_2$, then by Mostow's rigidity theorem, $\rho_\ast(Y_i, \mathfrak{s}_i, g_i)$ is a topological invariant of $Y_i$. The upshot of Theorem \ref{mainTh5} is that $\rho^\circ$ provides a topological lower bound for the scalar curvature functional of a $4$-dimensional ribbon rational homology cobordism associated with any metric $g$ whose restriction to each of the boundary components gives us $g_i$. Furthermore, a relationship between spectral invariants of the two hyperbolic ends provides an obstruction to the existence of psc on the whole cobordism. Naturally, as an independent interest, it is worth to investigate a relationship between $\rho^\circ$ with other invariants defined from the scalar curvature, e.g., the Yamabe invariant. We reserve this question for future work.

\begin{Rem}
    Let us mention an elementary observation of $\rho^\circ$ and the spectral geometric data of $Y$. Since we are working with rational homology three-sphere $Y$, by the Hodge decomposition theorem, $\Omega^1(Y) = d\Omega^0(Y) \oplus d^* \Omega^2(Y)$. Furthermore, the Laplacian $\Delta = (d+ d^*)^2$ respects the decomposition. The spectrum of $\Delta$ on coexact $1$-forms is discrete and always positive. Denote by $\lambda^*_1$ the first eigenvalue of $\Delta$. A classical problem in spectral geometry is to obtain a bound for $\lambda^*_1$. Using a refinement of various well-known estimates of the solutions of the Seiberg-Witten equations, in \cite{MR4165316, MR4322393}, it was shown that there is an upper bound for $\lambda^*_1$ in terms of a lower bound of the Ricci curvature as long as $Y$ is not an $L$-space. Recall that $Y$ is an $L$-space if and only if $HM(Y) = 0$; or equivalently, there are no irreducible solutions of the Seiberg-Witten equations. The condition that $Y$ is not a $L$-space is purely topological and there are many examples (see \cite{MR2450204, MR2168576, MR2299739}). Additionally, if $Y$ is hyperbolic, then the upper-bound of $\lambda^*_1$ by $2$ is obtained when there is an irreducible solution of the Seiberg-Witten equations (cf. Theorem 3 of \cite{MR4322393}). Thus, if $\lambda^*_1 >2$, then the topological invariant $\rho^\circ_\ast(Y, \mathfrak{s},g)$ associated with the canonical hyperbolic metric $g$ has to be $+\infty$.  This begs further investigation of the role of $\rho^\circ$ in the context of hyperbolic geometry.
\end{Rem}



    \subsection{Organization}

    In Section \ref{Sec2}, we discuss the persistent module from the Morse theory perspective, define a spectral invariant in this setting, and derive some basic relation between the spectral invariants in the pull-up-push-down construction. In Section \ref{Sec3}, we consider the finite-dimensional prototype model of an $S^1$-equivariant complex defined for an $S^1$-manifold satisfies certain conditions that can be generalized to an infinite-dimensional setting. The precise formalism of the category of $\mathcal{S}$-complexes is given in Subsection \ref{Sub3.4}. Section \ref{Sec4} is a review of monopole Floer homology from Fr\o yshov's perspective, and a proof of Theorem \ref{mainTh1} is given in this section. We define the monopole spectral invariants in Section \ref{Sec5} and prove Theorem \ref{mainTh2} and Theorem \ref{mainTh3}. The majority of Section \ref{Sec6} is devoted to the review of ribbon homology cobordism and the proof of Theorem \ref{mainTh4}. 

    \begin{proof}[Acknowledgement]\renewcommand{\qedsymbol}{}
    Parts of this work were inspired by the discussion of the paper \cite{MR3590354} in the "Journal Club" virtual reading seminar that was organized by Daemi, Iida, and Scaduto in the previous academic year of 2023-2024. We would like to thank Ali Daemi for suggesting this project to us and taking the time to explain as well as provide many valuable insights about certain aspects of the category of $\mathcal{S}$-complexes. We are also grateful for some conversations with Mike Miller Eismeier, Tye Lidman about things related to this project.
\end{proof}

	\section{Finite dimensional prototype}\label{Sec2}

    \subsection{Morse homology}
    Let $M$ be a closed smooth finite-dimensional manifold. Suppose $f: M \to \mathbf{R}$ is a Morse-Smale function. As usual, over $\mathbf{F}=\mathbf{F}_2$, we have a Morse complex of $M$ associated with $f$
    $$C(M,f,d):= \bigoplus C_\ast (M, f, d),$$
    where $C_\ast (M,f,d)$ is a freely finitely generated abelian group over $\mathbf{F}$ with generators are the critical points of $f$ of index $\ast$, and the differential $d$ counts the numbers of unparametrized (broken) flow lines connecting critical points.

    The Morse complex $C(M,f,d)$ is also a \textit{non-Archemedian} vector space. Specifically, there is a semi-norm  $\ell: C(M,f,d) \to \R \cup \{-\infty\}$ defined as following: For each $x \in C(M,f)$ with $\displaystyle x = \sum a_i p_i$ where $p_i \in Crit\;f$ and $a_i \in \mathbf{F}$,
    $$\ell(x) := \max \{f(p_i): a_i \neq 0\}.$$
    If $x =0$, we set $\ell(x) = -\infty$ by convention. By semi-norm, we mean that $\ell(x+y) \leq \max \{\ell(x), \ell(y)\}$. 

    Using $\ell : C(M,f,d) \to \R \cup \{-\infty\}$, we define an $\R$-filtration on the Morse complex defined above. In particular, for each $t \in \R$, we define
    $$C^t(M,f,d):=\{ c \in C(M,f,d): \ell(c) \leq t\}.$$
    Naturally, $d$ respects this $\R$-filtration and thus, for each $t\in \R$, $C^t(M,f,d)$ is a chain complex in its own right. Denote the homology of $C^t(M,f,d)$ by $H^t$. Furthermore, naturally, if $t\leq s$, then there is a chain map $i_{ts}: C^t(M,f,d) \to C^s(M,f,d)$. We also define
    $$i_t : C^t(M,f,d) \to C(M,f,d).$$

    Having set up the above terminologies, we are now ready to define the spectral invariants of $M$. There are two ways to go about this, depending on whether one has a certain "distinguished" element of $H(C(M,f,d))$.

    \begin{Def}\label{Def1.1}
        For each $a \in H(C(M,f,d))$. We define $\rho_M(a)$ by
        \begin{align*}
            \rho_M(a) &:= \inf \{ t \in \R : a \in \text{Im}\,i^*_t\}\\
            & = \inf \{ \ell(\alpha) : \alpha \in C(M,f,d),\; [\alpha]=a\}.
        \end{align*}
    \end{Def}

    \begin{Def}\label{Def1.2}
        In the case when there is no "distinguished" element in the homology of the Morse complex, we can still define
        \begin{align*}
            \rho_M &:= \inf \{ t \in \R: \text{Im}\; i^*_t \neq \{0\}\}\\
            &= \inf \{\ell(\alpha) : \alpha \in C(M,f,d), [\alpha] \neq 0\}.
        \end{align*}
    \end{Def}
	
	\begin{Rem}
	    Both $\rho_M(a)$ and $\rho_M$ defined in Definition \ref{Def1.1} and Definition \ref{Def1.2} are \textit{not} invariants of $M$, i.e., they do depend on the choices of Morse-Smale function (or Riemannian metrics) on the underlying manifold. This is because as one continuously changes the auxiliary data from $f_1$ to $f_2$, the continuation map $F_{12}: C(M,f_1,d_1) \to C(M, f_2, d_2)$ (which is a quasi-isomorphism) might not preserve the $\mathbf{R}$-filtration without some special assumption on $f_1$ and $f_2$.
	\end{Rem}

    Next, we investigate how these spectral invariants are affected via functoriality. Functoriality in a finite dimension situation can be phrased as follows: Let $(W, M_1, M_2)$ be a triple of manifolds such that there are maps $r_j: W \to M_j$, $j=1,2$. We choose a Morse-Smale function $f_j$ on each $M_j$. We assume the following transversality condition
    
    \begin{quote}
       \textbf{Assumption A:} $r= (r_1, r_2): W \to M_1 \times M_2$ is always transverse to $U_a \times S_b$ for all critical points $a,b$ of $f_1, f_2$, respectively.
    \end{quote}

    With this assumption, using the \textit{pull-up-push-down construction}, we construct a chain map $MC(W): C(M_1,f_1,d_1) \to C(M_2,f_2,d_2)$ between Morse complexes. By the transversality assumption,
        $$\mathcal{M}(a,b) := \{z \in W : r(z) \in U_a \times S_b \}$$
    is a finite-dimensional manifold of dimension
        $$dim\;\mathcal{M}(a,b) = dim\;W-dim\;Y_1 + \ind\;a - \ind\;b.$$
    Thus, when $\ind\,a - \ind\, b = -dim\, W + dim\, Y_1$, $\mathcal{M}(a,b)$ is zero dimensional. We then can do a $\mathbf{F}$-count of points in $\mathcal{M}(a,b)$, denoted by $\#_{\mathbf{F}}\mathcal{M}(a,b)$. As a result, $MC(W)$ is defined as follows
        \[MC(W)(a)  = \sum_{b \in Crit(f_2)} \la MC(W)(a), b \ra\; b,\]
    where $\la MC(W)a, b\ra = \#_{\mathbf{F}}\mathcal{M}(a,b)$ and the summation is taken over critical points $b$ of $f_2$ that satisfies $\ind\; b = dim\; W- dim\; Y_1 + \ind\; a$. Note that as a chain map, $deg\;MC(W) = dim\;W - dim\;Y_1$.

    As we have seen above, each Morse complex of $M_j$ comes with an $\R$-filtration induced by the Morse-Smale function $f_j$. How the chain map $MC(W)$ interacts with these filtration depends on some assumption on $f_j$. To this end, for the sake of simplicity, it is most convenient that we assume

    \begin{quote}
        \textbf{Assumption B:} $f_j: M_j \to \R$ are Morse-Smale functions such that $\sup\;f_2 \leq \inf\;f_1$.
    \end{quote}

    \begin{Lemma}\label{Lem1.4}
        With the Assumptions A-B as above, the chain map $MC(W)$ respects the $\R$-filtration on the Morse complexes $C(M_j,f_j,d_j)$. That is,
        $$MC(W): C^t(M_1,f_1,d_1) \to C^t(M_2,f_2,d_2).$$
    \end{Lemma}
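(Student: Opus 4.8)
The plan is to show that $MC(W)$ cannot raise the filtration level, i.e., that if $\ell(a) \le t$ then $\ell(MC(W)(a)) \le t$, and in fact the much stronger statement that $MC(W)$ sends \emph{all} of $C(M_1,f_1,d_1)$ into $C^t(M_2,f_2,d_2)$ for every $t \ge \sup f_2$; since Assumption B guarantees $\sup f_2 \le \inf f_1$, this will in particular cover every level $t$ that arises as $\ell(a)$ for a nonzero $a \in C(M_1,f_1,d_1)$. First I would reduce to generators: because $\ell$ is a semi-norm and $MC(W)$ is $\mathbf{F}$-linear, it suffices to check that for each critical point $a$ of $f_1$ and each critical point $b$ of $f_2$ appearing in $MC(W)(a)$ with nonzero coefficient, we have $f_2(b) \le \ell(a) = f_1(a)$.

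The key step is then entirely soft: if $b$ appears in $MC(W)(a)$ then $\mathcal{M}(a,b) \ne \emptyset$, so there is a point $z \in W$ with $r(z) \in U_a \times S_b$; in particular $r_2(z) \in S_b$, the stable manifold of $b$, and $r_1(z) \in U_a$, the unstable manifold of $a$. Now I invoke Assumption B directly: $f_2(b) \le \sup f_2 \le \inf f_1 \le f_1(a)$. Hence $f_2(b) \le f_1(a) = \ell(a) \le t$ whenever $\ell(a) \le t$, so every generator $b$ occurring in $MC(W)(a)$ satisfies $f_2(b) \le t$, which gives $\ell(MC(W)(a)) \le t$ and therefore $\ell(MC(W)(c)) \le t$ for every $c \in C^t(M_1,f_1,d_1)$ by the semi-norm inequality. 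Combined with the fact (established in the construction of $MC(W)$ above) that it is a chain map, this shows $MC(W)$ restricts to a chain map $C^t(M_1,f_1,d_1) \to C^t(M_2,f_2,d_2)$, as claimed.

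I do not expect any genuine obstacle here: the content is that the crude inequality in Assumption B is \emph{deliberately} chosen to make the filtration interaction trivial, so the proof is a one-line observation once the reduction to generators is made. The only thing to be careful about is bookkeeping — making sure that one never needs any information about \emph{which} critical points actually show up (only that, whatever they are, their $f_2$-values are globally bounded by $\sup f_2$), and noting that the degenerate case $a = 0$ (where $\ell(a) = -\infty$) is vacuous since then $MC(W)(a) = 0$ as well. It is worth remarking in passing that this is exactly why Assumption B is imposed in this stark form rather than a more refined hypothesis; in the infinite-dimensional Floer-theoretic applications later in the paper one cannot arrange such a global separation of the two functionals, which is precisely the source of the metric-dependence of the spectral invariants $\rho_q$, and the more delicate filtration estimates of Section \ref{Sec6} will be needed there.
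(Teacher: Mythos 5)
Your argument is correct and follows essentially the same route as the paper's: reduce to generators via the semi-norm definition, then observe that Assumption B gives $f_2(b) \le \sup f_2 \le \inf f_1 \le f_1(a) \le \ell(a)$ for every $b$ with nonzero coefficient in $MC(W)(a)$. The brief detour through $\mathcal{M}(a,b)\neq\emptyset$ is unnecessary (as you yourself note, the bound on $f_2(b)$ holds regardless of which critical points appear), but it does no harm.
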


    \begin{proof}
        Let $x = \displaystyle \sum a_k p_k \in C(M_1,f_1,d_1)$. Then
            $$MC(W)x = \sum_{b\in Crit(f_2)} \left(\sum a_k \la MC(W)(a_k),b\ra\right)b.$$
        Then by the orthogonality property of the semi-norm,
            $$\ell_2\left(MC(W)x\right) = \max \{f_2(b) : \sum a_k \la MC(W)(a_k),b\ra\neq 0\}.$$
        Since $\sup\;f_2 \leq \inf\;f_1$, $f_2(b) \leq f_1(a_k) \leq \ell_1(x)$. As a result, $\ell_2(MC(W)x) \leq \ell_1(x)$. Therefore, $MC(W)$ preserves the $\R$-filtration as claimed.
    \end{proof}

    For each $t\in \R$, Lemma \ref{Lem1.4} allows us to consider the following commutative diagram
    \begin{equation}\label{eq:1.1}
        \begin{tikzcd}
            H^t(C(M_1,f_1,d_1)) \arrow{r}{MC(W)^*} \arrow[swap]{d}{i^{*}_{1t}} & H^t(C(M_2, f_2,d_2)) \arrow{d}{i^*_{2t}} \\%
            H(C(M_1,f_1,d_1)) \arrow{r}{MC(W)^*}& H(C(M,f_2,d_2))
        \end{tikzcd}
    \end{equation}
    Let $t\in \R$ such that $\text{Im}\; i^*_{1t}$ is non-trivial. By the commutativity of \eqref{eq:1.1}, we have 
    $$MC(W)^* i^*_{1t}(a) = i^*_{2t}MC(W)^*(a).$$ 
    As a result, $i^*_{2t}MC(W)^*(a)$ is non-trivial if and only if $i^*_{1t}(a) \notin \text{Ker}\;MC(W)^*$. If that is the case, then $i^*_{2t}$ has a non-trivial image. Therefore, $\rho_{M_2} \leq t$, which implies that $\rho_{M_2} \leq \rho_{M_1}$. 

    For the above argument to work, recall that we need $i^*_{1t}(a)$ to be a non-trivial element of $H(C(M_1,f_1,d_1))$ that does not belong to $\text{Ker}\; MC(W)^*$. This condition is easily achievable when, say, $MC(W)^*$ has a trivial kernel. Note that the mapping cone $\text{Cone}(MC(W))=C(M_1,f_1,d_1)\oplus C(M_2,f_2,d_2)$ is another complex where the differential is given by
    $$d_{\text{cone}} = \begin{bmatrix} d_1 & 0 \\ MC(W) & d_2 \end{bmatrix}.$$
    Then, $(C(M_1,f_1,d_1), C(M_2, f_2, d_2), \text{Cone}(MC(W)))$ forms an exact triangle. This means that we have the following long exact sequence
    \begin{align*}
        \cdots &\to H_{j-1}(\text{Cone}(MC(W))) \to H_j(C(M_1,f_1,d_1)) \to H_j(C(M_2,f_2,d_2)) \to\\ 
        &\to H_j(\text{Cone}(MC(W))) \to \cdots
    \end{align*}
    If the mapping cone complex $\text{Cone}(MC(W))$ is acyclic, then the above long exact sequence tells us that $MC(W)^*$ is an isomorphism. In particular, $\text{Ker}\; MC(W)^* = \{0\}$. To this end, we now state another assumption.

    \begin{quote}
        \textbf{Assumption C:} The mapping cone complex $\text{Cone}(MC(W))$ is acyclic.
    \end{quote}

    The above discussion can be summarized in the form of the following theorem.

    \begin{Th}\label{Th1.5}
        Suppose $(W,M_1,M_2)$ is a triple of manifolds that satisfies Assumptions A-B-C as above. Then $\rho_{M_2} \leq \rho_{M_1}$. 
    \end{Th}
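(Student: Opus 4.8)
The plan is to assemble the three ingredients already in place: Lemma \ref{Lem1.4} makes $MC(W)$ preserve the $\mathbf{R}$-filtration, Assumption C makes the induced map on homology an isomorphism, and the commutative square \eqref{eq:1.1} lets one transport non-triviality of a sublevel-set image from $M_1$ to $M_2$. If $\rho_{M_1} = +\infty$ there is nothing to prove, so I would assume $\rho_{M_1}$ is finite and fix an arbitrary real number $t > \rho_{M_1}$.

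First I would observe that because the sublevel complexes $C^s(M_1,f_1,d_1)$ form an increasing filtration (the inclusions $i_{ss'}$ give $i^*_{1s} = i^*_{1s'}\circ i^*_{ss'}$ on homology), the set $\{s : \operatorname{Im} i^*_{1s} \neq 0\}$ is an up-set in $\mathbf{R}$; hence $t > \rho_{M_1} = \inf\{s : \operatorname{Im} i^*_{1s}\neq 0\}$ forces $\operatorname{Im} i^*_{1t}\neq 0$, so I may choose a nonzero class $a = i^*_{1t}(\alpha)$ with $\alpha \in H^t(C(M_1,f_1,d_1))$. Next I would invoke Assumption C: since $\operatorname{Cone}(MC(W))$ is acyclic, the long exact sequence of the exact triangle $\bigl(C(M_1,f_1,d_1),\, C(M_2,f_2,d_2),\, \operatorname{Cone}(MC(W))\bigr)$ shows that $MC(W)^* : H(C(M_1,f_1,d_1)) \to H(C(M_2,f_2,d_2))$ is an isomorphism, in particular injective, so $MC(W)^*(a) \neq 0$. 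Finally, using Lemma \ref{Lem1.4}, $MC(W)$ restricts to a chain map $C^t(M_1,f_1,d_1)\to C^t(M_2,f_2,d_2)$, and by commutativity of \eqref{eq:1.1},
$$ i^*_{2t}\bigl(MC(W)^*\alpha\bigr) = MC(W)^*\bigl(i^*_{1t}\alpha\bigr) = MC(W)^*(a) \neq 0, $$
where the first occurrence of $MC(W)^*$ denotes the map on sublevel homology and the second the map on total homology. Thus $\operatorname{Im} i^*_{2t}\neq 0$, so by Definition \ref{Def1.2} $\rho_{M_2}\leq t$; letting $t$ decrease to $\rho_{M_1}$ gives $\rho_{M_2}\leq\rho_{M_1}$.

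I do not expect a genuine obstacle here — the statement is essentially a formal consequence of Lemma \ref{Lem1.4}, the acyclicity hypothesis, and a diagram chase in \eqref{eq:1.1}. The only point that needs a moment of care is the bookkeeping distinguishing the map that $MC(W)$ induces on the sublevel homologies $H^t$ from the one it induces on total homology; this is exactly what the naturality square \eqref{eq:1.1} records, so no extra work beyond citing it is required.
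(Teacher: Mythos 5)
Your argument is correct and follows the same route the paper takes: use the filtration-preservation from Lemma~\ref{Lem1.4} to get the commutative square~\eqref{eq:1.1}, use acyclicity of $\operatorname{Cone}(MC(W))$ to conclude $MC(W)^*$ is an isomorphism (hence injective), and chase the diagram to transport non-triviality of $\operatorname{Im} i^*_{1t}$ to $\operatorname{Im} i^*_{2t}$. Your explicit handling of the $\rho_{M_1}=+\infty$ case and the observation that $\{s:\operatorname{Im} i^*_{1s}\neq 0\}$ is an up-set (so every $t>\rho_{M_1}$ qualifies) merely makes explicit what the paper's discussion leaves implicit.
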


	In the case that the homology of $M_1$ has a "preferred" element $a$ such that its image in the homology of $M_2$ is also "preferred" in a certain sense. A similar argument outlined in the discussion above gives the following theorem.

    \begin{Th}\label{Th1.6}
        Let $a\in H(M_1;\mathbf{F})$ be a "distinguished" element such that $MC(W)^*(a)$ is also "distinguished" in $H(M_2;\mathbf{F})$. With the Assumptions A-B-C as above, we have $\rho_{M_2}(MC(W)^*(a))\leq \rho_{M_1}(a)$. 
    \end{Th}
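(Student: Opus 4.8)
The plan is to mimic, almost verbatim, the diagram chase that precedes Theorem~\ref{Th1.5}, but to follow the specific class $a$ rather than merely the non-vanishing of an image. First I would unwind Definition~\ref{Def1.1} on $M_1$: the number $\rho_{M_1}(a)$ is the infimum of those $t\in\R$ for which $a$ lies in the image of $i^*_{1t}\colon H^t(C(M_1,f_1,d_1))\to H(C(M_1,f_1,d_1))$. So I would fix an arbitrary $t>\rho_{M_1}(a)$ and choose a lift $\tilde a\in H^t(C(M_1,f_1,d_1))$ with $i^*_{1t}(\tilde a)=a$.

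Next I would feed $\tilde a$ into the commutative square \eqref{eq:1.1}, whose existence is supplied by Lemma~\ref{Lem1.4}: Assumption~A makes $MC(W)$ defined via the pull-up-push-down construction, and Assumption~B ($\sup f_2\le\inf f_1$) is precisely what makes $MC(W)$ filtration-preserving. Commutativity then gives
$$i^*_{2t}\bigl(MC(W)^*(\tilde a)\bigr)=MC(W)^*\bigl(i^*_{1t}(\tilde a)\bigr)=MC(W)^*(a),$$
so $MC(W)^*(a)$ lies in the image of $i^*_{2t}$. Applying Definition~\ref{Def1.1} on $M_2$ now forces $\rho_{M_2}(MC(W)^*(a))\le t$, and letting $t\downarrow\rho_{M_1}(a)$ yields the claimed inequality.

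Finally I would record the role of Assumption~C. Unlike the proof of Theorem~\ref{Th1.5} — where one needs $i^*_{1t}(a)\notin\operatorname{Ker} MC(W)^*$ and therefore invokes acyclicity of $\operatorname{Cone}(MC(W))$ to get injectivity of $MC(W)^*$ — the inequality above does not logically require Assumption~C at all. Its purpose here is qualitative: the exact triangle $(C(M_1,f_1,d_1),C(M_2,f_2,d_2),\operatorname{Cone}(MC(W)))$ together with acyclicity of the cone makes $MC(W)^*$ an isomorphism, so that $MC(W)^*(a)$ is a genuinely nonzero ``distinguished'' class on $M_2$ rather than $0$ (in which case $\rho_{M_2}(0)=-\infty$ and the statement would be vacuous). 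I do not anticipate any real obstacle: the argument is a one-line diagram chase, and the only point demanding a touch of care is the bookkeeping with infima — working with the strict inequality $t>\rho_{M_1}(a)$ sidesteps the question of whether the infimum in Definition~\ref{Def1.1} is attained — together with confirming that the two hypotheses ``$a$ distinguished'' and ``$MC(W)^*(a)$ distinguished'' are used exactly in the sense that these are the reference classes plugged into Definition~\ref{Def1.1} on $M_1$ and $M_2$ respectively. This theorem is thus the normalized-spectral-number counterpart of Theorem~\ref{Th1.5}, proved by the same mechanism.
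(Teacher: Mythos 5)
Your proof is correct and follows essentially the same diagram chase (the commutative square \eqref{eq:1.1} together with the filtration-preservation of Lemma \ref{Lem1.4}) that the paper sketches before Theorem \ref{Th1.5} and then cites as ``a similar argument'' for Theorem \ref{Th1.6}. Your further remark that Assumption C is not logically needed for the inequality itself --- it only guarantees that $MC(W)^*(a)\neq 0$, so that $\rho_{M_2}(MC(W)^*(a))$ is not just $-\infty$ and the conclusion is substantive --- is a correct and slightly sharper reading than the paper's terse appeal to the preceding discussion.
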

	
	\begin{Rem}\label{Rem1.7}
	    The meaning of "distinguished" or "preferred" depends on different contexts and certain topological information that the spectral invariants wish to identify. 
	\end{Rem}

    \begin{Rem}
        In general, Assumption A seems reasonable to achieve (after some perturbation). The assumption B-C is quite rigid and possibly can be relaxed to arrive at analogous quantitative statements in Theorem \ref{Th1.5} and Theorem \ref{Th1.6}, depending on the situation.
    \end{Rem}

    \subsection{Morse-Novikov homology}
    Even though we will not encounter a Morse-Novikov situation for monopole Floer homology of rational homology sphere, we include a finite-dimensional prototype here that may be of independent interest. In many situations, especially in infinite-dimensional settings, $M$ comes equipped readily with a Morse-Smale function $f$ that is not $\R$-valued. In particular, a setting that is relevant for us to generalize to infinite dimension is as follows: we have a Morse-Smale function $f: M \to S^1$. Associated with such auxiliary data, we build a Morse-Novikov complex.

    The idea is simple, we do Morse homology on a cover of $M$ instead. Now, there are many choices of cover. However, for the sake of simplicity of the exposition, we consider the \textit{infinite cyclic cover} $\widetilde{M}$ of $M$, $\widetilde{M}:=f^*(\R)$, where $\R \to S^1= \R/\mathbf{Z}$ is the universal cover of the unit circle. Recall that this means the group of deck transformations of $\widetilde{M}$ is $\Z$, and the action $\Z \curvearrowright \widetilde{M}$ is given by $1 \mapsto T \in Aut(\widetilde{M})$, where $T(x,\lambda) = (x, \lambda - 1)$ with $(x,\lambda) \in \widetilde{M} \subset M \times \R$.

    We define $\widetilde{f} : \widetilde{M} \to \R$ by $\widetilde{f}(x,\lambda) = \lambda$. Note that $\widetilde{f}$ is an $\Z$-equivariant function and the following diagram is commutative.
        \[
            \begin{tikzcd}
                \widetilde{M} \arrow{r}{\widetilde{f}} \arrow[swap]{d}{\widetilde{\pi}} & \R \arrow{d}{\pi} \\%
                 M \arrow{r}{f}& S^1
            \end{tikzcd}
        \]
    Let $d\theta \in H^1(S^1;\R)$ be the generator. Denote $grad\;f\in\Gamma(M,TM)$ by vector field on $M$ such that $\la grad\;f, v\ra = f^*d\theta(v)$ for all $v \in \Gamma(M,TM)$. Typically, $grad\;f$ is an example of \textit{gradient-like vector field} of $f$.

    \begin{Lemma}\label{Lem1.9}
        The usual gradient of $\widetilde{f}$ is given by $\widetilde{\pi}^*(f^*d\theta)$. 
    \end{Lemma}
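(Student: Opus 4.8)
The plan is to reduce the statement to the identity of $1$-forms $d\widetilde{f} = \widetilde{\pi}^{*}(f^{*}d\theta)$ and then pass to gradients with respect to the pullback metric $\widetilde{g} := \widetilde{\pi}^{*}g$ on $\widetilde{M}$, where $g$ is the fixed Riemannian metric on $M$ entering the Morse--Smale data. Recall that $\widetilde{\pi}\colon\widetilde{M}\to M$ is a covering map, hence a local diffeomorphism, so $\widetilde{g}$ is well defined and $\widetilde{\pi}$ is a local isometry; recall also the paper's convention, which identifies $grad\;f$ with the $g$-dual of the $1$-form $f^{*}d\theta$, so that ``the usual gradient of $\widetilde{f}$'' means the $\widetilde{g}$-dual of $d\widetilde{f}$. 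With these conventions, Lemma~\ref{Lem1.9} is the assertion that this dual equals $\widetilde{\pi}^{*}(grad\;f)$, equivalently that $d\widetilde{f} = \widetilde{\pi}^{*}(f^{*}d\theta)$ as $1$-forms.

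First I would pin down the normalization: writing $\lambda$ for the standard coordinate on $\R$, the universal cover $\pi\colon\R\to S^{1}=\R/\Z$ satisfies $\pi^{*}d\theta = d\lambda$, because $d\theta\in H^{1}(S^{1};\R)$ is taken to be the generator with $\int_{S^{1}}d\theta = 1$ and $\pi$ is a local diffeomorphism transporting $\lambda$ to the angular coordinate. Next, since $\widetilde{f}(x,\lambda)=\lambda$ is precisely the restriction to $\widetilde{M}\subset M\times\R$ of the coordinate function $\lambda$, we have $\widetilde{f}^{*}(d\lambda) = d\widetilde{f}$. Combining these with the commutativity $\pi\circ\widetilde{f} = f\circ\widetilde{\pi}$ of the square displayed in the statement, and using that pullback commutes with $d$ and with composition, gives
$$d\widetilde{f} \;=\; \widetilde{f}^{*}(d\lambda) \;=\; \widetilde{f}^{*}\pi^{*}d\theta \;=\; (\pi\circ\widetilde{f})^{*}d\theta \;=\; (f\circ\widetilde{\pi})^{*}d\theta \;=\; \widetilde{\pi}^{*}(f^{*}d\theta),$$
which is the desired identity of $1$-forms.

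Finally I would translate this into the gradient statement. For any $v\in\Gamma(\widetilde{M},T\widetilde{M})$, using the definition of $grad\;\widetilde{f}$ with respect to $\widetilde{g}$, the identity just proved, naturality of pullback (so that $(\widetilde{\pi}^{*}\alpha)(v)=\alpha(d\widetilde{\pi}(v))$), the defining property of $grad\;f$, and the fact that $\widetilde{\pi}$ is a local isometry,
$$\la grad\;\widetilde{f},\,v\ra_{\widetilde{g}} \;=\; d\widetilde{f}(v) \;=\; \widetilde{\pi}^{*}(f^{*}d\theta)(v) \;=\; (f^{*}d\theta)\big(d\widetilde{\pi}(v)\big) \;=\; \la grad\;f,\,d\widetilde{\pi}(v)\ra_{g} \;=\; \la \widetilde{\pi}^{*}(grad\;f),\,v\ra_{\widetilde{g}},$$
so $grad\;\widetilde{f} = \widetilde{\pi}^{*}(grad\;f)$, i.e.\ the gradient of $\widetilde{f}$ is (the vector field $\widetilde{g}$-dual to) $\widetilde{\pi}^{*}(f^{*}d\theta)$, as claimed. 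There is no substantive obstacle here: the argument is a formal diagram chase. The only points demanding care are fixing the normalization so that $\pi^{*}d\theta = d\lambda$ holds on the nose rather than up to a scalar, and being explicit that the gradient on $\widetilde{M}$ is taken for the pullback metric $\widetilde{\pi}^{*}g$ — the choice implicit in building the Morse--Novikov complex on the infinite cyclic cover.
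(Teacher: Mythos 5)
Your proof is correct but takes a genuinely different route from the paper's. The paper establishes the $1$-form identity $d\widetilde{f} = \widetilde{\pi}^{*}(f^{*}d\theta)$ by an explicit computation on tangent vectors represented by curves: it picks $\gamma(s) = (x(s),\lambda(s))$ in $\widetilde{M}$, shows $d\widetilde{f}(\dot\gamma(0)) = \dot\lambda(0)$ directly, and then uses the concrete relation $f(x(s)) = e^{2\pi i\lambda(s)}$ together with the chain rule to show that $\widetilde{\pi}^{*}(f^{*}d\theta)(\dot\gamma(0))$ also equals $\dot\lambda(0)$. You instead prove the same identity by a coordinate-free diagram chase, $d\widetilde{f} = \widetilde{f}^{*}(d\lambda) = \widetilde{f}^{*}\pi^{*}d\theta = (\pi\circ\widetilde{f})^{*}d\theta = (f\circ\widetilde{\pi})^{*}d\theta = \widetilde{\pi}^{*}(f^{*}d\theta)$, using only that $\widetilde{f}$ is the restriction of the $\R$-coordinate, the normalization $\pi^{*}d\theta = d\lambda$, and the commutativity of the square. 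Your version has the merit of making the normalization explicit (where the paper's evaluation of $d\theta(e^{2\pi i\lambda(0)}\cdot\dot\lambda(0))$ tacitly assumes $\theta$ is the angular coordinate with $\pi^{*}d\theta = d\lambda$ on the nose) and of dispensing with auxiliary curves; the paper's computation is more hands-on and lets the reader track the exponential directly. Both correctly flag the one subtle point, namely that the gradient on $\widetilde{M}$ is taken with respect to the pullback metric $\widetilde{\pi}^{*}g$. No gaps.
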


    \begin{proof}
        Firstly note that we will work with the pulled-back metric on $\widetilde{M}$. It suffices for us to show that $\widetilde{\pi}^*(f^*d\theta) = d\widetilde{f}$. Suppose $\gamma : (-\epsilon, \epsilon) \to \widetilde{M}$ such that $\gamma(s) = (x(s), \lambda(s))$ and $\dot{\gamma}(0)$ is a vector in $T_{\gamma(0)}\widetilde{M}$. It is easy to see that
        $$d\widetilde{f}(\dot{\gamma}(0)) = \dfrac{d}{ds} \widetilde{f}(\gamma(s))\bigg|_{s=0} = \dfrac{d}{ds} \lambda(s)\bigg|_{s=0} = \dot{\lambda}(0).$$
        On the other hand, we also have
        \begin{align*}
            \widetilde{\pi}^*(f^*d\theta)(\dot{\gamma}(0)) & = d\theta ((f\circ \widetilde{\pi})_* (\dot{\gamma}(0))\\
            &= d\theta \left(\dfrac{d}{ds} (f\circ \widetilde{\pi})(\gamma(s))\bigg|_{s=0}\right) = d\theta \left( \dfrac{d}{ds}f(x(s))\bigg|_{s=0}\right).
        \end{align*}
        Since $f(x(s)) = e^{2\pi i \lambda(s)}$, differentiate at $s=0$, we obtain $\dfrac{d}{ds}f(x(s))\bigg|_{s=0} = e^{2\pi i \lambda(0)}\cdot \dot{\lambda}(0)$. Thus, we can re-write the right-hand side of the above expression as
        $$\widetilde{\pi}^*(f^*d\theta)(\dot{\gamma}(0)) = d\theta (e^{2\pi i \lambda(0)}\cdot \dot{\lambda}(0)) = \dfrac{d}{ds}\theta(e^{2\pi i \lambda(s)})\bigg|_{s=0} = \dot{\lambda}(0).$$
        As a result, we see that $\widetilde{\pi}^*(f^*d\theta) = d\widetilde{f}$ as claimed.
    \end{proof}
	
	The above Lemma \ref{Lem1.9} implies that $\widetilde{a}$ is a critical point of $\widetilde{f}$ if and only if $\widetilde{\pi}(\widetilde{p})$ is a critical point of $f$. However, note that there could be multiple critical points of $\widetilde{f}$ that descend to the same critical point of $f$. Thus, even though there is only a finite number of critical points of $f$, there might be an infinite number of critical points of $\widetilde{f}$. Because of this, naively constructing an $\mathbf{F}$-module chain complex using $Crit(\widetilde{f})$ would not work. To compensate for this infinite-ness, we introduce an algebraic book-keeping device via the means of a \textit{Novikov field}. The Novikov field $\Lambda_{\mathbf{F}}$ for our setup is defined as follows: Treating $T \in Aut(\widetilde{M})$ defined above as an indeterminate,
        \begin{equation*}
            \Lambda_{\mathbf{F}}:=\left\{\displaystyle \sum_{j=-\infty}^\infty a_j T^j : a_j \in \mathbf{F},\; \#\{j \leq 0: a_j \neq 0\} < \infty\right\}.
        \end{equation*}
    Note that the action of $\Z$ on $\widetilde{M}$ preserves the set of critical points of $\widetilde{f}$. As a result, there is a natural action of $\Lambda_{\mathbf{F}}$ on $Crit(\widetilde{f})$. We denote the \textit{Morse-Novikov complex} by
        \begin{equation*}
            CN_\ast (M,f,d) := \Lambda_{\mathbf{F}}\cdot Crit_{\ast}(\widetilde{f}), \quad CN(M,f,d) = \bigoplus CN_{\ast}(M,f,d),
        \end{equation*}
    where $Crit_\ast(\widetilde{f})$ is the set of all critial points of $\widetilde{f}$ of index $\ast$, and the differential $d$ is defined by
        $$d\widetilde{p}=\displaystyle \sum_{j=-\infty}^\infty \sum_{\widetilde{q}\in Crit(\widetilde{f})} \#_{\mathbf{F}} \mathcal{M}(\widetilde{p},T^j\widetilde{q})T^j\widetilde{q}.$$
    The coefficients of $d$ is the $\mathbf{F}$-count of the number of unparametrized (broken) negative gradient flow lines of $\widetilde{f}$ connecting between critical points just as in the usual Morse theory. $\Lambda_{\mathbf{F}}$ is an $\mathbf{F}[T,T^{-1}]$-module, and $CN(M,f,d)$ is both an $\mathbf{F}[T,T^{-1}]$-module and $\Lambda_\mathbf{F}$-module. As a module over $\Lambda_\mathbf{F}$, $CN(M,f,d)$ is \textit{freely finitely generated}, the finite bases are given by a choice of lifts of the critical points of $f$. Hence, the differential $d$ of the Morse-Novikov complex defined above is a $\Lambda_\mathbf{F}$-linear map. Note the resulting homology of $CN(M,f,d)$, \textit{Morse-Novikov homology}, depends only on the homotopy classes of $f$ in $[M, S^1]$. Since there is a bijection between $[M, S^1]$ and $H^1(M;\Z)$, $H(CN(M,f,d))$ depends only on $[f]\in H^1(M;\Z)$.

    Just as in the case of $\R$-valued Morse theory, we can define a notion of spectral invariants for the Morse-Novikov complex just as in Definitions \ref{Def1.1}  and \ref{Def1.2}. The $\Lambda_\mathbf{F}$-module chain complex $CN(M,f,d)$ has a semi-norm induced from $\widetilde{f}$. $CN(M,f,d)$ has an equivalent description 
        $$CN(M,f,d)=\left\{\sum_{j=-\infty}^\infty a_j \widetilde{p_j}: a_j \in \mathbf{F},\; \#\{j\leq 0: a_j \neq 0\}<\infty\right\}.$$
    Hence, the semi-norm $\ell: CN(M,f,d) \to \R\cup\{-\infty\}$ is given by
        $$\ell\left(\sum_{j=-\infty}^\infty a_j\widetilde{p_j}\right):=\max\{\widetilde{f}(\widetilde{p_j}): a_j \neq 0\}.$$
    For each $t\in \R$, an $\R$-filtration on the Morse-Novikov complex is defined by
        $$CN^t(M,f,d):=\{c \in CN(M,f,d): \ell(c) \leq t\}.$$
	As a result, $d$ respects this filtration. For each $t\leq s$, there is a chain map $i_{ts}: CN^t \to CN^s$. There is also a chain map $i_t : CN^t \to CN$. Thus, following Definitions \ref{Def1.1} and \ref{Def1.2}, we define

    \begin{Def}\label{Def1.10}
        Let $f:M\to S^1$ be a Morse-Smale function and $\widetilde{M}:=f^*(\R)$ be the infinite cyclic cover of $M$ as above.
        \begin{enumerate}
            \item For each $a \in H(CN(M,f,d))$ viewed as an element of $H(C^{cell}(\widetilde{M})\otimes_{\mathbf{F}[\Z]} \Lambda_{\mathbf{F}})$ via the canonical isomorphism, we define
                $$\rho^{[f]}_M(a) = \inf\{t\in\R: a \in \text{Im}\;i^*_t\} = \inf\{\ell(\alpha): \alpha \in CN(M,f,d),\; [\alpha]= a\}.$$
            \item If there is no "distinguished" $a$ in the Morse-Novikov homology, we define
                $$\rho^{[f]}_M:= \inf\{t\in\R: \text{Im}\;i^*_t \neq \{0\}\} = \inf\{\ell(\alpha): \alpha \in CN(M,f,d), \, [\alpha] \neq 0\}.$$
        \end{enumerate}
    \end{Def}

    \begin{Rem}\label{Rem1.11}
        The Morse-Novikov complex $CN(M,f,d)$ with the semi-norm $\ell$ defined above forms a non-Archemidean vector space over $\Lambda_\mathbf{F}$.
    \end{Rem}

    \begin{Rem}
        Note that, like the Morse case, $\rho^[f]_M(a)$ and $\rho^[f]_M$ are not expected to be an invariant of $M$.
    \end{Rem}

    For $j=1,2$, let $M_j$ be a closed smooth manifold equipped with a Morse-Smale function $f_j : M_j \to S^1$. Consider the infinite cyclic cover $\widetilde{M_j}:=f_j^*(\R)$ with an $\R$-valued Morse-Smale function $\widetilde{f_j}$ as before. Suppose that there is another manifold $\widetilde{W}$ equipped with a map $\widetilde{r} = (\widetilde{r_1}, \widetilde{r_2}): \widetilde{W} \to \widetilde{M_1}\times \widetilde{M_2}$. We assume the following assumptions on $\widetilde{f_j}, \widetilde{r},$ and $\widetilde{W}$ analogous to Assumptions A-B-C in the previous subsection:
    
        \begin{quote}
            \textbf{Assumption D:} $\widetilde{r}$ is always transverse to $U_{\widetilde{p}} \times S_{\widetilde{q}}$.
        \end{quote}

        \begin{quote}
            \textbf{Assumption E:} $\sup \widetilde{f_2} \leq \inf \widetilde{f_1}$.
        \end{quote}

        \begin{quote}
            \textbf{Assumption F:} From Assumption D, using the pull-up-push-down construction, we have an $\Lambda_\mathbf{F}$-chain map $NC(\widetilde{W}): CN(M_1,f_1,d_1) \to CN(M_2,f_2,d_2)$. We assume that the resulting mapping cone complex $\text{Cone}(NC(\widetilde{W}))$ is acyclic.
        \end{quote}

    With the same argument as in the proof of Theorem \ref{Th1.5}, we arrive at the following relation between the spectral invariants of the Morse-Novikov complex of $M_j$ corresponded to each other as described above

    \begin{Th}\label{Th1.13}
        Let $(\widetilde{W}, M_1, M_2)$ be a triple of manifolds satisfying Assumptions D-E-F as above. If there is a distinguished element $a\in H(CN(M_1,f_1,d_1))$ such that $NC(\widetilde{W})^*(a)$ is also distinguished in $H(CN(M_2,f_2,d_2))$ in similar sense, then 
            $$\rho^{[f_2]}_{M_2}(NC(\widetilde{W})^*(a)) \leq \rho^{[f_1]}_{M_1}(a).$$ 
        If there is no such notion of "distinguished", then we still have $\rho^{[f_2]}_{M_2} \leq \rho^{[f_1]}_{M_1}$. 
    \end{Th}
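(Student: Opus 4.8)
**The plan is to mimic the proof of Theorem \ref{Th1.5}, transplanting every ingredient along the covering-space dictionary.** Recall from the $\mathbf{R}$-valued case that the argument had three moving parts: (i) the continuation/cobordism map respects the $\mathbf{R}$-filtration because $\sup f_2 \le \inf f_1$ (Lemma \ref{Lem1.4}); (ii) for each $t$ one gets a commutative square relating $H^t \to H$ on the two sides via the cobordism map $MC(W)^*$; and (iii) acyclicity of the mapping cone forces $MC(W)^*$ to be an isomorphism, in particular injective, so a nonzero class persisting below level $t$ on $M_1$ maps to one on $M_2$. Each of these has a Morse-Novikov analogue, and the hypotheses D-E-F are precisely what make them go through verbatim.

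First I would establish the filtered analogue of Lemma \ref{Lem1.4}: the $\Lambda_\mathbf{F}$-chain map $NC(\widetilde W)$ produced by the pull-up-push-down construction satisfies $NC(\widetilde W)\colon CN^t(M_1,f_1,d_1) \to CN^t(M_2,f_2,d_2)$ for every $t \in \mathbf{R}$. The proof is the same semi-norm computation: writing a chain as $\sum a_j \widetilde p_j$ in the "expanded" description of $CN$, the coefficient of a lift $\widetilde q$ in $NC(\widetilde W)x$ is a sum of $\mathbf{F}$-counts $\#_\mathbf{F}\mathcal{M}(\widetilde p_j, \widetilde q)$, and Assumption E ($\sup \widetilde f_2 \le \inf \widetilde f_1$) gives $\widetilde f_2(\widetilde q) \le \widetilde f_1(\widetilde p_j) \le \ell_1(x)$ whenever that count is nonzero, so $\ell_2(NC(\widetilde W)x) \le \ell_1(x)$. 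One technical point to check here is that the zero-dimensional moduli spaces $\mathcal{M}(\widetilde p, \widetilde q)$ on the cover are genuinely finite for the $\mathbf{F}$-count to make sense; this is where one uses that $\widetilde f_j$ is a lift of a function on a closed manifold, so properness of the gradient flow on the cover holds within any sublevel set, combined with Assumption D for transversality.

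Next, for each $t$ I would write down the commuting square
\begin{equation*}
\begin{tikzcd}
H^t(CN(M_1,f_1,d_1)) \arrow{r}{NC(\widetilde W)^*} \arrow[swap]{d}{i^*_{1t}} & H^t(CN(M_2,f_2,d_2)) \arrow{d}{i^*_{2t}} \\
H(CN(M_1,f_1,d_1)) \arrow{r}{NC(\widetilde W)^*} & H(CN(M_2,f_2,d_2))
\end{tikzcd}
\end{equation*}
which commutes because $NC(\widetilde W)$ is a filtered chain map and the inclusions $i_t$ are natural. Then Assumption F enters exactly as Assumption C did: the exact triangle $(CN(M_1), CN(M_2), \mathrm{Cone}(NC(\widetilde W)))$ yields a long exact sequence in homology, and acyclicity of the cone forces $NC(\widetilde W)^* \colon H(CN(M_1,f_1,d_1)) \to H(CN(M_2,f_2,d_2))$ to be an isomorphism of $\Lambda_\mathbf{F}$-modules — in particular injective with $\mathrm{Ker}\,NC(\widetilde W)^* = \{0\}$. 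Now take $t > \rho^{[f_1]}_{M_1}$ (resp.\ $t > \rho^{[f_1]}_{M_1}(a)$, after noting $NC(\widetilde W)^*(a)$ has a nonzero preimage so it is itself nonzero). Then $\mathrm{Im}\, i^*_{1t}$ contains a nonzero class (resp.\ contains $a$), and since $NC(\widetilde W)^*$ is injective, $NC(\widetilde W)^* i^*_{1t}$ is nonzero on it; commutativity rewrites this as $i^*_{2t}\,NC(\widetilde W)^*$, so $\mathrm{Im}\,i^*_{2t} \ne \{0\}$ (resp.\ contains $NC(\widetilde W)^*(a)$), giving $\rho^{[f_2]}_{M_2} \le t$ (resp.\ $\rho^{[f_2]}_{M_2}(NC(\widetilde W)^*(a)) \le t$). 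Taking the infimum over such $t$ yields the claimed inequality.

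**The main obstacle I anticipate is purely bookkeeping rather than conceptual:** making sure the pull-up-push-down construction on the covers is well-defined over the Novikov ring — i.e., that the cobordism map $NC(\widetilde W)$ is genuinely $\Lambda_\mathbf{F}$-linear and that the relevant moduli spaces, though possibly infinite in number when grouped by deck-translate, contribute only finitely many terms below any given filtration level, so that the Novikov finiteness condition ($\#\{j \le 0 : a_j \ne 0\} < \infty$) is preserved. This is the direct analogue of the finiteness needed to define the Morse-Novikov differential $d$ itself, and it follows from Assumption E together with the properness of $\widetilde f_j$ on sublevel sets; once this is in hand, everything else is a transcription of the proof of Theorem \ref{Th1.5}. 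The statement is correspondingly routine, which is why the excerpt says "with the same argument as in the proof of Theorem \ref{Th1.5}."
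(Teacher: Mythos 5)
Your proposal is correct and takes essentially the same approach as the paper, which simply states that Theorem \ref{Th1.13} follows "with the same argument as in the proof of Theorem \ref{Th1.5}" and does not spell out a separate proof. You have filled in exactly the intended transcription — the filtered analogue of Lemma \ref{Lem1.4} under Assumption E, the commuting square, and injectivity of $NC(\widetilde W)^*$ from Assumption F — together with a sensible note on Novikov finiteness.
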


    \section{Equivariant Morse-Bott-Novikov complex}\label{Sec3}
    \subsection{Construction of the $S^1$-equivariant complex}\label{Sub3.1} Let $M$ be a compact $n$-dimensional smooth manifold with $G$-action. $G$ is assumed to have a CW complex structure. Suppose that $M$ has an $G$-invariant metric and $f: M \to S^1$ is an $G$-invariant Morse-Smale function. Furthermore, we also assume the following
    \begin{itemize}
        \item Given a point $x \in M$, its $Stab^G_x$ is either the $\{1\}$ or the entire $G$. 
        \item Among all the critical points of $f$, there is only one that is fixed under the $G$ action. Denoted by $\theta$ such a critical point.
    \end{itemize}
	Consider $\R \to S^1$ to be the infinite cyclic (universal) cover of $S^1$. Let $\widetilde{M}:=f^*(\R)=\{(x,\lambda)\in M \times \R: f(x) = e^{2i\pi \lambda}\}$ be the pull-back of $\R$ over $M$. Let $\widetilde{f} : \widetilde{M} \to \R$ be given by $\widetilde{f}(x,\lambda) = x$. Note that $\widetilde{f}$ is also $G$-invariant, where the action of $G$ on $\widetilde{M}$ is induced by $G$ acting on $M$ as assumed and $G$ acting on $\R$ trivially. Finally, we suppose that all critical points of $\widetilde{f}$ are isolated. Note that $(x,\lambda)$ is a critical point of $\widetilde{f}$ if and only if $x$ is a critical point of $f$. The stabilizer group of $(x,\lambda)$ is nothing but the stabilizer group $x$ in $G$.
 
 \begin{Lemma}\label{Lem3.1}
     $(x,\lambda)$ is a critical point of $\widetilde{f}$ if and only if $g\cdot(x,\lambda) = (g\cdot x, \lambda)$ is also a critical point for all $g \in G$.
 \end{Lemma}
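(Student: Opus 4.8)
The plan is to deduce everything from two facts already built into the setup: the $G$-action on $\widetilde{M}$ is by diffeomorphisms, and $\widetilde{f}$ is $G$-invariant. First I would verify that $g\cdot(x,\lambda) := (g\cdot x,\lambda)$ genuinely defines a smooth $G$-action on $\widetilde{M} = f^*(\R) = \{(x,\lambda) \in M\times\R : f(x) = e^{2\pi i\lambda}\}$. Because $f$ is $G$-invariant, $f(g\cdot x) = f(x) = e^{2\pi i\lambda}$, so $(g\cdot x,\lambda)$ again lies in $\widetilde{M}$; and the assignment is just the restriction to the submanifold $\widetilde{M}\subset M\times\R$ of the diffeomorphism $g\times\text{id}_{\R}$ of $M\times\R$, so each $g$ acts on $\widetilde{M}$ as a diffeomorphism. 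In particular the $\R$-coordinate $\lambda$ is fixed, which is exactly why the orbit of $(x,\lambda)$ has the stated form; this uses that $G$ acts trivially on the $\R$-factor.

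The key step is then to differentiate the identity $\widetilde{f}\circ g = \widetilde{f}$, which holds for every $g\in G$ since $\widetilde{f}(g\cdot x,\lambda) = \lambda = \widetilde{f}(x,\lambda)$. Pulling back, $g^*(d\widetilde{f}) = d(\widetilde{f}\circ g) = d\widetilde{f}$, so $d\widetilde{f}$ is a $G$-invariant $1$-form on $\widetilde{M}$; equivalently, using the $G$-invariant metric, $\nabla\widetilde{f}$ is a $G$-equivariant vector field. Concretely, for $p\in\widetilde{M}$ and $v\in T_p\widetilde{M}$ one has $d\widetilde{f}_{g\cdot p}(dg_p v) = d\widetilde{f}_p(v)$, and $dg_p\colon T_p\widetilde{M}\to T_{g\cdot p}\widetilde{M}$ is a linear isomorphism; hence $d\widetilde{f}_p = 0$ if and only if $d\widetilde{f}_{g\cdot p} = 0$. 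Taking $p = (x,\lambda)$ and recalling that $p$ is a critical point of $\widetilde{f}$ precisely when $d\widetilde{f}_p = 0$, this yields the claimed equivalence.

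I do not expect a real obstacle here: the lemma is a routine equivariance statement whose only role is to make the subsequent Morse--Bott--Novikov bookkeeping (critical points organized into $G$-orbits, with the single $G$-fixed critical point $\theta$ playing a distinguished role) consistent. The two small points to keep in mind are that the displayed formula $\widetilde{f}(x,\lambda) = x$ in the setup should read $\widetilde{f}(x,\lambda) = \lambda$, matching Section \ref{Sec2} and Lemma \ref{Lem1.9}, and that the well-definedness of the action should be made explicit before it is invoked, since the statement of the lemma already presupposes that $g\cdot(x,\lambda) = (g\cdot x,\lambda)$ makes sense as an element of $\widetilde{M}$.
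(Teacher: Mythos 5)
Your proof is correct and takes essentially the same approach as the paper's: both arguments differentiate the $G$-invariance identity $\widetilde{f}(g\cdot x,\lambda)=\widetilde{f}(x,\lambda)$ and use that $G$ acts by diffeomorphisms; the paper phrases this by differentiating along an arbitrary curve through $(g\cdot x,\lambda)$, while you phrase it via the pullback $g^*(d\widetilde{f})=d\widetilde{f}$ together with the invertibility of $dg_p$, which has the mild advantage of giving both directions of the equivalence at once. Your observation that $\widetilde{f}(x,\lambda)=x$ in the setup is a typo for $\widetilde{f}(x,\lambda)=\lambda$ is also correct.
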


 \begin{proof}
     $(\Leftarrow)$ is obvious. We will show $(\Rightarrow)$. Since $(x,\lambda)$ is a critical point of $\widetilde{f}$, $d_{(x,\lambda)} \widetilde{f} = 0$. Let $(x_t, \lambda_t)$ be a path in $\widetilde{M}$ for small real number $t$ such that $(x_0,\lambda_0) = (x,\lambda)$. Then for any $g \in G$, $(g \cdot x_t, \lambda_t)$ is small path in $\widetilde{M}$ where its starting point is $(g\cdot x, \lambda)$. By direct calculations, we have
     \begin{equation*}
         d_{(g\cdot x, \lambda)}\widetilde{f} = \dfrac{d}{dt}\widetilde{f}(g\cdot x_t, \lambda_t)\bigg|_{t=0} = \dfrac{d}{dt} \widetilde{f}(x_t,\lambda_t)\bigg|_{t=0} = 0,
     \end{equation*}
     which is what we want.
 \end{proof}

We say $\alpha$ is a critical orbit of $\widetilde{f}$ if and only if it is the $G$-orbit of a critical point $(x,\lambda)$ of $\widetilde{f}$. Note that by Lemma \ref{Lem3.1}, all points inside $\alpha$ are critical points of $\widetilde{f}$. From here onward, we shall assume that all critical orbits of $\widetilde{f}$ are isolated. If $G$ acts on $\widetilde{M}$ freely, then one should think of $\alpha$ as one of the non-degenerate critical points of the induced function $\widetilde{M}/G \to \R$ by $\widetilde{f}$. However, in our setup, there are fixed points of the $G$-action. We will deal with these two types of critical points separately. Since $\widetilde{M}$ is an infinite cyclic cover of $M$, there is a $\Z$-action on $\widetilde{M}$, where the generator $1$ acts on $M$ by the map $T\in Aut(\widetilde{M})$ given by $T(x,\lambda) = (x, \lambda-1)$.Suppose $(x,\lambda)$ is a critical point of $\widetilde{f}$. For any $k\in \Z$, we claim that $T^k(x,\lambda) = (x, \lambda -k )$ is another critical point. Indeed, let $(x_t, \lambda_t - k)$ be a path in $\widetilde{M}$ for small $t$ such that its starting point is $(x,\lambda- k)$. Then
\begin{equation*}
    d_{(x,\lambda-k)}\widetilde{f} = \dfrac{d}{dt}\widetilde{f}(x_t, \lambda_t - k)\bigg|_{t=0}= \dfrac{d}{dt}(\lambda_t - k)\bigg|_{t=0} = \dfrac{d}{dt} \lambda_t \bigg|_{t=0} = d_{(x,\lambda)}\widetilde{f} = 0.
\end{equation*}
From this observation, we see that any two lifts of a critical point of $f$ to universal cover $\widetilde{M}$ of $M$ are again critical points of $\widetilde{f}$, and they are only different from each other by a transformation $T^k$ for some $k\in \Z$. In particular, for the critical point $\theta$, denote one of its lifts by some $\Theta = (\theta, \lambda)$. Over a coefficient field $\mathbf{F}$, the abelian group generated by all of lifts of $\theta$ over the Novikov field $\Lambda_{\mathbf{F}}$ can be identified with $\Lambda_{\mathbf{F}}\{\Theta\}$. Furthermore, since $\Theta$ is a fixed point of $G$, the $G$-critical orbit of $\Theta$ is just a singleton set containing $\Theta$ itself. As a result, the abelian group generated by the critical orbits of all lifts of $\theta$ over the Novikov field $\Lambda_\mathbf{F}$ is also $\Lambda_{\mathbf{F}}\{\Theta\}$. Recall that $\Lambda_{\mathbf{F}}$ is defined as: Treating $T \in Aut(\widetilde{M})$ defined above as an indeterminate,
        \begin{equation*}
            \Lambda_{\mathbf{F}}:=\left\{\displaystyle \sum_{j=-\infty}^\infty a_j T^j : a_j \in \mathbf{F},\; \#\{j \leq 0: a_j \neq 0\} < \infty\right\}.
        \end{equation*}
	
Let $(x,\lambda)$ and $(x',\lambda')$ be two critical points of $\widetilde{f}$. Denote $\Breve{\mathcal{M}}((x,\lambda),(x',\lambda'))$ by the unparametrized moduli space of negative gradient flow lines from $(x,\lambda)$ to $(x',\lambda')$. For any $\gamma \in \Breve{\mathcal{M}}((x,\lambda),(x',\lambda'))$, this means that 
$$\dfrac{d\gamma}{dt}= -\nabla \widetilde{f}(\gamma(t)), \quad \displaystyle \lim_{t\to -\infty} \gamma(t) = (x,\lambda),\quad \lim_{t\to +\infty} \gamma(t) = (x',\lambda').$$
Denote $L = \dfrac{d}{dt} + \nabla\widetilde{f}$ by the first order differential operator from $Maps(\R, \widetilde{M}) \to T\widetilde{M}$. The action $G$ on the domain and codomain of $L$ is induced naturally by the action of $G$ on $M$. We claim the following

\begin{Lemma}\label{Lem3.2}
    The operator $L$ is $G$-equivariant.
\end{Lemma}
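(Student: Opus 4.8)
The plan is to unwind the two $G$-actions in play and verify equivariance of the two summands of $L$ separately. Recall that $G$ acts on $Maps(\R,\widetilde{M})$ by post-composition, $(g\cdot\gamma)(t)=g\cdot\gamma(t)$, each $g\in G$ acting on $\widetilde{M}$ as a diffeomorphism, and on $T\widetilde{M}$ by the differential, $v\mapsto dg_x(v)\in T_{g\cdot x}\widetilde{M}$ for $v\in T_x\widetilde{M}$. With these conventions $L(\gamma)$ is the map $t\mapsto \dot\gamma(t)+\nabla\widetilde{f}(\gamma(t))$ covering $\gamma$, while $g\cdot L(\gamma)$ is $t\mapsto dg(\dot\gamma(t))+dg(\nabla\widetilde{f}(\gamma(t)))$, so the claim $L(g\cdot\gamma)=dg(L(\gamma))$ reduces to the two pointwise identities
\[
\frac{d}{dt}(g\circ\gamma)(t)=dg_{\gamma(t)}(\dot\gamma(t))\qquad\text{and}\qquad \nabla\widetilde{f}(g\cdot x)=dg_x(\nabla\widetilde{f}(x)).
\]

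The first identity is merely the chain rule for the composite $g\circ\gamma\colon\R\to\widetilde{M}$, so there is nothing to do there. The real content is the second identity, the $G$-equivariance of the gradient vector field, and this is where the hypotheses enter. First I would record that $\widetilde{M}$ is equipped with the metric pulled back along the covering projection $\widetilde\pi\colon\widetilde{M}\to M$ from the fixed $G$-invariant metric on $M$ (as was already used implicitly in the proof of Lemma \ref{Lem1.9}); since $\widetilde\pi$ is $G$-equivariant --- the $G$-action on the $\R$-factor being trivial --- this pulled-back metric is again $G$-invariant, so each $dg_x$ is a linear isometry $T_x\widetilde{M}\to T_{g\cdot x}\widetilde{M}$. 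On the other hand $G$-invariance of $\widetilde{f}$ gives $\widetilde{f}\circ g=\widetilde{f}$ and hence $d\widetilde{f}_{g\cdot x}\circ dg_x=d\widetilde{f}_x$. Then for any $w\in T_{g\cdot x}\widetilde{M}$, writing $w=dg_x(v)$, the defining property of the gradient gives
\begin{align*}
\langle dg_x(\nabla\widetilde{f}(x)),\,w\rangle_{g\cdot x} &= \langle \nabla\widetilde{f}(x),\,v\rangle_x = d\widetilde{f}_x(v)\\
&= d\widetilde{f}_{g\cdot x}(dg_x v) = d\widetilde{f}_{g\cdot x}(w) = \langle \nabla\widetilde{f}(g\cdot x),\,w\rangle_{g\cdot x},
\end{align*}
and nondegeneracy of the metric forces $dg_x(\nabla\widetilde{f}(x))=\nabla\widetilde{f}(g\cdot x)$. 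Combining the two identities pointwise in $t$ then yields $L(g\cdot\gamma)=dg(L(\gamma))$, as desired.

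I do not anticipate a genuine obstacle: the argument is just the chain rule together with the definition of the gradient, and the only point deserving care is the bookkeeping of which metric $\widetilde{M}$ carries --- namely the elementary observation, needed so that each $dg_x$ is an isometry, that the pull-back of a $G$-invariant metric along a $G$-equivariant map is again $G$-invariant.
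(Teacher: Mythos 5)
Your proof is correct and follows essentially the same route as the paper's: reduce to the chain rule for the $d/dt$ term and the pointwise identity $dg_x(\nabla\widetilde{f}(x))=\nabla\widetilde{f}(g\cdot x)$, which you then establish by combining the isometry property of $dg_x$ with the $G$-invariance of $\widetilde{f}$ and the nondegeneracy of the metric. The only addition is your explicit remark that the $G$-invariant metric on $\widetilde{M}$ is the pullback of the one on $M$, which the paper leaves implicit.
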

	
	\begin{proof}
	    Let $L_g: \widetilde{M} \to \widetilde{M}$ be an isometry of $\widetilde{M}$ that is given by the action of $g$ on $\widetilde{M}$ for each $g\in G$. $G$ acts on $T\widetilde{M}$ by the push-forward map $dL_g$. Let $\gamma_t = (x_t, \lambda_t)$ be a path in $\widetilde{M}$. By definition, $\dfrac{d}{dt} L_g (\gamma_t) = dL_g \left(\dfrac{d\gamma}{dt}\right)$. To show that equivariant-ness of $L$, we just have to show that $dL_g (\nabla \widetilde{f}(\gamma_t)) = \nabla \widetilde{f}(L_g(\gamma_t))$. It suffices for us to prove that for each point $p \in \widetilde{M}$, we have
        \begin{equation}\label{eq:3.1}
            dL_g(\nabla \widetilde{f}(p)) = \nabla \widetilde{f}(L_g(p)).
        \end{equation}
        Since $L_g$ is an isometry, we have
        \begin{align*}
            \la dL_g(\nabla \widetilde{f}(p)), dL_g(v_p)\ra = \la \nabla \widetilde{f}(p), v_p \ra = d_p \widetilde{f}(v_p).
        \end{align*}
        On the other hand, it is not difficult to see that $d_{L_g(p)}\widetilde{f} (dL_g(v_p)) = d_p \widetilde{f}(v_p)$ from the $G$-invariant-ness of $\widetilde{f}$. As a result, the left-hand side of the above identity is also equal to $\la \nabla \widetilde{f}(L_g(p)), dL_g(v_p)\ra$. Thus, \eqref{eq:3.1} is true as claimed.
	\end{proof}

 Lemma \ref{Lem3.2} then implies that if $\gamma$ is an unparametrized negative gradient flow lines connecting the critical points $(x,\lambda)$ and $(x',\lambda')$, then $g \cdot \gamma$ is another unparametrized negative gradient flow lines connecting $(g \cdot x, \lambda)$ and $(g \cdot x', \lambda')$. In other words, $G$ shuffles all negative gradient flow lines connecting points in critical orbit $\alpha = [(x,\lambda)]$ and points critical orbit $\beta = [(x,\lambda)]$. With this, we define $\Breve{\mathcal{M}}(\alpha, \beta)$ to be the moduli space of unparametrized flow lines that connect $\alpha$ and $\beta$, i.e., if $\gamma \in \Breve{\mathcal{M}}(\alpha, \beta)$, then
 $$L(\gamma) = 0, \quad \displaystyle \lim_{t\to -\infty} \gamma(t) \in \alpha, \quad \lim_{t \to +\infty} \gamma(t) \in \beta.$$
 From Lemma \ref{Lem3.2}, we know that $\Breve{\mathcal{M}}(\alpha,\beta)$ is a $G$-space. This fact also holds for the compactification $\Breve{\mathcal{M}}^+(\alpha,\beta)$ given by adding in the broken flow lines of $\Breve{\mathcal{M}}(\alpha,\beta)$. If we assume that the unstable manifold of the critical orbit $\alpha$ intersects transversally with the stable manifold of the critical orbit $\beta$, then $\Breve{\mathcal{M}}^+(\alpha, \beta)$ is a compact finite-dimensional manifold, where its dimension is given by
 $$\dim \Breve{\mathcal{M}}^+(\alpha,\beta) = \dim G/ Stab^G_x + \mu(\alpha) - \mu(\beta) - 1,$$
 where $\mu$ is the Morse index function. By a free critical orbit, we mean that the stabilizer group of one (any) of its representatives is a trivial subgroup of $G$.

 \begin{Lemma}\label{Lem3.3}
     If either $\alpha$ or $\beta$ is a free critical orbit of $\widetilde{f}$, then $G$ acts on $\Breve{\mathcal{M}}^+(\alpha, \beta)$ freely.
 \end{Lemma}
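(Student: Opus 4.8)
The plan is to reduce the freeness of the $G$-action on $\Breve{\mathcal{M}}^+(\alpha,\beta)$ to the freeness of the $G$-action on the ambient space of flow lines, exploiting that a broken or unbroken flow line asymptotic to a free critical orbit is itself a ``free point'' for the $G$-action on the path space. The key observation is the following: if $\gamma$ is any (possibly broken) negative gradient flow line with $\lim_{t\to -\infty}\gamma(t) \in \alpha$ and $\lim_{t\to+\infty}\gamma(t)\in\beta$, and $g\cdot\gamma = \gamma$ for some $g \in G$, then $g$ fixes the entire image of $\gamma$ pointwise, and in particular $g$ fixes the asymptotic limit point $x_{\pm\infty}$ of $\gamma$ in $\alpha$ (or in $\beta$). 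But if $\alpha$ is a free critical orbit, $\mathrm{Stab}^G_{x_{-\infty}} = \{1\}$, forcing $g = 1$; similarly for $\beta$. So the whole argument comes down to upgrading ``$g\cdot\gamma = \gamma$ as elements of the moduli space'' to ``$g$ fixes $\gamma$ pointwise as a map.''

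\textbf{Step 1.} Set up the action precisely. An element $\gamma\in\Breve{\mathcal{M}}^+(\alpha,\beta)$ is represented by a (broken) trajectory, i.e.\ a finite concatenation of honest flow lines $\gamma_1,\dots,\gamma_\ell$ with $\gamma_i : \R\to\widetilde{M}$ solving $L(\gamma_i)=0$, where consecutive pieces share an intermediate critical orbit. By Lemma \ref{Lem3.2}, $L$ is $G$-equivariant, so $g$ acts on each unparametrized piece and on the concatenation; the unparametrized equivalence quotients only by the $\R$-reparametrization on each factor, which commutes with the $G$-action. Hence $g\cdot[\gamma] = [\gamma]$ in $\Breve{\mathcal{M}}^+(\alpha,\beta)$ means that for each $i$ there is a time-shift $s_i\in\R$ with $g\cdot\gamma_i(\cdot) = \gamma_i(\cdot + s_i)$ as maps $\R\to\widetilde{M}$.

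\textbf{Step 2.} Kill the time-shifts. Take the first piece $\gamma_1$, which is asymptotic to (a point of) $\alpha$ at $-\infty$; if instead $\beta$ is the free orbit, run the symmetric argument with the last piece at $+\infty$. From $g\cdot\gamma_1(t) = \gamma_1(t+s_1)$, pass to the limit $t\to -\infty$: the left side converges to $g\cdot x_{-\infty}$ where $x_{-\infty}=\lim_{t\to-\infty}\gamma_1(t)\in\alpha$, and the right side also converges to $x_{-\infty}$ (shifting the time argument does not change an infinite limit). Therefore $g\cdot x_{-\infty} = x_{-\infty}$, i.e.\ $g\in\mathrm{Stab}^G_{x_{-\infty}}$. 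Since $\alpha$ is a free critical orbit, $\mathrm{Stab}^G_{x_{-\infty}}=\{1\}$, so $g=1$. This shows the stabilizer of every point of $\Breve{\mathcal{M}}^+(\alpha,\beta)$ is trivial, which is exactly freeness of the action.

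\textbf{The main obstacle} I anticipate is a bookkeeping subtlety rather than a conceptual one: one must make sure the $G$-action genuinely descends to the unparametrized, compactified moduli space in a way compatible with the broken-trajectory stratification, so that ``$g\cdot[\gamma]=[\gamma]$'' really does yield the piecewise relations $g\cdot\gamma_i = \gamma_i(\cdot+s_i)$ and not something weaker (e.g.\ a $g$ that permutes the pieces). This is handled by noting that $G$ is connected-component-agnostic here only insofar as it preserves each critical orbit set-wise and fixes the endpoints' orbits $\alpha,\beta$, and — crucially — the \emph{intermediate} breaking orbits are determined by $\gamma$ itself, so $g$ cannot permute the pieces; it can only act on each. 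Once that is pinned down, passing to the asymptotic limit is immediate and the hypothesis that one endpoint orbit is free does the rest. I would also remark that the argument uses only the asymptotic behavior at one end, so freeness of \emph{either} $\alpha$ or $\beta$ suffices, matching the statement.
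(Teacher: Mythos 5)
Your proposal is correct and follows essentially the same approach as the paper: take $g$ in the stabilizer of a point of $\Breve{\mathcal{M}}^+(\alpha,\beta)$, pass to the asymptotic limit at the free end, observe $g$ then lies in the stabilizer of a point of the free critical orbit, and conclude $g=1$. The only difference is that you are more careful about the fact that the moduli space is unparametrized (so $g\cdot[\gamma]=[\gamma]$ a priori only gives $g\cdot\gamma(t)=\gamma(t+s)$) and about broken pieces; the paper states $g\cdot\gamma(t)=\gamma(t)$ directly, but since the $t\to\pm\infty$ limit is translation-invariant, both arguments land in the same place.
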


 \begin{proof}
     We argue in the case $\alpha$ is free. A similar argument applies in the case $\beta$ is free. Suppose $\gamma \in \Breve{\mathcal{M}}^+(\alpha, \beta)$ and let $g$ be an element in the stabilizer group of $\gamma$. Pick a representative $(x,\lambda)$ for $\alpha$ such that $\displaystyle \lim_{t\to -\infty}\gamma(t) = (x,\lambda) \in \alpha$. Since $g \cdot \gamma(t) = \gamma(t)$, by Lemma \ref{Lem3.2}, we must have $(g\cdot x, \lambda) = (x,\lambda)$. This implies that $g$ is in the stabilizer subgroup of $x$. But $\alpha$ is a free critical orbit; thus, $g$ can only be the identity.
 \end{proof}

 We have two maps $e^+_\alpha, e^-_\beta : \Breve{\mathcal{M}}^+(\alpha,\beta) \to \alpha, \beta$  defined by sending each $\gamma \in \Breve{\mathcal{M}}^+(\alpha,\beta)$ to its limits at the ends. Note that $e^+_\alpha, e^-_\beta$ are $G$-equivariant maps by Lemma \ref{Lem3.2}. From these maps, we define another $G$-equivariant map
 $$h_{\alpha\beta}: \Breve{\mathcal{M}}^+(\alpha,\beta) \to \alpha \times \beta, \quad h_{\alpha\beta} = (e^+_\alpha, e^-_\beta).$$
 
 If $\alpha$ and $\beta$ are both distinct free critical orbits, by fixing representative elements in $\alpha$ and $\beta$ respectively, then we can identify $\alpha \times \beta$ with $G\times G$. Since $h_{\alpha\beta}$ is $G$-equivariant, it descends to a map
 $$\widetilde{h}_{\alpha\beta}: \Breve{\mathcal{M}}^+(\alpha,\beta)/G \to (G\times G)/G \cong G.$$
 We would like to emphasize that at this point the definition of $\widetilde{h}_{\alpha\beta}$ depends on the choices of representatives of $\alpha, \beta$. On the other hand, if either $\alpha$ or $\beta$ is $\Theta$ and the remaining critical orbit is free, then we have two constant maps
 $$\widetilde{h}_{\alpha \Theta}: \Breve{\mathcal{M}}^+(\alpha, \Theta)/G \to \Theta, \quad \widetilde{h}_{\Theta \beta}: \Breve{\mathcal{M}}^+(\Theta, \beta)/G \to \Theta.$$
 Note that in each of the cases above, the dimensions of the moduli spaces are given by
 \begin{itemize}
     \item $\dim \Breve{\mathcal{M}}^+(\alpha, \beta)/G = \mu(\alpha) - \mu(\beta) - 1$,
     \item $\dim \Breve{\mathcal{M}}^+(\alpha, \Theta)/G = \mu(\alpha) - \mu(\Theta) - 1$,
     \item $\dim \Breve{\mathcal{M}}^+(\Theta, \beta)/G = \mu(\Theta) - \mu(\beta) - 1- \dim G$.
 \end{itemize}
For the last two cases, we are only interested in the situation when the dimension of the moduli spaces is equal to zero. We will come back to this later. 

So far, we set things up for some general CW-group $G$ and some general ground field $\mathbf{F}$. Now, we will look at the particular case when $G= S^1$ and $\mathbf{F} = \mathbf{F}_2$. Note that $H^*(S^1;\mathbf{F}_2) = \mathbf{F}^{(0)}_2\oplus \mathbf{F}^{(1)}_2$. In this case, when $\alpha, \beta$ are both distinct free critical orbits and $\mu(\alpha) = \mu(\beta) - 1$, equivalently, $\dim \Breve{\mathcal{M}}^+(\alpha,\beta) = 0$, we obtain the differential 
$$d: C_\ast = \Lambda_\mathbf{F}\{\alpha: \mu(\alpha) = \ast\} \to \Lambda_{\mathbf{F}}\{\beta: \mu(\beta) = \ast-1\}= C_{\ast -1},$$
$$d(\alpha) = \sum_{j=-\infty}^{\infty} \sum_{\beta} \#_{\mathbf{F}_2} \Breve{\mathcal{M}}^+(\alpha, T^j\beta)/G \cdot T^j \beta.$$
When $\alpha, \beta$ are free critical orbits and $\mu(\alpha) = \mu(\beta) + 2$, equivalently, $\dim \Breve{\mathcal{M}}^+(\alpha,\beta)/G = 1$, we obtain another map $u: C_\ast \to C_{\ast -2}$ defined by
$$\la u(\alpha), T^j \beta\ra = \la  h_{\alpha T^j\beta}^*(1), [\Breve{\mathcal{M}}^+(\alpha, T^j\beta)/G]\ra\, \mod 2.$$
Here $1$ denotes a generator of $H^1(S^1;\mathbf{F}_2)$. Next, we deal with the case when either of the critical orbits is $\Theta$. We have two maps
$$\delta_1 : C_\ast \to \Lambda_{\mathbf{F}}\{\Theta\}_{(\ast -1)}, \quad \quad \delta_2: \Lambda_{\mathbf{F}}\{\Theta\}_{(\ast)} \to C_{\ast - 2}$$
defined by $\la\delta_1(\alpha),\Theta\ra = \#_{\mathbf{F}_2}\Breve{\mathcal{M}}^+(\alpha,\Theta)/G$ and $\la \delta_2(\Theta),\beta\ra = \#_{\mathbf{F}_2} \Breve{\mathcal{M}}^+(\Theta, \beta)/G$. Having set up these maps, we define the following $\Lambda_{\mathbf{F}}$-finitely generated abelian groups
\begin{align}\label{eq:3.2}
    \widetilde{C}_{\ast}(M,f):= C_\ast \oplus C_{\ast -1} \oplus \Lambda_\mathbf{F}\{\Theta\}_{(\ast)}. 
\end{align}
The maps $d, u, \delta_1, \delta_2$ defined above can be gathered into one map that is given by the following matrix form
\begin{align}\label{eq:3.3}
    \widetilde{d}=\begin{bmatrix} d & 0 & 0 \\ u & d & \delta_2 \\ \delta_1 & 0 & 0 \end{bmatrix}.
\end{align}
Note that $\widetilde{d}: \widetilde{C}_\ast \to \widetilde{C}_{\ast-1}$ and it is a $\Lambda_{\mathbf{F}}$-linear map and $\deg \widetilde{d} = -1$ by construction.

\begin{Th}\label{Th3.4}
    $(\widetilde{C}(M,f), \widetilde{d})$ defined as in \eqref{eq:3.2} and \eqref{eq:3.3} is a differential complex.
\end{Th}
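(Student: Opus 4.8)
The plan is to verify directly that $\widetilde d$ squares to zero, organized by the block structure of \eqref{eq:3.3}. Writing a general element of $\widetilde C_\ast$ as $(c_1,c_2,v)$ with $c_1\in C_\ast$, $c_2\in C_{\ast-1}$ and $v\in\Lambda_{\mathbf{F}}\{\Theta\}_{(\ast)}$, one gets $\widetilde d(c_1,c_2,v)=(dc_1,\,uc_1+dc_2+\delta_2 v,\,\delta_1 c_1)$, and applying $\widetilde d$ once more shows $\widetilde d^2=0$ is equivalent to the four identities
$$d^2=0,\qquad \delta_1 d=0,\qquad d\delta_2=0,\qquad ud+du+\delta_2\delta_1=0,$$
the last read over $\mathbf{F}_2$, so that signs are irrelevant. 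Before starting I would record that Novikov finiteness — $\widetilde f$ decreases strictly along non-constant trajectories, so for a fixed target orbit only finitely many of its $T^j$-translates with energy above the source can be joined to it — makes every composite above a well-defined $\Lambda_{\mathbf{F}}$-linear map, so it suffices to check the four identities on matrix coefficients.

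For the first three I would run the standard ``boundary of a one-dimensional moduli space'' argument. Take $d^2=0$: fix free critical orbits $\alpha,\gamma$ with $\mu(\alpha)-\mu(\gamma)=2$. By Lemma \ref{Lem3.3} and the Morse--Bott gluing/compactness theory, the compactified $G$-quotient moduli space $\mathcal M^+(\alpha,T^j\gamma)/G$ is a compact $1$-manifold whose codimension-one faces are the products $\mathcal M^+(\alpha,T^l\gamma')/G\times\mathcal M^+(T^l\gamma',T^j\gamma)/G$ over once-broken trajectories. The potential faces that break at the fixed orbit $\Theta$ are empty for dimension reasons: a moduli space $\mathcal M^+(-,\Theta)$ or $\mathcal M^+(\Theta,-)$ carries a \emph{free} $S^1$-action by Lemma \ref{Lem3.3}, so when non-empty it has dimension $\ge 1$, and feeding this into the dimension formulas forces the total index drop to be $\ge 3>2$. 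Hence $\#\partial\big(\mathcal M^+(\alpha,T^j\gamma)/G\big)\equiv 0$ is exactly $\langle d^2\alpha,T^j\gamma\rangle=0$. The identity $\delta_1 d=0$ comes the same way from $\mathcal M^+(\alpha,T^k\Theta)/G$ with $\mu(\alpha)-\mu(\Theta)=2$, and $d\delta_2=0$ from $\mathcal M^+(\Theta,T^m\gamma)/G$ with $\mu(\Theta)-\mu(\gamma)=3$; in each case the only surviving faces break at a free orbit, since a break at $\Theta$ would need both $\mu(-)-\mu(\Theta)\ge 1$ and $\mu(\Theta)-\mu(-)\ge 2$ (too much total drop), and there is no non-constant flow between $\Theta$ and a translate of itself.

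The substance is $ud+du+\delta_2\delta_1=0$. Fix free orbits $\alpha,\beta$ with $\mu(\alpha)-\mu(\beta)=3$, so $\mathcal N:=\mathcal M^+(\alpha,T^j\beta)/G$ is a compact $2$-manifold with corners carrying the evaluation map $h=\widetilde h_{\alpha,T^j\beta}\colon\mathcal N\to S^1$; although $\widetilde h$ depends on choices of representatives, the class $h^\ast(1)\in H^1(\mathcal N;\mathbf{F}_2)$ does not, by homotopy invariance, so the relevant counts are well defined. I would pick $p\in S^1$ so that $h$ and all its restrictions to the faces of $\mathcal N$ are transverse to $\{p\}$; then $h^{-1}(p)$ is a compact $1$-manifold with $\partial h^{-1}(p)\subset\partial\mathcal N$, so $\#\partial h^{-1}(p)\equiv 0$. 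The codimension-one faces of $\mathcal N$ are: a break at a free orbit $\gamma$ with index splitting $(1,2)$ or $(2,1)$, and the break at $\Theta$, which (by the free-$S^1$ bound, now forced to equality) has $\mu(\alpha)-\mu(\Theta)=1$, $\mu(\Theta)-\mu(\beta)=2$. On a $(1,2)$-face $\mathcal M^+(\alpha,T^l\gamma)/G\times\mathcal M^+(T^l\gamma,T^j\beta)/G$, using that the two legs agree at $\gamma$, the map $h$ factors as $([x],y)\mapsto\widetilde h_{\alpha,T^l\gamma}([x])\cdot\widetilde h_{T^l\gamma,T^j\beta}(y)$ with the first factor constant on each zero-dimensional component, so the face contributes $\sum_{[x]}\#\{y:\widetilde h_{T^l\gamma,T^j\beta}(y)=\widetilde h_{\alpha,T^l\gamma}([x])^{-1}p\}=\langle d\alpha,T^l\gamma\rangle\langle u(T^l\gamma),T^j\beta\rangle$, summing over $l,\gamma$ to $\langle ud\,\alpha,T^j\beta\rangle$; the $(2,1)$-faces give $\langle du\,\alpha,T^j\beta\rangle$ symmetrically. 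The $\Theta$-face $\big(\mathcal M^+(\alpha,T^l\Theta)\times\mathcal M^+(T^l\Theta,T^j\beta)\big)/G$ — a genuine product over the single point $\Theta$ carrying the diagonal free $S^1$-action — is a disjoint union of orbit circles, one per element of $\mathcal M^+(\alpha,T^l\Theta)/G\times\mathcal M^+(T^l\Theta,T^j\beta)/G$, and on each such circle $h$ is built from the $S^1$-equivariant evaluation at the $\beta$-end, hence a degree-one map to $S^1$; so this face contributes exactly $\langle\delta_1\alpha,T^l\Theta\rangle\langle\delta_2 T^l\Theta,T^j\beta\rangle$, summing to $\langle\delta_2\delta_1\alpha,T^j\beta\rangle$. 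Adding the three contributions mod $2$ gives $ud+du+\delta_2\delta_1=0$, hence $\widetilde d^2=0$.

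I expect the main obstacle to be the combinatorics of the boundary strata: checking that the list of codimension-one faces is complete, verifying that breaks at $\Theta$ cannot contaminate the three ``plain'' identities (the free-$S^1$ dimension bound of Lemma \ref{Lem3.3} is exactly what rules this out), and, most delicately, showing that the $\Theta$-face of the $2$-dimensional space contributes $\delta_2\delta_1$ \emph{and nothing more} — the key being that $\dim S^1=1$, so exactly one rotation parameter is released on passing through the fixed point, which is what the index jump built into $\delta_2$ accounts for. A secondary, routine, point is the simultaneous transversality of $h$ and its face-restrictions to $\{p\}$ (a Sard argument). All of this rests on the Morse--Bott gluing and compactness package — quotient moduli spaces are manifolds with corners of the expected dimension, with faces given by products of lower moduli — which in this finite-dimensional prototype is classical, so the proof is in essence an exercise in assembling these facts.
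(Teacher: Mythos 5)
Your proposal is correct and follows essentially the same route as the paper: reduce $\widetilde d^{\,2}=0$ to the four block identities, prove $d^2=0$, $\delta_1 d=0$, $d\delta_2=0$ by counting boundaries of one-dimensional quotient moduli spaces (ruling out $\Theta$-breaks by the dimension shift caused by $\dim S^1=1$), and prove $ud+du+\delta_2\delta_1=0$ by slicing the two-dimensional space $\Breve{\mathcal{M}}^+(\alpha,T^j\beta)/S^1$ with $\widetilde h^{-1}(p)$ and identifying the boundary contributions from $(1,2)$, $(2,1)$ and $\Theta$ faces, exactly as in Propositions \ref{Prop3.5} and \ref{Prop3.6}. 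The one cosmetic difference is that you carry the rotated regular value $\widetilde h_{\alpha,T^l\gamma}([x])^{-1}p$ through the count directly, whereas the paper invokes the normalization assumption (H) and defers its justification to Subsection \ref{Sub3.3}; the mathematics is the same.
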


For $(\widetilde{C}(M), \widetilde{d})$ to be a differential complex, we need to show that $\widetilde{d}^2 = 0$. Equivalently, we must show the following
$$d^2 = 0,  \quad ud+du+\delta_2\delta_1 = 0, \quad d\delta_2 = 0, \quad  \delta_1 d = 0.$$
The fact $d^2=0$ follows from the standard argument that when the unparametrized moduli space of (broken) flow lines has dimension 1, it is a compact $1$-manifold with an even number of boundary points and $d^2$ counts the number of these exact boundary points $\mod 2$. Thus, the proof of Theorem \ref{Th3.4} follows from the following successive propositions that verify the remaining three identities above.

\begin{Prop}\label{Prop3.5}
    $d\delta_2 = 0$, and $\delta_1 d = 0$.
 \end{Prop}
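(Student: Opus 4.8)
The plan is to prove $d\delta_2 = 0$ and $\delta_1 d = 0$ by the usual ``count the boundary of a compact $1$-manifold'' argument, applied to the appropriate $1$-dimensional moduli spaces quotiented by $G = S^1$. Concretely, for $\delta_1 d = 0$: fix a free critical orbit $\alpha$ with $\mu(\alpha) = \mu(\Theta) + 2$, and consider the $1$-dimensional space $\Breve{\mathcal{M}}^+(\alpha, \Theta)/G$ (recall $\dim \Breve{\mathcal{M}}^+(\alpha,\Theta)/G = \mu(\alpha) - \mu(\Theta) - 1 = 1$). The codimension-$1$ boundary of its compactification consists of broken flow lines, which split according to the type of intermediate critical orbit. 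Since $\Theta$ is the unique $G$-fixed critical orbit and a breaking through $\Theta$ would force an intermediate factor $\Breve{\mathcal{M}}^+(\Theta,\Theta)/G$ of negative expected dimension, the only contributions are broken lines $\alpha \rightsquigarrow \beta \rightsquigarrow \Theta$ with $\beta$ a free critical orbit of index $\mu(\alpha) - 1$. Counting these mod $2$ gives exactly the matrix coefficient $\langle \delta_1 d(\alpha), \Theta\rangle$, and since the total count of boundary points of a compact $1$-manifold is even, this coefficient vanishes. The argument for $d\delta_2 = 0$ is dual: start from $\Breve{\mathcal{M}}^+(\Theta, \beta)/G$ of dimension $\mu(\Theta) - \mu(\beta) - 1 - \dim G = 1$ for a free orbit $\beta$, analyze its boundary, rule out breakings through $\Theta$ by the same dimension count, and identify the remaining broken lines $\Theta \rightsquigarrow \alpha \rightsquigarrow \beta$ with the coefficient $\langle d\delta_2(\Theta), \beta\rangle$.

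The first step I would carry out is to pin down the compactness and gluing statements: $\Breve{\mathcal{M}}^+(\alpha,\beta)/G$ is a compact manifold with boundary when the relevant transversality (stable/unstable manifolds of orbits intersecting transversally) holds, and its codimension-$1$ stratum is $\bigsqcup_\gamma \Breve{\mathcal{M}}^+(\alpha,\gamma)/G \times \Breve{\mathcal{M}}^+(\gamma,\beta)/G$ over intermediate critical orbits $\gamma$, with the product of dimensions adding correctly. This is standard Morse-Bott-Novikov theory and I would cite it rather than reprove it. The second step is the bookkeeping with the Novikov variable $T$: because each coefficient is a finite sum over $T^j$ (the Novikov finiteness condition guarantees this), I can work one power $T^j$ at a time, so the identity $\delta_1 d = 0$ reduces to showing, for each $j$, that $\sum_\beta \#_{\mathbf{F}_2}\bigl(\Breve{\mathcal{M}}^+(\alpha, T^{j'}\beta)/G\bigr)\cdot \#_{\mathbf{F}_2}\bigl(\Breve{\mathcal{M}}^+(T^{j'}\beta, T^j\Theta)/G\bigr) = 0$ summed appropriately — which is precisely the mod-$2$ boundary count above.

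The third step, and the one I expect to be the main obstacle, is verifying that breakings through the fixed orbit $\Theta$ genuinely do not occur as codimension-$1$ phenomena. The naive dimension count says $\Breve{\mathcal{M}}^+(\alpha,\Theta)/G \times \Breve{\mathcal{M}}^+(\Theta,\Theta)/G$ has a factor of expected dimension $\mu(\Theta) - \mu(\Theta) - 1 - \dim G = -2 < 0$, hence is empty — but one must be careful that the ``dimension drop by $\dim G$'' when passing through the fixed orbit is correctly accounted for in all the gluing regions, and that there is no hidden stratum where a flow line limits onto $\Theta$ and then re-emerges. I would handle this by invoking the structure of a neighborhood of $\Theta$ (where the $S^1$-action is effectively the cone structure used to set up $\delta_1, \delta_2$) and checking the local model directly. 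Once that exclusion is justified, the remaining strata are exactly the free-orbit breakings, and the proof is a routine parity argument; for $d\delta_2 = 0$ the only additional care is that the map $\delta_2$ already incorporates the $\dim G$ dimension shift, so the relevant $1$-dimensional moduli space is $\Breve{\mathcal{M}}^+(\Theta,\beta)/G$ with $\mu(\Theta) - \mu(\beta) = \dim G + 2$, and its boundary pairs $\delta_2$ with $d$ as claimed.
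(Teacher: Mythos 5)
Your proposal is correct and follows the same route as the paper: for each identity, identify a compact $1$-manifold (for $d\delta_2 = 0$ it is $\Breve{\mathcal{M}}^+(\Theta,\alpha)_2/S^1$ with $\mu(\Theta)-\mu(\alpha)=3$; for $\delta_1 d = 0$ it is $\Breve{\mathcal{M}}^+(\alpha,\Theta)_2/S^1$ with $\mu(\alpha)-\mu(\Theta)=2$), describe its codimension-$1$ boundary strata, match the mod-$2$ boundary count against the matrix coefficient, and use evenness of the boundary count of a compact $1$-manifold.

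Two minor remarks. You are more explicit than the paper in ruling out breakings through $T^j\Theta$: the paper simply asserts the only codimension-$1$ faces break through free orbits $\beta$, whereas you flag this as the main obstacle. Your conclusion is correct, but your dimension count there is slightly off: since both ends are $G$-fixed, the $S^1$-action on $\Breve{\mathcal{M}}^+(T^{j'}\Theta, T^j\Theta)$ is not free, so one should not subtract $\dim G$; the unquotiented expected dimension $\mu(T^{j'}\Theta)-\mu(T^j\Theta)-1 = -1 < 0$ already gives emptiness (using that the deck transformation $T$ preserves the Morse index). Also, your Novikov bookkeeping step (working one power of $T$ at a time) is sound but is not made explicit in the paper; it is implicitly subsumed in the finiteness of the moduli space counts that define $d$, $\delta_1$, $\delta_2$.
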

	
\begin{proof}
    Let $\alpha$ be a free critical orbit of $\widetilde{f}$ such that $\mu(\Theta, \alpha)=3$. We need to show that $\la d\delta_2(\Theta), \alpha\ra = 0$. Note that
    \begin{equation}\label{eq:3.4}
        \la d \delta_2(\Theta), \alpha\ra = \displaystyle \sum_{\substack{\beta \text{ free critical orbit}\\\mu(\Theta,\beta) = 2}} \#_{\mathbf{F}_2} \Breve{\mathcal{M}}^+(\Theta, \beta)_1/S^1 \cdot \#_{\mathbf{F}_2} \Breve{\mathcal{M}}(\beta, \alpha)_1/S^1.
    \end{equation}
Furthermore, $\Breve{\mathcal{M}}^+(\Theta, \alpha)_2/S^1$ is a $1$-dimensional compact manifold with boundary. The proof is complete if we can show that the $\mod 2$ count of the boundary of $\Breve{\mathcal{M}}^+(\Theta, \alpha)_2/S^1$ is exactly the right-hand side of the above identity. The boundary of $\Breve{\mathcal{M}}^+(\Theta, \alpha)$ can be described by the co-dimension $1$ faces
$$\bigcup_{\beta} (\Breve{\mathcal{M}}^+(\Theta,\beta) \times_{\beta} \Breve{\mathcal{M}}^+(\beta,\alpha))/S^1,$$
where $\beta$ is another free critical orbit distinct from $\alpha$. There are two situations for $\beta$, either $\mu(\Theta, \beta) = 1$ or $\mu(\Theta, \beta)=2$. The case where $\mu(\Theta,\beta) = 1$ cannot happen because $\dim \Breve{\mathcal{M}}^+(\Theta, \beta) = -1 <0$. Thus, the only remaining possibility is that $\mu(\Theta, \beta) = 2$, and the count of all boundary points with such $\beta$ is exactly the right-hand side of \eqref{eq:3.7} as claimed. A similar argument also shows that $\delta_1 d = 0$. 
\end{proof}

\begin{Prop}\label{Prop3.6}
    $ud+du+\delta_2\delta_1 = 0$.
\end{Prop}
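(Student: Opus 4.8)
The plan is to run the usual degeneration (cobordism) argument for moduli spaces of negative gradient flow lines, one dimension above the level used in the proof of Proposition~\ref{Prop3.5}, incorporating the fact that $u$ is defined by pairing with the pulled-back generator $1\in H^1(S^1;\mathbf{F}_2)$. All three maps $ud$, $du$, $\delta_2\delta_1$ have degree $-3$, so it suffices to fix \emph{free} critical orbits $\alpha,\gamma$ of $\widetilde f$ with $\mu(\alpha)-\mu(\gamma)=3$ and to show $\langle(ud+du+\delta_2\delta_1)(\alpha),T^j\gamma\rangle=0$ in $\mathbf{F}_2$ for every $j\in\mathbf{Z}$; since $d,u,\delta_1,\delta_2$ are all $T$-equivariant and the Novikov condition makes each $T$-coefficient a finite sum, this isolates a single compact moduli space for each $j$. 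Set $N:=\breve{\mathcal M}^+(\alpha,T^j\gamma)/S^1$, a compact $2$-dimensional manifold with corners (granting the standard gluing and transversality input, the same that underlies $d^2=0$), and let $h_N\colon N\to(\alpha\times T^j\gamma)/S^1\cong S^1$ be the holonomy map obtained from the evaluations at the two ends.

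Choose $p\in S^1$ generic, i.e.\ a regular value of $h_N$ and of the restriction of $h_N$ to every codimension-$1$ face of $\partial N$. Then $h_N^{-1}(p)$ is a compact $1$-manifold whose boundary lies in $\partial N$ and is disjoint from the corners, so $\#\bigl(h_N^{-1}(p)\cap\partial N\bigr)\equiv 0\pmod 2$. The codimension-$1$ boundary of $N$ is the union, over intermediate critical orbits $\beta$, of the once-broken faces $\bigl(\breve{\mathcal M}^+(\alpha,\beta)\times_\beta\breve{\mathcal M}^+(\beta,T^j\gamma)\bigr)/S^1$; a nonempty such face forces $\mu(\alpha)>\mu(\beta)>\mu(\gamma)$, so only $\mu(\beta)\in\{\mu(\alpha)-1,\mu(\alpha)-2\}$ occur, with $\beta$ either free or equal to $\Theta$. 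The identity will follow once each face is identified with one of the three matrix-coefficient products.

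For $\beta$ free with $\mu(\beta)=\mu(\alpha)-1$, the factor $\breve{\mathcal M}^+(\alpha,\beta)/S^1$ is $0$-dimensional (carrying the coefficients $\langle d(\alpha),T^l\beta\rangle$) and $\breve{\mathcal M}^+(\beta,T^j\gamma)/S^1$ is $1$-dimensional. Normalizing the $S^1$-action at the two $\beta$-ends by passing to the slices over a fixed point of $\beta$ (canonically identified with the respective quotients), the face becomes a disjoint union, indexed by the rigid lines $\alpha\to\beta$, of copies of the $u$-moduli space for $(\beta,T^{j-l}\gamma)$; on each component $h_N$ is, up to a fixed translation by the holonomy of the chosen $\alpha\to\beta$ line, exactly the map whose mod-$2$ degree defines $\langle u(\beta),T^{j-l}\gamma\rangle$. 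Hence $h_N^{-1}(p)$ meets this face in $\sum_l\langle d(\alpha),T^l\beta\rangle\,\langle u(\beta),T^{j-l}\gamma\rangle$ points, and summing over such $\beta$ gives $\langle ud(\alpha),T^j\gamma\rangle$. The case $\beta$ free with $\mu(\beta)=\mu(\alpha)-2$ is the mirror image (now $\alpha\to\beta$ is the $1$-dimensional $u$-piece and $\beta\to T^j\gamma$ the rigid $d$-piece) and contributes $\langle du(\alpha),T^j\gamma\rangle$. For $\beta=\Theta$, since $\Theta$ is the unique fixed critical orbit $\mu(\Theta)$ is pinned: the subcase $\mu(\Theta)=\mu(\alpha)-2$ gives an empty face, as $\breve{\mathcal M}^+(\Theta,T^j\gamma)/S^1$ then has negative dimension, while in the surviving subcase $\mu(\Theta)=\mu(\alpha)-1$ both $\breve{\mathcal M}^+(\alpha,\Theta)/S^1$ and $\breve{\mathcal M}^+(\Theta,T^j\gamma)/S^1$ are $0$-dimensional, the fibre product over the point $\Theta$ is an honest product with free diagonal $S^1$-action, and each component is a single circle on which $h_N$ is a diffeomorphism onto $S^1$ (one of the two endpoint evaluations sweeps out all of $S^1$ along an orbit). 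Thus $h_N^{-1}(p)$ meets this face in $\langle\delta_1(\alpha),\Theta\rangle\,\langle\delta_2(\Theta),T^j\gamma\rangle=\langle\delta_2\delta_1(\alpha),T^j\gamma\rangle$ points, and adding the three contributions yields $\langle(ud+du+\delta_2\delta_1)(\alpha),T^j\gamma\rangle\equiv 0\pmod 2$.

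The main obstacle is the $\Theta$-face: away from it $N$ is an honest $S^1$-bundle over the relevant quotient and the bookkeeping is the familiar one from equivariant Morse theory, but at $\Theta$ the fibration degenerates and one must check carefully that the broken configurations through $\Theta$, the induced splitting of the holonomy map, and the collapse of the $h^*(1)$-weight to a plain point count are all consistent with the definitions of $\delta_1$ and $\delta_2$ — this is exactly where the hypotheses that $\Theta$ is the unique $S^1$-fixed critical orbit and the dimension formula with its $-\dim G$ correction get used. A secondary, technical point is the gluing and transversality needed to identify $\partial N$ with the union of the once-broken faces (so that $h_N^{-1}(p)$ genuinely has boundary $h_N^{-1}(p)\cap\partial N$), which is the same input already invoked for $d^2=0$ and Proposition~\ref{Prop3.5}.
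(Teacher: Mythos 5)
Your argument reproduces the paper's proof: fix a $2$-dimensional moduli space $\Breve{\mathcal{M}}^+(\alpha,\beta)_3/S^1$, pick a regular value $p$ of the endpoint-holonomy map into $S^1$, and count the (even number of) boundary points of the $1$-manifold $\widetilde h^{-1}_{\alpha\beta}(p)$, identifying the once-broken codimension-$1$ faces with the $du$, $ud$, and $\delta_2\delta_1$ matrix entries. Your handling of the normalization (translating by the holonomy of the rigid piece rather than globally fixing $\widetilde h\equiv 1$ on $0$-dimensional moduli spaces) and of the $\Theta$-face (an honest product with a free diagonal $S^1$, giving circle components on which $h_N$ is a diffeomorphism) is a faithful unwinding of what the paper's Type I--III decomposition and Subsection~\ref{Sub3.3} establish, so this is essentially the same proof.
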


 \begin{proof}
     Let $\alpha, \beta$ be free critical orbits of $\widetilde{f}$ such that $\mu(\alpha, \beta):= \mu(\alpha) - \mu(\beta) = 3$, $\mu(\alpha, \Theta) = 1$, and $\mu(\Theta, \beta) = 2$. We need to show that $\la (ud+ du + \delta_2 \delta_1) \alpha, \beta \ra = 0$. Firstly, note that since $\mu(\alpha, \beta) = 3$, the moduli space $\Breve{\mathcal{M}}^+(\alpha, \beta)$ has dimension $3$. To indicate the dimension of the moduli space relative to the indices of critical points, we write $\Breve{\mathcal{M}}^+(\alpha, \beta)_3$. As a result, $\dim \Breve{\mathcal{M}}^+(\alpha,\beta)_3 /S^1 = 2$. Consider the map $\widetilde{h}_{\alpha \beta} : \Breve{\mathcal{M}}^+(\alpha,\beta)_3 /S^1 \to S^1$ constructed earlier and $p$ is any of its regular value. Then, $\widetilde{h}^{-1}_{\alpha\beta}(p)$ is a compact submanifold of dimension $1$ with boundary. Thus, the $\mod 2$ count of boundary points of $\widetilde{h}^{-1}_{\alpha \beta}(p)$ is always equal to zero. The proof is complete as soon as we can prove that
     $$\# \partial \widetilde{h}^{-1}_{\alpha \beta}(p) = \la (ud + du + \delta_2 \delta_1) \alpha, \beta \ra.$$
     
     By the definitions of the maps $\delta_1, \delta_2, u, d$, it is not difficult to see that
     \begin{align}
         \la \delta_2 \delta_1 (\alpha), \beta \ra & = \#_{\mathbf{F}_2}\Breve{\mathcal{M}}(\alpha,\Theta)_1/S^1 \cdot \#_{\mathbf{F}_2}\Breve{\mathcal{M}}^+(\Theta, \beta)_1/S^1, \label{eq:3.5}\\
         \la du(\alpha),\beta\ra &= \displaystyle \sum_{\substack{\gamma \text{ free critical orbit, }\\\mu(\alpha, \gamma) = 2, \mu(\gamma,\beta) = 1}} \la \widetilde{h}^*_{\alpha \gamma}(1), [\Breve{\mathcal{M}}^+(\alpha, \gamma)_2/S^1]\ra_{\mathbf{F}_2}\cdot \#_{\mathbf{F}_2} \Breve{\mathcal{M}}(\gamma,\beta)_1/S^1, \label{eq:3.6}\\
         \la ud(\alpha),\beta\ra &= \displaystyle \sum_{\substack{\gamma' \text{ free critical orbit,}\\ \mu(\alpha, \gamma')=1, \mu(\gamma', \beta) = 2}} \#_{\mathbf{F}_2}  \Breve{\mathcal{M}}(\alpha, \gamma')_1/S^1 \cdot \la \widetilde{h}^*_{\gamma' \beta}(1), [\Breve{\mathcal{M}}^+(\gamma,\beta)_2/S^1]\ra_{\mathbf{F}_2}. \label{eq:3.7}
     \end{align}
    Recall that if $f: M^n \to N^n$ is a smooth map between $n$-dimensional manifolds, then $\la f^*(1), [M]\ra_{\mathbf{F}_2} = \la 1, f_\ast [M]\ra_{\mathbf{F}_2} = \la 1, \deg_2(f)[N]\ra_{\mathbf{F}_2} = \deg_2(f) = \#_{\mathbf{F}_2} f^{-1}(y)$, where $1$ is the generator of the top cohomology of $N$ and $y$ is any regular value of $f$. Hence, we can think of $\la \widetilde{h}^*_{\alpha \gamma}(1), [\Breve{\mathcal{M}}^+(\alpha,\gamma)_2/G]\ra_{\mathbf{F}_2} = \deg_2(\widetilde{h}_{\alpha \gamma})$, and $\la \widetilde{h}^*_{\gamma' \beta}(1), [\Breve{\mathcal{M}}^+(\gamma,\beta)_2/G]\ra_{\mathbf{F}_2} = \deg_2(\widetilde{h}_{\gamma \beta})$. Next, the points in the boundary of $\widetilde{h}^{-1}_{\alpha \beta}(p)$ are exactly points in the boundary of $\Breve{\mathcal{M}}^+(\alpha,\beta)_3/S^1$ whose $\widetilde{h}_{\alpha \beta}$-values are $p\in S^1$. Note that a boundary of $\Breve{\mathcal{M}}^+(\alpha,\beta)/S^1$ belongs to one of the following three co-dimension $1$-faces
    \begin{itemize}
        \item Type I: $\bigcup_{\gamma \text{ free critical orbit, } \mu(\alpha, \gamma)=2, \mu(\gamma, \beta) =1} (\Breve{\mathcal{M}}^+(\alpha, \gamma)_2 \times_{\gamma} \Breve{\mathcal{M}}(\gamma,\beta)_1)/S^1$.
        \item Type II: $\bigcup_{\gamma' \text{ free critical orbit, } \mu(\alpha, \gamma')=1, \mu(\gamma', \beta) =2} (\Breve{\mathcal{M}}(\alpha, \gamma')_1 \times_{\gamma'} \Breve{\mathcal{M}}^+(\gamma',\beta)_2)/S^1$.
        \item Type III: $\Breve{\mathcal{M}}(\alpha,\Theta)_1/S^1 \times S^1 \times \Breve{\mathcal{M}}(\Theta, \beta)_1/S^1$.
    \end{itemize}
    The number of boundary points of type III corresponds exactly to \eqref{eq:3.5}. Suppose $(\mathfrak{a}_1, \mathfrak{a}_2) \in \Breve{\mathcal{M}}^+(\alpha, \gamma)_2/S^1 \times \Breve{\mathcal{M}}(\gamma,\beta)_1/S^1$ is a boundary point of type I such that its $\widetilde{h}_{\alpha \beta}$-value is $p$. Without loss of generality, assume $\widetilde{h}_{\gamma \beta} \equiv 1$ for any $\mu(\gamma,\beta) = 1$. Then the number of boundary points of this type whose values are $p$ is exactly the $\#_{\mathbf{F}_2} \widetilde{h}^{-1}_{\alpha \gamma}(p) \cdot \#_{\mathbf{F}_2} \Breve{\mathcal{M}}(\gamma, \beta)/S^1 = \deg_2(\widetilde{h}_{\alpha \gamma}) \cdot \#_{\mathbf{F}_2} \Breve{\mathcal{M}}(\gamma,\beta)/S^1$. Summing over all $\gamma$ corresponds to \eqref{eq:3.6}. We argue similarly for type II. Therefore, we achieve the identity as claimed.
 \end{proof}

 \begin{Rem}
     There is some gluing theory that makes various compactifications described above rigorous. We will ignore this technicality for now and will come back to it when we adapt this prototype model in a gauge theory setting.
 \end{Rem}

\subsection{Pull-up-push-down construction}\label{Sub3.2} The pull-up-push-down construction in finite dimension will be a prototype for the various induced cobordism maps in the infinite-dimensional setting that we will discuss later. Let $(M_i, f_i)_{i=0,1}$ be $G=S^1$-manifolds with the same assumptions as in the previous subsection. Suppose we have another $G$-manifold $W$ equipped with $S^1$-equivariant maps $r_i: W \to \widetilde{M_i}$. Define $r = (r_0, r_1) : W \to \widetilde{M_0} \times \widetilde{M_1}$. For any pair of critical points $(x_+,\lambda_+)$ and $(x_-, \lambda_-)$ of $\widetilde{f}_0$ and $\widetilde{f}_1$, respectively, we assume that $r$ is transversal to $U_{(x_+,\lambda_+)}\times S_{(x_-,\lambda_-)}$. If $z \in W$ such that $r(z) \in U_{(x_+,\lambda_+)}\times S_{(x_-,\lambda_-)}$, then by $S^1$-equivariant-ness of $r$, a similar argument as in Lemma \ref{Lem3.2} implies that for any $g \in S^1$, $r(g\cdot z) \in U_{(g\cdot x_+, \lambda_+)} \times S_{(g \cdot x_-, \lambda_-)}$. Thus, for a pair of critical orbit $\alpha_+, \alpha_-$ of $\widetilde{f}_0, \widetilde{f}_1$, respectively, we define $\mathscr{M}(W;\alpha_+, \alpha_-)$ to be all elements in $z \in W$ such that $r(z) \in U_{\alpha_+} \times S_{\alpha_-}$. Note that $\mathscr{M}(W; \alpha_+, \alpha_-)$ is an $S^1$-manifold. With a similar argument as in Lemma \ref{Lem3.3}, we see that if either $\alpha_{\pm}$ is a free critical orbit, then $S^1 \curvearrowright \mathscr{M}(W; \alpha_+, \alpha_-)$ freely. Furthermore, 
$$\dim \mathscr{M}(W; \alpha_+, \alpha_-) = \dim W - \dim M_0 + \dim S^1/ Stab^{S^1}_{\alpha_+} + \mu(\alpha_+) - \mu(\alpha_-).$$
Denote $\Theta_i$ to be the lift of the only $S^1$-invariant critical point $\theta_i$ of $f_i$, $i=0,1$. As in the previous subsection, we single out the dimension of the $S^1$-quotient of the moduli space in the following cases
\begin{itemize}
    \item $\alpha_{\pm}$ free critical orbits: $ \dim \mathscr{M}(W; \alpha_+, \alpha_-)/S^1 = \dim W - \dim M_0 + \mu(\alpha_+) - \mu(\alpha_-)$.
    \item $\alpha_+$ free critical orbit and $\Theta_1 = \alpha_-$: $\dim \mathscr{M}(W;\alpha_+ , \Theta_1)/S^1 = \dim W - \dim M_0 + \mu(\alpha) - \mu(\Theta_1)$.
    \item $\alpha_+ = \Theta_0$ and $\alpha_-$ free critical orbit: $\dim \mathscr{M}(W; \Theta_1, \alpha_-)/S^1 = \dim W - \dim M_0 + \mu(\Theta_0) - \mu (\alpha_-) -1$.
\end{itemize}
	
Let $\alpha_{\pm}$ be a pair of free critical orbits. By picking a representative element in each of $\alpha_{\pm}$, we have the following $S^1$-equivariant maps
$$h_{W; \alpha_+ \alpha_-}: \mathscr{M}(W;\alpha_+, \alpha_-)/S^1 \to S^1,$$
$$h_{W; \Theta_0\alpha_-} : \mathscr{M}(W; \Theta_0, \alpha_-)/S^1 \to \Theta_0, \quad h_{W; \alpha_+ \Theta_1}: \mathscr{M}(W; \alpha_+ \Theta_1)/S^1 \to \Theta_1.$$
If $\mu(\alpha_+, \alpha_-) = \dim M_0 - \dim W$, then the $\mod 2$ count of $\mathscr{M}(W; \alpha_+ ,\alpha_-)$ gives us a map $C(W) : C(M_0) \to C(M_1)$ where $\deg C(W) = \dim W - \dim M_0$. However, if $\mu(\alpha_+, \alpha_-) = \dim M_0 - \dim W + 1$, using the map $h_{W; \alpha_+ \alpha_-}$, we define a map $u(W): C(M_0) \to C(M_1)$  of degree equal to $\dim W - \dim M_0 - 1$ by
$$\la u(W)\alpha_+, \alpha_-\ra = \la h^*_{W; \alpha_+ \alpha_-}(1), [\mathscr{M}(W;\alpha_+,\alpha_-)/S^1]\ra \mod 2.$$
Here, $1$ is a the generator of $H^1(S^1;\mathbf{F}_2)$. In the case $\mu(\alpha_+, \Theta_1) = \dim M_0 - \dim W$ and $\mu(\Theta_0, \alpha_-) = \dim M_0 -\dim W + 1$, by counting ($\mod 2$) of the respective moduli spaces, we obtain
$$\delta_1(W): C(M_0) \to \Lambda_{\mathbf{F}_2}, \quad \delta_2(W): \Lambda_{\mathbf{F}_2} \to C(M_1).$$
Note that $\deg \delta_1(W) = \dim W - \dim M_0$ and $\deg \delta_2(W) = \dim W -\dim M_0 -1$. Gathering all of these maps in a matrix form, we obtain
\begin{equation}\label{eq:3.8}
    \widetilde{C}(W) = \begin{bmatrix}
        C(W) & 0 & 0 \\
        u(W) & C(W) & \delta_2(W) \\
        \delta_1(W) & 0 & 1
    \end{bmatrix},
\end{equation}
where $\widetilde{C}(W): \widetilde{C}(M_0,f_0) \to \widetilde{C}(M_1,f_1)$ and $\deg \widetilde{C}(W) = \dim W - \dim M_0 $.
	
\begin{Th}\label{Th3.8}
    $\widetilde{C}(W)$ defined in \eqref{eq:3.8} is a chain map.
\end{Th}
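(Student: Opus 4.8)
The plan is to show $\widetilde{C}(W) \circ \widetilde{d}_{M_0} = \widetilde{d}_{M_1} \circ \widetilde{C}(W)$ by expanding both sides as $3 \times 3$ matrices of maps and checking equality entry by entry. Writing out the matrix product with $\widetilde{d}$ as in \eqref{eq:3.3} and $\widetilde{C}(W)$ as in \eqref{eq:3.8}, the commutation relation unpacks into a finite list of identities among $C(W), u(W), \delta_1(W), \delta_2(W)$ and the internal maps $d, u, \delta_1, \delta_2$ of $\widetilde{C}(M_0), \widetilde{C}(M_1)$. The diagonal chain-map condition is the classical $C(W) d_0 = d_1 C(W)$, proved exactly as in the non-equivariant pull-up-push-down setup (Lemma \ref{Lem1.4}'s underlying construction): the $1$-dimensional cut-down moduli space $\mathscr{M}(W; \alpha_+, \alpha_-)$ has an even number of boundary points, which split into broken configurations of the two types corresponding to $d_1 C(W)$ and $C(W) d_0$. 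The genuinely new content is the mixed relations involving the $u(W)$-entry and the $\Theta$-entries.

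First I would fix the bookkeeping: the $(1,1)$ entry gives $C(W) d_0 = d_1 C(W)$; the $(3,1)$ entry gives $\delta_1(W) d_0 = \delta_1(W)\cdot(\text{nothing from }d_1) $ — more precisely it reads $\delta_1(W) d_0 + 1\cdot \delta_1(W)\cdot 0 = 0\cdot C(W) + 0\cdot u(W) + ?$, so one reads off the correct identity $\delta_1(W) d_0 = \delta_1^{M_1}\!\!\cdot\text{(none)}$; I would carefully recompute each slot rather than guess. The substantive identities are: (i) the $(2,1)$ entry, which should read
$$C(W) u_0 + u(W) d_0 + \delta_2(W)\delta_1^{M_0} = d_1 u(W) + u_1 C(W) + \delta_2^{M_1}\delta_1(W),$$
(ii) the $(2,3)$ entry relating $\delta_2(W)$, $C(W)$ and $\delta_2^{M_0},\delta_2^{M_1}$, and (iii) the $(1,3)$ and $(3,3)$ entries, which are simpler. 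For each, the proof is the same geometric mechanism as in Proposition \ref{Prop3.6}: take the relevant moduli space $\mathscr{M}(W;\alpha_+,\alpha_-)/S^1$ of dimension $1$ (or, for the $u(W)$-type terms, dimension $2$ cut down by a regular fiber of $h_{W;\alpha_+\alpha_-}$ to dimension $1$), observe it is a compact $1$-manifold, and enumerate its codimension-one boundary faces. The boundary strata are of three kinds — breaking at a critical orbit of $\widetilde f_0$ on the incoming end, breaking at a critical orbit of $\widetilde f_1$ on the outgoing end, and breaking through the fixed orbit $\Theta_0$ or $\Theta_1$ — and matching these strata against the definitions of $C(W), u(W), \delta_i(W)$ and $d, u, \delta_i$ yields exactly the desired algebraic identity mod $2$. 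The degree-counting at the start of Subsection \ref{Sub3.2} guarantees that only the listed strata have the right dimension to contribute and that spurious strata (e.g.\ breaking through $\Theta$ with negative-dimensional pieces) are empty.

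The main obstacle is the $u(W)$-entry, i.e.\ identity (i) above. Here one must handle the degree-one cohomological weighting: $u(W)$ and $u$ are defined by pairing the $S^1$-equivariant evaluation map $h$ against the generator $1 \in H^1(S^1;\mathbf F_2)$, so the boundary-counting argument must be run on $h_{W;\alpha_+\alpha_-}^{-1}(p)$ for a regular value $p$, not on the whole moduli space, and one must verify that fibered-product boundary faces contribute $\deg_2$ of the restricted evaluation map with the correct multiplicities — exactly the subtlety already navigated in Proposition \ref{Prop3.6}, now with the extra wrinkle that compositions $C(W)\circ u_0$, $u_1 \circ C(W)$, and $\delta_2(W)\circ\delta_1^{M_0}$, $\delta_2^{M_1}\circ\delta_1(W)$ all appear simultaneously and must be matched to the four combinatorial types of boundary stratum (two "mixed" breakings at a free orbit, one breaking through $\Theta_0$, one through $\Theta_1$). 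Once the boundary stratification of the $2$-dimensional moduli space over $W$ is written out and each stratum is identified with one term, the identity falls out; I expect this to be routine but notationally heavy, and I would defer, as the author does in the Remark after Proposition \ref{Prop3.6}, the gluing-theoretic justification that these strata really are the codimension-one boundary.
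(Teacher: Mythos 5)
Your plan follows the paper's proof essentially verbatim: expand $\widetilde{d}_1\widetilde{C}(W)=\widetilde{C}(W)\widetilde{d}_0$ entrywise to obtain the four identities \eqref{eq:3.9}--\eqref{eq:3.12}, prove the diagonal identity by the standard even-boundary-count on the $1$-dimensional moduli space $\mathscr{M}(W;\alpha_+,\alpha_-)/S^1$, prove the mixed $u(W)$-identity by cutting down a $2$-dimensional moduli space along a regular fiber $h_{W;\alpha_+\alpha_-}^{-1}(p)$ and enumerating the codimension-one strata (the argument of Proposition~\ref{Prop3.11}), and handle the $\Theta$-identities by a variant of the argument in Proposition~\ref{Prop3.5}. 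Your identity (i) is exactly \eqref{eq:3.10}, and your identification of the boundary stratification --- two free-orbit breakings on each side, one breaking through $\Theta_0$, one through $\Theta_1$ --- matches the paper's enumeration.

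One bookkeeping slip you should fix before executing the plan: your parsing of the $(3,1)$ slot is off. Because the $(3,3)$ entry of $\widetilde{C}(W)$ is $1$ (not $0$), the product $\widetilde{C}(W)\widetilde{d}_0$ picks up a contribution $1\cdot\delta^0_1$ in position $(3,1)$, and the product $\widetilde{d}_1\widetilde{C}(W)$ picks up $\delta^1_2\cdot 1$ in position $(2,3)$. So the two $\Theta$-identities are $\delta^1_1 C(W) = \delta_1(W)d^0 + \delta^0_1$ and $d^1\delta_2(W)+\delta^1_2 = C(W)\delta^0_2$ --- compare \eqref{eq:3.11}--\eqref{eq:3.12} --- rather than anything of the shape ``$\delta_1(W)d^0 = 0$.'' Geometrically these extra constant terms $\delta^0_1,\delta^1_2$ are not spurious: they arise from boundary strata of $\mathscr{M}(W;\alpha_+,\Theta_1)/S^1$ and $\mathscr{M}(W;\Theta_0,\alpha_-)/S^1$ where the flow escapes to $\pm\infty$ in the cylindrical end through the \emph{fixed} orbit before/after passing over $W$, and they are essential to closing the mod $2$ count. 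Your plan already flags the slot as uncertain, which is good, but without the $1$ in position $(3,3)$ being correctly propagated, the boundary enumeration in Proposition~\ref{Prop3.10}'s analogue would not balance. With that correction, the rest of your outline --- including the dimension-counting argument that forces the intermediate orbits $\gamma,\gamma'$ in the breaking strata to be \emph{free}, which the paper spells out just before Proposition~\ref{Prop3.9} --- is the paper's argument.
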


Denote $(d^i, u^i, \delta^i_1, \delta^i_2)_{i=0,1}$ be the maps associated with the $S^1$-equivariant complex of $(M_i, f_i)$ constructed in the previous subsection. For $\widetilde{C}(W)$ to be a chain map, we must show the following identities
\begin{align}
    d^1 C(W) - C(W) d^0 & = 0. \label{eq:3.9}\\
    d^1 u(W) - u(W) d^0 + u^1 C(W) - C(W) u^0 + \delta^1_2 \delta_1(W) - \delta_2(W) \delta^0_1 & = 0. \label{eq:3.10}\\
    d^1 \delta_2(W) + \delta^1_2 - C(W) \delta^0_2 & = 0. \label{eq:3.11}\\
    \delta^1_1 C(W) - \delta_1(W) d^0 - \delta^0_1 &= 0. \label{eq:3.12}
\end{align}

We denote $k = \dim M_0 - \dim W$. For any pair of critical orbits $\alpha_{\pm}$ of $\widetilde{f_0}, \widetilde{f_1}$, respectively, the moduli space $\mathscr{M}(W; \alpha_+, \alpha_-)/S^1$ is a compact manifold with boundary. If $\mu(\alpha_+, \alpha_-) = k + i$ and $i$ is some non-negative integer, then as we have seen before, $\dim \mathscr{M}(W; \alpha_+, \alpha_-)_{i+1}/S^1 = i$. A boundary point of $\mathscr{M}(W; \alpha_+, \alpha_-)_{i+1}/S^1$ belongs to the following co-dimension $1$ stratum of one of the below types
$$ \displaystyle \bigcup_{d \in \Z} \bigcup_{\gamma \text{ critical orbit of } \widetilde{f_0}} (\Breve{\mathcal{M}}^+(\alpha_+, \gamma)_{d+1} \times_\gamma \mathscr{M}(W; \gamma, \alpha_-)_{i-d})/S^1,$$
$$\displaystyle \bigcup_{d \in \Z} \bigcup_{\gamma \text{ critical orbit of } \widetilde{f_0}} (\Breve{\mathcal{M}}^+(\alpha_+, \gamma)_{i-d} \times_\gamma \mathscr{M}(W; \gamma, \alpha_-)_{d+1})/S^1,$$
$$\displaystyle \bigcup_{d\in \Z} \bigcup_{\gamma' \text{ critical orbit of }\widetilde{f_1}} (\mathscr{M}(W; \alpha_+, \gamma')_{d+1} \times_{\gamma'} \Breve{\mathcal{M}}^+(\gamma', \alpha_-)_{i-d})/S^1,$$
$$\displaystyle \bigcup_{d\in \Z} \bigcup_{\gamma' \text{ critical orbit of }\widetilde{f_1}} (\mathscr{M}(W; \alpha_+, \gamma')_{i-d} \times_{\gamma'} \Breve{\mathcal{M}}^+(\gamma', \alpha_-)_{d+1})/S^1,$$
$$\displaystyle \bigcup_{d_1 + d_2 = i} \Breve{\mathcal{M}}^+(\alpha_+, \Theta_1)_{d_1}/S^1 \times S^1 \times \mathscr{M}(W;\Theta_1, \alpha_-)_{d_2}/S^1,$$
$$\displaystyle \bigcup_{d_1 +d_2 = i} \mathscr{M}(W; \alpha_+, \Theta_2)_{d_1}/S^1 \times S^1 \times \Breve{\mathcal{M}}^{+}(\Theta_2, \alpha_-)_{d_2}/S^1.$$
The last two strata are derived from standard gluing theory. For example, if $i = 1$ and $\alpha_{\pm}$ is a free critical orbit, then for dimension reason, $d = 0$. Note that there is not enough dimension to pass through either of the invariant critical orbits. Furthermore, when $\gamma$ is a critical orbit of $\widetilde{f_0}$ such that $ \Breve{\mathcal{M}}^+(\alpha_+,\gamma)_1/S^1$ and $\mathscr{M}(W;\gamma,\alpha_-)/S^1$ have dimension zero, we must have
$$\mu(\alpha_+,\gamma) = 1, \quad \mu(\gamma, \alpha_-) = k + 1 - \dim S^1/Stab^{S^1}_\gamma.$$
As a result, $k + 1 = \mu(\alpha_+, \alpha_-) = k+2 - \dim S^1/Stab^{S^1}_{\gamma}$. This relation can only hold when $\gamma$ is a free critical orbit of $\widetilde{f_0}$. Similarly, we can argue that $\gamma'$ must also be a free critical orbit of $\widetilde{f_1}$ if $\dim \mathscr{M}(W; \alpha_+,\gamma')/S^1 = \dim \Breve{\mathcal{M}}^+(\gamma',\alpha_-)/S^1 = 0$. Therefore, the $\mod 2$ count of the boundary points of $\mathscr{M}(W; \alpha_+, \alpha_-)_2/S^1$ is given by
$$\displaystyle \underbrace{\sum_{\gamma} \#_{\mathbf{F}_2} \Breve{\mathcal{M}}(\alpha_+,\gamma)/S^1\cdot \#_{\mathbf{F}_2}\mathscr{M}(W; \gamma, \alpha_-)/S^1}_{\text{\normalfont $\la C(W)d^0(\alpha_+),\alpha_-\ra$}} + \underbrace{\sum_{\gamma'}  \#_{\mathbf{F}_2}\mathscr{M}(W; \alpha_+, \gamma')/S^1 \cdot \Breve{\mathcal{M}}(\gamma',\alpha_-)/S^1}_{\text{\normalfont $\la d^1C(W)(\alpha_+),\alpha_-\ra$}}.$$
Since there is always an even number of boundary points of $1$-dimensional compact manifold, the above proves exactly \eqref{eq:3.9}. We sum up this discussion in the form of the following proposition.

\begin{Prop}\label{Prop3.9}
   $d^1C(W) - C(W)d^0 = 0$.
\end{Prop}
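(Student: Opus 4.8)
The plan is to prove \eqref{eq:3.9} by the standard ``boundary of a compact one-dimensional moduli space'' argument, applied to the pull-up-push-down spaces $\mathscr{M}(W;\alpha_+,\alpha_-)$. Set $k=\dim M_0-\dim W$. I would fix a pair of \emph{free} critical orbits $\alpha_+$ of $\widetilde{f_0}$ and $\alpha_-$ of $\widetilde{f_1}$ with $\mu(\alpha_+,\alpha_-)=k+1$; this is exactly the index gap for which $\la(d^1C(W)-C(W)d^0)\alpha_+,\alpha_-\ra$ is the matrix entry that has to be shown to vanish, and it is also the gap for which the dimension formula recorded above gives $\dim\mathscr{M}(W;\alpha_+,\alpha_-)/S^1=1$. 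Since one of $\alpha_\pm$ is free, the $S^1$-action on $\mathscr{M}(W;\alpha_+,\alpha_-)$ is free (the analogue of Lemma \ref{Lem3.3} already noted above), so the quotient is an honest smooth $1$-manifold; granting the compactness-and-gluing package alluded to in the surrounding text, $\mathscr{M}(W;\alpha_+,\alpha_-)_2/S^1$ compactifies to a compact $1$-manifold with boundary whose boundary strata are those displayed before the statement, specialized to $i=1$ (so only $d=0$ contributes).

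The next step is to prune the list of codimension-one faces with a dimension count. The candidates are the two ``interior'' faces $(\Breve{\mathcal{M}}^+(\alpha_+,\gamma)_1\times_\gamma\mathscr{M}(W;\gamma,\alpha_-))/S^1$ over critical orbits $\gamma$ of $\widetilde{f_0}$ and $(\mathscr{M}(W;\alpha_+,\gamma')\times_{\gamma'}\Breve{\mathcal{M}}^+(\gamma',\alpha_-)_1)/S^1$ over critical orbits $\gamma'$ of $\widetilde{f_1}$, together with the two ``$\Theta$-strata'' routed through the invariant orbits. For an interior face to be nonempty and zero-dimensional one needs $\mu(\alpha_+,\gamma)=1$ and $\mu(\gamma,\alpha_-)=k+1-\dim S^1/Stab^{S^1}_\gamma$; adding these and using $\mu(\alpha_+,\gamma)+\mu(\gamma,\alpha_-)=\mu(\alpha_+,\alpha_-)=k+1$ forces $\dim S^1/Stab^{S^1}_\gamma=1$, i.e.\ $\gamma$ is free and in particular $\gamma\neq\Theta_0$; the mirror argument forces $\gamma'$ free, $\gamma'\neq\Theta_1$. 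For the same reason the $\Theta$-strata are empty: passing through $\Theta_0$ would require $\mu(\alpha_+,\Theta_0)\geq 1$ from the $\widetilde{f_0}$-factor and $\mu(\Theta_0,\alpha_-)\geq k+1$ from the $W$-factor (the latter using the $-1$ correction in the dimension formula for moduli spaces whose incoming end is an invariant orbit), whence $\mu(\alpha_+,\alpha_-)\geq k+2$, contradicting $\mu(\alpha_+,\alpha_-)=k+1$; the $\Theta_1$-case is identical, using in addition the $\dim S^1$ correction for flow lines leaving an invariant orbit. So ``there is not enough dimension to pass through either invariant orbit'', exactly as asserted in the surrounding discussion.

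It then remains to match the two surviving boundary families with the matrix entries. Summing over free $\gamma$ with $\mu(\alpha_+,\gamma)=1$ and $\mu(\gamma,\alpha_-)=k$, the first family contributes $\sum_\gamma\#_{\mathbf{F}_2}\Breve{\mathcal{M}}(\alpha_+,\gamma)/S^1\cdot\#_{\mathbf{F}_2}\mathscr{M}(W;\gamma,\alpha_-)/S^1=\la C(W)d^0(\alpha_+),\alpha_-\ra$ by the definitions of $d^0$ and $C(W)$, and symmetrically the second family contributes $\la d^1C(W)(\alpha_+),\alpha_-\ra$. A compact $1$-manifold has an even number of boundary points, so the sum of these two mod-$2$ counts is $0$; hence $\la(d^1C(W)-C(W)d^0)\alpha_+,\alpha_-\ra=0$ in $\mathbf{F}_2$, and since this holds for every pair of free critical orbits and $d^0,d^1,C(W)$ are supported on the free part, \eqref{eq:3.9} follows. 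The step I expect to be the genuine obstacle is not this counting but the gluing/compactness input behind it: establishing that $\mathscr{M}(W;\alpha_+,\alpha_-)_2/S^1$ really is a compact $1$-manifold with precisely the boundary strata listed (and that configurations broken through the invariant orbits carry the extra $S^1$-gluing parameter used in the dimension count). In the finite-dimensional prototype this is classical --- the theory of broken gradient trajectories and transverse fiber products --- but it is exactly the technicality the remark following Proposition \ref{Prop3.6} proposes to set aside, and a fully self-contained treatment would verify the gluing charts near the reducible strata directly from the $S^1$-equivariance of the flow (Lemma \ref{Lem3.2}).
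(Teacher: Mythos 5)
Your argument is correct and is essentially identical to the paper's: both use the $\mathbf{F}_2$-parity of boundary points of the compact one-dimensional manifold $\mathscr{M}(W;\alpha_+,\alpha_-)_2/S^1$, prune the codimension-one strata by the same index computation to show the intermediate orbit must be free and the $\Theta$-strata are empty, and then match the two surviving boundary families with $\la C(W)d^0\alpha_+,\alpha_-\ra$ and $\la d^1C(W)\alpha_+,\alpha_-\ra$. The only difference is that you spell out the emptiness of the $\Theta$-strata more explicitly than the paper, which simply asserts there is ``not enough dimension''; this is a welcome elaboration, not a different route.
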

	
A similar argument as the above and in the proof of Proposition \ref{Prop3.5} will also show \eqref{eq:3.11} and \eqref{eq:3.12}.

\begin{Prop}\label{Prop3.10}
    $d^1 \delta_2(W) + \delta^1_2 - C(W) \delta^0_2  = 0$,  $\delta^1_1 C(W) - \delta_1(W) d^0 - \delta^0_1 = 0$.
\end{Prop}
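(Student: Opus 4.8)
The plan is to prove both identities by the boundary-counting scheme already used in Proposition~\ref{Prop3.5} and Proposition~\ref{Prop3.9}: each identity is the component of the chain-map relation $\widetilde{d}^1\widetilde{C}(W)=\widetilde{C}(W)\widetilde{d}^0$ obtained by reading off a single matrix entry (the $(2,3)$-entry for \eqref{eq:3.11}, the $(3,1)$-entry for \eqref{eq:3.12}), and to establish it one exhibits an auxiliary $1$-dimensional moduli space cut out in $W$, notes that it is a compact $1$-manifold with boundary, and matches the three families of boundary points with the three terms appearing in the identity. Throughout write $k=\dim M_0-\dim W$, and recall from the strata list preceding Proposition~\ref{Prop3.9} the three geometric ways such a space can degenerate: breaking a gradient trajectory of $\widetilde{f_0}$ off the incoming end, breaking a gradient trajectory of $\widetilde{f_1}$ off the outgoing end, or limiting onto an $S^1$-invariant critical orbit (which carries an extra $S^1$ gluing parameter).

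For \eqref{eq:3.11} I would fix a free critical orbit $\alpha_-$ of $\widetilde{f_1}$ with $\mu(\Theta_0,\alpha_-)=k+2$, so that $\mathscr{M}(W;\Theta_0,\alpha_-)/S^1$ is a compact $1$-manifold with boundary. A degeneration of the first type is $\bigl(\Breve{\mathcal{M}}^+(\Theta_0,\gamma)\times_\gamma \mathscr{M}(W;\gamma,\alpha_-)\bigr)/S^1$; the dimension count forces $\gamma$ to be a free critical orbit of $\widetilde{f_0}$ with $\mu(\Theta_0,\gamma)=2$, both factors becoming $0$-dimensional, and summing over $\gamma$ produces $\langle C(W)\delta^0_2(\Theta_0),\alpha_-\rangle$. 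A degeneration of the second type is $\bigl(\mathscr{M}(W;\Theta_0,\gamma')\times_{\gamma'}\Breve{\mathcal{M}}^+(\gamma',\alpha_-)\bigr)/S^1$ with $\gamma'$ free, $\mu(\Theta_0,\gamma')=k+1$, giving $\langle d^1\delta_2(W)(\Theta_0),\alpha_-\rangle$. The only $S^1$-invariant orbit that can occur on the outgoing side is $\Theta_1$, and the corresponding gluing stratum is $\mathscr{M}(W;\Theta_0,\Theta_1)/S^1\times S^1\times \Breve{\mathcal{M}}^+(\Theta_1,\alpha_-)/S^1$; for dimension reasons the $W$-factor is the $0$-dimensional moduli computing the $(3,3)$-entry of $\widetilde{C}(W)$ in \eqref{eq:3.8}, which is normalized to $1$, while the $\widetilde{f_1}$-factor is precisely what $\delta^1_2$ counts, so this stratum contributes $\langle\delta^1_2(\Theta_0),\alpha_-\rangle$. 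Since a compact $1$-manifold has an even number of boundary points, the sum of these three contributions vanishes mod $2$, which is \eqref{eq:3.11}.

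For \eqref{eq:3.12} the argument is symmetric: fix a free critical orbit $\alpha_+$ of $\widetilde{f_0}$ with $\mu(\alpha_+,\Theta_1)=k+1$, so that $\mathscr{M}(W;\alpha_+,\Theta_1)/S^1$ is a compact $1$-manifold. Breaking a $\widetilde{f_0}$-trajectory at a free $\gamma$ gives $\langle\delta_1(W)d^0(\alpha_+),\Theta_1\rangle$; breaking a $\widetilde{f_1}$-trajectory at a free $\gamma'$ gives $\langle\delta^1_1 C(W)(\alpha_+),\Theta_1\rangle$; and the gluing stratum through the $S^1$-invariant orbit $\Theta_0$, namely $\Breve{\mathcal{M}}^+(\alpha_+,\Theta_0)/S^1\times S^1\times \mathscr{M}(W;\Theta_0,\Theta_1)/S^1$, has its $W$-factor again equal to the normalization $1$ and its $\widetilde{f_0}$-factor counted by $\delta^0_1$, contributing $\langle\delta^0_1(\alpha_+),\Theta_1\rangle$. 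Setting the mod-$2$ sum to zero yields $\delta^1_1 C(W)-\delta_1(W)d^0-\delta^0_1=0$.

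The main obstacle is the gluing analysis at the $S^1$-fixed orbits $\Theta_0,\Theta_1$: one must verify that the codimension-$1$ stratum limiting onto $\Theta_i$ genuinely carries the claimed extra $S^1$ gluing factor, that the $W$-side of such a broken configuration is accounted for exactly by the scalar "$1$" recorded in \eqref{eq:3.8} and not by some other count, and that all of the cut-down moduli spaces above can be made transverse so that they are manifolds with corners of the predicted dimension. As flagged in the remark following Proposition~\ref{Prop3.6}, a fully rigorous treatment needs the relevant $S^1$-equivariant gluing theory, which we defer to the gauge-theoretic adaptation; at the level of this finite-dimensional prototype the combinatorics is strictly parallel to that of Propositions~\ref{Prop3.5} and \ref{Prop3.9}.
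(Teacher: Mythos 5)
Your proof takes exactly the approach the paper intends: the paper's own proof of Proposition~\ref{Prop3.10} is a one-line deferral to the boundary-counting scheme of Propositions~\ref{Prop3.5} and~\ref{Prop3.9}, and you have correctly carried it out. You identify the right $(2,3)$- and $(3,1)$-entries of the chain-map identity, pick the correct auxiliary moduli spaces $\mathscr{M}(W;\Theta_0,\alpha_-)/S^1$ and $\mathscr{M}(W;\alpha_+,\Theta_1)/S^1$ of dimension one, and your index bookkeeping for the two ``breaking'' strata (recovering $C(W)\delta^0_2$, $d^1\delta_2(W)$ and $\delta_1(W)d^0$, $\delta^1_1 C(W)$ respectively) checks out against the dimension formulas in Subsections~\ref{Sub3.1} and~\ref{Sub3.2}.

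One small imprecision worth flagging: for the stratum passing through the fixed orbit, you write $\mathscr{M}(W;\Theta_0,\Theta_1)/S^1 \times S^1 \times \Breve{\mathcal{M}}^+(\Theta_1,\alpha_-)/S^1$, mimicking the ``through-$\Theta$'' strata from the list preceding Proposition~\ref{Prop3.9}. That $A/S^1\times S^1\times B/S^1$ decomposition of $(A\times_\Theta B)/S^1$ is valid when $S^1$ acts freely on both factors, which is the situation in Propositions~\ref{Prop3.6} and~\ref{Prop3.11} where the $W$-side factor has a free incoming orbit. Here, however, the $W$-side factor $\mathscr{M}(W;\Theta_0,\Theta_1)$ joins two $S^1$-fixed orbits, so the stratum is simply $\mathscr{M}(W;\Theta_0,\Theta_1)\times \Breve{\mathcal{M}}^+(\Theta_1,\alpha_-)/S^1$ (dimension $0+0=0$, as it must be for a codimension-one face of a $1$-manifold), with no extra $S^1$ gluing circle. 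The mod-$2$ count is unaffected — it still contributes $\la\delta^1_2\cdot 1(\Theta_0),\alpha_-\ra$ — but as written your stratum would be $1$-dimensional, which is inconsistent. Your explicit acknowledgment of the unresolved gluing rigor at the fixed orbits matches the paper's own caveat following Proposition~\ref{Prop3.6}.
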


The proof of Theorem \ref{Th3.8} will be complete if we can show the following proposition.

\begin{Prop}\label{Prop3.11}
    $d^1 u(W) - u(W) d^0 + u^1 C(W) - C(W) u^0 + \delta^1_2 \delta_1(W) - \delta_2(W) \delta^0_1 =0$.
\end{Prop}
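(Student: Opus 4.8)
The plan is to run the same compact--$1$--manifold boundary count used for $d^{2}=0$ and in the proofs of Propositions \ref{Prop3.5} and \ref{Prop3.6}, now for a two--dimensional $S^{1}$--quotient of a mixed moduli space built out of $W$. Put $k=\dim M_0-\dim W$. A degree count (using $\deg C(W)=-k$, $\deg u(W)=-k-1$, $\deg\delta_1(W)=-k$, $\deg\delta_2(W)=-k-1$, $\deg d^{i}=-1$, $\deg u^{i}=-2$, and the degrees of $\delta^{0}_{1},\delta^{1}_{2}$) shows that each of the six operators $d^{1}u(W)$, $u(W)d^{0}$, $u^{1}C(W)$, $C(W)u^{0}$, $\delta^{1}_{2}\delta_1(W)$, $\delta_2(W)\delta^{0}_{1}$ has degree $-k-2$, so it suffices to fix free critical orbits $\alpha_+$ of $\widetilde{f_0}$ and $\alpha_-$ of $\widetilde{f_1}$ with $\mu(\alpha_+)-\mu(\alpha_-)=k+2$ and prove
$$\la\big(d^{1}u(W)+u(W)d^{0}+u^{1}C(W)+C(W)u^{0}+\delta^{1}_{2}\delta_1(W)+\delta_2(W)\delta^{0}_{1}\big)\alpha_+,\ \alpha_-\ra=0\in\mathbf{F}_2,$$
over $\mathbf{F}_2$ the signs in the statement being irrelevant. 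For such $\alpha_\pm$ the space $\mathscr{M}(W;\alpha_+,\alpha_-)/S^{1}$ is a compact $2$--manifold with boundary and corners; I would take the evaluation map $\widetilde{h}_{W;\alpha_+\alpha_-}\colon \mathscr{M}(W;\alpha_+,\alpha_-)/S^{1}\to S^{1}$ of Subsection \ref{Sub3.2} together with a regular value $p\in S^{1}$, so that $\widetilde{h}_{W;\alpha_+\alpha_-}^{-1}(p)$ is a compact $1$--manifold with boundary and $\#\,\partial\,\widetilde{h}_{W;\alpha_+\alpha_-}^{-1}(p)\equiv 0\pmod 2$. The entire proof then consists of identifying this boundary count with the left--hand side above.

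The essential bookkeeping point, exactly as in Proposition \ref{Prop3.6}, is the additivity of the evaluation map under breaking: on a boundary face obtained by splitting off a flow line at a \emph{free} intermediate orbit $\gamma$ of $\widetilde{f_0}$ (resp.\ $\gamma'$ of $\widetilde{f_1}$) one has $\widetilde{h}_{W;\alpha_+\alpha_-}=\widetilde{h}_{\alpha_+\gamma}+\widetilde{h}_{W;\gamma\alpha_-}$ (resp.\ $=\widetilde{h}_{W;\alpha_+\gamma'}+\widetilde{h}_{\gamma'\alpha_-}$) in $(S^{1}\times S^{1})/S^{1}\cong S^{1}$, since the phases telescope through the intermediate orbit; and whenever one of the two factors of a face is $0$--dimensional, the evaluation map restricted to it is constant and may be normalized to $1$, so that on that face $\widetilde{h}_{W;\alpha_+\alpha_-}^{-1}(p)$ reduces to the preimage of a regular value of the evaluation map on the remaining $1$--dimensional factor, whose mod--$2$ cardinality is by definition the $\deg_2$ appearing in the corresponding matrix entry ($u^{0}$, $u^{1}$, or $u(W)$).

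I would then list the codimension--$1$ faces of $\partial\big(\mathscr{M}(W;\alpha_+,\alpha_-)/S^{1}\big)$ — which, specializing the general description in Subsection \ref{Sub3.2} to the present case, are of exactly six kinds — and read off the contribution of each. Breaking at the $\alpha_+$ end along a free $\gamma$ of $\widetilde{f_0}$ is only possible for $\mu(\alpha_+,\gamma)\in\{1,2\}$: the case $\mu(\alpha_+,\gamma)=1$ gives the face $\big(\Breve{\mathcal{M}}^+(\alpha_+,\gamma)_1\times_\gamma\mathscr{M}(W;\gamma,\alpha_-)_2\big)/S^{1}$ and contributes $\#_{\mathbf{F}_2}\Breve{\mathcal{M}}(\alpha_+,\gamma)/S^{1}\cdot\deg_2\widetilde{h}_{W;\gamma\alpha_-}$, summing over $\gamma$ to $\la u(W)d^{0}\alpha_+,\alpha_-\ra$; the case $\mu(\alpha_+,\gamma)=2$ gives $\deg_2\widetilde{h}_{\alpha_+\gamma}\cdot\#_{\mathbf{F}_2}\mathscr{M}(W;\gamma,\alpha_-)/S^{1}$, i.e.\ $\la C(W)u^{0}\alpha_+,\alpha_-\ra$. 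Symmetrically, breaking at the $\alpha_-$ end along a free $\gamma'$ of $\widetilde{f_1}$ is possible for $\mu(\gamma',\alpha_-)\in\{1,2\}$ and yields $\la d^{1}u(W)\alpha_+,\alpha_-\ra$ and $\la u^{1}C(W)\alpha_+,\alpha_-\ra$ respectively. Finally there are the two faces on which the broken configuration passes through an invariant critical orbit: $\Breve{\mathcal{M}}^+(\alpha_+,\Theta_0)/S^{1}\times S^{1}\times\mathscr{M}(W;\Theta_0,\alpha_-)/S^{1}$ (forcing $\mu(\alpha_+,\Theta_0)=1$, $\mu(\Theta_0,\alpha_-)=k+1$, both end factors $0$--dimensional) and $\mathscr{M}(W;\alpha_+,\Theta_1)/S^{1}\times S^{1}\times\Breve{\mathcal{M}}^+(\Theta_1,\alpha_-)/S^{1}$ (forcing $\mu(\alpha_+,\Theta_1)=k$, $\mu(\Theta_1,\alpha_-)=2$). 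On each of these the middle $S^{1}$ is a residual circle of gluing parameters and $\widetilde{h}_{W;\alpha_+\alpha_-}$ restricts to a translate of the projection onto it, so its preimage of $p$ meets the face in exactly $\#_{\mathbf{F}_2}\Breve{\mathcal{M}}^+(\alpha_+,\Theta_0)/S^{1}\cdot\#_{\mathbf{F}_2}\mathscr{M}(W;\Theta_0,\alpha_-)/S^{1}=\la\delta_2(W)\delta^{0}_{1}\alpha_+,\alpha_-\ra$ points, respectively $\#_{\mathbf{F}_2}\mathscr{M}(W;\alpha_+,\Theta_1)/S^{1}\cdot\#_{\mathbf{F}_2}\Breve{\mathcal{M}}^+(\Theta_1,\alpha_-)/S^{1}=\la\delta^{1}_{2}\delta_1(W)\alpha_+,\alpha_-\ra$ points. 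A dimension count as in Subsection \ref{Sub3.2} shows that no codimension--$1$ face passes through both $\Theta_0$ and $\Theta_1$ and that a splitting through $\Theta_0$ or $\Theta_1$ without the extra circle has negative expected dimension, so these six faces exhaust $\partial\big(\mathscr{M}(W;\alpha_+,\alpha_-)/S^{1}\big)$. Adding the six contributions and using $\#\,\partial\,\widetilde{h}_{W;\alpha_+\alpha_-}^{-1}(p)\equiv 0\pmod 2$ gives the displayed identity for arbitrary $\alpha_\pm$, hence the operator identity; together with Propositions \ref{Prop3.9} and \ref{Prop3.10} this completes the proof of Theorem \ref{Th3.8}.

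The steps I expect to be the real obstacle are precisely the ones the paper defers in the Remark following Proposition \ref{Prop3.6}. The first is the gluing analysis at the invariant orbits $\Theta_0$ and $\Theta_1$: one must show that the relevant boundary face genuinely is the product of the two quotiented end moduli with an extra free circle of gluing parameters, and that $\widetilde{h}_{W;\alpha_+\alpha_-}$ restricts to a translate of the projection onto that circle — this is what makes the two $\delta$--terms appear with multiplicity one. The second is the verification that, after summing over all faces and all intermediate orbits, the codimension--$2$ corners — where the $1$--dimensional pieces $\Breve{\mathcal{M}}^+(\alpha_+,\gamma)_2/S^{1}$, $\mathscr{M}(W;\gamma,\alpha_-)_2/S^{1}$, $\mathscr{M}(W;\alpha_+,\gamma')_2/S^{1}$, $\Breve{\mathcal{M}}^+(\gamma',\alpha_-)_2/S^{1}$ themselves have boundary — cancel in pairs, so that the mod--$2$ degrees $\deg_2$ used above are well defined; this is the usual ``ends of a $1$--manifold come in pairs'' argument applied one dimension up, but it has to be set up carefully within the gluing framework that the paper promises to supply when it adapts this prototype to gauge theory. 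Everything else is routine.
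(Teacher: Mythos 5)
Your proof follows essentially the same approach as the paper's: you take the $2$-dimensional quotient $\mathscr{M}(W;\alpha_+,\alpha_-)/S^1$, pull back a regular value of the evaluation map to a compact $1$-manifold with boundary, enumerate the same six codimension-$1$ faces (four free breakings giving the $u$-terms and $C(W)$/$d$-terms, two breakings through $\Theta_0$ and $\Theta_1$ with a residual gluing circle giving the $\delta$-terms), and invoke the even boundary count over $\mathbf{F}_2$. Your bookkeeping and dimension constraints match those in the paper; you are also right, and the paper's own remark after Proposition \ref{Prop3.6} agrees, that the gluing analysis justifying the two $\Theta$-strata and the factorization of the evaluation map is what remains to be supplied.
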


\begin{proof}
    Set $i = 2$ and consider the moduli space $\mathscr{M}(W; \alpha_+, \alpha_-)_{3}/S^1$, where $\alpha_\pm$ is any pair of free critical orbits of $\widetilde{f_0}, \widetilde{f_1}$, respectively. Let $p$ be a regular value of $S^1$. Then $h^{-1}_{W; \alpha_+, \alpha_-}(p)$ is a compact $1$-dimensional submanifold of $\mathscr{M}(W; \alpha_+, \alpha_-)_3/S^1$ with boundary. A different $\mod 2$ count of the boundary points of $h^{-1}_{W;\alpha_+,\alpha_-}(p)$ will give us exactly the desired relation. In other words, we have to count the boundary points of $\mathscr{M}(W; \alpha_+,\alpha_-)_3/S^1$ whose $h_{W;\alpha_+,\alpha_-}$-values are $p$. Note that for a dimensional reason, $d$ can only be either $0$ or $1$.

    Firstly, suppose that $d=0$. Suppose $\gamma$ is a critical orbit of $\widetilde{f_0}$ such that $\Breve{\mathcal{M}}^+(\alpha_+, \gamma)_1/S^1$ has dimension $0$ and $\mathscr{M}(W; \gamma, \alpha_-)_2/S^1$ has dimension $1$. Then
    $$1= \mu(\alpha_+,\gamma), \quad 2 = -k + \dim S^1/Stab^{S^1}_{\gamma} + \mu(\gamma, \alpha_-).$$
    The two equalities above imply that $3 = \dim S^1/Stab^{S^1}_\gamma + 2$, which can only be true when $\gamma$ is a free orbit. Let $(\mathfrak{a}_1, \mathfrak{a}_2)$ be a boundary point belonging to $(\Breve{\mathcal{M}}^+(\alpha_+, \gamma)_1 \times \mathscr{M}(W;\gamma, \alpha_-)_2)/S^1$ such that $h_{W;\alpha_+,\alpha_-}(\mathfrak{a}_1,\mathfrak{a}_2) = p$. Since $\widetilde{h}^{1}_{\alpha_+ \gamma}\equiv 1$, $h_{W; \gamma, \alpha_-}(\mathfrak{a}_2) = p$.  The $\mod 2$ count of all such boundary points summing over all free critical orbit $\gamma$ such that $\mu(\alpha_+,\gamma)=1$ and $\mu(\gamma,\alpha_-) = k+1$ is given by
    $$\displaystyle \underbrace{\sum_{\substack{\gamma \text{ free critical orbit of } \widetilde{f_0}\\ \mu(\alpha_+,\gamma)=1, \mu(\gamma, \alpha_-) = k+1}} \#_{\mathbf{F}_2} \Breve{\mathcal{M}}(\alpha_+,\gamma)_1/S^1 \cdot \deg_2(h_{W;\gamma,\alpha_-})}_{\text{\normalfont $\la u(W)d^0(\alpha_+),\alpha_-\ra$}}.$$
    If $\gamma$ is a critical orbit of $\widetilde{f_0}$ such that $\Breve{\mathcal{M}}^+(\alpha_+,\gamma)_2/S^1$ has dimension 1 and $\mathscr{M}(W;\gamma,\alpha_-)_1/S^1$ has dimension 0, then argue similarly, $\gamma$ must also be a free critical orbit. Furthermore, the $\mod 2$ count of all boundary points summing over all such $\gamma$ is given by
    $$\displaystyle \underbrace{\sum_{\substack{\gamma \text{ free critical orbit of } \widetilde{f_0}\\ \mu(\alpha_+,\gamma)=2, \mu(\gamma, \alpha_-) = k}}  \deg_2(h^0_{\alpha_+\gamma})\cdot \#_{\mathbf{F}_2}\mathscr{M}(W;\gamma,\alpha_-)}_{\text{\normalfont $\la C(W)u^0(\alpha_+),\alpha_-\ra$}}.$$
    With the same argument, we also obtain the $\mod 2$-count of pre-image of $p$ in each of the stratum $\bigcup_{\gamma'} (\mathscr{M}(W;\alpha_+, \gamma')_1 \times_{\gamma'} \Breve{\mathcal{M}}^+(\gamma',\alpha_-)_2)/S^1$ and $\bigcup_{\gamma'} (\mathscr{M}(W;\alpha_+,\gamma')_2 \times_{\gamma'} \Breve{\mathcal{M}}^+(\gamma',\alpha_-)_1)/S^1$, respectively, is given by
    \begin{align*}
        \la u^1C(W)\alpha_+,\alpha_-\ra & = \displaystyle \sum_{\substack{\gamma' \text{ free critical orbit of } \widetilde{f_1} \\ \mu(\alpha_+,\gamma')=k, \mu(\gamma',\alpha_-)=2}} \#_{\mathbf{F}_2} \mathscr{M}(W; \alpha_+, \gamma')_1/S^1 \cdot \deg_2(h^1_{\gamma' \alpha_-}),\\
        \la d^1 u(W) \alpha_+, \alpha_- \ra & = \displaystyle \sum_{\substack{\gamma' \text{ free critical orbit of }\widetilde{f_1}\\ \mu(\alpha_+,\gamma') = k+1, \mu(\gamma',\alpha_-) = 1}} \deg_2(h_{W;\alpha_+,\gamma'})\cdot \#_{\mathbf{F}_2}\Breve{\mathcal{M}}(\gamma',\alpha_-)_1/S^1.
    \end{align*}
    
    If $d=1$, we obtain the same type of boundary points as above and thus can be omitted. Next, we consider a boundary point of the type belonging to the stratum 
    $$\Breve{\mathcal{M}}^+(\alpha_+,\Theta_0)_{d_1}/S^1 \times S^1 \times \mathscr{M}(W;\Theta_0,\alpha_-)_{d_2}/S^1.$$
    Since $d_1 + d_2 = i = 2$ and $d_1, d_2 \geq 1$, $d_1 = d_2 = 1$. Without loss of generality, we assume that $h_{W; \alpha+,\alpha_-}$ restricted to the above stratum is just the projection onto the $S^1$-factor. As a result, the $\mod 2$-count of the pre-image of $p$ in this stratum is given by
    $$\la \delta_2(W)\delta^0_1(\alpha_+),\alpha_-\ra = \#_{\mathbf{F}_2} \Breve{\mathcal{M}}^+(\alpha_+, \Theta_1)_1/S^1 \cdot \#_{\mathbf{F}_2} \mathscr{M}(W;\Theta_1,\alpha_-)_1/S^1.$$
    Similarly, we can show that $\la \delta^1_2 \delta_1(W)(\alpha_+),\alpha_-\ra$ is the $\mod 2$-count of pre-image of $p$ in the stratum $\mathscr{M}(W;\alpha_+,\Theta_1)_1/S^1 \times S^1 \times \Breve{\mathcal{M}}^+(\Theta_1,\alpha_-)_1/S^1$. 
\end{proof}

\subsection{Well-definedness of the construction}\label{Sub3.3}
Recall that the construction of the $S^1$-equivariant complex $(\widetilde{C}(M, f), \widetilde{d})$ and the induced chain map $\widetilde{C}(W): \widetilde{C}(M_0, f_0) \to \widetilde{C}(M_1, f_1)$ in the previous subsections depends on the choice of representative of a critical orbit. Furthermore, in the proof Theorem \ref{Th3.4} and Theorem \ref{Th3.8}, we make the assumption (H) that $\widetilde{h}_{\alpha\beta} = h_{W; \alpha_+ \alpha_-} \equiv 1$ whenever $\mu(\alpha, \beta) = 1$ and $\mu(\alpha_+, \alpha_-) = k$. In this subsection, we will show that $\widetilde{C}$-complex is independent of the choices made, and (H) can be assumed for certain choices of representative elements of the corresponding critical orbits.

Let $\alpha, \beta$ be free critical orbits of $\widetilde{f}$. Note that by choosing  representatives $x, x'$ of $\alpha$ and $\beta$, respectively, we can identify $\alpha, \beta$ with $S^1$ via the $S^1$-equivariant maps $\phi_x(g\cdot x) = g$, $\phi_{x'}(g\cdot x') = g$. Then, there is a standard action of $S^1$ on $\alpha \times \beta$ induced by the multiplication of $S^1$ on each factor of $S^1 \times S^1$. Furthermore, $S^1 \curvearrowright \alpha \times \beta$ freely so that $(\alpha \times \beta)/S^1$ is without singularity. Consider the following map
$$\psi_{xx'}: \dfrac{\alpha \times \beta}{S^1} \cong \dfrac{S^1 \times S^1}{S^1} \to S^1, \quad \quad \psi_{xx'}[h,f] = h^{-1}f.$$
In terms of $\phi_x, \phi_{x'}$, we can write $\psi_{xx'}[z,z']  = (\phi_x(z))^{-1} \phi_{x'}(z')$.

\begin{Lemma}\label{Lem3.12}
    $\psi_{xx'}$ is well-defined. Furthermore, $\psi_{xx'}$ is a diffeomorphism.
\end{Lemma}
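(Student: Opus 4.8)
The plan is to reduce the statement to the elementary fact that on the abelian group $S^1$ the ``difference'' map $(h,f)\mapsto h^{-1}f$ is invariant under the diagonal multiplication action of $S^1$ on $S^1\times S^1$ and descends to a diffeomorphism $(S^1\times S^1)/S^1\xrightarrow{\sim}S^1$. First I would observe that, because $\alpha$ and $\beta$ are \emph{free} critical orbits, the orbit maps $g\mapsto g\cdot x$ and $g\mapsto g\cdot x'$ are $S^1$-equivariant diffeomorphisms $S^1\xrightarrow{\sim}\alpha$ and $S^1\xrightarrow{\sim}\beta$ (a smooth equivariant bijection onto an embedded orbit with trivial stabilizer has constant rank, hence is an immersion, hence, $S^1$ being compact, a diffeomorphism); their inverses are precisely $\phi_x$ and $\phi_{x'}$. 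Therefore $\phi_x\times\phi_{x'}$ is an $S^1$-equivariant diffeomorphism $\alpha\times\beta\xrightarrow{\sim}S^1\times S^1$ intertwining the two diagonal actions, and it induces a diffeomorphism of quotients $(\alpha\times\beta)/S^1\xrightarrow{\sim}(S^1\times S^1)/S^1$ under which $\psi_{xx'}$ becomes $[h,f]\mapsto h^{-1}f$. So it suffices to treat the latter map.

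For well-definedness, suppose $(h',f')=g\cdot(h,f)=(gh,gf)$ for some $g\in S^1$. Since $S^1$ is abelian, $(h')^{-1}f'=(gh)^{-1}(gf)=h^{-1}g^{-1}gf=h^{-1}f$, so the value is independent of the chosen representative; equivalently, $\psi_{xx'}[z,z']=(\phi_x(z))^{-1}\phi_{x'}(z')$ does not depend on the representative $(z,z')$ of its class in $(\alpha\times\beta)/S^1$. Smoothness of $\psi_{xx'}$ then follows because $(h,f)\mapsto h^{-1}f$ is smooth on $S^1\times S^1$ (multiplication and inversion are smooth), is constant on $S^1$-orbits, and the orbit projection $S^1\times S^1\to(S^1\times S^1)/S^1$ is a smooth submersion for the free action of the compact Lie group $S^1$; hence the induced map on the quotient is smooth.

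To finish I would exhibit a two-sided smooth inverse. Define $\chi\colon S^1\to(S^1\times S^1)/S^1$ by $\chi(f)=[1,f]$, which is smooth as the composite of $f\mapsto(1,f)$ with the orbit projection. Then $\psi_{xx'}(\chi(f))=1^{-1}f=f$, and $\chi(\psi_{xx'}[h,f])=[1,h^{-1}f]=[h\cdot 1,\,h\cdot(h^{-1}f)]=[h,f]$, where in the last equality we acted by $h\in S^1$. Thus $\chi=\psi_{xx'}^{-1}$, so $\psi_{xx'}$ is a diffeomorphism; transporting back through $\phi_x\times\phi_{x'}$ yields the claim in the original formulation. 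The argument is essentially formal, and the only point requiring a little care is that $(\alpha\times\beta)/S^1$ is a smooth manifold with the orbit map a submersion --- this is exactly where freeness of the $S^1$-action on $\alpha\times\beta$ (already noted in the text preceding the lemma) enters --- but this is standard for free actions of compact Lie groups and poses no real obstacle; everything else rests on commutativity of $S^1$.
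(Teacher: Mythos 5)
Your proof is correct and follows essentially the same route as the paper: both reduce to the elementary difference map $[h,f]\mapsto h^{-1}f$ on $(S^1\times S^1)/S^1$ and use abelianness of $S^1$ for well-definedness. The one genuine improvement in your write-up is the bijectivity step: the paper proves injectivity by a chain of algebraic manipulations ($h^{-1}f=h'^{-1}f'\Rightarrow h'f=hf'\Rightarrow[h,f]=[h',f']$) and surjectivity separately via $\psi_{xx'}[1,g]=g$, then waves away smoothness of the inverse with ``it's not hard to see''; you instead exhibit the explicit two-sided inverse $\chi(f)=[1,f]$ and verify both compositions, which gives bijectivity and the smooth inverse in one stroke, and you actually justify the smoothness claims (submersion property of the orbit map for a free compact group action) that the paper glosses over. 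Both arguments are sound; yours is a little tidier and more self-contained.
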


\begin{proof}
    Suppose $(h',f')$ is another representative of $[h,f]$. Then there is a $g \in S^1$ such that $h'=gh, f' = gf$. Observe that $h'^{-1} f' = h^{-1}g^{-1} g f = h^{-1} f$. This shows that the map $\psi_{xx'}$ is indeed well-defined.

    Note that for $[h',f'] = [h,f]$, there must be a $g \in S^1$ such that $gh=h', gf = f'$. Equivalently, this means that $h^{-1}h' = f^{-1}f'$. Using the fact that $S^1$ is an abelian group, we re-write the aforementioned condition as $h' f = h f'$. Now, suppose $\psi_{xx'}[h,f] = \psi_{xx'}[h',f']$, which means that $h^{-1}f = h'^{-1}f'$. We multiply both sides of the equation by $hh'$ and re-arrange to obtain $h'f = hf'$. By the previous observation, we must have $[h,f]=[h',f']$. As a result, $\psi_{xx'}$ is injective. On the other hand, for any $g \in S^1$, we always have $\psi_{xx'} [1,g] = g$. Hence, $\psi_{xx'}$ is also subjective. It's not hard to see that $\psi_{xx'}$ is smooth and has a smooth inverse. Therefore, $\psi_{xx'}$ is a diffeomorphism as claimed. 
\end{proof}

By Lemma \ref{Lem3.12}, for each representative element $x,x'$ of $\alpha, \beta$, respectively, we have a well-defined smooth map $\widetilde{h}^{xx'}_{\alpha\beta} : \Breve{\mathcal{M}}(\alpha, \beta)/S^1 \to S^1$ defined by 
$$\widetilde{h}^{xx'}_{\alpha \beta}[\gamma] = (\phi_x(e^-_{\alpha}(\gamma)))^{-1}\phi_{x'}(e^+_{\beta}(\gamma)).$$
Let $y$ be another representative of $\alpha$. There is a unique $h_{xy} \in S^1$ such that $x = h_{xy} y$. Denote $h_{xy}$ by the automorphism of $S^1$ defined by multiplication by $h_{xy}$. Similarly, let $y'$ be another representative of $\beta$ and we also obtain another automorphism $h_{x'y'}$ of $S^1$. Then we have the following commutative diagrams
\[
  \begin{tikzcd}
    \alpha \arrow{r}{\phi_x} \arrow[swap]{dr}{\phi_y} & S^1 \arrow{d}{h_{xy}} \\
     & S^1
  \end{tikzcd} \quad \begin{tikzcd}
    \beta \arrow{r}{\phi_{x'}} \arrow[swap]{dr}{\phi_{y'}} & S^1 \arrow{d}{h_{x'y'}} \\
     & S^1
  \end{tikzcd} \quad \begin{tikzcd}
    \Breve{\mathcal{M}}(\alpha,\beta)/S^1 \arrow{r}{\widetilde{h}^{xx'}_{\alpha \beta}} \arrow[swap]{dr}{\widetilde{h}^{yy'}_{\alpha\beta}} & S^1 \arrow{d}{h^{-1}_{xy}h_{x'y'}} \\
     & S^1
  \end{tikzcd}
\]
The above discussion also extends to the compactification $\Breve{\mathcal{M}}^+(\alpha,\beta)/S^1$.

\begin{Lemma}\label{Lem3.13}
    We always obtain the same $u$-map for $\widetilde{C}(M,f)$ regardless of the choice of representatives of any pair of free critical orbits $(\alpha, \beta)$ of $\widetilde{f}$, where $\mu(\alpha,\beta) = 2$. 
\end{Lemma}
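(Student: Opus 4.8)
The plan is to reduce the claim to the elementary fact that the mod-$2$ degree of a map to $S^1$ is unchanged by post-composition with a rotation. Recall that for a pair $(\alpha,\beta)$ of free critical orbits with $\mu(\alpha,\beta)=2$, the matrix entry $\langle u(\alpha), T^j\beta\rangle$ is by definition $\langle (\widetilde h^{xx'}_{\alpha, T^j\beta})^*(1), [\Breve{\mathcal{M}}^+(\alpha, T^j\beta)/S^1]\rangle \bmod 2$, where $x, x'$ are the chosen representatives of $\alpha,\beta$ and $1\in H^1(S^1;\mathbf{F}_2)$ is the unique nonzero class. The only part of this expression that depends on the choice of representatives is the map $\widetilde h^{xx'}_{\alpha, T^j\beta}$: the moduli space $\Breve{\mathcal{M}}^+(\alpha, T^j\beta)/S^1$ and its $\mathbf{F}_2$-fundamental class are defined intrinsically, with no reference to an identification of $\alpha$ or $\beta$ with $S^1$, and $H^1(S^1;\mathbf{F}_2)\cong\mathbf{F}_2$ admits no nontrivial automorphism, so there is no ambiguity in the class $1$ either.

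Next I would invoke the commutative diagrams displayed just before the statement together with Lemma~\ref{Lem3.12}. If $y$ is another representative of $\alpha$ and $y'$ another representative of $\beta$, write $x = h_{xy}y$ and $x' = h_{x'y'}y'$ for the unique $h_{xy}, h_{x'y'}\in S^1$; the third triangle then gives
\[
\widetilde h^{yy'}_{\alpha\beta} \;=\; (h^{-1}_{xy}h_{x'y'})\circ \widetilde h^{xx'}_{\alpha\beta},
\]
where $h^{-1}_{xy}h_{x'y'}$ denotes the automorphism of $S^1$ given by multiplication by that group element, i.e.\ a rotation. Since a rotation of $S^1$ is homotopic to the identity, it induces the identity on $H^1(S^1;\mathbf{F}_2)$, whence $(\widetilde h^{yy'}_{\alpha\beta})^*(1) = (\widetilde h^{xx'}_{\alpha\beta})^*\big((h^{-1}_{xy}h_{x'y'})^*(1)\big) = (\widetilde h^{xx'}_{\alpha\beta})^*(1)$ as classes in $H^1(\Breve{\mathcal{M}}^+(\alpha,\beta)/S^1;\mathbf{F}_2)$. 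Pairing with the choice-independent fundamental class then yields the same value, and the identical computation applies verbatim with $\beta$ replaced by $T^j\beta$ for each $j\in\mathbf{Z}$. Summing over $j$, every coefficient of the $u$-map agrees for $(x,x')$ and for $(y,y')$, which is the assertion; the same argument, with the evaluation maps $e^\pm$ replaced by the corresponding cobordism evaluation maps, will also yield the analogous independence statement for $u(W)$ in the pull-up-push-down construction.

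The routine-but-necessary ingredient is the derivation of the transformation rule for $\widetilde h_{\alpha\beta}$ under change of representatives, which is precisely Lemma~\ref{Lem3.12} combined with the three commutative triangles preceding the statement, so I would simply cite it. The one point deserving a word of care is that $\Breve{\mathcal{M}}^+(\alpha,\beta)/S^1$ is a compact $1$-manifold \emph{with} boundary (coming from broken trajectories through intermediate free orbits), so "degree" must be read as the pairing of $\widetilde h^*_{\alpha\beta}(1)$ with the $\mathbf{F}_2$-fundamental class rather than as the cardinality of a generic fibre; but because post-composition with a map homotopic to the identity does not alter the pulled-back cohomology class at all, this cohomological formulation makes the independence immediate and there is nothing further to verify. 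I expect no genuine obstacle here: the whole content sits in Lemma~\ref{Lem3.12} and the homotopy-invariance of $H^1(-;\mathbf{F}_2)$.
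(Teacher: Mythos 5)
Your proof follows essentially the same route as the paper: both extract the transformation rule $\widetilde{h}^{yy'}_{\alpha\beta} = (h^{-1}_{xy}h_{x'y'})\circ\widetilde{h}^{xx'}_{\alpha\beta}$ from Lemma~\ref{Lem3.12} and the commutative triangles preceding the lemma, and then conclude that post-composing with this rotation of $S^1$ does not alter the $u$-map coefficient. The paper phrases the final step via $\deg_2(h^{-1}_{xy}h_{x'y'}) = 1$ and the composition rule for $\deg_2$, while you phrase it via homotopy invariance of $H^1(S^1;\mathbf{F}_2)$ --- two spellings of the same observation.
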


\begin{proof}
    Let $x, y$ be two different representatives of $\alpha$. Similarly define $x',y'$ for $\beta$. It is sufficient for us to show that $\deg_2(\widetilde{h}^{xy}_{\alpha\beta}) = \deg_2(\widetilde{h}^{x'y'}_{\alpha\beta})$. By the commutative of the above diagram, we have $(h^{-1}_{xy}h_{x'y'})\widetilde{h}^{xx'}_{\alpha \beta} = \widetilde{h}^{yy'}_{\alpha\beta}$. Since $h^{-1}_{xy}h_{x'y'}$ is an automorphism of $S^1$, its $\deg_2$ is always equal to $1$. By the composition rule of $\deg_2$, we obtain the result as claimed.
\end{proof}

Now, we deal with the assumption (H) stated at the beginning of this subsection. This assumption was used extensively in the proof of Proposition \ref{Prop3.5} and Proposition \ref{Prop3.6}. Recall if we have $\mu(\alpha,\beta) = 3$, then $\dim \Breve{\mathcal{M}}^+(\alpha,\beta)_3/S^1 = 2$. Pick any representative $x,x''$ of $\alpha$ and $\beta$ so that we can write down the map $\widetilde{h}^{xx''}_{\alpha\beta} : \Breve{\mathcal{M}}^+(\alpha,\beta)_3/S^1 \to S^1$.  We pick $p$ to be any regular value of $\widetilde{h}^{xx''}_{\alpha\beta}$ and we wish to count the boundary $(\widetilde{h}^{xx''}_{\alpha \beta})^{-1}(p)$. This amounts to the same thing as counting the boundary points of $\Breve{\mathcal{M}}^+(\alpha,\beta)_3/S^1$ whose have $p$-value in $S^1$. The boundary of $\Breve{\mathcal{M}}^+(\alpha,\beta)_3/S^1$ is given by certain kinds of co-dimension $1$ stratum in the compactification of $\Breve{\mathcal{M}}(\alpha,\beta)_3/S^1$. One such stratum is given by, say,
$$(\Breve{\mathcal{M}}(\alpha,\gamma)_{1}\times_{\gamma} \Breve{\mathcal{M}}^+(\gamma,\beta)_{2})/S^1.$$
Here $\gamma$ is another free critical orbit. Pick $x'$ to be any representative of $\gamma$. Let $(\mathfrak{a}_1, \mathfrak{a}_2)$ be a boundary point of $\Breve{\mathcal{M}}^+(\alpha,\beta)_3/S^1$ belonging to the above stratum and $\{\mathfrak{c}_n\}$ be a sequence in the moduli space that approaches to $(\mathfrak{a}_1,\mathfrak{a}_2)$. Suppose $p=\displaystyle \lim_{n\to \infty} \widetilde{h}^{xx''}_{\alpha\beta} (\mathfrak{c}_n) = \widetilde{h}^{xx''}_{\alpha\beta}(\mathfrak{a}_1, \mathfrak{a}_2)$. This implies that
$(\phi_x(e^-_{\alpha}(\mathfrak{a}_1)))^{-1}\phi_{x''}(e^+_\beta(\mathfrak{a}_2)) = p$. Note that by gluing, we have $\phi_{x'}(e^+_\gamma (\mathfrak{a}_2)) = \phi_{x'}(e^-_{\gamma}(\mathfrak{a}_2))$. Thus,
$$p = (\phi_x(e^-_{\alpha}(\mathfrak{a}_1)))^{-1}\phi_{x''}(e^+_\beta(\mathfrak{a}_2)) = \widetilde{h}^{xx'}_{\alpha \gamma}(\mathfrak{a}_1) \cdot \widetilde{h}^{x'x''}_{\gamma \beta}(\mathfrak{a}_2).$$
Hence, for each $\mathfrak{a}_1 \in \Breve{\mathcal{M}}(\alpha,\gamma)_1/S^1$, for the above equation to be true, $\mathfrak{a}_2 \in \Breve{\mathcal{M}}^+(\gamma,\beta)_2/S^1$ has to satisfy $\widetilde{h}^{x'x''}_{\gamma\beta}(\mathfrak{a}_2) = (\widetilde{h}^{xx'}_{\alpha \gamma}(\mathfrak{a}_1))^{-1}(p)$. On the other hand, we have the following general facts.

\begin{Lemma}\label{Lem3.14}
    Suppose $f: M \to N$ is a smooth map between smooth manifolds of the same dimension. Let $g: N \to N$ be any diffeomorphism of $N$. Then for any $p$ that is a regular value of $M$, $g(p)$ is also another regular value.
\end{Lemma}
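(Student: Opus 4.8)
The plan is to unwind the definition of a regular value and apply the chain rule; concretely, the statement I would prove is that if $p$ is a regular value of $f$, then $g(p)$ is a regular value of the composite $g\circ f\colon M\to N$ (this is the form that gets used above, with $g$ the left–translation of $S^1$ by the fixed element $\widetilde h^{xx'}_{\alpha\gamma}(\mathfrak a_1)$, applied to the restriction of $\widetilde h^{xx''}_{\alpha\beta}$ to the codimension-one boundary stratum). Since $\dim M=\dim N$, the hypothesis that $p$ is a regular value of $f$ says precisely that for every $x\in f^{-1}(p)$ the differential $df_x\colon T_xM\to T_{f(x)}N$ is a linear \emph{isomorphism}, surjectivity and bijectivity being equivalent for a linear map between spaces of the same dimension. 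If $f^{-1}(p)=\emptyset$ there is nothing to check, since then $(g\circ f)^{-1}(g(p))=\emptyset$ as well, so I may assume the fibre is nonempty.

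The first step is to identify the fibre: because $g$ is a bijection, $(g\circ f)^{-1}(g(p))=f^{-1}\bigl(g^{-1}(g(p))\bigr)=f^{-1}(p)$. Fix a point $x$ in this set. By the chain rule, $d(g\circ f)_x=dg_{f(x)}\circ df_x$, which is the composition of the isomorphism $df_x$ (by the previous paragraph) with the isomorphism $dg_{f(x)}$ (because $g$ is a diffeomorphism), hence is itself an isomorphism. As $x$ was an arbitrary point of the fibre, $g(p)$ is a regular value of $g\circ f$. Running the identical computation with $g^{-1}$ in place of $g$ gives the reverse implication, so the conditions ``$p$ is a regular value of $f$'' and ``$g(p)$ is a regular value of $g\circ f$'' are equivalent; this is exactly what lets us pass, on the boundary stratum where $p=\widetilde h^{xx'}_{\alpha\gamma}(\mathfrak a_1)\cdot\widetilde h^{x'x''}_{\gamma\beta}(\mathfrak a_2)$, from regularity of $p$ for $\widetilde h^{xx''}_{\alpha\beta}$ to regularity of $(\widetilde h^{xx'}_{\alpha\gamma}(\mathfrak a_1))^{-1}p$ for $\widetilde h^{x'x''}_{\gamma\beta}$.

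I do not expect any real obstacle here: the mathematical content is the one-line chain-rule identity $d(g\circ f)_x=dg_{f(x)}\circ df_x$, and the only hypothesis that does any work is $\dim M=\dim N$, which is what allows the move between ``$df_x$ surjective'' and ``$df_x$ an isomorphism.'' In the intended application $M$ is a (one-dimensional) quotient moduli space and $N=S^1$, so this dimension hypothesis holds automatically, and the lemma applies verbatim.
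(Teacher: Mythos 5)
Your proof is correct, and it takes a genuinely different---and in this case preferable---route from the paper's. The paper argues via the fiber product $M\times_{f,N,g}N$: it differentiates $f\circ p_1=g\circ p_2$ at $(x,p)$ with $f(x)=g(p)$ and concludes $\text{rank}\,d_xf=\text{rank}\,d_pg$ from the assertion that ``since $p$ is a regular value, both $p_1$ and $p_2$ have full rank.'' But because $g$ is a diffeomorphism the pullback is the graph of $g^{-1}\circ f$, so $p_1$ is automatically a diffeomorphism while $p_2=g^{-1}\circ f\circ p_1$; hence $d_{(x,p)}p_2$ having full rank is \emph{equivalent} to $d_xf$ having full rank, and regularity of $p$ for $f$ controls $df$ only along $f^{-1}(p)$, not along $f^{-1}(g(p))$. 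So that step in the paper is exactly the assertion one wants to prove, and indeed the literal reading of the lemma---$g(p)$ a regular value of $f$ itself---fails already for $f(x)=x^2$, $g(y)=y-1$, $p=1$. You have quietly replaced the statement with the correct one, namely that $p$ is regular for $f$ if and only if $g(p)$ is regular for $g\circ f$, and this is also precisely the form invoked in Subsection~\ref{Sub3.3} (with $g$ a translation of $S^1$ and $f$ a holonomy-type map on a codimension-one boundary stratum). Your chain-rule argument, $d(g\circ f)_x=dg_{f(x)}\circ df_x$ together with $(g\circ f)^{-1}(g(p))=f^{-1}(p)$, is shorter, avoids the fiber-product machinery, and is demonstrably sound; it also delivers the converse direction for free.
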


\begin{proof}
    Consider the pullback of $f, T$ over $N$ that is given by $M\times _{f, N, g} N = \{(x,y) \in M\times N: f(x) = g(y)\}$. Then we have the following commutative diagram
    \[\begin{tikzcd}
M\times_{f,N,g} N \arrow{r}{p_2} \arrow[swap]{d}{p_1} & N \arrow{d}{g} \\%
M \arrow{r}{f}& N
\end{tikzcd}\]
Here $p_1$ and $p_2$ denote the projection of the pullback onto the first and second factors, respectively. Let $x \in f^{-1}(g(p))$, where $p$ is a regular value of $f$. This means that $(x,p) \in M\times_{f,N,g} N$. By the commutative of the above diagram, $f\circ p_1 = g \circ p_2$. We differentiate this equation at $(x,p)$ to obtain $d_x f \circ d_{(x,p)}p_1 = d_p g \circ d_{(x,p)}p_2$. Since $p$ is a regular value, both $p_1$ and $p_2$ have full rank. As a result, $\text{rank}\, d_x f = \text{rank}\, d_p g$. But $g$ is a diffeomorphism, hence, $f$ also has full rank at $x$. This shows that $g(p)$ is a regular value of $f$ as claimed. 
\end{proof}

Since $(\widetilde{h}^{xx'}_{\alpha \gamma}(\mathfrak{a}_1))^{-1}$ can be viewed as a diffeomorphism of $S^1$ interpreted as a map by taking every point in $S^1$ and multiply it with $(\widetilde{h}^{xx'}_{\alpha \gamma}(\mathfrak{a}_1))^{-1}$, by Lemma \ref{Lem3.14}, $(\widetilde{h}^{xx'}_{\alpha \gamma}(\mathfrak{a}_1))^{-1}p$ is another regular value of $\widetilde{h}^{x'x''}_{\gamma\beta}$. Therefore, the number of points $\mod 2$ in the moduli space $\Breve{\mathcal{M}}(\gamma,\beta)_2/S^1$ whose values are $(\widetilde{h}^{xx'}_{\alpha \gamma}(\mathfrak{a}_1))^{-1}p$ is still exactly $\deg_2 (\widetilde{h}^{x'x''}_{\gamma \beta})$. Thus, it makes no difference in the construction of the $S^1$-equivariant complex if we assume (H). The same justification can be given for the well-definedness of the construction of the chain map $\widetilde{C}(W)$.

\subsection{The category of $S^1$-equivariant complexes}\label{Sub3.4}
In Daemi and Scaduto's work \cite{daemi2020equivariantaspectssingularinstanton, MR4742808}, the $S^1$-equivariant differential complex defined by \eqref{eq:3.2}, \eqref{eq:3.3} is called an \textit{$\mathcal{S}$-complex}. The collection of all $\mathcal{S}$-complexes form a category, which we will now describe.

\begin{Def}\label{Def3.15}
    Let $\mathscr{S}$ be the category of $\mathcal{S}$-complexes. An \textit{object} in $\mathscr{S}$ is a $\widetilde{C}$-complex that is $\Z$-graded, finitely generated over the Novikov field $\Lambda_{\mathbf{F}_2}$, and of the form
$$\widetilde{C}_\ast = C_\ast \oplus C_{\ast -1} \oplus \Lambda_{\mathbf{F}_2}.$$
Here $(C_\ast, d)$ is some $\Z$-graded complex over $\Lambda_{\mathbf{F}_2}$. The differential $\widetilde{d}$ of $\widetilde{C}$ is given by the following matrix
$$\widetilde{d} = \begin{bmatrix} d & 0 & 0 \\ u & d & \delta_2 \\ \delta_1 & 0 & 0 \end{bmatrix}.$$
For some $k\in \Z$, a \textit{morphism} $\widetilde{\lambda}: \widetilde{C}_\ast = C_\ast \oplus C_{\ast -1} \oplus \Lambda_{\mathbf{F}_2} \to \widetilde{C'} = C'_{\ast+k} \oplus C'_{\ast -1+k} \oplus \Lambda_{\mathbf{F}_2}$ is a chain map of degree $k$ that has the form
$$\widetilde{\lambda} = \begin{bmatrix} \lambda & 0 & 0 \\ \eta & \lambda & \Delta_2 \\ \Delta_1 & 0 & 1 \end{bmatrix}.$$
\end{Def}

\begin{Rem}\label{Rem3.16}
    $\widetilde{d}$ being a differential implies that we have the following identities
    \begin{align*}
        d^2 & = 0,\\
        ud + du + \delta_2 \delta_1 & = 0,\\
        d\delta_2 & = 0,\\
        \delta_1 d & = 0.
    \end{align*}
    While $\widetilde{\lambda}$ being a chain map means that we have
    \begin{align*}
        d'\lambda - \lambda d & = 0,\\
        d' \eta - \eta d + u' \lambda - \lambda u + \delta'_2 \Delta_1 - \Delta_2 \delta_1 & = 0,\\
        d'\Delta_2 + \delta'_2 - \lambda \delta_2 & = 0,\\
        \delta'_1 \lambda - \Delta_1 d - \delta_1 & = 0.
    \end{align*}
\end{Rem}

\begin{Def}\label{Def3.17}
    Given a pair of objects $(\widetilde{C}, \widetilde{C'})$ in $\mathscr{S}$ together with two morphisms $\widetilde{\lambda}_1, \widetilde{\lambda}_2: \widetilde{C} \to \widetilde{C'}$. An \textit{$\mathscr{S}$-chain homotopy} between $\widetilde{\lambda}_1$ and $\widetilde{\lambda}_2$ is a map $h : \widetilde{C} \to \widetilde{C'}$ (\text{not} a morphism) that satisfies
    $$\widetilde{\lambda}_1 - \widetilde{\lambda}_2 = \widetilde{d'} h + h \widetilde{d}.$$
    Given this, we say that two objects $\widetilde{C}, \widetilde{C'}$ are \textit{$\mathscr{S}$-chain homotopy equivalent} to each other if there are two morhisms $\widetilde{\lambda}_1 : \widetilde{C} \to \widetilde{C'}$ and $\widetilde{\lambda}_2: \widetilde{C'} \to \widetilde{C}$ such that $\widetilde{\lambda}_1\circ \widetilde{\lambda}_2$ and $\widetilde{\lambda}_2\circ \widetilde{\lambda}_1$ are $\mathscr{S}$-chain homotopy equivalence to the respective identity morphisms.
\end{Def}

\begin{Rem}\label{Rem3.18}
    If $h$ is an $\mathscr{S}$-chain homotopy, then with respect to the decompositions of $\mathcal{S}$-complexes $h$ has a matrix form $(h_{ij})_{1\leq i, j \leq 3}$. Based on the relation that $\widetilde{\lambda}_1 - \widetilde{\lambda}_2 = \widetilde{d'}h + h \widetilde{d}$, we must have $h_{12} = h_{13} = h_{32} = h_{33} = 0$. As a result, in matrix form, an $\mathscr{S}$-chain homotopy $h$ is written as
    $$ h = \begin{bmatrix} L & 0 & 0 \\ N & L & D_2 \\ D_1 & 0 & 0 \end{bmatrix}.$$
\end{Rem}

\begin{Def}\label{Def3.19}
We denote $K(\mathscr{S})$ by the \textit{homotopy category of $\mathcal{S}$-chain complexes} where its objects are still the objects of $\mathscr{S}$ and morphisms are given by morphisms of $\mathscr{S}$ modulo $\mathscr{S}$-chain homotopy equivalence. 
\end{Def}

As seen in the previous subsections, given an $S^1$-manifold $M$ equipped with an $S^1$-invariant Morse-Smale function $f: M \to S^1$, we construct an $\mathcal{S}$-complex $\widetilde{C}(M, f)$. We also assume that there is an $S^1$-invariant metric on $M$. Let $\mathscr{P}$ denote the parameter space containing the auxiliary data that goes into the construction of $\widetilde{C}(M,f)$. If $\mathscr{P}$ is connected, then by a continuity argument, the image of the assignment $M \mapsto [\widetilde{C}(M,f)] \in K(\mathscr{S})$ is an invariant of $M$. Similarly, the homotopy category $K(\mathscr{S})$ is where we are going to define an invariant of a $\Q HS^3$ using Seiberg-Witten gauge theory. This process will be carried out in the subsequent sections.

\section{Floer homology from Seiberg-Witten gauge theory}\label{Sec4}
\subsection{Set-up}\label{Sub4.1} Most of what is written here is well-known and can be found in greater detail in \cite{MR2388043}. We briefly recall the setting of Seiberg-Witten gauge theory in this subsection for the sake of self-containment. Let $(Y^3,g)$ be a closed smooth oriented $3$-manifold, and $g$ is a fixed Riemannian metric on $Y$. For now, we are not assuming any topology on $Y$. Since all $3$-manifold is $spin^c$, we pick a $spin^c$ structure $\mathfrak{s}$ on $Y$. Denote $P_{\mathfrak{s}} = \mathfrak{s} \times_{Spin^c(3)} \mathbf{C}^2$ by the associated spinor bundle over $Y$. It is a standard fact that $P_\mathfrak{s} \to Y$ is a Clifford module over $Y$, i.e., 
\begin{itemize}
    \item There is a Clifford multiplication $\rho : TY\otimes \mathbf{C} \to End(P_{\mathfrak{s}})$ that is compatible with the metric $g$ on $Y$.
    \item There is a covariant derivative $\nabla_A$ on $P_{\mathfrak{s}}$ determined by the Levi-Civita connection induced from $g$ and a $U(1)$-connection $A$ on the determinant line bundle $\det\,\mathfrak{s} = \mathfrak{s}/Spin(3)$ associated with the $spin^c$ structure $\mathfrak{s}$.
\end{itemize}
These two geometric data let us define the Dirac operator $D_A := \rho \circ \nabla_A : \Gamma(P_\mathfrak{s}) \to \Gamma(P_\mathfrak{s})$. Note that if we fix $A_0$ to be a referenced $U(1)$-connection, then any other connection $A$ can be written as $A_0 + a$, where $a \in i\Omega^1(Y)$. We consider the following Chern-Simons-Dirac functional.

\begin{Def}\label{Def4.1}
    Let $\mathcal{C}(Y, \mathfrak{s}) = \mathcal{A}(det\,\mathfrak{s}) \times \Gamma(P_\mathfrak{s})$. Define $\mathcal{L} : \mathcal{C}(Y,\mathfrak{s}) \to \R$ to be
    $$\mathcal{L}(A,\psi) = -\dfrac{1}{8}\int_Y (A -A_0) \wedge (F_A + F_{A_0}) + \dfrac{1}{2}\int_Y \la D_A \psi, \psi\ra.$$
\end{Def}

Recall that we have a gauge group action $\mathcal{G}(Y) := Maps(Y, U(1)) \curvearrowright \mathcal{C}(Y,\mathfrak{s})$ given by $(\sigma, (A, \psi)) \mapsto (A - \sigma^{-1}d\sigma, \sigma \psi)$. In general, it is not true that $\mathcal{L}$ is $\mathcal{G}(Y)$-invariant. However, if $c_1(\mathfrak{s})$ is torsion or $Y$ is $\Q HS^3$, i.e., $b_1(Y) = 0$, then $\mathcal{L}$ is indeed $\mathcal{G}(Y)$-invariant. The same statement also holds for the based gauge group $\mathcal{G}_0(Y) := \{ \sigma \in \mathcal{G}(Y) : \sigma(x_0) = 1\}$, for some fixed $x_0 \in Y$. Thus, from now on, we shall work with rational homology sphere $Y$ (unless otherwise stated). As a result, $\mathcal{L}$ descends to a functional
$$\mathcal{L} : \mathcal{B}_0(Y,\mathfrak{s}) := \mathcal{C}(Y,\mathfrak{s})/\mathcal{G}_0(Y) \to \R.$$
Upon $L^2_k$-completion of $\mathcal{C}(Y,\mathfrak{s})$ and $L^2_{k+1}$-completion of $\mathcal{G}_0(Y)$, since $\mathcal{G}_0 \curvearrowright \mathcal{C}_0$ freely,  $\mathcal{B}_0(Y,\mathfrak{s})$ is an infinite dimensional Hilbert manifold. We use $\mathcal{L} : \mathcal{B}_0 \to \R$ to define an $S^1$-equivariant monopole Floer homology. Note that there is a natural $S^1$-action on $\mathcal{B}_0$ that is left over from the full gauge group action $\mathcal{G}(Y) \curvearrowright \mathcal{C}(Y,\mathfrak{s})$. Furthermore, as in \cite{MR2388043}, the $L^2$-gradient of $\mathcal{L}$ is given by 
$$grad\, \mathcal{L} = \left(\dfrac{1}{2} \star_3 F_A + \rho^{-1}(\psi\psi^*)_0, D_A \psi\right).$$
Hence, the critical points of $\mathcal{L}$ are solutions of the three-dimensional Seiberg-Witten equations
\begin{equation*}
\begin{cases}
    D_A \psi = 0,\\
    \dfrac{1}{2} \rho(F_A) = (\psi \psi^*)_0.
\end{cases}
\end{equation*}
It is an elementary calculation to show that the critical points of $\mathcal{L}$ is $\mathcal{G}_0(Y)$-invariant. Elements of the set of critical points of $\mathcal{L}$ are divided into two types: irreducible if $\psi$ is not identically zero, and reducible if $\psi \equiv 0$. If we consider the based gauge symmetry, the based gauge equivalence classes of the irreducible solutions are free under the $S^1$-action, whereas the gauge equivalence classes of the reducible solutions are invariant under the $S^1$-action. In fact, since $b_1(Y) = 0$, there is a unique reducible solution $(A,0)$ up to based gauge transformation. We denote the based gauge equivalence class of $(A,0)$ by $\Theta$.

To do Morse theory for $\mathcal{L} : \mathcal{B}_0(Y,\mathfrak{s}) \to \R$, we need all of its critical points non-degenerate, i.e., the kernels of the corresponding Dirac operators are trivial. For this to happen, we have to perturb the Seiberg-Witten equations by a closed 2-form; and in turn, this means that we have to perturb the original Chern-Simons-Dirac functions $\mathcal{L}$. To this end, let $\eta \in i\Omega^2(Y)$ that is closed. We define the perturbed $\mathcal{L}$ to be
\begin{equation}\label{eq:4.1}
    \mathcal{L}_{\eta}(A,\psi) = -\dfrac{1}{8}\int_Y (A-A_0)\wedge (F_A + F_{A_0} - \eta) + \dfrac{1}{2}\int_Y \la D_A \psi, \psi\ra.
\end{equation}
Hence, the critical points of $\mathcal{L}_{\eta}$ solves the following perturbed Seiberg-Witten equations
\begin{equation}\label{eq:4.2}
    \begin{cases}
        D_A \psi = 0,\\
        \dfrac{1}{2} \rho(F_A - \eta) = (\psi \psi^*)_0.
    \end{cases}
\end{equation}

\begin{Prop}[\cite{MR2465077}, Proposition 8.1.1]\label{Prop4.2}
    For each metric $g$ on $Y$, there is generic $\eta \in i\Omega^2(Y)$ that is closed such that all critical points of $\mathcal{L}_{\eta}$ are non-degenerate.
\end{Prop}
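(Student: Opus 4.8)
The plan is to run the standard Sard--Smale transversality argument, with the Banach space of closed $2$-forms $\eta$ as parameter space and the zero set of $\operatorname{grad}\mathcal{L}_\eta$ on $\mathcal{B}_0(Y,\mathfrak{s})$ as the object to be made regular. Since $b_1(Y)=b_2(Y)=0$ for a rational homology sphere, every closed $\eta\in i\Omega^2(Y)$ is exact, so for each $\eta$ there is exactly one reducible critical point $\Theta=\Theta(\eta)$ --- represented by the connection $A$ with $F_A=\eta$ --- while all other critical points are irreducible and lie on free $S^1$-orbits. I would treat these two cases separately, since a closed $2$-form perturbs the curvature equation in \eqref{eq:4.2} but not the Dirac equation directly, and this forces a different mechanism for each. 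At the reducible, non-degeneracy means the Hessian of $\mathcal{L}_\eta$ at $\Theta$ is invertible normal to the $S^1$-orbit; its $1$-form block is $\star d$ on coexact $1$-forms, automatically invertible because $b_1(Y)=0$, so the only content is that $\ker D_A=0$. At an irreducible, non-degeneracy means the relevant extended Hessian (a self-adjoint elliptic operator) has trivial kernel, equivalently that the moduli space of irreducible critical points is cut out transversally downstairs in $\mathcal{B}(Y,\mathfrak{s})$.

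For the irreducible part I would form the universal zero set
$$
\mathcal{Z}^*=\bigl\{\,(A,\psi,\eta):\psi\not\equiv 0,\ D_A\psi=0,\ \tfrac{1}{2}\rho(F_A-\eta)=(\psi\psi^*)_0\,\bigr\}\big/\mathcal{G}_0(Y),
$$
with $\eta$ ranging over a Banach space of closed $2$-forms (a Floer $C^\varepsilon$-type completion, so the parameter space is Banach and a later routine argument recovers genericity among \emph{smooth} $\eta$). The crux is to show the linearization of the defining map is surjective at every point of $\mathcal{Z}^*$. Because varying $\eta$ only moves the second equation, by $(0,-\tfrac{1}{2}\rho(\dot\eta))$ with $\dot\eta$ closed, an element $(\phi,b)$ of the (elliptic, hence finite-dimensional) cokernel of the $(\dot A,\dot\psi)$-linearization that is also $L^2$-orthogonal to all these perturbations must have its $1$-form component $b$ orthogonal to $\star(\text{closed }2\text{-forms})$, that is, to every coexact $1$-form; since $b_1(Y)=0$ this makes $b$ exact. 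Feeding $b=df$ and the adjoint Dirac equation $D_A\phi=0$ back into the linearized system and invoking \emph{unique continuation for the Dirac operator} --- which prevents $\psi$, hence the algebraic coupling $(\psi\psi^*)_0$, from vanishing on any open set --- forces $\phi=0$ and then $b=0$. So the cokernel vanishes, $\mathcal{Z}^*$ is a Banach manifold, the projection $\pi$ to the perturbation space is Fredholm of index $0$, and by Sard--Smale a residual set of $\eta$ consists of regular values of $\pi$, for which $\pi^{-1}(\eta)$ is a zero-dimensional manifold, i.e. every irreducible critical point of $\mathcal{L}_\eta$ is non-degenerate.

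For the reducible I would argue directly. The assignment $\eta\mapsto A$ with $F_A=\eta$ is an affine surjection from closed $2$-forms onto $U(1)$-connections on $\det\mathfrak{s}$ modulo based gauge --- surjective precisely because $H^2(Y;\mathbf{R})=0$ --- so it is enough to know that the set of connections $A$ with $\ker D_A\neq 0$ is meagre. That is the classical fact that along a generic family of $U(1)$-connections the Dirac operator stays invertible (its kernel dies, eigenvalues crossing $0$ transversally), proved by the same universal-moduli-plus-Sard--Smale scheme applied to the linear family $A\mapsto D_A$ with $1$-form variations as parameters, again using unique continuation. Intersecting the two residual sets from the irreducible and reducible cases produces a residual --- hence dense --- set of closed $\eta$ for which \emph{every} critical point of $\mathcal{L}_\eta$ is non-degenerate, and a final approximation step upgrades ``Banach-generic'' to ``$C^\infty$-generic,'' proving the proposition.

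The main obstacle is the transversality computation for the irreducibles, precisely because the admissible perturbations never touch the Dirac equation: one has to route the freedom in the curvature equation through the algebraic term $(\psi\psi^*)_0$, and it is unique continuation for $D_A$ that makes this work. The bookkeeping is a little heavier than on the instanton side because one must also carry the $S^1$-action and the splitting of the Hessian into spinorial and connection blocks; but all of this is done carefully in \cite{MR2465077}, and for $b_1(Y)=0$ the vanishing of $H^1$ and $H^2$ removes the harmonic obstructions that would otherwise complicate both the reducible block and the cokernel analysis.
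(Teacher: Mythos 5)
The paper offers no proof of this proposition --- it simply cites Kronheimer--Mrowka \cite{MR2465077}, Proposition 8.1.1 --- so there is nothing in the text itself to compare against. Your Sard--Smale reconstruction is the standard argument and is correct in outline: separating the reducible from the irreducibles, using $b_1(Y)=0$ to turn $L^2$-orthogonality to $\star(\text{closed }2\text{-forms})$ (coexact $1$-forms) into exactness of the $1$-form block $b$ of a putative cokernel element, and closing with unique continuation applied to the nonvanishing spinor $\psi$ is precisely why closed $2$-form perturbations suffice on a rational homology sphere. One small imprecision worth flagging: at an irreducible, the adjoint spinor equation coming from the cokernel is $D_A\phi + \rho(b)\psi = 0$, not $D_A\phi = 0$; you must first absorb $b=df$ into the infinitesimal gauge freedom using the identity $D_A(f\psi) = \rho(df)\psi$ (valid because $D_A\psi=0$) and then invoke the slice condition to eliminate the residual one-dimensional freedom before unique continuation applies to the shifted spinor. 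It is also worth noting that the Kronheimer--Mrowka framework enlarges the perturbation class to tame cylinder functions precisely because closed $2$-forms are insufficient in general (e.g. for $b_1(Y)>0$); the argument you give is closer in spirit to Fr\o yshov's treatment in \cite{MR2738582}, which the paper cites elsewhere in Section 4 and which is the cleaner reference for the rational-homology-sphere case.
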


We define $\mathcal{R}_0(Y, \mathfrak{s}, g, \eta)$ to be the set of based gauge equivalence classes of solutions of \eqref{eq:4.2}. For generic $\eta$, by Proposition \ref{Prop4.2}, $\mathcal{R}_0$ is a discrete finite set containing only non-degenerate critical points. Recall that $\mathcal{R}_0$ inherits the $S^1$-action from $\mathcal{B}_0$. All elements of $\mathcal{R}_0$ are free under this $S^1$-action, except for $\Theta$.

Before we state the next lemma, which is a result of an \textit{a priori} bound for solutions of \eqref{eq:4.2}, we set up the following convention. We denote by $[\cdot,\cdot]_0$ the based gauge equivalence classes of solutions of \eqref{eq:4.2} and $[\cdot, \cdot]$ the (full) gauge equivalence classes. Correspondingly, $\mathcal{R}(Y, \mathfrak{s}, g, \eta)$ is the set of gauge equivalence classes of solutions of the perturbed Seiberg-Witten equations. Similar to $\mathcal{R}_0$, $\mathcal{R}$ is a discrete finite set containing $S^1$-critical orbits of $\mathcal{L}_{\eta}$.

\begin{Lemma}\label{Lem4.3}
    Let $[A,\psi] \in \mathcal{R}(Y,\mathfrak{s},g,\eta)$ arbitrarily. Then we must have
    \begin{enumerate}
        \item $\norm{\psi}^2_{L^\infty} \leq -\inf_{Y}\dfrac{s}{2} + \norm{\eta}_{L^\infty}$, where $s$ denotes the scalar curvature of the metric $g$ on $Y$.
        \item There is a positive constant $c(g, \eta, A_0)$ such that $\norm{A- A_0}_{L^2} \leq c$.
    \end{enumerate}
\end{Lemma}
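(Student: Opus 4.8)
The plan is to derive both bounds from the Weitzenböck (Lichnerowicz) formula for $D_A$ together with the curvature equation in \eqref{eq:4.2}, and then, for the connection bound, to combine this with Coulomb gauge fixing and the vanishing of $H^1(Y;\mathbf{R})$.

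For part (1), recall the Weitzenböck formula $D_A^2 = \nabla_A^*\nabla_A + \tfrac{s}{4} + \tfrac12\rho(F_A)$, where $s$ is the scalar curvature of $g$ and $F_A$ is the curvature of the $U(1)$-connection $A$ on $\det\mathfrak{s}$. Since $[A,\psi]$ solves \eqref{eq:4.2} we have $D_A\psi = 0$, and substituting $\tfrac12\rho(F_A) = (\psi\psi^*)_0 + \tfrac12\rho(\eta)$ into the Weitzenböck identity applied to $\psi$ gives
\[
0 = \nabla_A^*\nabla_A\psi + \tfrac{s}{4}\psi + (\psi\psi^*)_0\psi + \tfrac12\rho(\eta)\psi .
\]
Taking the real part of the pointwise Hermitian pairing with $\psi$, and using $\mathrm{Re}\la\nabla_A^*\nabla_A\psi,\psi\ra = \tfrac12 d^*d|\psi|^2 + |\nabla_A\psi|^2$ together with the algebraic identity $\la(\psi\psi^*)_0\psi,\psi\ra = \tfrac12|\psi|^4$, one obtains the pointwise relation
\[
\tfrac12 d^*d|\psi|^2 + |\nabla_A\psi|^2 + \tfrac{s}{4}|\psi|^2 + \tfrac12|\psi|^4 + \tfrac12\,\mathrm{Re}\la\rho(\eta)\psi,\psi\ra = 0 .
\]
Now evaluate at a point $p\in Y$ where $|\psi|^2$ is maximal (if $\psi\equiv 0$ there is nothing to prove, so assume $\psi\not\equiv 0$, so $|\psi(p)|^2>0$). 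At such a point $d^*d|\psi|^2(p)\geq 0$ and $|\nabla_A\psi(p)|^2\geq 0$, while $|\mathrm{Re}\la\rho(\eta)\psi,\psi\ra|\leq \norm{\eta}_{L^\infty}|\psi|^2$; dividing by $|\psi(p)|^2$ and rearranging yields $\tfrac12|\psi(p)|^2 \leq -\tfrac14 s(p) + \tfrac12\norm{\eta}_{L^\infty} \leq -\tfrac14\inf_Y s + \tfrac12\norm{\eta}_{L^\infty}$, which is precisely $\norm{\psi}^2_{L^\infty}\leq -\inf_Y\tfrac{s}{2} + \norm{\eta}_{L^\infty}$.

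For part (2), write $A = A_0 + a$ with $a\in i\Omega^1(Y)$. After replacing the representative by a gauge-equivalent one of the form $\sigma = e^{\xi}$ — which amounts to solving a Poisson equation to remove the exact part of $a$, solvable since $\int_Y d^*a = 0$ — we may assume $d^*a = 0$. From the curvature equation $\tfrac12\rho(F_A-\eta) = (\psi\psi^*)_0$ and $|(\psi\psi^*)_0|\leq\tfrac12|\psi|^2$ we get the pointwise bound $|F_A|\leq |\eta| + |\psi|^2$, so by part (1) there is a constant $C_1(g,\eta)$ with $\norm{F_A}_{L^2}\leq C_1$; since $F_A = F_{A_0} + da$ this gives $\norm{da}_{L^2}\leq \norm{F_{A_0}}_{L^2} + C_1 =: C_2(g,\eta,A_0)$. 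Finally, because $b_1(Y)=0$ there are no nonzero harmonic $1$-forms, so $a\mapsto(da,d^*a)$ has trivial kernel on $\Omega^1(Y)$ and the elliptic estimate for $d+d^*$ gives $\norm{a}_{L^2}\leq\norm{a}_{L^2_1}\leq C(\norm{da}_{L^2} + \norm{d^*a}_{L^2}) = C\norm{da}_{L^2}\leq C\,C_2 =: c(g,\eta,A_0)$, as claimed.

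\textbf{Main obstacle.} There is no substantial difficulty here: the argument is the standard a priori estimate. The only points that require care are fixing the conventions in the Weitzenböck formula and the operator norm of Clifford multiplication by the $2$-form $\eta$ so that the constants appear exactly as stated, and, in part (2), making explicit that $\norm{A-A_0}_{L^2}$ is bounded for a suitable (Coulomb) representative of the gauge-equivalence class — this is what allows one to invoke the elliptic estimate for $d+d^*$ using $H^1(Y;\mathbf{R})=0$.
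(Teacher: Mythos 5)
Your proof is correct and follows essentially the same route as the paper: part (1) via the maximum principle applied to $|\psi|^2$ combined with the Weitzenböck formula and the curvature equation, and part (2) via the $L^\infty$ bound on $\psi$ giving an $L^2$ bound on $F_A$, then Coulomb/Uhlenbeck gauge fixing and the elliptic estimate for $d+d^*$. Your explicit invocation of $b_1(Y)=0$ to kill the harmonic piece is a detail the paper leaves implicit but is the same argument.
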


\begin{proof}
    Since $Y$ is a compact manifold, $|\psi|$ achieves a maximum at some point $y_0$. By the maximum principle, at $y_0$, we have 
    $$0\leq - \Delta |\psi|^2 = \la \nabla^*_A \nabla_A \psi, \psi \ra - |\nabla_A \psi|^2 \leq \la \nabla^*_A \nabla_A \psi, \psi\ra.$$
    On the other hand, since $D_A \psi = 0$, by the Weitzenb\"ock identity, we also have 
    $$0 = D^2_A \psi = \nabla^*_A \nabla_A \psi + \dfrac{s}{4}\psi + \dfrac{1}{2} F_A \cdot \psi.$$
    Thus, we obtain
    \begin{align*}
        0 &\leq -\frac{s}{4}|\psi|^2 - \left\la \dfrac{1}{2}F_A \cdot \psi, \psi \right\ra = -\dfrac{s}{4}|\psi|^2 - |(\psi\psi^*)_0|^2 - \dfrac{1}{2}\la \eta \cdot \psi ,\psi\ra\\
        & \leq -\dfrac{s}{4}|\psi|^2- \dfrac{1}{4}|\psi|^4 - \dfrac{1}{2}\norm{\eta}_{L^\infty} |\psi|^2 = -|\psi|^2\left( \dfrac{s}{4}+ \dfrac{1}{2}|\psi|^2 + \dfrac{1}{2}\norm{\eta}_{L^\infty}\right).
    \end{align*}
    From the above estimate, unless $\psi \equiv 0$, it must be the case that
    $$|\psi(y_0)|^2 \leq -\inf_{Y} \dfrac{s}{2} + \norm{\eta}_{L^{\infty}},$$
    which implies the bound in the first part as claimed.

    For the second part, firstly, note that the uniform $L^\infty$-bound on $\psi$ will give us a uniform $L^8$-bound for $\psi$. This bound depends on the metric $g$ and the perturbation $\eta$. Since $F_A - \eta = 2 (\psi \psi^*)_0$, we have
    $$\norm{F_A}^2_{L^2} \lesssim \norm{\psi}^8_{L^8} + \norm{\eta}^2_{L^\infty}.$$
    As a result, $F_A$ also has a uniform $L^2$-bound in terms of $g$ and $\eta$. Furthermore, by Uhlenbeck's gauge fixing result \cite{MR648355}, up to a gauge transformation, we may assume $A-A_0$ is co-closed. Thus, the elliptic estimate tells us that
    $$\norm{A-A_0}^2_{L^2_1} \lesssim \norm{d(A-A_0)}^2_{L^2} = \norm{F_A-F_{A_0}}^2_{L^2} \lesssim \norm{F_A}^2_{L^2} + \norm{F_{A_0}}^2_{L^2}.$$
    Because the $ L^2_1$-norm controls the $L^2$-norm, we obtain part two as desired.
\end{proof}

One of the crucial ingredients to construct a Floer theory we have to consider is the negative gradient flow line equations. The equations ask for the unknowns $(A(t), \psi(t))$ which can be thought of as a family of configuration parametrized by time $t \in \R$ that satisfy the following system
\begin{equation}\label{eq:4.3}
    \begin{cases}
        dA/dt = -\dfrac{1}{2}\star_3 (F_A -\eta)- \rho^{-1}(\psi\psi^*)_0\\
        d\psi/dt=-D_A \psi.
    \end{cases}
\end{equation}
It can be shown that \eqref{eq:4.3} can be re-written as a four-dimensional Seiberg-Witten equation on the cylinder $Y \times \R$ \cite{MR2388043}:
\begin{equation}\label{eq:4.4}
    \begin{cases}
        \dfrac{1}{2}\rho_{Y\times \R}(F_B - \pi^*_{Y}\eta)^+ = (\Phi \Phi^*)_0,\\
        D^+_B \Phi = 0.
    \end{cases}
\end{equation}
Here $\pi_Y$ denotes the projection of $Y\times \R$ onto $Y$, the $spin^c$ structure on $Y\times \R$ is induced by the $spin^c$ structure $\mathfrak{s}$ on $Y$, and the Clifford multiplication $\rho_{Y\times \R}$ is given by
$$\rho_{Y\times \R}\left(\dfrac{d}{dt}\right) = \begin{bmatrix} 0 & -1 \\ 1 & 0 \end{bmatrix}, \quad \quad \rho_{Y\times \R}(v) = \begin{bmatrix} 0 & -\rho(v)^* \\ \rho(v) & 0 \end{bmatrix}, \text{ where } v\in TY.$$
The Clifford module associated with the $spin^c$ structure decomposes into positive and negative parts $P^+_\mathfrak{s} \oplus P^-_\mathfrak{s}$. Let $\mathcal{C}(Y\times \R,\mathfrak{s}) = \mathcal{A}(det\,\mathfrak{s})\times \Gamma(W, P^+_{\mathfrak{s}})$, where $\mathcal{A}(det\, \mathfrak{s})$ is the space of $U(1)$-connections on the the determinant line bundle of the $spin^c$ structure on $Y\times \R$. $D^+_B$ is the restriction of the full Dirac operator to $P^+_\mathfrak{s}$.

In general, we consider $W$ a compact $4$-dimensional manifold with boundary $Y$ ($Y$ is not necessarily connected). Let $W^*$ be the manifold obtained from $W$ by attaching $Y\times [0,\infty)$ along the boundary $Y$. If $\mathfrak{s}$ is a $spin^c$ structure on $Y$, we consider a $spin^c$ structure $\mathfrak{s}_W$ on $W$ such that its restriction to the boundary is exactly $\mathfrak{s}$. Pick $\mu$ to be a purely imaginary $2$-form on $W$ such that $\mu|_{Y}$ equals to the pulled-back of $\eta \in i\Omega^2(Y)$ via the projection map $Y \times [0,\infty) \to Y$. We extend $\mathfrak{s}_W, \mu$ to $W^*$. Then the Seiberg-Witten equations on $W^*$ read as following
\begin{equation}\label{eq:4.5}
    \begin{cases}
        \dfrac{1}{2}\rho_{W^*}(F_B - \mu)^+ = (\Phi \Phi^*)_0,\\
        D^+_B \Phi = 0.
    \end{cases}
\end{equation}
The solutions of \eqref{eq:4.5} are gauge invariant. For each $t \in [0,\infty)$, $(B|_{Y \times t}, \Phi|_{Y \times t})$ is a configuration in $\mathcal{C}(Y, \mathfrak{s})$. We define $\mathscr{M}(W;\alpha)$ to be the moduli space of gauge equivalence classes of solutions $(B, \Phi)$ of \eqref{eq:4.5} such that $\lim_{t\to \infty} (B|_{Y\times t}, \Phi|_{Y\times t}) = (A, \psi)$, where $\alpha = [A, \psi]$ is a critical point of $\mathcal{L}_{\eta}$. Pick $B_0$ to be a referenced connection on $W$ (so, it's also a referenced connection on $W^*$) and denote $B_0|_{Y} := A_0$. Just as in Lemma \ref{Lem4.3}, we have an \textit{a priori} estimate for elements of $\mathscr{M}(W; \alpha)$.

\begin{Lemma}[\cite{MR2465077}, Proposition 3.4.1]\label{Lem4.4}
    Let $[B, \Phi] \in \mathscr{M}(W; \alpha)$ arbitrarily. Then we must have that either $\Phi \equiv 0$ or
    $$\norm{\Phi}^2_{L^\infty} \leq -\inf_{W}\dfrac{s}{2} + \norm{\mu}_{L^\infty}.$$
    Consequently, $F_B$ has a uniform $L^2$-bound in terms of the geometry of $W$ and $\mu$.
\end{Lemma}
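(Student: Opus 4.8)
The plan is to follow the proof of Lemma \ref{Lem4.3} almost verbatim, with the three-dimensional Weitzenböck identity replaced by its four-dimensional counterpart and the maximum principle applied to $|\Phi|^2$ on $W^*$. The one genuinely new feature, since $W^*$ is non-compact, is that one must know where the supremum of $|\Phi|^2$ lives: it is either attained at an interior point of $W^*$, or it is approached along the cylindrical end $Y\times[0,\infty)$, where $(B|_{Y\times t},\Phi|_{Y\times t})$ converges to the critical point $(A,\psi)$ representing $\alpha$. In the latter case, since the metric on the end is a product (so its scalar curvature is that of $g$ on $Y$) and $\mu|_Y=\pi_Y^*\eta$, the bound $\norm{\Phi}_{L^\infty}^2\le\norm{\psi}_{L^\infty}^2$ combined with Lemma \ref{Lem4.3}(1) applied to $[A,\psi]\in\mathcal{R}(Y,\mathfrak{s},g,\eta)$ already gives the claimed inequality; one may also argue through a compact exhaustion and monotonicity of the sup.

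So assume the supremum is attained at an interior point $x_0$ with $\Phi(x_0)\neq 0$. By the maximum principle, $0\le -\Delta|\Phi|^2=\langle\nabla_B^*\nabla_B\Phi,\Phi\rangle-|\nabla_B\Phi|^2\le\langle\nabla_B^*\nabla_B\Phi,\Phi\rangle$ at $x_0$. Next I invoke the four-dimensional Lichnerowicz--Weitzenböck formula for the $spin^c$ Dirac operator: on positive spinors $D_B^-D_B^+=\nabla_B^*\nabla_B+\tfrac{s}{4}+\tfrac12\rho(F_B^+)$, where $F_B^+$ is the self-dual part of the curvature of the connection on $\det\mathfrak{s}_W$, normalised so that the curvature equation in \eqref{eq:4.5} reads exactly $\tfrac12\rho(F_B-\mu)^+=(\Phi\Phi^*)_0$. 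Since $D_B^+\Phi=0$, pairing with $\Phi$ and substituting $\tfrac12\rho(F_B^+)=(\Phi\Phi^*)_0+\tfrac12\rho(\mu^+)$ yields, at $x_0$,
$$0\le\langle\nabla_B^*\nabla_B\Phi,\Phi\rangle=-\frac{s}{4}|\Phi|^2-\langle(\Phi\Phi^*)_0\Phi,\Phi\rangle-\frac12\langle\mu^+\cdot\Phi,\Phi\rangle.$$
Exactly as in the pointwise computation of Lemma \ref{Lem4.3} (using $\langle(\Phi\Phi^*)_0\Phi,\Phi\rangle\gtrsim|\Phi|^4$ and $|\langle\mu^+\cdot\Phi,\Phi\rangle|\le\norm{\mu}_{L^\infty}|\Phi|^2$), this forces $|\Phi(x_0)|^2\le-\tfrac{s(x_0)}{2}+\norm{\mu}_{L^\infty}\le-\inf_W\tfrac{s}{2}+\norm{\mu}_{L^\infty}$, which is the asserted $L^\infty$ bound.

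For the consequence, from the self-dual curvature equation $F_B^+=2\rho^{-1}(\Phi\Phi^*)_0+\mu^+$ and the $L^\infty$ bound just proved, $\norm{F_B^+}_{L^2}^2\lesssim\norm{\Phi}_{L^8}^8+\norm{\mu}_{L^2}^2$ is controlled by the geometry of $W$ and by $\mu$. To pass from $F_B^+$ to all of $F_B$ on the compact part, I would use the topological energy identity: $\norm{F_B^+}_{L^2}^2-\norm{F_B^-}_{L^2}^2$ equals a fixed multiple of $\int_W c_1(\mathfrak{s}_W)^2$ plus a boundary term given by the Chern--Simons invariant of the limit $\alpha$; since $\mathcal{R}(Y,\mathfrak{s},g,\eta)$ is finite by Proposition \ref{Prop4.2}, that boundary term ranges over a finite set and is bounded, so $\norm{F_B^-}_{L^2}$, and hence $\norm{F_B}_{L^2}$, are bounded in terms of the geometry of $W$ and $\mu$ as claimed.

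I expect the genuine obstacle to be the first step: rigorously justifying the dichotomy "$\sup|\Phi|^2$ is interior, or else reduces to the three-dimensional bound." This requires enough of the asymptotic analysis of finite-energy solutions on $W^*$ (exponential decay to the critical point $\alpha$, in the spirit of \cite{MR2388043}) to rule out the supremum escaping to infinity without being controlled. Once that input is available, everything else is a routine transcription of the argument for Lemma \ref{Lem4.3}, and only the bookkeeping of the constants in the four-dimensional Weitzenböck formula requires care.
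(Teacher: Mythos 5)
The paper does not prove Lemma~\ref{Lem4.4}: it cites \cite{MR2465077}, Proposition~3.4.1. So there is no in-paper proof to compare against, and I will evaluate your reconstruction on its own.

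Your argument is structurally the standard one, and it is correct. Two remarks. First, the ``genuine obstacle'' you flag --- whether the supremum of $|\Phi|^2$ escapes to infinity --- is less of an obstacle than you suggest: the convergence $(B|_{Y\times t},\Phi|_{Y\times t})\to(A,\psi)$ as $t\to\infty$ is built into the definition of $\mathscr{M}(W;\alpha)$ given in the sentence immediately preceding the lemma, so the dichotomy ``interior max, or reduce to the three-dimensional bound for $(A,\psi)$'' is available by fiat, not something you need to re-derive from exponential decay. (Of course, setting up $\mathscr{M}(W;\alpha)$ properly uses that decay theory; but given the lemma's hypotheses, you may assume it.) The interior-max branch then goes through exactly as you wrote, via the four-dimensional Weitzenb\"ock formula and the substitution $\tfrac12\rho(F_B^+)=(\Phi\Phi^*)_0+\tfrac12\rho(\mu^+)$; the precise constants depend on normalizations, but that is cosmetic.

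Second, your treatment of the $L^2$ bound on $F_B$ is the genuinely necessary elaboration over the three-dimensional Lemma~\ref{Lem4.3}, and you identify it correctly: the curvature equation only pins down $F_B^+$, so one must recover $F_B^-$ by relating $\norm{F_B^+}^2_{L^2}-\norm{F_B^-}^2_{L^2}$ to $\int_W F_B\wedge F_B$, which is a characteristic number plus a Chern--Simons-type boundary contribution; finiteness of $\mathcal{R}(Y,\mathfrak{s},g,\eta)$ (Proposition~\ref{Prop4.2}) bounds the latter. This matches the role the topological/analytical energy identity \eqref{eq:4.8} plays elsewhere in Section~\ref{Sec4}. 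Overall the proposal is a sound and complete proof sketch, with the only overhead being your overcaution about the asymptotic behavior, which the definition of the moduli space already supplies.
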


We end this set-up subsection by discussing the notions of topological energy and analytical energy that will be relevant to us later. For that, we restrict our attention to compact $4$-manifold $W$ with boundary as above. Let $(B, \Phi)$ be a configuration on $W$ and $\mu$ be defined as before. By Lemma 4.5.1 in \cite{MR2388043} and the Weitzenb\"ock formula, we have 
\begin{align}\label{eq:4.6}
    \int_{W}|D^+_B \Phi|^2 = &\int_{W} |\nabla_B \Phi|^2 + \dfrac{1}{2}\int_{W} \la \Phi, \rho_W(F^+_A)\Phi\ra + \dfrac{1}{4}\int_W s|\Phi|^2\\
    &+ \int_{Y}\la \Phi|_Y, D_{B|_Y}\Phi|_Y\ra - \int_{Y} \dfrac{H}{2}\, |\Phi|_Y|^2,\nonumber
\end{align}
where $H$ is the mean curvature of the boundary of $W$. On the other hand,
\begin{align}\label{eq:4.7}
    &\int_W \left| \dfrac{1}{2}\rho_W(F^+_B - \mu^+) - (\Phi \Phi^*)_0\right|^2  = \\
    & = \dfrac{1}{4}\int_W |\rho_W (F^+_B - \mu^+)|^2 + \int_W |(\Phi\Phi^*)_0|^2 - \dfrac{1}{2}\int_W tr(\rho_W(F^+_B - \mu^+)(\Phi\Phi^*)_0) \nonumber \\
    & = \dfrac{1}{2}\int_{W} |F^+_B - \mu^+|^2 + \dfrac{1}{4}\int_W |\Phi|^4- \dfrac{1}{2} \int_W \la \Phi, \rho_W(F^+_B - \mu^+)\Phi\ra \nonumber\\
    & = \dfrac{1}{4}\int_W |F_B - \mu|^2 - \dfrac{1}{4}\int_W (F_B - \mu) \wedge (F_B - \mu) + \dfrac{1}{4}\int_W |\Phi|^4 \nonumber\\
    &- \dfrac{1}{2}\int_W \la \Phi, \rho_W(F^+_B)\Phi\ra + \dfrac{1}{2} \int_W \la \Phi, \rho_W(\mu^+)\Phi\ra \nonumber.
\end{align}
We define $\mathscr{F}_{W,\mu}(B,\Phi)$ to the Seiberg-Witten functional on $W$ so that
$$\norm{\mathscr{F}_{W,\mu}(B,\Phi)}^2 = \norm{D^+_B\Phi}^2_{L^2} + \norm{\dfrac{1}{2}\rho_W(F^+_B - \mu^+) - (\Phi\Phi^*)_0}^2_{L^2}.$$
Combine \eqref{eq:4.6} and \eqref{eq:4.7} with completion of square, we can rewrite
\begin{align}\label{eq:4.8}
    &\norm{\mathscr{F}_{W,\mu}(B,\Phi)}^2 = \underbrace{\dfrac{1}{4}\norm{F_B - \mu}^2_{L^2}+\norm{\nabla_B \Phi}^2_{L^2}+\dfrac{1}{4}\norm{|\Phi|^2+s/2}^2_{L^2}-\norm{s/4}^2_{L^2}}_{\text{$:= \mathscr{E}^{an}_{W,\mu}(B,\Phi)$}}  \\
    &+ \underbrace{\left(-\dfrac{1}{4}\int_W (F_B - \mu)\wedge (F_B - \mu) + \dfrac{1}{2}\int_W \la \Phi, \rho_W (\mu^+)\Phi\ra + \int_{Y}\la \Phi|_Y, D_{B|_Y}\Phi|_Y\ra - \int_{Y} \dfrac{H}{2}|\Phi|_Y|^2  \right)}_{\text{$:= -\mathscr{E}^{top}_{W,\mu}(B, \Phi)$}}.\nonumber
\end{align}

\begin{Def}\label{Def4.5}
    For a configuration $(B, \Phi)$, $\mathscr{E}^{an}_{W,\mu}(B,\Phi)$ and $\mathscr{E}^{top}_{W,\mu}(B,\Phi)$ defined above, respectively, are the \textit{analytical energy} and \textit{topological energy}.
\end{Def}

\begin{Rem}\label{Rem4.6}
Immediately from the definition, we see that $\mathscr{F}_{W, \mu}(B, \Phi) =0$ if and only if the topological energy equals the analytical energy. This observation will used heavily when we compare the values of the Chern-Simons-Dirac functional evaluated at the critical points on the two ends of a cobordism between two $3$-manifolds. 
\end{Rem}

\subsection{$S^1$-equivariant monopole Floer homology}\label{Sub4.2} In this subsection, we construct the $\mathcal{S}$-complex of monopole Floer homology via $\mathcal{L}_{\eta}: \mathcal{B}_0(Y,\mathfrak{s}) \to \R$ by following the finite-dimensional model in Section \ref{Sec3}. Most of what is presented here are based on the framework laid out by Fr\o yshov in \cite{MR2738582}. The complex is parametrized by elements of $\mathfrak{m}(Y) \in \Q/\mathbf{Z}$, which are called "chambers". $\mathfrak{m}(Y)$ is a classical invariant that is defined as $\mathfrak{m}(Y) \equiv (\sigma(W) - c_1(det\,\mathfrak{s}_W)^2)/8 \mod \mathbf{Z}$, where $\sigma(W)$ denotes the signature of a four-manifold $W$ with boundary $Y$ and $\mathfrak{s}_W$ is a $spin^c$ structure. Alternatively, we can view $\mathfrak{m}$ as a set of rational numbers of the form $m + \mathbf{Z}$ with $m \in \Q$. 

Let $\mathfrak{s}$ be a $spin^c$ structure on $Y$ and $(g,\eta)$ as in Proposition \ref{Prop4.2}, we consider $m_\mathfrak{s}$ which is a representative of a chamber in $\mathfrak{m}(Y)$. Suppose $W^*$ is a 4-manifold with an end that looks like $Y \times [0,\infty)$ and $B$ is a $spin^c$ connection on $W^*$ whose restriction to $Y$ is a $spin^c$ connection $A - \eta$. Define an associated index $I(g,\eta) \in \mathfrak{m}(Y)$ to be
$$I(g,\eta) = \ind_{\mathbf{C}}(D^+_B) - \dfrac{1}{8}(c_1(det\,\mathfrak{s}_{W^*})^2- \sigma(W^*)).$$
Theorem 9 of Section 3 in \cite{MR2738582} tells us that there always exists a pair $(g,\eta)$ as in Proposition \ref{Prop4.2} such that $I(g,\eta) = m_{\mathfrak{s}}$. With this auxiliary data $(g,\eta)$, we build an $\mathcal{S}$-complex of monopoles $\widetilde{CM}(Y, m_\mathfrak{s}; \mathbf{F}):= \widetilde{CM}(Y, \mathfrak{s}; \mathbf{F})$, where recall that $\mathbf{F}$ denotes the field of characteristic $2$.

Consider \eqref{eq:4.4} the equation of negative gradient flow lines of $\mathcal{L}_{g,\eta} : \mathcal{C}(Y, \mathfrak{s})_{L^2_k} \to \R$, which is the Seiberg-Witten equations of $Y \times \R$. Let $\mathcal{F}$ be the Seiberg-Witten map associated with \eqref{eq:4.4},
$$\mathcal{F} (B, \Phi) = \left( D^+_B \Phi, \dfrac{1}{2}\rho_{Y\times \R}(F_B - \pi^*_Y \eta)^+ - (\Phi\Phi^*)_0\right).$$
Note that $\mathcal{F}$ is $\mathcal{G}_0(Y\times \R)$-equivariant, where $\mathcal{G}_0(Y\times \R)_{L^2_{k+1}}$ is a based gauge group of the full gauge group $\mathcal{G}(Y\times \R)_{L^2_{k+1}} = L^2_{k+1}-maps(Y\times \R \to U(1))$. Let $(A_\pm, \psi_\pm)$ be solutions of the perturbed three-dimensional Seiberg-Witten equations on $Y$. Consider $\mathcal{M}([A_-,\psi_-]_0, [A_+, \psi_+]_0)$ to be
\[
\left\{(B, \Phi) \in \mathcal{C}(Y\times \R, \pi^*_Y \mathfrak{s})_{L^2_k} : \begin{array}{cc}
    \mathcal{F}(B,\Phi) = 0 &  \\
    \lim_{t\to -\infty}(B,\Phi)|_{Y\times t} = (A_-,\psi_-) & \\
     \lim_{t\to \infty} (B,\Phi)|_{Y\times t}= (A_+, \psi_+) & 
\end{array}\right\}/\mathcal{G}_0(Y\times \R)_{L^2_{k+1}}.
\]
Since $\mathcal{F}$ is $\mathcal{G}_0(Y\times \R)$-equivariant, and
$$\lim_{t\to -\infty} [B,\Phi]_0|_{Y\times t} = [A_-, \psi_-]_0, \quad \lim_{t\to \infty}[B,\Phi]_0|_{Y\times t} = [A_+, \psi_+]_0,$$
$\mathcal{M}([A_-,\psi_-]_0, [A_+, \psi_+]_0)$ is exactly the based moduli space of negative gradient flow lines of $\mathcal{L}_{g,\eta} : \mathcal{B}_0(Y,\mathfrak{s}) \to \R$ that are asymptotic to $[A_\pm, \psi_\pm]_0 \in \mathcal{R}_0(Y,\mathfrak{s},g,\eta)$ at the ends.

Let $[B,\Phi]_0 \in \mathcal{M}([A_-,\psi_-]_0, [A_+, \psi_+]_0)$. For any $e^{i\theta} \in S^1$, $e^{i\theta}\cdot [B,\Phi]_0$ still satisfies $\mathcal{F} = 0$. Furthermore, it is not hard to see that $e^{i\theta}\cdot [B,\Phi]_0|_{Y\times t}$ is aymptotic to $e^{i\theta}\cdot [A_\pm, \psi_\pm]_0$ as $t \to \pm \infty$. This prompts us to consider the moduli space $\mathcal{M}([A_-,\psi_-],[A_+,\psi_+])$ of based gauge equivalence classes of solutions of $\mathcal{F} = 0$ that are asymptotic to the $S^1$-orbit of $[A_\pm, \psi_\pm]$. Note that, unlike the finite-dimensional prototype, even in the most ideal conditions, $\mathcal{M}([A_-,\psi_-],[A_+,\psi_+])$ is not a finite-dimensional manifold. However, what still holds is that if one of $[A_\pm, \psi_\pm]_0$ is an irreducible monopole, then there is a free $S^1$-action on $\mathcal{M}([A_-,\psi_-],[A_+,\psi_+])$.

\begin{Lemma}\label{Lem4.7}
    If either $[A_\pm, \psi_\pm]_0 \in \mathcal{R}_0(Y,\mathfrak{s},g,\eta)$ is irreducible, then 
    $$\mathscr{M}([A_-,\psi_-],[A_+,\psi_+]) := \mathcal{M}([A_-,\psi_-],[A_+,\psi_+])/S^1$$
    is a finite-dimensional manifold.
\end{Lemma}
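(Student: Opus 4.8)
The plan is to realize $\mathscr{M}=\mathcal{M}([A_-,\psi_-],[A_+,\psi_+])/S^1$ as the quotient, by a free and proper $S^1$-action, of the zero locus of a Fredholm section of a Banach bundle over an infinite-dimensional configuration space; finite-dimensionality will come from the Fredholm index, and the fact that the quotient is a manifold (not merely an orbifold) will come from freeness of the action, which is exactly the content of the remark preceding the lemma.

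First I would fix the analytic setup on the cylinder $Z=Y\times\R$ carrying the pulled-back $spin^c$ structure and $\pi_Y^*\eta$. Choosing smooth representatives of the asymptotic critical points and a translation-invariant interpolating configuration $(B_0,\Phi_0)$ on $Z$ that agrees with the pullback of $(A_\mp,\psi_\mp)$ near the two ends, I would model configurations as $(B_0,\Phi_0)+(b,\phi)$ with $(b,\phi)$ in a weighted Sobolev space $L^2_{k,\delta}(Z)$ for a small weight $\delta>0$, together with the based Coulomb gauge-fixing condition (so that the only residual symmetry is $S^1\cong\mathcal{G}(Z)/\mathcal{G}_0(Z)$). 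Then $\mathcal{M}([A_-,\psi_-]_0,[A_+,\psi_+]_0)$ is the zero set of the section built from $\mathcal{F}$ and the gauge-fixing operator, and $\mathcal{M}([A_-,\psi_-],[A_+,\psi_+])$ is the variant where the asymptotic limits are allowed to range over their $S^1$-orbits; modding out by the residual $S^1$ identifies $\mathscr{M}$ with the based moduli space in which one end is pinned to $[A_-,\psi_-]_0$ while the other end is only required to converge to the $S^1$-orbit of $[A_+,\psi_+]_0$.

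Next I would establish Fredholmness and finiteness. The linearization of $(\mathcal{F},\text{gauge-fixing})$ at a solution has the form $\tfrac{d}{dt}+L_t$ on $Z$, where $L_t$ is the elliptic, formally self-adjoint Hessian of $\mathcal{L}_\eta$ at the slice $Y\times\{t\}$ augmented by the gauge-fixing operator, with $L_t\to L_{\pm\infty}$ as $t\to\pm\infty$, the latter being the Hessians at the asymptotic critical points. Because $(g,\eta)$ is chosen as in Proposition \ref{Prop4.2}, these critical points are non-degenerate, so $L_{\pm\infty}$ is invertible; standard Atiyah--Patodi--Singer-type analysis on manifolds with cylindrical ends (cf.\ \cite{MR2388043}) then gives that $\tfrac{d}{dt}+L_t$ is Fredholm on the weighted spaces, with index a finite integer (a spectral flow / relative grading of the two ends). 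After a generic perturbation of the equations, as needed to achieve transversality of moduli spaces, the section is transverse to zero, so the based moduli space is a smooth finite-dimensional manifold, and allowing the one end to rotate adds a single parameter. Finally, the residual $S^1$ acts smoothly on $\mathcal{M}([A_-,\psi_-],[A_+,\psi_+])$, the action is proper since $S^1$ is compact, and by the remark before the lemma (proved exactly as Lemma \ref{Lem3.3}, since the stabilizer of an irreducible solution under the residual gauge action is trivial) it is free whenever one of $[A_\pm,\psi_\pm]_0$ is irreducible; a free proper action of a compact Lie group on a smooth finite-dimensional manifold has a smooth manifold quotient, which is the claim.

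The hard part will be the analytic package on the non-compact cylinder: choosing the weighted Sobolev spaces so that the deformation operator is genuinely Fredholm, proving exponential decay of finite-energy solutions to the non-degenerate limits, and arranging transversality in the presence of the residual gauge symmetry --- all standard in Seiberg--Witten Floer theory (Kronheimer--Mrowka, Fr\o yshov) but technical, and precisely the reason $\mathcal{M}$ itself need not be a nice finite-dimensional manifold in general. By contrast, the ingredient specific to this lemma --- freeness of the $S^1$-action --- is immediate from the stabilizer computation, and is exactly what rules out orbifold singularities in $\mathscr{M}$.
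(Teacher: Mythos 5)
Your proof is correct and reaches the same conclusion as the paper's, but it takes a genuinely different and considerably longer route. The paper's proof is essentially a two-sentence observation: since $\mathcal{F}$ is $S^1$-equivariant, the quotient $\mathcal{M}([A_-,\psi_-],[A_+,\psi_+])/S^1$ is the same thing as the moduli space of solutions of $\mathcal{F}=0$ with pinned asymptotics, modulo the \emph{full} gauge group $\mathcal{G}(Y\times\R)$ (the residual $S^1$ being exactly $\mathcal{G}/\mathcal{G}_0$), and that object is a finite-dimensional manifold by the transversality and regularity results of Chapter 8 of \cite{MR2465077}. You instead unpack what is inside that citation: the weighted Sobolev / Atiyah--Patodi--Singer Fredholm package, invertibility of $L_{\pm\infty}$ from non-degeneracy of the critical points, exponential decay, generic perturbations for transversality, and finally the free proper $S^1$-action giving a manifold quotient. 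The slice description you use (pin one end, let the other range over its orbit) is an alternative but equivalent way to present the same quotient that the paper identifies with the full-gauge moduli space. The paper's approach buys brevity by deferring all the hard analysis to the reference; yours is more self-contained but repeats standard technical material.

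One small point of tension you should be aware of: you conclude in passing that $\mathcal{M}([A_-,\psi_-],[A_+,\psi_+])$ itself is finite-dimensional (the based moduli with a one-parameter end freedom), and $\mathscr{M}$ is one dimension lower, whereas the paper explicitly asserts just before the lemma that $\mathcal{M}([A_-,\psi_-],[A_+,\psi_+])$ is ``not a finite-dimensional manifold'' even in ideal conditions. Since a free proper smooth $S^1$-action with a finite-dimensional quotient forces the total space to be finite-dimensional, your conclusion is the logically consistent one \emph{a posteriori}; the paper's remark should probably be read as saying only that this is not obvious \emph{a priori} from the based-gauge setup. This does not affect the correctness of either proof, since neither relies on that remark, but it is worth flagging as a mismatch with the paper's surrounding prose.
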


\begin{proof}
    Since $\mathcal{F} : \mathcal{B}_0(Y\times \R) = \mathcal{C}(Y\times \R)_{L^2_k}/\mathcal{G}_0(Y\times \R)_{L^2_{k+1}} \to \R$ is $S^1$-equivariant, this implies that $\mathcal{M}([A_-,\psi_-],[A_+,\psi_+])/S^1$ is
    $$\left\{ (B,\Phi) : \begin{array}{cc}
        \mathcal{F}(B,\Phi)= 0 &  \\
        \lim_{t\to \pm \infty} (B,\Phi)|_{Y\times t} = (A_\pm, \psi_\pm) & 
    \end{array}\right\}/\mathcal{G}(Y\times \R),$$
    which is exactly the moduli space of gauge equivalence classes of negative gradient flow lines between critical points of $\mathcal{L}_{g,\eta}: \mathcal{C}(Y,\mathfrak{s}) \to \R$. It is a finite-dimensional manifold by the transversality and regularity results in Chapter 8 of \cite{MR2465077}. 
\end{proof}

Let $\alpha_\pm := [A_\pm, \psi_\pm]$, which we recall that it can be viewed as the $S^1$-orbit of $[A_\pm, \psi_\pm]_0$. Then there are natural $S^1$-equivariant maps $e_{\alpha_\pm}: \mathcal{M}(\alpha_-, \alpha_+) \to \alpha_{\pm}$ by following the based gauge equivalence classes of negative gradient flow lines to its asymptotic limit belonging the $S^1$-orbit $\alpha_\pm$. As a result, we get an $S^1$-equivariant map $h_{\alpha_+ \alpha_-}: \mathcal{M}(\alpha_-, \alpha_+) \to \alpha_+ \times \alpha_-$. After picking a representative of each critical $S^1$-orbit and quotient out everything by $S^1$, assuming that either $\alpha_\pm$ is a free critical $S^1$-orbit, by Lemma \ref{Lem4.7}, we obtain a smooth map (cf. the construction in Subsection \ref{Sub3.1})
$$\widetilde{h}_{\alpha_- \alpha_+} : \mathscr{M}(\alpha_-, \alpha_+) \to S^1.$$
We use $\widetilde{h}_{\alpha_- \alpha_+}$ to build the $\mathcal{S}$-complex of monopoles. Before we proceed to the construction, we need to discuss the dimension of the moduli space of gauge equivalence classes of flow lines defined above. The dimension gives us a notion of an index, just as in the finite-dimensional prototype model.

We follow the discussion in Section 4 of \cite{MR2738582} (cf. \cite{MR1895135} for $b_1(Y) >0$ case). Let $X$ be a $4$-manifold with tubular ends $Y_j \times [0,\infty)$, for $j=1,\cdots,n$, where $Y_j$ is a rational homology sphere. For each $j$, we choose a chamber $m_j$ belonging to $\mathfrak{m}(Y_j)$ associated with a $spin^c$ structure $\mathfrak{s}_j$ on $Y_j$ and let $(g_j, \eta_j)$ be an auxiliary data  such that $I(g_j, \eta_j) = m_j$ and all critical points of $\mathcal{L}_{\eta_j}$ are non-degenerate. For each $\alpha_j \in \mathcal{R}(Y_j, \mathfrak{s}_j, g_j, \eta_j)$, we define $\mathscr{M}(X; \alpha_1, \cdots, \alpha_n)$ to be the moduli space of gauge equivalence classes of solutions of the Seiberg-Witten equations on $X$ that are asymptotic to $\alpha_j$ at the ends. The expected dimension $\mathscr{M}(X; \alpha_1, \cdots, \alpha_n)$ is the index of a certain Fredholm operator (cf. Section 3.4 in \cite{MR2465077}). In particular, if we all have reducible limits $\{\Theta_j\}_{j=1}^n$ at the ends, then
\begin{equation}\label{eq:4.9}
\dim \mathscr{M}(X; \Theta_1, \cdots, \Theta_j) = -b_0(X) + b_1(X) - b^+(X) + 2\,\ind_{\mathbf{C}}(D^+_B),
\end{equation}
where $B$ is a $spin^c$ connection on $X$ defined as before. Consequently, by \eqref{eq:4.9}, in the case where $X= Y\times \R$, we obtain
\begin{equation}\label{eq:4.10}
    \dim \mathscr{M}(\Theta, \Theta) = -1 + 2\,\ind_{\mathbf{C}}(D^+_B).
\end{equation}
Also in this case $X = Y\times \R$, if $\alpha, \gamma, \beta \in \mathcal{R}(Y, \mathfrak{s}, g, \eta)$, then we have the following additional rule (Corollary C.0.1 in \cite{MR2465077})
\begin{equation}\label{eq:4.11}
\dim \mathscr{M}(\alpha, \beta) = \dim \mathscr{M}(\alpha, \gamma) + \dim \mathscr{M}(\gamma, \beta) + n_\gamma,
\end{equation}
where $n_\gamma = 1$ if $\gamma$ is irreducible, and $n_\gamma = 0$ if $\gamma = \Theta$. For $\alpha \in \mathcal{R}(Y, \mathfrak{s}, g, \eta)$, we are now ready to define $\ind(\alpha) \in 2\mathfrak{m}(Y) + \mathbf{Z}$. Suppose $X$ is any connected $4$-manifold with a tubular end $Y \times [0,\infty)$. Then $\ind(\alpha)$ is given by
\begin{equation}\label{eq:4.12}
    \dim \mathscr{M}(X; \alpha) = \ind(\alpha) + \dfrac{1}{4}(c_1(det\,\mathfrak{s}_{X})^2-\sigma(X))-n_{\alpha}+b_1(X) - b^+(X).
\end{equation}
\eqref{eq:4.10} and \eqref{eq:4.12} combined to give us the index of $\Theta$ in the case $X = Y\times \R$
\begin{equation}\label{eq:4.13}
    \ind(\Theta) = 2\left(\ind_{\mathbf{C}}(D^+_B)-\dfrac{1}{8}(c_1(det \mathfrak{s}_X)^2-\sigma(X))\right) = 2 I(g,\eta) = 2m_\mathfrak{s}.
\end{equation}
Furthermore, from \eqref{eq:4.12} and \eqref{eq:4.13}, note that (still assuming $X = Y\times \R$), for $\alpha, \beta \in \mathcal{R}(Y,\mathfrak{s},g,\eta)$ that are irreducible monopoles
\begin{equation}\label{eq:4.14}
\dim \mathscr{M}(\alpha, \Theta) = \ind(\alpha) -2m_\mathfrak{s}, \quad \dim \mathscr{M}(\Theta, \beta) = 2m_{\mathfrak{s}}-1 - \ind(\beta).
\end{equation}
Consequently, by \eqref{eq:4.11}, we must have
\begin{equation}\label{eq:4.15}
\dim \mathscr{M}(\alpha , \beta) = \ind(\alpha) - \ind(\beta).
\end{equation}

Let's get back to the map $\widetilde{h}_{\alpha_- \alpha_+} : \mathscr{M}(\alpha_-,\alpha_+) \to S^1$ constructed previously. There is a free $\R$-action on the moduli space of negative gradient flow lines between $\alpha_\pm$ given by translation. By considering the quotient space $\mathscr{M}(\alpha_-,\alpha_+)/\R := \Breve{\mathscr{M}}(\alpha_-, \alpha_+)$, we obtain the moduli space of unparametrized negative gradient flow lines interpolating between critical points. Even though $\Breve{\mathscr{M}}(\alpha_-, \alpha_+)$ is still a manifold, it is not compact. If we include the broken flow lines to obtain $\Breve{\mathscr{M}}^+(\alpha_-,\alpha_+)$, by gluing theory (cf. \cite{MR2465077}), $\Breve{\mathscr{M}}^+(\alpha_-, \alpha_+)$ is indeed compact. Furthermore, the map $\widetilde{h}_{\alpha_- \alpha_+}$ descends to a smooth map (still denoted by the same name)
\begin{equation}\label{eq:4.16}
\widetilde{h}_{\alpha_- \alpha_ +} : \Breve{\mathscr{M}}^+(\alpha_-, \alpha_+) \to S^1.
\end{equation}
Using the map \eqref{eq:4.16} and the construction in Subsection \ref{Sub3.1} combined with the relation between the dimension of various relevant moduli spaces and indices derived above (cf. \eqref{eq:4.14}, \eqref{eq:4.15}), we define the $\mathcal{S}$-complex of monopoles as follows. Let $CM(Y, m_{\mathfrak{s}}, g, \eta; \mathbf{F})$ be the finitely generated abelian group over $\mathbf{F}$ given by
$$CM(Y, m_{\mathfrak{s}}, g, \eta; \mathbf{F})_\ast = \mathbf{F}\cdot \{\alpha \in \mathcal{R}^*(Y, \mathfrak{s}, g, \eta) : \ind (\alpha) = \ast\},$$
where $\mathcal{R}^*(Y, \mathfrak{s}, g, \eta)$ denote the finite set containing only gauge equivalence classes of irreducible monopoles of the perturbed 3-dimensional Seiberg-Witten equations \eqref{eq:4.2}. For $\alpha_\pm \in \mathcal{R}^*(Y, \mathfrak{s}, g, \eta)$ such that $\ind(\alpha_-) = \ind(\alpha_+) + 1$, then by $\eqref{eq:4.15}$, $\dim \Breve{\mathscr{M}}^+(\alpha_- ,\alpha_+) = 0$. As a result, we obtain a map
\begin{equation}\label{eq:4.17}
    d: CM(Y, m_{\mathfrak{s}}, g, \eta; \mathbf{F})_\ast \to CM(Y, m_{\mathfrak{s}}, g, \eta; \mathbf{F})_{\ast - 1}, \quad \la d(\alpha_-), \alpha_+\ra = \#_{\mathbf{Z}_2}\Breve{\mathscr{M}}^+(\alpha_-,\alpha_+). 
\end{equation}
If $\alpha_\pm \in \mathcal{R}^*$ such that $\ind(\alpha_-) = \ind(\alpha_+) + 2$, by \eqref{eq:4.15}, $\dim \Breve{\mathscr{M}}^+(\alpha_-,\alpha_+) = 1$. In this case, using \eqref{eq:4.16}, we have a map
\begin{equation}\label{eq:4.18}
   u :  CM(Y, m_{\mathfrak{s}}, g, \eta; \mathbf{F})_\ast \to CM(Y, m_{\mathfrak{s}}, g, \eta; \mathbf{F})_{\ast - 2},
\end{equation}
$$\la u(\alpha_-), \alpha_+ \ra = \la \widetilde{h}^*_{\alpha_- \alpha_+}(1), [\Breve{\mathscr{M}}^+(\alpha_-,\alpha+)]\ra \mod 2.$$
Here $1$ denotes the generator of $H^1(S^1;\mathbf{F})$. Finally, we take into account the gauge equivalence class $\Theta$ of reducible monopoles. When $\ind(\alpha) = 2m_\mathfrak{s} + 1$, by \eqref{eq:4.14}, $\dim \Breve{\mathscr{M}}^+(\alpha, \Theta) = 0$, we obtain
\begin{equation}\label{eq:4.19}
    \delta_1 : CM(Y, m_{\mathfrak{s}}, g, \eta; \mathbf{F})_{2m_\mathfrak{s} + 1} \to \mathbf{F}, \quad \delta_1(\alpha) = \sum_{\alpha} \#_{\mathbf{Z}_2} \Breve{\mathscr{M}}^+(\alpha, \Theta) \mod 2.
\end{equation}
Similarly, when $\ind(\beta) = 2m_\mathfrak{s}-2$, we have
\begin{equation}\label{eq:4.20}
    \delta_2 : \mathbf{F} \to CM(Y, \mathfrak{s}, g, \eta; \mathbf{F})_{2m_{\mathfrak{s}}-2}, \quad \delta_2(1) = \sum_{\beta} \#_{\mathbf{Z}_2} \Breve{\mathscr{M}}^+(\Theta, \beta) \beta.
\end{equation}

So far, the maps defined by \eqref{eq:4.17}, \eqref{eq:4.18}, \eqref{eq:4.1}, \eqref{eq:4.20} rely on the definition of the map $\widetilde{h}_{\alpha_- \alpha_+}$ in \eqref{eq:4.16}, which is dependent of the choice of the representative element of the $S^1$-critical orbit of $\mathcal{L}_\eta: \mathcal{B}_0(Y,\mathfrak{s}) \to \R$. The discussion in Subsection \ref{Sub3.3} carries over directly in this setting to give us the well-definedness of the construction.  

\begin{Prop}[cf. Proposition 4 of Section 7, Proposition 8 of Section 8 in \cite{MR2738582}, Proposition \ref{Prop3.5}, Proposition \ref{Prop3.6}]\label{Prop4.8}
    Let $Y$ be a rational homology three-sphere equipped with a $spin^c$ structure $\mathfrak{s}$ with an associated $m_\mathfrak{s} \in \mathfrak{m}(Y)$. Pick $(g, \eta)$ to be a pair of auxiliary data on $Y$ such that $I(g,\eta) = m_\mathfrak{s}$. The maps defined by \eqref{eq:4.17}, \eqref{eq:4.18}, \eqref{eq:4.1}, \eqref{eq:4.20} satisfy the following relations
    $$d^2 = 0, \quad \delta_1 d = 0, \quad d \delta_2 = 0, \quad du + ud + \delta_2 \delta_1 = 0.$$
\end{Prop}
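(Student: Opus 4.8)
The strategy is to mimic the proofs of Proposition \ref{Prop3.5} and Proposition \ref{Prop3.6} in the finite-dimensional prototype, transplanting them into the Seiberg-Witten gauge theory setting using the dimension formulas \eqref{eq:4.14} and \eqref{eq:4.15} and the compactness of $\Breve{\mathscr{M}}^+$ via gluing theory. The identity $d^2=0$ is the standard fact that for irreducible $\alpha_\pm$ with $\ind(\alpha_-)=\ind(\alpha_+)+2$, the compactified $1$-manifold $\Breve{\mathscr{M}}^+(\alpha_-,\alpha_+)$ has boundary given by the broken trajectories through intermediate irreducibles $\gamma$ with $\ind(\gamma)=\ind(\alpha_+)+1$ — since $\ind(\alpha_-)-\ind(\Theta)$ and $\ind(\Theta)-\ind(\alpha_+)$ cannot both be $\geq 1$ by \eqref{eq:4.14} when the total is $2$, so $\Theta$ never appears as an intermediate limit — and the $\bmod 2$ count of that boundary is $\langle d^2(\alpha_-),\alpha_+\rangle$, hence zero.

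For $\delta_1 d = 0$ and $d\delta_2 = 0$: for $\delta_1 d$, take $\alpha$ with $\ind(\alpha) = 2m_\mathfrak{s}+2$ and consider the $1$-dimensional compactified moduli space $\Breve{\mathscr{M}}^+(\alpha,\Theta)$, whose dimension is $\ind(\alpha)-2m_\mathfrak{s} = 2$, wait — by \eqref{eq:4.14} this is a $1$-manifold when $\ind(\alpha) = 2m_\mathfrak{s}+2$. Its boundary consists of broken trajectories $\Breve{\mathscr{M}}^+(\alpha,\gamma)\times_\gamma\Breve{\mathscr{M}}^+(\gamma,\Theta)$ with $\gamma$ irreducible of index $2m_\mathfrak{s}+1$ (the only possibility, as intermediate $\Theta$ would force a negative-dimensional factor). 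Counting this boundary $\bmod 2$ gives $\langle \delta_1 d(\alpha),\Theta\rangle = 0$. The argument for $d\delta_2 = 0$ is symmetric, using $\Breve{\mathscr{M}}^+(\Theta,\beta)$ with $\ind(\beta) = 2m_\mathfrak{s}-3$, a $1$-manifold by \eqref{eq:4.14}. The one point needing care here that is absent in the finite-dimensional case is that reducible-to-reducible breaking along $\Theta$ cannot contribute: one checks via \eqref{eq:4.14} and \eqref{eq:4.11} that the relevant dimensions forbid it, and one also needs that the reducible $\Theta$ is isolated and non-degenerate so that gluing theory (Chapter 19 of \cite{MR2465077} or Fr\o yshov's treatment) applies to the ends involving $\Theta$.

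For the main identity $du + ud + \delta_2\delta_1 = 0$: fix irreducibles $\alpha_-,\alpha_+$ with $\ind(\alpha_-) = \ind(\alpha_+)+3$, so $\dim\Breve{\mathscr{M}}^+(\alpha_-,\alpha_+) = 3$. Pick a regular value $p\in S^1$ of the map $\widetilde{h}_{\alpha_-\alpha_+}:\Breve{\mathscr{M}}^+(\alpha_-,\alpha_+)\to S^1$; then $\widetilde{h}_{\alpha_-\alpha_+}^{-1}(p)$ is a compact $1$-manifold whose boundary count $\bmod 2$ vanishes. The boundary of $\Breve{\mathscr{M}}^+(\alpha_-,\alpha_+)$ decomposes into three strata exactly as the Type I, II, III faces in Proposition \ref{Prop3.6}: breaking at an irreducible $\gamma$ with $\ind(\gamma)=\ind(\alpha_+)+1$ (contributing $\langle du(\alpha_-),\alpha_+\rangle$ after using $\deg_2$ of $\widetilde{h}$), breaking at an irreducible $\gamma'$ with $\ind(\gamma')=\ind(\alpha_+)+2$ (contributing $\langle ud(\alpha_-),\alpha_+\rangle$), and breaking through the reducible $\Theta$ — here by \eqref{eq:4.14} the two pieces $\Breve{\mathscr{M}}^+(\alpha_-,\Theta)$ and $\Breve{\mathscr{M}}^+(\Theta,\alpha_+)$ have dimensions $\ind(\alpha_-)-2m_\mathfrak{s}$ and $2m_\mathfrak{s}-1-\ind(\alpha_+)$ summing to $2$, and for both to be nonempty and contribute a codimension-$1$ face one needs each to be $0$ after accounting for the extra $S^1$ of the reducible end (which is where the $\delta_2\delta_1$ term and the factor-of-$S^1$ phenomenon from Type III in Proposition \ref{Prop3.6} come in). Assembling the three contributions and invoking that the total $\bmod 2$ boundary count is zero yields the identity. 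The main obstacle is the analytic input I am taking for granted: that the Fr\o yshov/Kronheimer–Mrowka gluing and compactness theory (\cite{MR2465077}, \cite{MR2738582}) actually produces the claimed stratified boundary structure for these moduli spaces with a reducible end — in particular that the $S^1$-orbit of the reducible contributes the extra circle factor appearing in the Type III stratum, and that no further degenerations (bubbling, or multiple breaking at $\Theta$) occur in these low-dimensional spaces. Modulo citing those results, the proof is a bookkeeping argument with the index formulas \eqref{eq:4.11}–\eqref{eq:4.15}.
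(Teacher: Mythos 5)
Your proposal is correct and matches the approach the paper intends: the paper gives no written-out proof of Proposition \ref{Prop4.8}, instead citing Fr\o yshov's Propositions 4 and 8 together with the finite-dimensional prototypes Propositions \ref{Prop3.5} and \ref{Prop3.6}, and you are spelling out exactly that transplantation, using the index formulas \eqref{eq:4.11}--\eqref{eq:4.15} for the dimension bookkeeping and the Kronheimer--Mrowka/Fr\o yshov gluing and compactness theory for the stratified boundary structure. One minor slip in your treatment of $du+ud+\delta_2\delta_1=0$: when $\ind(\alpha_-)=\ind(\alpha_+)+3$, the unparametrized compactified space $\Breve{\mathscr{M}}^+(\alpha_-,\alpha_+)$ has dimension $2$, not $3$ (the $3$ is the dimension of the parametrized $\mathscr{M}(\alpha_-,\alpha_+)$), consistent with your correct claim that $\widetilde{h}_{\alpha_-\alpha_+}^{-1}(p)$ is a compact $1$-manifold; likewise the factors in the Type~III stratum have \emph{unparametrized} dimensions $\ind(\alpha_-)-2m_{\mathfrak{s}}-1=0$ and $2m_{\mathfrak{s}}-2-\ind(\alpha_+)=0$, which together with the $S^1$ gluing parameter give the expected codimension-one face.
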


We define $\widetilde{CM}(Y, m_{\mathfrak{s}}, g, \eta; \mathbf{F})$ to be
 $$\widetilde{CM}(Y, m_{\mathfrak{s}}, g, \eta; \mathbf{F})_\ast = CM_\ast \oplus CM_{\ast-1} \oplus \mathbf{F},$$
and the map $\widetilde{d}: \widetilde{CM}_\ast \to \widetilde{CM}_{\ast-1}$ to be
$$\widetilde{d}=\begin{bmatrix} d & 0 & 0 \\ u & \delta_2 & d \\ \delta_1 & 0 & 0 \end{bmatrix}.$$
From Proposition \ref{Prop4.8}, we immediately see that 

\begin{Th}\label{Th4.9}
    Let $Y$ be a rational homology three-sphere with the same hypothesis as Proposition \ref{Prop4.8}. Then $(\widetilde{CM}(Y, m_\mathfrak{s}, g, \eta), \widetilde{d})$ is an $\mathcal{S}$-complex. 
\end{Th}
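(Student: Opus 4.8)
The plan is to unwind Definition \ref{Def3.15} and check, clause by clause, that the pair $\bigl(\widetilde{CM}(Y,m_\mathfrak{s},g,\eta),\widetilde d\bigr)$ meets every requirement of an $\mathcal{S}$-complex, with the Novikov field $\Lambda_{\mathbf{F}_2}$ specialized to the ground field $\mathbf{F}$. This specialization is legitimate precisely because $b_1(Y)=0$: the perturbed Chern--Simons--Dirac functional $\mathcal{L}_\eta$ is genuinely $\R$-valued (no infinite cyclic cover is needed) and there is a single reducible $\Theta$ up to based gauge, so there is no Novikov bookkeeping. First I would record the structural facts: by Proposition \ref{Prop4.2} together with the a priori bounds of Lemma \ref{Lem4.3}, the set $\mathcal{R}^*(Y,\mathfrak{s},g,\eta)$ is finite, so $CM(Y,m_\mathfrak{s},g,\eta;\mathbf{F})$ is finitely generated over $\mathbf{F}$; it is $\mathbf{Z}$-graded by the index $\ind$ of \eqref{eq:4.12}; and hence $\widetilde{CM}_\ast=CM_\ast\oplus CM_{\ast-1}\oplus\mathbf{F}$ has exactly the shape demanded in Definition \ref{Def3.15}.

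Next I would dispatch the degree bookkeeping so that $\widetilde d$ is a well-defined $\mathbf{F}$-linear map of degree $-1$. Using the index identities \eqref{eq:4.13}--\eqref{eq:4.15} and $\ind(\Theta)=2m_\mathfrak{s}$, the map $d$ of \eqref{eq:4.17} has degree $-1$, the map $u$ of \eqref{eq:4.18} has degree $-2$, the map $\delta_1$ of \eqref{eq:4.19} has degree $-1$, and $\delta_2$ of \eqref{eq:4.20} has degree $-2$; reading these off against the summand decomposition $\widetilde{CM}_\ast=CM_\ast\oplus CM_{\ast-1}\oplus\mathbf{F}$, each of the four maps lands in the correct summand of $\widetilde{CM}_{\ast-1}$, and together they constitute exactly the nonzero entries of the prescribed $3\times3$ matrix. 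So $\widetilde d$ is a degree $-1$ endomorphism of the required form, and the only remaining point is $\widetilde d{}^2=0$.

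For that I would simply expand the block-matrix square. Working over $\mathbf{F}_2$, where signs are immaterial, one computes
\[
\widetilde d{}^2 \;=\;
\begin{bmatrix} d & 0 & 0 \\ u & d & \delta_2 \\ \delta_1 & 0 & 0 \end{bmatrix}^2
\;=\;
\begin{bmatrix} d^2 & 0 & 0 \\ ud+du+\delta_2\delta_1 & d^2 & d\delta_2 \\ \delta_1 d & 0 & 0 \end{bmatrix},
\]
so that $\widetilde d{}^2=0$ holds if and only if the four identities $d^2=0$, $\delta_1 d=0$, $d\delta_2=0$, and $ud+du+\delta_2\delta_1=0$ all hold. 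But these are exactly the content of Proposition \ref{Prop4.8}. Therefore $\widetilde d{}^2=0$ and $\bigl(\widetilde{CM}(Y,m_\mathfrak{s},g,\eta),\widetilde d\bigr)$ is an object of $\mathscr{S}$, which is the assertion of the theorem.

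I do not anticipate any real obstacle: the statement is a purely formal corollary of Proposition \ref{Prop4.8}, and all the genuine work — the moduli-space boundary/gluing arguments modeled on Propositions \ref{Prop3.5} and \ref{Prop3.6}, carried over to the Seiberg--Witten setting — is already absorbed into that proposition. The only thing to handle with minor care is the translation of conventions, i.e.\ keeping the ordering of the summands $CM_\ast\oplus CM_{\ast-1}\oplus\mathbf{F}$ consistent with the placement of $d,u,\delta_1,\delta_2$ in the matrix so that $\delta_2$ really maps the $\mathbf{F}$-summand into $CM_{\ast-1}$ and $\delta_1$ really maps $CM_\ast$ into $\mathbf{F}$; this is bookkeeping, not mathematics, and does not affect the reduction of $\widetilde d{}^2=0$ to the four relations above.
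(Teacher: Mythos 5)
Your proposal is correct and follows exactly the same route as the paper: the paper simply asserts that Theorem 4.9 is an immediate consequence of Proposition 4.8, and your proof makes that reduction explicit by writing out the block-matrix square of $\widetilde d$ and matching the resulting entries $d^2$, $\delta_1 d$, $d\delta_2$, and $ud+du+\delta_2\delta_1$ against the four identities of Proposition 4.8. Your added remarks on finiteness, $\mathbf Z$-grading, degree bookkeeping, and the collapse of $\Lambda_{\mathbf F_2}$ to $\mathbf F$ when $b_1(Y)=0$ are all accurate and merely spell out what the paper leaves implicit.
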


So far, for each each $spin^c$ structure $\mathfrak{s}$ on a rational homology sphere $Y$ and an associated $m_\mathfrak{s} \in \mathfrak{m}(Y)$, given an auxiliary data $(g, \eta)$, we constructed an $\mathcal{S}$-complex $\widetilde{CM}(Y, \mathfrak{s}, g, \eta)$. The following theorem tells us what happens as we vary the auxiliary data. Let $(g_1, \eta_1)$ and $(g_2, \eta_2)$ be two different pairs of metric and perturbation that give us two different chambers $m^1_\mathfrak{s}, m^2_\mathfrak{s}$. 

\begin{Th}[cf. Proposition 5 of Section 7 in \cite{MR2738582}]\label{Th4.10}
    For $q \leq 2m^1_\mathfrak{s} - 1$ or $q\geq 2m^2_\mathfrak{s}$, then the continuation map that gives rise to 
    $$\widetilde{CM}(Y, m^1_\mathfrak{s}, g_1, \eta_1; \mathbf{F})_q \to \widetilde{CM}(Y, m^2_\mathfrak{s}, g_2, \eta_2; \mathbf{F})_q$$
    is an $\mathcal{S}$-chain quasi-isomorphism in the sense of Definition \ref{Def3.17}. As a result, 
    $$\widetilde{HM}_q(Y, m^1_\mathfrak{s}; \mathbf{F}) := H(\widetilde{CM}(Y, m^1_\mathfrak{s}, g_1, \eta_1; \mathbf{F})_q) \cong H(\widetilde{CM}(Y, m^2_\mathfrak{s}, g_2, \eta_2; \mathbf{F})_q):= \widetilde{HM}_q(Y, m^2_\mathfrak{s}; \mathbf{F}).$$
\end{Th}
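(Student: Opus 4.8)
The plan is to produce the asserted map as the cobordism morphism associated to the trivial cylinder $Y\times\mathbf{R}$ equipped with a generic path of auxiliary data, and then to argue that this morphism is an $\mathcal{S}$-chain homotopy equivalence in the stated range of degrees. The only delicate point will be to confine the influence of the reducible solution $\Theta$ to the ``wall-crossing window'' $2m^1_\mathfrak{s}\le q\le 2m^2_\mathfrak{s}-1$, so that outside this window the argument reduces to the purely irreducible situation already treated in the finite-dimensional prototype.

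First I would choose a smooth path $(g_t,\eta_t)$, $t\in\mathbf{R}$, equal to $(g_1,\eta_1)$ for $t\ll 0$, equal to $(g_2,\eta_2)$ for $t\gg 0$, and generic in between, so that the Seiberg--Witten moduli spaces $\mathscr{M}(W;\alpha_+,\alpha_-)$ on $W=Y\times\mathbf{R}$ carrying this (non-translation-invariant) data are regular; the a priori bounds of Lemma \ref{Lem4.4} supply the usual compactness. Feeding these moduli spaces into the pull-up--push-down construction of Subsection \ref{Sub3.2}, whose gauge-theoretic adaptation is precisely the construction of Subsection \ref{Sub4.2}, yields a degree-zero morphism of $\mathcal{S}$-complexes
\[
\widetilde{\lambda}=\widetilde{CM}(W):\widetilde{CM}(Y,m^1_\mathfrak{s},g_1,\eta_1;\mathbf{F})\longrightarrow \widetilde{CM}(Y,m^2_\mathfrak{s},g_2,\eta_2;\mathbf{F})
\]
of the block form of Definition \ref{Def3.15}, whose bottom-right entry is the scalar $1$ arising from the unique reducible trajectory on the trivial cobordism. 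That $\widetilde{\lambda}$ is a chain map is the gauge-theoretic instance of Theorem \ref{Th3.8}, and independence of the choices of representatives of critical orbits is handled exactly as in Subsection \ref{Sub3.3}.

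Next I would build a homotopy inverse. Running the same construction along the reversed path produces a morphism $\widetilde{\mu}$ in the opposite direction, and a standard neck-stretching (catenation) argument shows $\widetilde{\mu}\circ\widetilde{\lambda}$ is $\mathcal{S}$-chain homotopic, in the sense of Definition \ref{Def3.17}, to the cobordism morphism of a nullhomotopic loop of data, which in turn is $\mathcal{S}$-chain homotopic to the identity; symmetrically for $\widetilde{\lambda}\circ\widetilde{\mu}$. The obstruction to running this verbatim in every degree is that a path of auxiliary data need not keep the reducible regular on the $4$-dimensional cobordisms, so broken configurations passing through $\Theta$ can occur. Here one uses the index formula \eqref{eq:4.12} together with \eqref{eq:4.14}, \eqref{eq:4.15} (and \eqref{eq:4.13}, which gives $\ind\Theta = 2m_\mathfrak{s}$ at each end) to see that any moduli space factoring through a reducible limit contributes to the degree-$q$ part of $\widetilde{\lambda}$, of $\widetilde{\mu}$, or of the relevant homotopies only when $2m^1_\mathfrak{s}\le q\le 2m^2_\mathfrak{s}-1$. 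Hence for $q\le 2m^1_\mathfrak{s}-1$ or $q\ge 2m^2_\mathfrak{s}$ every moduli space relevant to degree $q$ is irreducible and regular, the gluing introduces no reducible strata, and the identities $\widetilde{\mu}\widetilde{\lambda}\simeq\mathrm{id}$, $\widetilde{\lambda}\widetilde{\mu}\simeq\mathrm{id}$ hold on the degree-$q$ part, exactly as for the finite-dimensional model under Assumptions A--C.

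Finally, in the stated range $\widetilde{\lambda}$ induces an isomorphism on degree-$q$ homology, giving the claimed invariance $\widetilde{HM}_q(Y,m^1_\mathfrak{s};\mathbf{F})\cong\widetilde{HM}_q(Y,m^2_\mathfrak{s};\mathbf{F})$; independence of the chosen path is automatic, since any two such paths are homotopic rel endpoints and thus induce $\mathcal{S}$-chain homotopic morphisms. The version for $HM_q$ recorded in Corollary \ref{Cor4.11} follows by applying the same cobordism-and-homotopy argument to the diagonal block $\lambda=CM(W)$, which is a chain map $CM\to CM$ by the first identity of Remark \ref{Rem3.16} and is a quasi-isomorphism in the same range of degrees. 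I expect the main obstacle to be exactly this reducible bookkeeping: one must arrange transversality for all irreducible strata on the cobordisms and on every broken or glued configuration while tolerating the non-regular reducible stratum, and then read off from \eqref{eq:4.12}--\eqref{eq:4.15} that this stratum can never enter a zero- or one-dimensional moduli space relevant to a degree outside $[2m^1_\mathfrak{s},2m^2_\mathfrak{s})$ --- this is the substance of Fr\o yshov's Proposition 5 of Section 7 in \cite{MR2738582}.
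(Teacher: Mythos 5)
Your proposal reconstructs the standard continuation-map argument (generic path of auxiliary data on the cylinder, homotopy inverse from the reversed path, neck-stretching, and index bookkeeping to confine the reducible's interference to the window $2m^1_\mathfrak{s}\le q\le 2m^2_\mathfrak{s}-1$), which is precisely what the paper invokes by deferring to Fr\o yshov's Proposition 4/5 of Section 7 in \cite{MR2738582}. This is the same approach as the paper; the paper simply leaves it as a citation while you sketch the mechanics, and you correctly identify that the hardest step — verifying via \eqref{eq:4.12}--\eqref{eq:4.15} that reducible strata never enter the relevant zero- or one-dimensional moduli spaces outside that window — is exactly the content being cited.
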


The proof of Theorem \ref{Th4.10} is essentially the same as the proof of Proposition 4 of Section 7 in \cite{MR2738582}. An immediate corollary of Theorem \ref{Th4.10} is as follows.

\begin{Cor}\label{Cor4.11}
    Suppose $m_\mathfrak{s} \geq 0$. For each $q \leq -1$ or $q \geq 2m_\mathfrak{s}$, we have $\widetilde{HM}_q(Y, m_\mathfrak{s}; \mathbf{F})$ is an invariant of $Y$. Thus, for such $q$, $HM_q(Y, m_\mathfrak{s}; \mathbf{F}):= H(CM(Y, m_\mathfrak{s}; \mathbf{F})_q)$ is also an invariant of $Y$. 
\end{Cor}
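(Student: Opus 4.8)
The plan is to derive Corollary~\ref{Cor4.11} formally from Theorem~\ref{Th4.10}; the substantive work is a bookkeeping of allowable ranges, followed by a homological-algebra argument to pass from $\widetilde{HM}$ to $HM$.

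\emph{Invariance of $\widetilde{HM}_q$.} By Theorem~9 of \cite{MR2738582} every chamber representative of $\mathfrak{m}(Y)$ arises as $I(g,\eta)$ for a suitable admissible pair $(g,\eta)$, and the representatives form a $\mathbf{Z}$-coset, so since $m_{\mathfrak{s}}\ge 0$ we may work throughout with non-negative representatives. First, fixing $m_{\mathfrak{s}}$ and taking two admissible pairs $(g_1,\eta_1),(g_2,\eta_2)$ with $I(g_i,\eta_i)=m_{\mathfrak{s}}$, Theorem~\ref{Th4.10} with $m^1_{\mathfrak{s}}=m^2_{\mathfrak{s}}=m_{\mathfrak{s}}$ gives an $\mathcal{S}$-chain quasi-isomorphism in every degree (the condition $q\le 2m_{\mathfrak{s}}-1$ or $q\ge 2m_{\mathfrak{s}}$ excludes nothing), so $\widetilde{HM}_q(Y,m_{\mathfrak{s}})$ is independent of the auxiliary data. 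To see it depends only on $Y$ and $\mathfrak{s}$, take non-negative representatives $m^1_{\mathfrak{s}}\le m^2_{\mathfrak{s}}$, with allowable ranges $\mathcal{I}_i=\{q\le -1\}\cup\{q\ge 2m^i_{\mathfrak{s}}\}$; their common range is $\mathcal{I}_1\cap\mathcal{I}_2=\mathcal{I}_2$, and on it $q\le -1$ implies $q\le 2m^1_{\mathfrak{s}}-1$ (this is where $m^1_{\mathfrak{s}}\ge 0$ is used) while $q\ge 2m^2_{\mathfrak{s}}$ is automatic, so the hypothesis of Theorem~\ref{Th4.10} is met and $\widetilde{HM}_q(Y,m^1_{\mathfrak{s}})\cong\widetilde{HM}_q(Y,m^2_{\mathfrak{s}})$ there. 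Thus the groups computed from the various representatives agree on the overlaps of their allowable ranges and glue to a single graded group attached to $Y$.

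\emph{From $\widetilde{HM}_q$ to $HM_q$.} Here I would use the finite filtration of the $\mathcal{S}$-complex recorded in the introduction, whose graded pieces are $(CM[1],d)$, the one-dimensional complex $\mathbf{F}$ placed in degree $2m_{\mathfrak{s}}$, and $(CM,d)$; in particular $CM[1]=\mathscr{F}^1\widetilde{CM}$ is a subcomplex and $CM=\widetilde{CM}/\mathscr{F}^2$ is a quotient complex. Away from degree $2m_{\mathfrak{s}}$ the $\mathbf{F}$-summand is absent, so there $\widetilde{CM}$ is the extension $0\to CM[1]\to \widehat{CM}\to CM\to 0$ with $\widehat{CM}_\ast=CM_\ast\oplus CM_{\ast-1}$ and differential $(a,b)\mapsto(da,\,ua+db)$, yielding a long exact sequence relating $HM_\ast$, $HM_{\ast-1}$ and $\widetilde{HM}_\ast$ whose connecting map is induced by $u$. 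A continuation morphism is a morphism of $\mathcal{S}$-complexes in the sense of Definition~\ref{Def3.15}, hence is filtered and restricts on the graded pieces to $\lambda$, the identity, and $\lambda$; it therefore induces a map of these long exact sequences, and an induction on the degree (with base case the vanishing of $HM_q$ for $|q|$ large, which holds because $\mathcal{R}^\ast(Y,\mathfrak{s},g,\eta)$ is finite) combined with the Five Lemma shows that the induced map $\lambda\colon CM\to CM'$ is a quasi-isomorphism on the allowable degrees. Hence $HM_q(Y,m_{\mathfrak{s}})$ is also an invariant of $Y$ for $q\in\mathcal{I}$. Equivalently, this is Proposition~5 of \S7 of \cite{MR2738582} applied to the sub-theory $(CM,d)$.

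\emph{Expected main obstacle.} All of the analysis---transversality, compactness and gluing for the relevant $4$-dimensional moduli spaces, and the fact that continuation maps can be realized as honest $\mathcal{S}$-chain maps---is already packaged in Theorem~\ref{Th4.10} and in \cite{MR2738582,MR2465077}, so the corollary itself is formal. The only delicate point is the interface near the single degree $q=2m_{\mathfrak{s}}$ where the reducible $\Theta$ lives: there the $\mathbf{F}$-summand and the maps $\delta_1,\delta_2$ enter, the clean identification $\widetilde{CM}=\widehat{CM}$ fails, and one must check by hand that the Five-Lemma induction still closes up at $q=2m_{\mathfrak{s}}$ (using that the continuation map is the identity on the $\mathbf{F}$-summand) or else shave off a couple of degrees there. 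I expect this to be the only spot requiring genuine care.
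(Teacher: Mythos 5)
Your derivation of the invariance of $\widetilde{HM}_q$ from Theorem~\ref{Th4.10} is fine, and the range bookkeeping (in particular that the constraint in Theorem~\ref{Th4.10} is vacuous when $m^1_{\mathfrak{s}}=m^2_{\mathfrak{s}}$ because the grading lives in the coset $2m_{\mathfrak{s}}+\mathbf{Z}$) is exactly what one should check.

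The passage from $\widetilde{HM}_q$ to $HM_q$, however, is a detour, and not a harmless one. The intended argument is an immediate block-matrix observation, not a spectral-sequence or Five-Lemma argument. By Definition~\ref{Def3.15} an $\mathcal{S}$-morphism has the lower-triangular form $\widetilde{\lambda}=\bigl(\begin{smallmatrix}\lambda & 0 & 0\\ \eta & \lambda & \Delta_2 \\ \Delta_1 & 0 & 1\end{smallmatrix}\bigr)$, and by Remark~\ref{Rem3.18} an $\mathcal{S}$-chain homotopy has the form $h=\bigl(\begin{smallmatrix}L & 0 & 0\\ N & L & D_2 \\ D_1 & 0 & 0\end{smallmatrix}\bigr)$. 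Reading the $(1,1)$-entry of the $\mathcal{S}$-chain homotopy identity $\widetilde{\lambda}_2\widetilde{\lambda}_1-\mathrm{id}=\widetilde{d}\,h+h\,\widetilde{d}$ (and its twin for the other composition) gives $\lambda_2\lambda_1-\mathrm{id}=dL+Ld$ on $CM$ directly. So the $\mathcal{S}$-equivalence of Theorem~\ref{Th4.10} automatically restricts to an ordinary chain homotopy equivalence $\lambda\colon CM\to CM'$, and $HM_q$ is invariant with no long exact sequence, no induction on degree, and no special pleading near the reducible degree.

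Your long-exact-sequence route has a real gap, one you partly notice. First, the claim that ``away from degree $2m_{\mathfrak{s}}$ the $\mathbf{F}$-summand is absent'' is not quite the right statement: the maps $\delta_1\colon CM_{2m_{\mathfrak{s}}+1}\to\mathbf{F}$ and $\delta_2\colon\mathbf{F}\to CM_{2m_{\mathfrak{s}}-2}$ mean the identification of $\widetilde{CM}_q$ with $\widehat{CM}_q=CM_q\oplus CM_{q-1}$ with differential $(a,b)\mapsto(da,ua+db)$ already fails at $q=2m_{\mathfrak{s}}\pm 1$, not only at $q=2m_{\mathfrak{s}}$; and the corollary's range $q\ge 2m_{\mathfrak{s}}$ contains both $q=2m_{\mathfrak{s}}$ and $q=2m_{\mathfrak{s}}+1$. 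Second, even where the identification holds, applying the Five Lemma to show $\lambda_*$ is an isomorphism at degree $n$ requires already knowing $\lambda_*$ is an isomorphism at two lower degrees $n-1$ and $n-2$; the induction through the band near $2m_{\mathfrak{s}}$ is precisely where the naive short exact sequence is the wrong one, so you would need to run the argument with the full three-step filtration $\mathscr{F}^{\bullet}\widetilde{CM}$ and track the map of spectral sequences, which is considerably more work. You flag this as the only delicate point and propose to ``check by hand'' or ``shave off a couple of degrees,'' but shaving degrees would weaken the statement (the corollary asserts invariance at $q=2m_{\mathfrak{s}}$). The block-matrix argument avoids the entire issue.
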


With Corollary \ref{Cor4.11} in mind, for allowable range of degree $q$, we now simply write
\begin{equation}\label{eq:4.21}
    HM_q(Y, \mathfrak{s}; \mathbf{F}) := HM_q(Y, m_\mathfrak{s}; \mathbf{F}), \quad \widetilde{HM}_q(Y, m_\mathfrak{s}; \mathbf{F}) := \widetilde{HM}_q(Y, \mathfrak{s}; \mathbf{F}).
\end{equation}
to highlight the fact that these invariants of the rational homology sphere $Y$ depend on an \textit{a priori} choice of the $spin^c$ structure. The next section investigates the spectral invariant associated with $HM_q, \widetilde{HM}_q$.

\subsection{Cobordism}\label{Sub4.3} Let $W$ be any smooth compact $spin^c$ $4$-manifold with boundary $\partial W = -Y_1 \cup Y_2$, where $Y_i$ is a rational homology sphere for $i=1,2$. We view $W: Y_1 \to Y_2$ as a cobordism from $Y_1$ to $Y_2$. Denote $W^*$ by the elongated version of $W$ where we attach a cylinder to its boundary, i.e.,
$$W^* =  Y_1 \times [0,-\infty) \cup_{Y_1} W \cup_{Y_2} Y_2 \times [0,\infty).$$
As in Subection \ref{Sub4.1}, we pick a $spin^c$ structure $\mathfrak{s}_W$, which can be extended to $W^*$. Let $\mathfrak{s}_i$ be the $spin^c$ structure on $Y_i$ given by $\mathfrak{s}_W|_{Y_i}$. Suppose $g$ is a metric on $W^*$ such that its restriction to one of the cylinder ends is $1 \times g_i$, where $g_i$ is a metric on $Y_i$. Let $\eta_i$ be a generic closed purely imaginary $2$-form on $Y_i$ and $\mu$ is a closed purely imaginary $2$-form on $W$ such that
\begin{itemize}
    \item The pulled-back of $\eta_i$ via $Y_i \times \R^{\pm}$ for $i=1,2$, respectively, agrees with the restriction of $\mu$ on each of the cylindrical ends.
    \item $\mathcal{L}_{\eta_i}$ has non-degenrate critical points and $I(g_i, \eta_i) = m_{\mathfrak{s}_i}$, where $m_{\mathfrak{s}_i}$ is a chamber of $\mathfrak{m}(Y_i)$.
\end{itemize}
Consider the Seiberg-Witten equations on $W^*$ as in \eqref{eq:4.5}. Let $\alpha_i \in \mathcal{R}(Y_i,\mathfrak{s}_i, g_i, \eta_i)$. Then the moduli space $\mathscr{M}(W;\alpha_1, \alpha_2)$ of gauge equivalence classes of solutions $(B, \Phi)$ of \eqref{eq:4.5} such that its limit at each end is $\alpha_1, \alpha_2$, respectively, has dimension given by
$$\dim \mathscr{M}(W; \alpha_1, \alpha_2) = \ind(\alpha_1) - \ind(\alpha_2) +n_{\alpha_1} - d(W),$$
where $d(W) = (c_1(det\,\mathfrak{s}_W)^2-\sigma(W))/4 + b_1(W) - b^+(W)$. Denote $k = m_{\mathfrak{s}_1} - m_{\mathfrak{s}_2} - d(W)/2 \in (1/2)\mathbf{Z}$. Let $\Theta_i$ be the gauge equivalence classes of reducible solutions of the corresponding perturbed Seiberg-Witten equations on $Y_i$. Then $\dim \mathscr{M}(W; \Theta_1, \Theta_2) = 2k +1$.

Recall that if $\alpha_i = [A_i, \psi_i]$, then $\alpha_i$ can be thought of as the $S^1$-orbit of $[A_i, \psi_i]_0$. Similar to the proof of Lemma \ref{Lem4.7}, we can show that if either $\{\alpha_i\}_{i=1,2}$ is irreducible, then $\mathcal{M}(W; \alpha_1, \alpha_2)/S^1 = \mathscr{M}(W; \alpha_1, \alpha_2)$, where $\mathcal{M}(W;\alpha_1,\alpha_2)$ is the moduli space of based gauge equivalence classes of solutions of \eqref{eq:4.5} whose asymptotic limits at the ends belongs to the $S^1$-orbits of $[A_i, \psi_i]_0$. Suppose $\alpha_i$ is irreducible. Following what had been done in Subsection \ref{Sub4.2} and Subsection \ref{Sub3.2}, similarly, we obtain the following maps
$$h_{W;\alpha_1\alpha_2} : \mathscr{M}(W; \alpha_1, \alpha_2) \to S^1,$$
$$h_{W; \Theta_1 \alpha_2} : \mathscr{M}(W; \Theta_1, \alpha_2) \to \Theta_1, \quad h_{W; \alpha_1 \Theta_2} : \mathscr{M}(W; \alpha_1, \Theta_2) \to \Theta_2.$$

Suppose $b^+(W) \leq 1$ and $k \leq -1$, then for $q \leq 2m_{\mathfrak{s}_1}-1$ or $q \geq 2m_{\mathfrak{s}_2}$, due to no factorization through reducible monopoles that may appear in certain chain limits as we glue the moduli spaces along certain critical points (cf. Section 8 in \cite{MR2738582}), we can define the following $\mathbf{F}$-linear maps
\begin{align}
    C(W) : CM(Y_1, \mathfrak{s}_1;\mathbf{F})_q \to CM(Y_2, \mathfrak{s}_2;\mathbf{F})_{q-d(W)}, \label{eq:4.22}
\end{align}
$$ \la C(W)(\alpha_1), \alpha_2\ra = \#_{\mathbf{Z}_2} \mathscr{M}(W; \alpha_1, \alpha_2).$$
\begin{align}
    u(W) : CM(Y_1, \mathfrak{s}_1;\mathbf{F})_q \to CM(Y_2, \mathfrak{s}_2; \mathbf{F})_{q-d(W)-1}, \label{eq:4.23}
\end{align}
$$\quad \la u(W)(\alpha_1),\alpha_2 \ra = \la h^{*}_{W;\alpha_1 \alpha_2}(1), [\mathscr{M}(W; \alpha_1,\alpha_2]\ra \mod 2,$$
\begin{align}
    \delta_1(W): CM(Y_1, \mathfrak{s}_1;\mathbf{F}) \to \mathbf{F}, \quad \delta_2(W) : \mathbf{F} \to CM(Y_2, \mathfrak{s}_2;\mathbf{F}),\label{eq:4.24}
\end{align}
where $\delta_1(W)$ is given by the modular $2$ count of the moduli space $\mathscr{M}(W; \alpha_1, \Theta_2)$, and $\la \delta_2(W)(1), \alpha_2 \ra$ is equal to the $\mod 2$ count of $\mathscr{M}(W; \Theta_1, \alpha_2)$, whenever the respective dimension of the moduli spaces is zero.

\begin{Rem}
    In general, when $b^+(W) >1$ or $k > -1$, the maps \eqref{eq:4.22}, \eqref{eq:4.23}, \eqref{eq:4.24} can be defined for any degree. However, we choose to focus entirely on the former restrictive case of degree in light of Corollary \ref{Cor4.11}, where for the allowable range of degree, the $\mathcal{S}$-chain homotopy type of the $\mathcal{S}$-complex of monopoles is an invariant of the rational homology three-sphere $Y$.
\end{Rem}

\eqref{eq:4.22}, \eqref{eq:4.23}, \eqref{eq:4.24} can be repackaged into a single linear map between $\mathcal{S}$-complexes
$$\widetilde{CM}(W): \widetilde{CM}(Y_1,\mathfrak{s}_1;\mathbf{F})_{\ast} \to \widetilde{CM}(Y_2, \mathfrak{s}_2;\mathbf{F})_{\ast - d(W)},$$
$$\widetilde{CM}(W) = \begin{bmatrix}
    CM(W) & 0 & 0\\
    u(W) & CM(W) & \delta_2(W) \\
    \delta_1(W) & 0 & 1
\end{bmatrix}.$$

\begin{Th}\label{Th4.13}
    Suppose $W: Y_1 \to Y_2$ is a cobordism between two rational homology spheres. Suppose $b^+(W) \leq 1$. Let $\mathfrak{s}_W$ be a $spin^c$ structure on $W$ and $\mathfrak{s}_i$ be its restriction to each of the boundary components of $W$. Let $m_{\mathfrak{s}_i} \in \mathfrak{m}(Y_i)$ and denote $k = m_{\mathfrak{s}_1} - m_{\mathfrak{s}_2} - d(W)/2$, where  $d(W) = (c_1(det\,\mathfrak{s}_W)^2-\sigma(W))/4 + b_1(W) - b^+(W)$. If $k \leq -1$, then for $q \leq 2m_{\mathfrak{s}_1}-1$ or $q\geq 2m_{\mathfrak{s}_2}$, then 
    $$\widetilde{CM}(W): \widetilde{CM}(Y_1, m_{\mathfrak{s}_1};\mathbf{F})_q \to \widetilde{CM}(Y_2, m_{\mathfrak{s}_2}; \mathbf{F})_{q - d(W)}$$
    is an $\mathcal{S}$-chain map in the sense of Remark \ref{Rem3.16}. 
\end{Th}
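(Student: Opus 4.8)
The plan is to follow, essentially verbatim, the proof of Theorem \ref{Th3.8} from the finite-dimensional prototype, with the Morse flow-line spaces replaced by the Seiberg--Witten moduli spaces $\mathscr{M}(W;\alpha_1,\alpha_2)$ on $W^*$ and by the unparametrized trajectory spaces $\Breve{\mathscr{M}}^+(\alpha,\beta)$ on the cylinders $Y_i\times\R$, using the transversality, compactness, and gluing theory of \cite{MR2465077} (and Sections 7--8 of \cite{MR2738582}). Unwinding Remark \ref{Rem3.16}, the statement that $\widetilde{CM}(W)$ is an $\mathcal{S}$-chain map is equivalent to the four identities
\begin{align*}
d^2 C(W) - C(W) d^1 &= 0,\\
d^2 \delta_2(W) + \delta^2_2 - C(W)\,\delta^1_2 &= 0,\\
\delta^2_1 C(W) - \delta_1(W)\, d^1 - \delta^1_1 &= 0,\\
d^2 u(W) - u(W) d^1 + u^2 C(W) - C(W) u^1 + \delta^2_2\, \delta_1(W) - \delta_2(W)\,\delta^1_1 &= 0,
\end{align*}
where $(d^i,u^i,\delta^i_1,\delta^i_2)$ denote the structure maps of $\widetilde{CM}(Y_i,\mathfrak{s}_i)$, $i=1,2$. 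Each identity is obtained by choosing a moduli space whose expected dimension is $1$ --- for the last identity, after first cutting down a $2$-dimensional moduli space to the fiber $h_{W;\alpha_1\alpha_2}^{-1}(p)$ over a regular value $p\in S^1$ --- and equating the (vanishing) mod-$2$ count of its boundary points with the corresponding sum of composition maps.

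For the first identity one takes $\alpha_1,\alpha_2$ irreducible with $\ind(\alpha_1)-\ind(\alpha_2)=d(W)$, so that $\mathscr{M}(W;\alpha_1,\alpha_2)$ is a compact $1$-manifold. By the gluing theorem its boundary consists of configurations breaking off an unparametrized trajectory on $Y_1$ at the incoming end (matching $C(W)d^1$) or on $Y_2$ at the outgoing end (matching $d^2 C(W)$). The role of the hypotheses $b^+(W)\le 1$, $k\le -1$, and $q\le 2m_{\mathfrak{s}_1}-1$ or $q\ge 2m_{\mathfrak{s}_2}$ is to guarantee that \emph{no} codimension-one boundary stratum factors through either reducible $\Theta_1$ or $\Theta_2$: feeding the index relations \eqref{eq:4.14}, \eqref{eq:4.15} and $\dim\mathscr{M}(W;\Theta_1,\Theta_2)=2k+1$ into the gluing-stratum dimension formula shows that every such putative stratum would have negative dimension in the allowable range, exactly as in the discussion of Section 8 of \cite{MR2738582}. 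The $\delta_1(W)$ and $\delta_2(W)$ identities are handled the same way using the $1$-dimensional moduli spaces $\mathscr{M}(W;\alpha_1,\Theta_2)$ and $\mathscr{M}(W;\Theta_1,\alpha_2)$; the extra summands $\delta^2_2$ and $\delta^1_1$ arise from composing with the constant $1$ in the bottom-right corner of $\widetilde{CM}(W)$ and correspond to the gluing strata that split off a cylindrical trajectory arriving at, or leaving, the reducible, entirely parallel to Proposition \ref{Prop3.10}.

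The fourth identity is the substantial one and is the analogue of Proposition \ref{Prop3.11}. One takes $\alpha_1,\alpha_2$ irreducible with $\dim\mathscr{M}(W;\alpha_1,\alpha_2)=2$, picks a regular value $p\in S^1$ of $h_{W;\alpha_1\alpha_2}$, and counts $\partial h_{W;\alpha_1\alpha_2}^{-1}(p)$. Using the list of codimension-one faces of $\mathscr{M}(W;\alpha_1,\alpha_2)$ provided by gluing, the normalization (H) of Subsection \ref{Sub3.3} (so that the auxiliary $S^1$-valued evaluation maps may be taken $\equiv 1$ on the relevant index-one strata), and Lemma \ref{Lem3.14} to see that the fiber count over $p$ is unaffected by translating $p$ by an automorphism of $S^1$, one finds that the four \emph{free--free} faces contribute $u(W)d^1$, $d^2u(W)$, $u^2C(W)$, $C(W)u^1$, while the two faces of the form $\Breve{\mathscr{M}}(\alpha_1,\Theta_1)\times S^1\times\mathscr{M}(W;\Theta_1,\alpha_2)$ and $\mathscr{M}(W;\alpha_1,\Theta_2)\times S^1\times\Breve{\mathscr{M}}(\Theta_2,\alpha_2)$ contribute $\delta_2(W)\delta^1_1$ and $\delta^2_2\delta_1(W)$; summing mod $2$ yields the identity.

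I expect the genuine obstacle to be analytic rather than combinatorial: one needs that, in the allowable degree range, all the moduli spaces on $W^*$ occurring above are cut out transversally and are compact manifolds-with-corners whose codimension-one boundary is \emph{exactly} the union of fiber products predicted by gluing (this is the regularity and gluing package imported from \cite{MR2465077, MR2738582}), and one needs the dimension count that excludes reducible breaking. The latter is the only point at which the argument truly deviates from the finite-dimensional prototype of Theorem \ref{Th3.8}, and it is where each of the hypotheses $b^+(W)\le 1$, $k\le -1$, and the restriction on $q$ is used.
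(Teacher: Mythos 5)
Your proposal is correct and follows the same route as the paper's own proof, which is stated to be essentially the same as the proof of Theorem~\ref{Th3.8} together with the transversality, compactness, and gluing package from \cite{MR2465077} and Sections 7--8 of \cite{MR2738582}. You additionally make explicit the dimension count, via $\dim\mathscr{M}(W;\Theta_1,\Theta_2)=2k+1<0$ and the index relations \eqref{eq:4.14}--\eqref{eq:4.15}, by which the hypotheses $b^+(W)\le 1$, $k\le -1$, and the restriction on $q$ exclude codimension-one reducible breaking; this is exactly the point the paper leaves to the cited references.
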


The proof of the above theorem is essentially the same as the proof of Theorem \ref{Th3.8}. Similar proofs for analogous statements in the context of instanton Floer theory have been given in, e.g., \cite{MR1883043, MR1910040, daemi2020equivariantaspectssingularinstanton, daemi2022instantonsrationalhomologyspheres}.

\section{Spectral invariants and monopole Floer homology}\label{Sec5}
The majority of the discussion in this section applies to both $HM, \widetilde{HM}$. We use the notation $CM^\circ, HM^\circ$, $\circ \in \{\emptyset, \sim\}$, to refer to either version of the monopole Floer theory discussed in Section \ref{Sec4}. Specifically, if $\circ = \emptyset$, then $CM^\circ, HM^\circ$ is $CM$ and $HM$, respectively. If $\circ = \sim$, then $CM^\circ, HM^\circ$ refer to $\widetilde{CM}, \widetilde{HM}$.

\subsection{Definition and properties}\label{Sub5.1}
Let $Y$ be a closed, oriented, smooth, and connected rational homology three-sphere equipped with a metric $g$. Let $\mathfrak{s}$ be a $spin^c$ structure on $Y$, and associated with it, consider a chamber $m_\mathfrak{s} \in \mathfrak{m}(Y)$ such that for a generic $\eta \in i\Omega^2(Y)$ that is closed, we have $I(g, \eta) = m_{\mathfrak{s}}$. The Chern-Simons-Dirac functional $\mathcal{L}_\eta$ induces an $\R$-filtration $\ell_{\eta}: CM^\circ \to \R \cup \{-\infty\}$ defined by
$$\ell_\eta\left(\sum a_i \alpha_i\right) = \max \{\mathcal{L}_{\eta}(\alpha_i) : a_i \neq 0\}, \quad \alpha_i \in \mathcal{R}^*(Y,\mathfrak{s},g,\eta), \quad \quad \text{if } \circ = \emptyset,$$
$$\ell_{\eta}(\alpha \oplus \beta \oplus a) = \max \{\ell_{\eta}(\alpha), \ell_{\eta}(\beta), \ell_{\eta}(\Theta)\}, \quad \alpha, \beta \in CM, a \in \mathbf{F}\setminus \{0\}, \quad \quad \text{if } \circ = \sim.$$
In the modern language of persistent homology (cf. Definition 2.2, \cite{MR3590354}), $(CM^\circ, \ell_\eta)$ is a nonarchimedean normed vector space.

For each $t \in \R$, we define
$$CM^{\circ t}(Y, m_\mathfrak{s}, g, \eta; \mathbf{F}) = \{\mathbf{x} \in CM^{\circ}(Y, m_\mathfrak{s}, g, \eta; \mathbf{F}) : \ell_\eta(\mathbf{x}) \leq t\}.$$
Let $d^\circ$ be the differential of $CM^\circ$. Our first task is to show that $d^\circ$ respects the $\R$-filtration given by $\ell_{\eta}$.

\begin{Prop}\label{Prop5.1}
    Let $\alpha_\pm \in \mathcal{R}(Y, \mathfrak{s}, g, \eta)$ that are the asymptotic limits of the gauge equivalence class of some solution $(B, \Phi)$ of the Seiberg-Witten equations \eqref{eq:4.4} on $Y\times \R$. Then $\mathcal{L}_\eta(\alpha_+) \leq \mathcal{L}_\eta(\alpha_-)$ for $\eta$ small enough.
\end{Prop}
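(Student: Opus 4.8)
The plan is to reduce the statement to the standard ``energy decreases along flow lines'' computation for the Chern--Simons--Dirac functional, being careful about the perturbation $\eta$ and the distinction between analytical and topological energy introduced in Subsection~\ref{Sub4.1}. First I would recall that a solution $(B,\Phi)$ of the four-dimensional Seiberg--Witten equations \eqref{eq:4.4} on $Y\times\mathbf{R}$ asymptotic to $\alpha_\pm$ at the ends is, after a gauge transformation into temporal gauge, exactly a downward gradient trajectory $(A(t),\psi(t))$ of $\mathcal{L}_\eta$ with $\lim_{t\to\pm\infty}[A(t),\psi(t)] = \alpha_\pm$. Then formally
$$\mathcal{L}_\eta(\alpha_-) - \mathcal{L}_\eta(\alpha_+) = -\int_{-\infty}^{+\infty}\frac{d}{dt}\mathcal{L}_\eta(A(t),\psi(t))\,dt = \int_{-\infty}^{+\infty}\norm{\operatorname{grad}\mathcal{L}_\eta(A(t),\psi(t))}^2_{L^2}\,dt \geq 0,$$
which is the desired inequality. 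The real content is making this rigorous: one must justify that the integral of $|\frac{d}{dt}\mathcal{L}_\eta|$ converges (equivalently, that the trajectory has finite total energy), which follows from the fact that $(B,\Phi)$ has finite analytical energy $\mathscr{E}^{an}$, and that the boundary contributions at $t=\pm\infty$ are precisely $\mathcal{L}_\eta(\alpha_\mp)$.

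The cleaner route, which I would actually write out, is via the energy identity \eqref{eq:4.8} rather than temporal gauge. Apply the decomposition $\norm{\mathscr{F}_{W,\mu}(B,\Phi)}^2 = \mathscr{E}^{an}_{W,\mu}(B,\Phi) - \mathscr{E}^{top}_{W,\mu}(B,\Phi)$ to $W = Y\times[-T,T]$ with $\mu = \pi_Y^*\eta$. Since $(B,\Phi)$ solves \eqref{eq:4.4} we have $\mathscr{F} = 0$, so $\mathscr{E}^{an} = \mathscr{E}^{top}$ on each such slab; the topological energy is a boundary term and, by the standard computation relating it to the Chern--Simons--Dirac functional (this is where the mean-curvature and Dirac boundary terms in \eqref{eq:4.6}--\eqref{eq:4.8} combine to give $\mathcal{L}_\eta$ exactly), one gets $\mathscr{E}^{top}_{Y\times[-T,T]}(B,\Phi) = \mathcal{L}_\eta(A(-T),\psi(-T)) - \mathcal{L}_\eta(A(T),\psi(T))$ up to the perturbation bookkeeping. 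Letting $T\to\infty$ and using that $[A(\pm T),\psi(\pm T)] \to \alpha_\pm$ (so $\mathcal{L}_\eta$ of the slices converges to $\mathcal{L}_\eta(\alpha_\pm)$, using gauge-invariance of $\mathcal{L}_\eta$ for $b_1(Y)=0$, established in Subsection~\ref{Sub4.1}) yields $\mathcal{L}_\eta(\alpha_-) - \mathcal{L}_\eta(\alpha_+) = \mathscr{E}^{an}_{Y\times\mathbf{R}}(B,\Phi) \geq 0$, provided the analytical energy is nonnegative.

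The one subtlety — and the reason for the hypothesis ``$\eta$ small enough'' — is that $\mathscr{E}^{an}_{W,\mu}$ as written in \eqref{eq:4.8} is \emph{not} manifestly nonnegative: it contains the term $\tfrac14\norm{\,|\Phi|^2 + s/2\,}^2_{L^2} - \norm{s/4}^2_{L^2}$ and, when $\mu\ne 0$, a term $\tfrac12\int \la\Phi,\rho_W(\mu^+)\Phi\ra$ hidden in the topological piece that must be moved over. One controls these using the a priori $L^\infty$ bound on $\Phi$ from Lemma~\ref{Lem4.4} (and Lemma~\ref{Lem4.3} on the slices): for $\norm{\eta}_{L^\infty}$ sufficiently small the pointwise spinor bound forces the potentially-negative contributions to be dominated, so that $\mathscr{E}^{an}_{Y\times\mathbf{R}}(B,\Phi)\geq 0$. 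I expect this estimate — showing the analytical energy stays nonnegative once the perturbation is small, uniformly in the trajectory — to be the main obstacle; everything else is the standard Floer-theoretic ``action decreases along trajectories'' argument together with the energy identity already recorded in \eqref{eq:4.8}. I would also remark that in the unperturbed case $\eta = 0$ the Weitzenb\"ock term is automatically of the right sign on the relevant range and no smallness is needed, which is consistent with the $\lim_{\norm{\eta}\to 0}$ in Definition~\ref{mainDef1}.
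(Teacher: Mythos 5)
Your ``cleaner route'' is exactly the paper's argument: the paper takes $W_T = Y\times[-T,T]$, uses $\mathscr{F}_{W_T,\mu}=0\Rightarrow \mathscr{E}^{an}_{W_T,\mu}=\mathscr{E}^{top}_{W_T,\mu}$, expands the topological energy as in your sketch to exhibit $\mathcal{L}_\eta(\alpha_-)-\mathcal{L}_\eta(\alpha_+)$ plus $\eta$-weighted boundary/error terms (their eq.\ (5.1)), lets $T\to\infty$, and controls the error terms by the a priori bounds of Lemma~\ref{Lem4.3} and Lemma~\ref{Lem4.4} so that for $\norm{\eta}$ small they contribute $O(1)\epsilon$. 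So the overall structure matches.

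The one place where both you and the paper are on thin ice is nonnegativity of $\mathscr{E}^{an}_{Y\times\R,\mu}(B,\Phi)$. You correctly flag that \eqref{eq:4.8} is not manifestly nonnegative, but your proposed fix --- that the a priori $L^\infty$ bound on $\Phi$ dominates the potentially negative contributions once $\norm{\eta}$ is small --- does not actually work: the offending term is $\tfrac14\norm{|\Phi|^2+s/2}^2 - \norm{s/4}^2 = \tfrac14\int(|\Phi|^4 + s|\Phi|^2)$, which can be strictly negative for $s<0$ even at $\eta=0$ (take $|\Phi|^2$ close to $-s/2$), and the a priori bound only caps $|\Phi|^2$, it does not exclude this. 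Your side remark that the sign is automatic at $\eta=0$ is therefore also off. The paper's own justification (``the metric on a cylinder can be chosen such that it has uniform positive scalar curvature'') is likewise suspect, since the cylinder must carry the product metric $g_Y+dt^2$ (scalar curvature that of $g_Y$) for the four-dimensional equations to be the gradient flow of $\mathcal{L}_\eta$, so one cannot choose the metric freely. The clean fix is the argument you stated first and then set aside as ``formal'': on a product cylinder $\mathscr{E}^{an}$ has the cylinder-specific form $\int_\R\norm{\tfrac{d}{dt}\check\gamma}^2_{L^2(Y)}\,dt + \int_\R\norm{\operatorname{grad}\mathcal{L}_\eta(\check\gamma(t))}^2_{L^2(Y)}\,dt$ (Kronheimer--Mrowka, Lemma 4.5.2), which is manifestly nonnegative. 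That temporal-gauge identity is precisely the rigorous input that closes the energy-identity route; you should not have discarded it.
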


\begin{proof}
    For $T\in \R$, we denote $W_T = Y \times [-T,T]$, which is a manifold with boundary $-Y \cup Y$. Consider $\mu$ to be a closed purely imaginary two-form on $Y\times \R$ such that $\mu$ restricts to each $Y$-slice is $\eta$. Note that in the case of a finite cylinder, the mean curvature $H$ of the boundary vanishes. Furthermore, the metric on a cylinder can be chosen such that it has uniform positive scalar curvature \cite{gromov2022perspectives}. Let $A_0$ be a fixed referenced connection on $Y$. We have
    \begin{align}
        &\int_{W_T} (F_B - \mu) \wedge (F_B -\mu)  = \int_{W_T}F_B \wedge F_B - 2\int_{W_T} F_B \wedge \mu + \int_{W_T} \mu \wedge \mu \nonumber\\
        & = \int_{Y\times T}(B|_{Y\times T}-A_0) \wedge (F_{B|_{Y\times T}}+F_{A_0}-\eta) \nonumber \\
        &- \int_{Y\times -T} (B|_{Y\times -T} - A_0) \wedge (F_{B|_{Y\times -T}}+F_{A_0}-\eta) - 2\int_{W_T} F_B \wedge \mu + \int_{W_T} \mu \wedge \mu \nonumber \\
        & + \int_{Y \times T} (B|_{Y\times T}-A_0)\wedge \eta - \int_{Y \times -T}(B|_{Y\times -T}-A_0)\wedge \eta. \nonumber
    \end{align}
    As a result, the topological energy $\mathscr{E}^{top}_{W_T,\mu}(B,\Phi)$ can be written as
    \begin{align}\label{eq:5.1}
        &\dfrac{1}{4}\int_{W_T}(F_B - \mu) \wedge (F_B -\mu) - \dfrac{1}{2} \int_{W_T}\la \Phi, \rho_{W_T}(\mu^+) \Phi \ra   \\
        &+ \int_{Y\times -T} \la \Phi_{Y\times -T}, D_{B|_{Y\times -T}}\Phi|_{Y\times -T}\ra - \int_{Y \times T}\la \Phi_{Y\times T}, D_{B|_{Y\times T}}\Phi|_{Y\times T}\ra  \nonumber  \\
        &= \dfrac{1}{4} \int_{Y\times T}(B|_{Y\times T}-A_0) \wedge (F_{B|_{Y\times T}}+F_{A_0}-\eta) - \int_{Y \times T}\la \Phi_{Y\times T}, D_{B|_{Y\times T}}\Phi|_{Y\times T}\ra \nonumber \\
        &-\dfrac{1}{4}\int_{Y\times -T} (B|_{Y\times -T} - A_0) \wedge (F_{B|_{Y\times -T}}+F_{A_0}-\eta) + \int_{Y\times -T} \la \Phi_{Y\times -T}, D_{B|_{Y\times -T}}\Phi|_{Y\times -T}\ra \nonumber \\
        &- \dfrac{1}{2}\int_{W_T}F_B \wedge \mu + \dfrac{1}{4}\int_{W_T} \mu \wedge \mu + \dfrac{1}{4}\int_{Y} (B|_{Y\times T}-B|_{Y\times -T})\wedge \eta - \dfrac{1}{2}\int_{W_T}\la \Phi, \rho_{W_T}(\mu^+)\Phi\ra. \nonumber 
    \end{align}
    Let $\epsilon >0$. By \eqref{eq:5.1}, Lemma \ref{Lem4.3}, and Lemma \ref{Lem4.4}, if $\norm{\eta} < \epsilon$, then
    \begin{align}\label{eq:5.2}
        0 < \lim_{T\to \infty} \mathscr{E}^{an}_{W_T, \mu}(B,\Phi) = \lim_{T\to \infty} \mathscr{E}^{top}_{W_T, \mu}(B, \Phi) \leq 2(\mathcal{L}_{\eta}(\alpha_-) - \mathcal{L}_{\eta}(\alpha_+)) + O(1)\epsilon.
    \end{align}
    The estimate \eqref{eq:5.2} implies that for a generic $\eta$ small enough, we do get $\mathcal{L}_{\eta}(\alpha_+) \leq \mathcal{L}_{\eta}(\alpha_-)$ as desired.
\end{proof}

From Proposition \ref{Prop5.1}, we immediate see that for any $\mathbf{x} \in CM^{\circ t}(Y, m_\mathfrak{s}, g, \eta; \mathbf{F})_{\ast}$, then $\ell_{\eta}(d^{\circ}\mathbf{x}) \leq \ell_\eta (\mathbf{x}) \leq t$, provided that $\eta$ is small enough. Thus, $(CM^{\circ t}(Y, m_\mathfrak{s}, g, \eta; \mathbf{F}), d^{\circ t})$ is a complex in its own right. Here $d^{\circ t}$ denotes the restriction of $d^\circ$ to $CM^{\circ t}$. With this, for $\eta$ small enough, we define
$$HM^{\circ t}(Y, m_\mathfrak{s}, g, \eta; \mathbf{F}) := H(CM^{\circ t}(Y, m_\mathfrak{s}, g, \eta; \mathbf{F})).$$
The natural inclusion $\iota^t: CM^{\circ t} \to CM^{\circ}$ induces a homomorhism $\iota^{t}_{\ast} : H^{\circ t}(Y, m_\mathfrak{s}, g, \eta; \mathbf{F}) \to HM^{\circ}(Y, m_\mathfrak{s}, g, \eta; \mathbf{F})$. Recall from Corollary \ref{Cor4.11}, for $m_{\mathfrak{s}} \geq 0$, if $q \leq -1$ or $q \geq 2m_\mathfrak{s}$, $HM^\circ_q$ is an invariant of $Y$. For such an allowable range of $q$, we define the following numerical quantities induced from the $CM^{\circ}$ and the $\R$-filtration.

\begin{Def}\label{Def5.2}
    Let $Y$ be a rational homology three-sphere that is smooth, closed, connected, and oriented. Suppose $g$ is a fixed Riemannian metric of $Y$, and $\mathfrak{s}$ is a $spin^c$ structure on $Y$. Let $m_\mathfrak{s} \in \mathfrak{m}(Y)$ be a chamber such that $m_{\mathfrak{s}} \geq 0$, and $\eta \in i\Omega^2(Y)$ that is closed and generic. For $q \leq -1$ or $q \geq 2m_{\mathfrak{s}}$, we define
    $$\rho_q(Y, m_{\mathfrak{s}}, g) = \lim_{\norm{\eta} \to 0} \inf\{t : \iota^t_\ast: \widetilde{HM}^t_q(Y, m_\mathfrak{s}, g, \eta; \mathbf{F}) \to \widetilde{HM}_q(Y, m_\mathfrak{s}, g, \eta; \mathbf{F}) \text{ non-trivial}\}.$$
    Similarly, we define $\lambda_q(Y,g)$ associated with $HM_q$. 
\end{Def}

Note that $\rho_q(Y,g), \lambda_q(Y,g)$ take values in $\R \cup \{+\infty\}$. Adopting the labeling convention at the beginning of the subsection, we use $\rho^{\circ}_q(Y,m_\mathfrak{s}, g)$ to refer to $\rho_q(Y,m_\mathfrak{s}, g)$ or $\lambda_q(Y, m_\mathfrak{s}, g)$, depending on whether $\circ = \emptyset$ or $\sim$.   Just from the definition, we immediately have the following observation.

\begin{Prop}\label{Prop5.3}
    With the same set-up as in Definition \ref{Def5.2}, if $\rho^\circ_q(Y, m_\mathfrak{s}, g)$ is finite, then $HM^\circ_q(Y, m_\mathfrak{s}; \mathbf{F})$ is non-trivial. 
\end{Prop}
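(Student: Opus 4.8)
The proof proposal for Proposition \ref{Prop5.3} is essentially a tautological unwinding of Definition \ref{Def5.2}, so the plan is short and the only subtle point is keeping track of which homology group appears where.

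\textbf{Strategy.} The claim is that finiteness of the spectral invariant $\rho^\circ_q(Y,m_\mathfrak{s},g)$ forces $HM^\circ_q(Y,m_\mathfrak{s};\mathbf{F})\neq 0$. I would argue by contrapositive: suppose $HM^\circ_q(Y, m_\mathfrak{s};\mathbf{F}) = 0$. By Corollary \ref{Cor4.11}, for $q$ in the allowable range this group is an invariant of $Y$, and it is computed (for any admissible small $\eta$) as $H_q(CM^\circ(Y,m_\mathfrak{s},g,\eta;\mathbf{F}))$. The inclusion-induced map $\iota^t_\ast: HM^{\circ t}_q(Y,m_\mathfrak{s},g,\eta;\mathbf{F}) \to HM^\circ_q(Y,m_\mathfrak{s},g,\eta;\mathbf{F})$ has target equal to the zero group, hence is the zero map for every $t\in\R$. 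Therefore the set $\{t : \iota^t_\ast \text{ non-trivial}\}$ over which we take the infimum in Definition \ref{Def5.2} is empty, and $\inf\emptyset = +\infty$ by the usual convention. Taking $\lim_{\norm{\eta}\to 0}$ of the constant value $+\infty$ gives $\rho^\circ_q(Y,m_\mathfrak{s},g) = +\infty$, which is the desired contradiction with finiteness.

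\textbf{Steps, in order.} First, fix a sufficiently small generic $\eta$ so that Proposition \ref{Prop5.1} applies and $(CM^{\circ t}, d^{\circ t})$ is a genuine subcomplex for each $t$; this is the setting already established just before Definition \ref{Def5.2}. Second, observe that the filtration is exhaustive — for $t$ large enough, $CM^{\circ t} = CM^\circ$ — so $\iota^t$ is eventually the identity; but for the contrapositive argument we do not even need this, only that the target of $\iota^t_\ast$ is $HM^\circ_q$. Third, invoke the assumption $HM^\circ_q = 0$: any homomorphism into the zero group is trivial, so $\iota^t_\ast = 0$ for all $t$. Fourth, conclude the infimum defining $\rho^\circ_q$ (before the $\eta$-limit) is $+\infty$ for every admissible $\eta$, hence the limit is $+\infty$. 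Finally, state the contrapositive: if $\rho^\circ_q(Y,m_\mathfrak{s},g) < +\infty$, then $HM^\circ_q(Y,m_\mathfrak{s};\mathbf{F}) \neq 0$.

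\textbf{Main obstacle.} There is essentially no analytic or geometric obstacle here; the statement is purely formal. The only thing to be careful about is a bookkeeping issue in Definition \ref{Def5.2}: as literally written, $\rho_q$ is defined using $\widetilde{HM}$ in both the $\circ=\emptyset$ and $\circ=\sim$ cases (a typo), while $\lambda_q$ is "defined similarly" with $HM_q$; I would silently read $\rho^\circ_q$ as using $HM^\circ_q$ throughout, matching the notational convention fixed at the start of Section \ref{Sec5}. One should also note that finiteness of $\rho^\circ_q$ implicitly includes the statement that for arbitrarily small $\eta$ the inner infimum is finite, which already requires $\iota^t_\ast$ to be nonzero for some $t$ and hence $HM^\circ_q(Y,m_\mathfrak{s},g,\eta;\mathbf{F})\neq 0$; by Corollary \ref{Cor4.11} this group is independent of the auxiliary data in the allowable range, so it equals $HM^\circ_q(Y,m_\mathfrak{s};\mathbf{F})$, completing the argument.
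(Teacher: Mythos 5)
Your proof is correct and coincides with what the paper intends: the paper does not spell out an argument, merely noting the proposition follows "just from the definition," and your contrapositive unwinding (if $HM^\circ_q = 0$ then every $\iota^t_\ast$ is trivial, so the infimum is $+\infty$ for all admissible $\eta$) is precisely that immediate observation, with Corollary \ref{Cor4.11} correctly invoked to pass from the $\eta$-dependent group to the invariant. Your remark about the $\circ$-labeling mismatch between the convention set at the start of Section \ref{Sec5} and the definitions of $\rho_q$ and $\lambda_q$ is a genuine notational inconsistency in the text and you resolved it in the sensible way.
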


We end this subsection by showing that

\begin{Th}\label{Th5.4}
    With the same set-up as above with the additional assumption that $\mathfrak{s}$ is a torsion $spin^c$ structure, i.e., $c_1(\det \mathfrak{s})^2 = 0$, then  $\rho^{\circ}_q(Y,m_\mathfrak{s}, g)$ is an invariant of $(Y,g)$.
\end{Th}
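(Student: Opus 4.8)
The plan is to show that $\rho^\circ_q(Y, m_\mathfrak{s}, g)$ does not depend on the two pieces of auxiliary data that are still free once $(Y, g, \mathfrak{s})$ is fixed: the choice of chamber representative $m_\mathfrak{s}$ (hence the pair used to realize it) and, more delicately, the perturbation $\eta$ that appears in $\mathcal{L}_\eta$ and in the definition of $CM^\circ$. The independence of the \emph{representative} $m_\mathfrak{s}$ is essentially Theorem \ref{Th4.10} / Corollary \ref{Cor4.11} at the chain-homotopy level; the content of this theorem is that the continuation maps used there can be arranged to \emph{preserve the $\mathbf{R}$-filtration} up to a controlled shift that vanishes in the limit $\|\eta\| \to 0$. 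So the first step is to set up a continuation map $F_{12}: CM^\circ(Y, m_\mathfrak{s}, g, \eta_1) \to CM^\circ(Y, m_\mathfrak{s}, g, \eta_2)$ for two small generic perturbations $\eta_1, \eta_2$ (with the same $g$), coming from counting solutions of the Seiberg–Witten equations on $Y \times \mathbf{R}$ with an $s$-dependent perturbation interpolating from $\eta_1$ to $\eta_2$.

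Second, I would establish the key filtration estimate: if $\alpha_1 \in \mathcal{R}^*(Y, \mathfrak{s}, g, \eta_1)$ and $\alpha_2 \in \mathcal{R}^*(Y, \mathfrak{s}, g, \eta_2)$ are joined by such an interpolated flow line, then $\mathcal{L}_{\eta_2}(\alpha_2) \le \mathcal{L}_{\eta_1}(\alpha_1) + C(\eta_1, \eta_2)$ where $C(\eta_1, \eta_2) \to 0$ as $\|\eta_1\|, \|\eta_2\| \to 0$. This is the exact analogue of Proposition \ref{Prop5.1}, run on $W_T = Y \times [-T, T]$ with the interpolating form $\mu$ in place of a constant one; the energy identity \eqref{eq:4.8}, together with the \emph{a priori} bounds of Lemma \ref{Lem4.3} and Lemma \ref{Lem4.4}, controls the difference $\mathscr{E}^{top}$ across the cylinder, and the only new terms are the ones involving $\partial\mu/\partial s$, which are $O(\|\eta_1\| + \|\eta_2\|)$ after integrating against the uniformly bounded curvature and spinor. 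Here the hypothesis that $\mathfrak{s}$ is torsion enters: it guarantees $c_1(\det\mathfrak{s})^2 = 0$, so $\mathcal{L}$ (and $\mathcal{L}_\eta$) is genuinely gauge-invariant with no residual $\mathbf{Z}$-periodicity issue, the critical values $\mathcal{L}_\eta(\alpha)$ are honest real numbers, and the term $\frac14\int (F_B-\mu)\wedge(F_B-\mu)$ contributes only a boundary term plus a quantity going to zero — there is no topological anomaly proportional to $c_1^2$ that would survive the limit. Consequently $F_{12}$ shifts the filtration by at most $C(\eta_1,\eta_2)$, i.e. $F_{12}(CM^{\circ t}) \subseteq CM^{\circ, t + C}$.

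Third, with the filtered continuation maps in hand I would run the formal persistence argument already rehearsed in Section \ref{Sec2} (the proof of Theorem \ref{Th1.5}/\ref{Th1.6} and the diagram \eqref{eq:1.1}): since $F_{12}$ and $F_{21}$ are $\mathcal{S}$-chain homotopy inverses in the allowable degree range $q$ (Theorem \ref{Th4.10}), the induced maps on $HM^\circ_q$ are isomorphisms, and commutativity of the square relating $\iota^t_\ast$ at the two perturbations with $F_{12\ast}$ forces
$$\inf\{t : \iota^{t}_{\ast, \eta_2} \text{ nontrivial}\} \le \inf\{t : \iota^{t}_{\ast, \eta_1} \text{ nontrivial}\} + C(\eta_1, \eta_2),$$
and symmetrically with the roles reversed. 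Taking $\|\eta_1\|, \|\eta_2\| \to 0$ makes $C \to 0$, so the two limits defining $\rho^\circ_q$ agree; this shows the limit in Definition \ref{Def5.2} is independent of the choice of small generic $\eta$ used to approach it (and, combined with Corollary \ref{Cor4.11}, of the chamber representative). Since $g$ is the only remaining datum, $\rho^\circ_q(Y, m_\mathfrak{s}, g)$ is an invariant of $(Y, g)$.

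\textbf{Main obstacle.} The crux is the uniformity of the constant $C(\eta_1, \eta_2)$ in the filtration-shift estimate — one must see that \emph{every} term produced by the non-constant perturbation on the cylinder (in particular $\int_{W_T} \Phi \cdot \rho(\mu^+)\Phi$ and the cross terms $\int F_B \wedge \mu$, as well as the corrections from $\mu$ not being closed if the interpolation is chosen carelessly) is bounded by $O(1)\cdot(\|\eta_1\|_{C^0} + \|\eta_2\|_{C^0})$ with the $O(1)$ depending only on $(Y,g)$ and not on $T$ or on the particular flow line; this is where the torsion hypothesis and the positive-scalar-curvature-on-the-cylinder trick from Proposition \ref{Prop5.1} are indispensable, and where one has to be careful that gluing/compactness does not introduce factorizations through the reducible $\Theta$ that would spoil the degree-range argument (this is handled exactly as in Section 8 of \cite{MR2738582}, using $b^+ \le 1$ and the index bookkeeping of Subsection \ref{Sub4.2}). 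A secondary technical point is choosing the interpolating perturbation generically enough that the continuation moduli spaces are cut out transversally while still keeping $\|\mu\|$ small throughout, which is routine given Proposition \ref{Prop4.2} applied parametrically.
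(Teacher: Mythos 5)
Your proposal follows essentially the same route as the paper's proof: filtered continuation maps on the cylinder $Y\times\mathbf{R}$ with an interpolating two-form $\mu$ between two small generic perturbations $\eta_1,\eta_2$ (same $g$), an $O(1)\epsilon$ filtration-shift estimate obtained from the energy identity and the \emph{a priori} bounds of Lemma \ref{Lem4.3} and Lemma \ref{Lem4.4} (with the torsion hypothesis annihilating the Chern-Weil term $\pi^*_Y c_1(\det\mathfrak{s})^2$ appearing after integrating $(F_B-\mu)\wedge(F_B-\mu)$), and the commutative-diagram argument squeezing $\rho^\circ_q(Y,m^1_\mathfrak{s},g,\eta_1)$ and $\rho^\circ_q(Y,m^2_\mathfrak{s},g,\eta_2)$ to within $O(1)\epsilon$ of each other before taking the limit. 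One small imprecision: gauge-invariance of $\mathcal{L}_\eta$ and the absence of $\mathbf{Z}$-periodicity already follow from $b_1(Y)=0$ (Subsection \ref{Sub4.1}) and do not require the torsion hypothesis, which in this proof is used solely to kill the $c_1^2$ contribution in the Chern-Weil step of the inequality derived from \eqref{eq:5.4}.
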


\begin{proof}
    The proof is similar to the proof of Proposition \ref{Prop5.1} and works for either version $HM, \widetilde{HM}$. Let $\eta_1, \eta_2$ be two generic small enough perturbations. We have two auxiliary data $(g,\eta_1) , (g,\eta_2)$ and consider the continuity map between the chain complexes induced by the cylinder cobordism $W_T = Y \times [-T, T]$. Let $W^*$ be obtained from $W_T$ by attaching a cylinder $Y \times \R^+$ to its boundary. Pick a two-form $\mu$ on $W^* = Y \times \R$ such that $\mu|_{Y \times -T} = \eta_1$ and $\mu|_{Y\times T} = \eta_2$. Let $(B,\Phi)$ be a solution of the $\mu$-perturbed four-dimensional Seiberg-Witten equations on $Y \times \R$ such that $\lim_{T\to \pm \infty}(B,\Phi)|_{Y \times T} = (A_\pm, \psi_\pm)$, where $(A_\pm, \psi_\pm)$ is a solution of the $\eta_2, \eta_1$-perturbed Seiberg-Witten equations on $Y$, respectively. Let $A_0$ be a fixed referenced connection. Following the calculations that derive \eqref{eq:5.1}, in this setting, we also have
    \begin{align}\label{eq:5.3}
        & 2(\mathcal{L}_{\eta_1}(B|_{Y\times -T}, \Phi|_{Y\times -T}) - \mathcal{L}_{\eta_2}(B|_{T\times T}, \Phi|_{Y\times T}))  \nonumber \\
        & = \mathscr{E}^{top}_{W_T,\mu}(B, \Phi) -\int_{W_T}\dfrac{-1}{2} F_B \wedge \mu + \dfrac{1}{4}\mu \wedge \mu \nonumber \\
        &- \dfrac{1}{4}\int_Y (B|_{Y\times T} - A_0)\wedge \eta_1 - (B|_{Y\times -T}-A_0)\wedge \eta_2 + \dfrac{1}{2}\int_{W_T} \la \Phi, \rho_{W_T}(\mu^+)\Phi\ra. 
    \end{align}
    Let $T \to \infty$, from \eqref{eq:5.3}, we obtain
    \begin{align}\label{eq:5.4}
        & 2(\mathcal{L}_{\eta_1}(A_-, \psi_-) - \mathcal{L}_{\eta_2}(A_+,\psi_+))  \nonumber \\
        & = \lim_{T \to \infty} \mathscr{E}^{an}_{W_T ,\mu}(B, \Phi) - \int_{Y\times \R} \dfrac{-1}{2} F_B \wedge \mu + \dfrac{1}{4}\mu\wedge \mu \nonumber \\
        & -\dfrac{1}{4}\int_Y (A_+ -A_0) \wedge \eta_1 - (A_- - A_0)\wedge \eta_2 + \dfrac{1}{2}\int_{Y\times \R}\la \Phi, \rho_{Y\times \R}(\mu^+)\Phi\ra \nonumber \\
        & \geq \dfrac{1}{4}\int_{Y\times \R}|F_B -\mu|^2 + \int_{Y\times \R}|\nabla_B \phi|^2 + \dfrac{1}{4}\int_{Y\times \R}|\phi|^4 + \dfrac{1}{2}\int_{Y\times \R}F_B \wedge \mu - \dfrac{1}{4}\mu \wedge \mu \nonumber \\
        & -\dfrac{1}{4}\int_Y (A_+ -A_0) \wedge \eta_1 - (A_- - A_0)\wedge \eta_2 + \dfrac{1}{2}\int_{Y\times \R}\la \Phi, \rho_{Y\times \R}(\mu^+)\Phi\ra.
    \end{align}
    By Chern-Weil theory, the estimate \eqref{eq:5.4} implies that
    \begin{align*}
        \mathcal{L}_{\eta_1}(A_-,\psi_-) \geq &\mathcal{L}_{\eta_2}(A_+,\psi_+) + \pi^*_Y c_1(det\, \mathfrak{s})^2 \nonumber \\
        &-\dfrac{1}{4}\int_Y (A_+ -A_0) \wedge \eta_1 - (A_- - A_0)\wedge \eta_2 + \dfrac{1}{2}\int_{Y\times \R}\la \Phi, \rho_{Y\times \R}(\mu^+)\Phi\ra \\
        & = \mathcal{L}_{\eta_2}(A_+,\psi_+) -\dfrac{1}{8}\int_Y (A_+ -A_0) \wedge \eta_1 - (A_- - A_0)\wedge \eta_2 \\
        &  + \dfrac{1}{4}\int_{Y\times \R}\la \Phi, \rho_{Y\times \R}(\mu^+)\Phi\ra.
    \end{align*}
    Let $\epsilon > 0$. Thus, if $\mathcal{L}_{\eta_1}(A_-,\psi_-) \leq t$ and $\norm{\eta_1}, \norm{\eta_2} < \epsilon$, then by Lemma \ref{Lem4.3} and Lemma \ref{Lem4.4},
    \begin{align}\label{eq:5.5}
        \mathcal{L}_{\eta_2}(A_+,\psi_+) &\leq t + \dfrac{1}{8}\int_Y (A_+ -A_0) \wedge \eta_1 - (A_- - A_0)\wedge \eta_2 - \dfrac{1}{4}\int_{Y\times \R}\la \Phi, \rho_{Y\times \R}(\mu^+)\Phi\ra \nonumber \\
        &\leq t + O(1)\epsilon.
    \end{align}

    By \eqref{eq:5.5}, we have the following commutative diagram for $q \leq 2m^1_\mathfrak{s}-1$ or $q \geq 2 m^2_{\mathfrak{s}}$, where $m^{i}_{\mathfrak{s}} = I(g,\eta_i)$
    \[
    \begin{tikzcd}
        HM^{\circ}_q(Y, m^1_\mathfrak{s}; \mathbf{F}) \arrow[r] & HM^{\circ}_q(Y, m^2_{\mathfrak{s}};\mathbf{F}) \\
        HM^{\circ t}_q(Y, m^1_\mathfrak{s}; \mathbf{F}) \arrow[u] \arrow[r] & HM^{\circ t + O(1)\epsilon}_q(Y, m^2_{\mathfrak{s}};\mathbf{F}) \arrow[u]
    \end{tikzcd}
    \]
    Here recall that the top map is an isomorphism induced by the continuation chain map between chain complexes. If the left vertical map is non-trivial, then the right vertical map is also non-trivial. For $\eta_i$ small enough, we define $\rho^{\circ}_q(Y, m^i_\mathfrak{s}, g, \eta_i)$ to be the infimum of all $t$ such that $\iota^t_\ast : HM^{\circ t} \to HM^{\circ}$ is non-trivial. Hence, $\rho^{\circ}_q(Y, m^2_\mathfrak{s},g,\eta_2) \leq \rho^{\circ}_q(Y, m^1_\mathfrak{s}, g, \eta_1) + O(1)\epsilon$. Similarly, we also have $\rho^{\circ}_q(Y, m^1_\mathfrak{s},g,\eta_2) \leq \rho^{\circ}_q(Y, m^2_\mathfrak{s}, g, \eta_1) + O(1)\epsilon$. Therefore, $$|\rho^\circ_q(Y, m^1_\mathfrak{s}, g, \eta_1) - \rho^\circ_q(Y, m^2_\mathfrak{s}, g, \eta_2)| \leq O(1)\epsilon.$$
    The above inequality shows that $\rho^\circ_q(Y, m_\mathfrak{s}, g)$ is an invariant of $(Y,g)$ for the allowable range of $q$ as claimed.
\end{proof}

\begin{Rem}\label{Rem5.5}
From this point on-ward, we refer to $\rho^{\circ}_q(Y, m_\mathfrak{s}, g)$ as the \textit{spectral invariant} associated with $HM^\circ_q(Y, m_\mathfrak{s}; \mathbf{F})$. In general, it is not true that if we also vary $g$, then an analogous version of the spectral invariant defined above is an invariant of $Y$. Roughly, this is because the Chern-Simons-Dirac functional's definition depends on an \textit{a priori} choice of background metric, and the $\R$-degree of the induced homomorphism at the filtered level depends on the geometry of the cobordism. This is a point of contrast to various similar invariants defined on the instanton side (see, e.g., \cite{MR4158669,Nozaki2023}).
\end{Rem}

\subsection{Relationship with cobordism}\label{Sub5.2}
In this subsection, we investigate how the cobordism affects the $\R$-filtration level of $HM^{\circ}$. For now, we assume that $Y$ is a smooth, closed, oriented, not necessarily three-manifold. Suppose $W$ is a compact $4$-manifold with boundary $\partial W = Y$. Let $W^*$ be a non-compact manifold by attaching a cylinder $Y\times [0,\infty)$ to the boundary of $W$. As before, we let $\mathfrak{s}_W$ be a $spin^c$ structure on $W$, and $\mathfrak{s} = \mathfrak{s}_W|_{Y}$ is the $spin^c$ structure on $Y$. Denote the extension of $\mathfrak{s}_W$ to $W^*$ by the same label. Let $\eta \in i\Omega^2(Y)$ that is closed and generic, $\mu \in i\Omega^2(W^*)$ such that $\mu|_{Y\times [0,\infty)}$ is the pulled-back of $\eta$ via the projection map $Y\times [0,\infty)$. Additionally, let $g_{W^*}$ be a metric on $W^*$ such that on $Y \times [0,\infty)$, it is of the form $g\times 1$, where $g$ is a metric on $Y$. 

Let $W_T = W^* \setminus Y\times [T,\infty)$. Note that $W_T$ is now a compact manifold with cylindrical ends and its boundary is $Y$. Denote $\mathfrak{s}_{W_T}$ by the restriction of the $\mathfrak{s}_W$ on $W_T$. Suppose $B_0$ is a fixed connection on $W^*$ and the restriction of $B_0$ to each $Y$-slice in the cylindrical boundary is $A_0$.  We start with the following possibly well-known lemma.

\begin{Lemma}\label{Lem5.6}
     Let $(B, \Phi)$ be a configuration on $W_T$. Then there exists a constant $C(B_0, \mathfrak{s}_W)$ depending on $B_0, \mathfrak{s}_{W}$ such that
     \begin{align*}
     \mathscr{E}^{top}_{W_T, \mu}(B,\Phi) &= -2\mathcal{L}_\eta (B|_{\partial W_T}, \Phi|_{\partial W_T}) + \dfrac{1}{4}\int_Y (B|_{\partial W_T} - A_0)\wedge \eta -\dfrac{1}{2}\int_{W_T}\la \Phi, \rho_{W_T}(\mu^+)\Phi\ra \\
     & - \dfrac{1}{2}\int_{W_T}F_B \wedge \mu + \dfrac{1}{4}\int_{W_T}\mu \wedge \mu +C.
     \end{align*}
\end{Lemma}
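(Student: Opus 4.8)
The plan is to repeat, in the setting of a single manifold $W$ with one boundary component, the energy identity computation that was carried out in the proof of Proposition \ref{Prop5.1} for the finite cylinder $W_T = Y\times[-T,T]$. Recall that in \eqref{eq:4.8} the Seiberg--Witten functional splits as $\norm{\mathscr{F}_{W,\mu}(B,\Phi)}^2 = \mathscr{E}^{an}_{W,\mu}(B,\Phi) - \mathscr{E}^{top}_{W,\mu}(B,\Phi)$, and by definition (the boundary terms in \eqref{eq:4.8}) the topological energy of the configuration $(B,\Phi)$ on $W_T$ is
\begin{align*}
    \mathscr{E}^{top}_{W_T,\mu}(B,\Phi) = &-\dfrac{1}{4}\int_{W_T}(F_B-\mu)\wedge(F_B-\mu) + \dfrac{1}{2}\int_{W_T}\la \Phi,\rho_{W_T}(\mu^+)\Phi\ra \\
    &+ \int_{Y}\la \Phi|_Y, D_{B|_Y}\Phi|_Y\ra - \int_{Y}\dfrac{H}{2}|\Phi|_Y|^2,
\end{align*}
with the sign convention of \eqref{eq:4.8}; here $\partial W_T = Y$ and $H$ is the mean curvature of $Y$ sitting inside $W_T$. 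Since $W_T$ is obtained from $W$ by attaching a finite collar $Y\times[0,T]$ on which the metric is a product, the mean curvature of the boundary $Y\times\{T\}$ vanishes, so the last integral drops out.

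Next I would convert the term $\int_{W_T}(F_B-\mu)\wedge(F_B-\mu)$ into a boundary integral plus a correction using Stokes' theorem, exactly as in the displayed computation preceding \eqref{eq:5.1}. Writing $F_B\wedge F_B = d((B-B_0)\wedge(F_B+F_{B_0}))$ and integrating over $W_T$ gives
\begin{align*}
    \int_{W_T}F_B\wedge F_B &= \int_{Y}(B|_{\partial W_T}-A_0)\wedge(F_{B|_{\partial W_T}}+F_{A_0}) + \int_{W}F_{B_0}\wedge F_{B_0},
\end{align*}
where I am using that $B_0$ restricts to $A_0$ on the boundary and that $\int_{W_T}F_{B_0}\wedge F_{B_0}$ equals $\int_W F_{B_0}\wedge F_{B_0}$ (a topological quantity, independent of $T$, determined by $c_1(\det\mathfrak{s}_W)^2$), which together with the normalization constants will be collected into the single constant $C = C(B_0,\mathfrak{s}_W)$. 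Then, exactly as in \eqref{eq:5.1}, one rewrites $-2\int_{W_T}F_B\wedge\mu$ via Stokes into $-2\int_{Y}(B|_{\partial W_T}-A_0)\wedge\eta$ plus a remaining bulk term $-2\int_{W_T}F_{B_0}\wedge\mu$ absorbed into $C$ (using $d\mu=0$), while $\int_{W_T}\mu\wedge\mu$ is kept as written (this is the $+\frac14\int_{W_T}\mu\wedge\mu$ appearing in the statement). Recognizing the boundary expression
$$-\dfrac{1}{8}\int_{Y}(B|_{\partial W_T}-A_0)\wedge(F_{B|_{\partial W_T}}+F_{A_0}-\eta) + \dfrac{1}{2}\int_{Y}\la \Phi|_Y, D_{B|_Y}\Phi|_Y\ra$$
as precisely $\mathcal{L}_\eta(B|_{\partial W_T},\Phi|_{\partial W_T})$ from Definition \ref{Def4.1}/\eqref{eq:4.1}, one reads off the coefficient $-2$ and the leftover term $+\frac14\int_Y (B|_{\partial W_T}-A_0)\wedge\eta$ that does not fit into $\mathcal{L}_\eta$; keeping the $-\frac12\int_{W_T}F_B\wedge\mu$ and $-\frac12\int_{W_T}\la\Phi,\rho_{W_T}(\mu^+)\Phi\ra$ terms explicit and folding all $B_0$-dependent pieces into $C$ yields exactly the claimed formula.

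The computation is essentially bookkeeping, so the ``main obstacle'' is really just care with signs and with which terms one is allowed to hide inside the constant $C$: the requirement is that $C$ depend only on $B_0$ and $\mathfrak{s}_W$ and not on $(B,\Phi)$ or on $\eta$, so one must check that the only $T$- and configuration-independent leftovers are $\int_W F_{B_0}\wedge F_{B_0}$ (a $c_1(\det\mathfrak{s}_W)^2$-type term) together with fixed numerical normalization constants, and that the genuinely $(B,\Phi)$- and $\mu$-dependent terms $-\tfrac12\int_{W_T}F_B\wedge\mu$, $+\tfrac14\int_{W_T}\mu\wedge\mu$, $-\tfrac12\int_{W_T}\la\Phi,\rho_{W_T}(\mu^+)\Phi\ra$, and $\tfrac14\int_Y(B|_{\partial W_T}-A_0)\wedge\eta$ are displayed separately, which is exactly how the lemma is phrased. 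A minor point worth noting is that strictly speaking $-2\int_{W_T}F_{B_0}\wedge\mu$ depends on $\mu$ and not only on $B_0$; but since in all later applications one only ever compares two such energy identities and lets $\norm{\eta}\to 0$, this term is $O(\norm{\eta})$ and can legitimately be treated as part of the ``$+C$'' up to a controlled error, or alternatively one simply enlarges the statement's constant to depend mildly on $\mu$ as well — in either case the proof is the Stokes-theorem manipulation sketched above applied to the already-derived formula \eqref{eq:4.8}.
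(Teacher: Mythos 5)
Your overall strategy — apply Stokes' theorem to the $\int_{W_T}F_B\wedge F_B$ piece of the topological energy to convert it to a boundary integral and then recognize $\mathcal{L}_\eta$ — is the same as the paper's. The only structural difference is that the paper invokes Kronheimer--Mrowka's formula (4.21) for the unperturbed identity $\mathscr{E}^{top}_{W_T}(B,\Phi) = -2\mathcal{L}(B|_{\partial W_T},\Phi|_{\partial W_T})+C$ as a black box and then perturbs it, whereas you essentially rederive that identity from scratch; that is a cosmetic distinction. However, there are two calculational slips that would derail the derivation if carried out literally.

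First, a sign error at the very start: equation \eqref{eq:4.8} defines the bracketed expression
\[
-\dfrac{1}{4}\int_{W}(F_B-\mu)\wedge(F_B-\mu)+\dfrac{1}{2}\int_W\la\Phi,\rho_W(\mu^+)\Phi\ra+\int_Y\la\Phi|_Y,D_{B|_Y}\Phi|_Y\ra-\int_Y\dfrac{H}{2}|\Phi|_Y|^2
\]
to be $-\mathscr{E}^{top}_{W,\mu}(B,\Phi)$, not $\mathscr{E}^{top}_{W,\mu}(B,\Phi)$. You wrote the expression down without the overall negation, so every sign in what follows is flipped; if you proceed consistently you will land on $+2\mathcal{L}_\eta$ rather than the claimed $-2\mathcal{L}_\eta$. (A quick sanity check: $\mathcal{L}_\eta$ contains $+\tfrac12\int_Y\la D_A\psi,\psi\ra$, so $-2\mathcal{L}_\eta$ contains $-\int_Y\la D_A\psi,\psi\ra$, which forces the $-\int_Y\la\Phi|_Y,D_{B|_Y}\Phi|_Y\ra$ sign on $\mathscr{E}^{top}$.)

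Second and more substantively, you misidentify the source of the boundary term $\tfrac14\int_Y(B|_{\partial W_T}-A_0)\wedge\eta$. You attribute it to applying Stokes to $-\tfrac12\int_{W_T}F_B\wedge\mu$. But if you do that, the bulk term $-\tfrac12\int_{W_T}F_B\wedge\mu$ disappears entirely (becoming $-\tfrac12\int_Y(B|_Y-A_0)\wedge\eta-\tfrac12\int_{W_T}F_{B_0}\wedge\mu$), whereas the lemma's right-hand side keeps $-\tfrac12\int_{W_T}F_B\wedge\mu$ explicit alongside the boundary term, and with coefficient $+\tfrac14$, not $-\tfrac12$. In the paper's calculation (look at the display preceding \eqref{eq:5.1} and at \eqref{eq:5.7}), the $\eta$-boundary term instead arises from the Stokes computation on $\tfrac14\int_{W_T}F_B\wedge F_B$, by adding and subtracting $\eta$ inside the resulting boundary integral: one writes $\tfrac14\int_Y(B|_Y-A_0)\wedge(F_{B|_Y}+F_{A_0})=\tfrac14\int_Y(B|_Y-A_0)\wedge(F_{B|_Y}+F_{A_0}-\eta)+\tfrac14\int_Y(B|_Y-A_0)\wedge\eta$, so that the first piece matches the curvature part of $\mathcal{L}_\eta$ and the second is the leftover $+\tfrac14\int_Y(B|_Y-A_0)\wedge\eta$. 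The two bulk cross-terms $-\tfrac12\int_{W_T}F_B\wedge\mu$ and $+\tfrac14\int_{W_T}\mu\wedge\mu$ are left untouched. If you follow your stated plan you would double-count $F_B\wedge\mu$ (it appears both as a boundary term and as a bulk term) and get the wrong coefficient on the $\eta$ boundary integral. Your observation that the mean curvature term vanishes because the metric is a product on the collar is correct, and your concluding remark about the $\mu$-dependence of the absorbed constant is a fair comment on the statement, though as noted above it becomes moot once you realize $-\tfrac12\int F_B\wedge\mu$ is displayed explicitly and never converted.
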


\begin{proof}
    Without perturbation, by formula (4.21) on page 98 of \cite{MR2388043}, there exists a $C(B_0, \mathfrak{s}_W)$ such that
    $$\mathscr{E}^{top}_{W_T}(B,\Phi) = -2 \mathcal{L}(B|_{\partial W_T}, \Phi|_{\partial W_T}) + C.$$
    Expand the above formula, and we have
    \begin{align}\label{eq:5.6}
        \dfrac{1}{4}\int_{W_T} F_B \wedge F_B -& \int_Y \la \Phi|_Y, D_{B|_Y}\Phi|_Y\ra + \int_{Y} \left(\dfrac{H}{2}\right) | \Phi|_Y|^2 = \nonumber\\
        & \dfrac{1}{4}\int_Y (B|_Y - A_0)\wedge (F_{B|_Y}+F_{A_0}) - \int_Y \la \Phi|_Y, D_{B|_Y}\Phi|_Y\ra + C.
    \end{align}
    Now, with perturbation, from \eqref{eq:5.6}, we obtain
    \begin{align}\label{eq:5.7}
        &\dfrac{1}{4}\int_{W_T}(F_B - \mu) \wedge (F_B - \mu) + \dfrac{1}{2}\int_{W_T} F_B\wedge \mu - \dfrac{1}{4}\int_{W_T} \mu \wedge \mu \nonumber \\
        & - \dfrac{1}{2}\int_{W_T} \la \Phi, \rho_{W_T}(\mu^+)\Phi\ra -\int_Y \la \Phi|_Y, D_{B|_Y}\Phi|_Y\ra + \int_Y \left(\dfrac{H}{2}\right)|\Phi|_Y|^2 \nonumber \\
        & = \dfrac{1}{4}\int_Y (B|_Y - A_0) \wedge (F_{B|_Y}+F_{A_0}- \eta) - \int_Y \la \Phi|_Y, D_{B|_Y}\Phi|_Y \ra \nonumber\\
        & + \dfrac{1}{4}\int_Y (B|_Y - A_0) \wedge \eta - \dfrac{1}{2}\int_{W_T}\la \Phi, \rho_{W_T}(\mu^+)\Phi\ra + C.
    \end{align}
    As a result, \eqref{eq:5.7} can be rewritten as the formula of the lemma as desired.
\end{proof}

Now, suppose that $Y = -Y_1 \cup Y_2$ with the set-up as at the beginning of subsection \ref{Sub4.3}. A consequence of the above lemma is the following proposition.

\begin{Prop}\label{Prop5.7}
     Let $(B,\Phi)$ be a solution of the Seiberg-Witten equations \eqref{eq:4.5} on $W^*$ such that $\lim_{T\to \pm \infty}(B,\Phi)|_{Y\times T} = (A_\pm, \psi_\pm)$, where $(A_\pm, \psi_\pm)$ is a solution of the corresponding perturbed Seiberg-Witten equations on $Y_1, Y_2$, respectively. Let $s$ be the scalar curvature on $W$. For $\eta_1, \eta_2$ small enough, then there exists a $C(B_0,\mathfrak{s}_W)$ such that
     $$-\dfrac{1}{32}\int_W s^2 \leq \mathcal{L}_{\eta_1}(A_-,\psi_-) - \mathcal{L}_{\eta_2}(A_+,\psi_+)+C.$$
\end{Prop}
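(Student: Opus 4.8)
The plan is to deduce Proposition \ref{Prop5.7} from Lemma \ref{Lem5.6} together with the sign structure of the energy identity \eqref{eq:4.8} and the \textit{a priori} estimates of Lemma \ref{Lem4.3} and Lemma \ref{Lem4.4}. First I would apply Lemma \ref{Lem5.6} on the truncated cobordism $W_T$, but split the boundary contribution $-2\mathcal{L}_\eta(B|_{\partial W_T},\Phi|_{\partial W_T})$ into its two pieces along $-Y_1$ and $Y_2$; as $T\to\infty$ these converge to $-2\mathcal{L}_{\eta_1}(A_-,\psi_-)$ and $+2\mathcal{L}_{\eta_2}(A_+,\psi_+)$ respectively (matching the sign conventions from the derivation of \eqref{eq:5.1}, \eqref{eq:5.3}), while the bulk curvature/spinor terms $\int_{W_T}F_B\wedge\mu$, $\int_{W_T}\mu\wedge\mu$, $\int_{W_T}\langle\Phi,\rho(\mu^+)\Phi\rangle$ and the boundary term $\int_Y(B|_\partial-A_0)\wedge\eta$ are all controlled in absolute value by $O(1)\|\eta_i\|$ using Lemma \ref{Lem4.3}(2), Lemma \ref{Lem4.4}, and the uniform $L^2$ bound on $F_B$. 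Absorbing these into the constant (or, more honestly, noting they tend to $0$ as $\|\eta_i\|\to 0$), we get in the limit
\begin{align*}
    \mathscr{E}^{top}_{W^*,\mu}(B,\Phi) = 2\mathcal{L}_{\eta_2}(A_+,\psi_+) - 2\mathcal{L}_{\eta_1}(A_-,\psi_-) + C' ,
\end{align*}
with $C'=C+o(1)$ as the perturbations shrink.

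Next I would use the fact that $(B,\Phi)$ solves \eqref{eq:4.5}, so $\mathscr{F}_{W^*,\mu}(B,\Phi)=0$, which by Remark \ref{Rem4.6} forces $\mathscr{E}^{top}_{W^*,\mu}(B,\Phi)=\mathscr{E}^{an}_{W^*,\mu}(B,\Phi)$. Now I examine $\mathscr{E}^{an}$ from \eqref{eq:4.8}: it equals
\begin{align*}
    \tfrac14\|F_B-\mu\|_{L^2}^2 + \|\nabla_B\Phi\|_{L^2}^2 + \tfrac14\big\||\Phi|^2+s/2\big\|_{L^2}^2 - \tfrac14\|s/2\|_{L^2}^2 ,
\end{align*}
where I write the last term as $\tfrac14\int_{W}(s/2)^2 = \tfrac{1}{16}\int_W s^2$; the first three terms are manifestly non-negative. (Here one uses that the extra cylindrical end $Y\times[0,\infty)$ carries a metric of uniform positive scalar curvature and that the spinor decays there, so the negative $s^2$ contribution only lives on the compact piece $W$; this is the same device used in the proof of Proposition \ref{Prop5.1}.) Hence $\mathscr{E}^{an}_{W^*,\mu}(B,\Phi) \geq -\tfrac{1}{16}\int_W s^2$, and combining with the identity above,
\begin{align*}
    -\tfrac{1}{16}\int_W s^2 \leq 2\mathcal{L}_{\eta_2}(A_+,\psi_+) - 2\mathcal{L}_{\eta_1}(A_-,\psi_-) + C' .
\end{align*}
Dividing by $2$ and renaming $-C'/2$ as $C$ (again a constant depending only on $B_0$ and $\mathfrak{s}_W$, up to an $o(1)$ error absorbed when $\|\eta_i\|$ is small) gives exactly the claimed inequality $-\tfrac{1}{32}\int_W s^2 \leq \mathcal{L}_{\eta_1}(A_-,\psi_-) - \mathcal{L}_{\eta_2}(A_+,\psi_+) + C$.

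The main obstacle I anticipate is bookkeeping rather than conceptual: getting the signs right in the limit $T\to\infty$ (the $-Y_1$ end flips the sign of the $\mathcal{L}$-contribution relative to the $Y_2$ end, and one must track this through formula \eqref{eq:5.6}/\eqref{eq:5.7} of Lemma \ref{Lem5.6}), and making rigorous that all the $\mu$-dependent bulk and boundary terms genuinely converge and are $O(\|\eta_i\|)$ — this requires knowing that $\|F_B\|_{L^2}$ and $\|\Phi\|_{L^\infty}$ are bounded uniformly on the whole non-compact $W^*$, which follows from Lemma \ref{Lem4.4} applied on each truncation together with the asymptotic convergence to the non-degenerate critical points $(A_\pm,\psi_\pm)$. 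A secondary point to be careful about is that the "$\mathscr{E}^{an}\ge 0$ except on $W$" step genuinely needs the positive scalar curvature of the attached cylindrical collar, exactly as invoked in Proposition \ref{Prop5.1}; with that in hand the negative term is supported on $W$ and the estimate closes.
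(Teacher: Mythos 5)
Your overall strategy mirrors the paper's proof exactly: truncate to $W_T$, apply Lemma~\ref{Lem5.6}, let $T\to\infty$, absorb the $\mu$-dependent bulk and boundary terms into an $O(\norm{\eta_i})$ error using Lemma~\ref{Lem4.3} and Lemma~\ref{Lem4.4}, use $\mathscr{E}^{an}=\mathscr{E}^{top}$ for a solution, and invoke the lower bound $\mathscr{E}^{an}_{W^*,\mu}\ge -\tfrac{1}{16}\int_W s^2$ from the defining expression \eqref{eq:4.8}. Making the use of Remark~\ref{Rem4.6} explicit is a nice touch. However, there is a concrete sign error in the splitting of the boundary contribution, and the attempt to repair it at the end is an invalid step.

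Specifically, you claim the $-Y_1$ piece of $-2\mathcal{L}_\eta(B|_{\partial W_T},\Phi|_{\partial W_T})$ limits to $-2\mathcal{L}_{\eta_1}(A_-,\psi_-)$ and the $Y_2$ piece to $+2\mathcal{L}_{\eta_2}(A_+,\psi_+)$. The orientation reversal on $-Y_1$ goes the other way: expanding the boundary integrals over $\partial W_T=-Y_1\cup Y_2$ gives
\begin{equation*}
-2\mathcal{L}_\eta(B|_{\partial W_T},\Phi|_{\partial W_T})
\;\longrightarrow\; 2\mathcal{L}_{\eta_1}(A_-,\psi_-)-2\mathcal{L}_{\eta_2}(A_+,\psi_+),
\end{equation*}
which you can cross-check against \eqref{eq:5.1}: there the terms on $Y\times T$ assemble to $-2\mathcal{L}_\eta$ evaluated at the \emph{outgoing} end and those on $Y\times (-T)$ to $+2\mathcal{L}_\eta$ at the \emph{incoming} end, and \eqref{eq:5.2} then reads $\mathscr{E}^{top}\le 2(\mathcal{L}_\eta(\alpha_-)-\mathcal{L}_\eta(\alpha_+))+O(1)\epsilon$. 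Likewise the paper's \eqref{eq:5.8} carries the sign $\tfrac12\mathscr{E}^{an}=\mathcal{L}_{\eta_1}(A_-,\psi_-)-\mathcal{L}_{\eta_2}(A_+,\psi_+)+\cdots$. With your signs, the chain of inequalities terminates in $-\tfrac{1}{32}\int_W s^2\le \mathcal{L}_{\eta_2}(A_+,\psi_+)-\mathcal{L}_{\eta_1}(A_-,\psi_-)+C'/2$, which is not the statement of Proposition~\ref{Prop5.7}, and no ``renaming of $-C'/2$ as $C$'' can flip the relative sign of the two Chern--Simons--Dirac terms --- that is a change in the $\mathcal{L}$-dependence, not in the additive constant. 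Once the boundary split is corrected, the argument closes as written and agrees with the paper's.
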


\begin{proof}
    By the formula in Lemma \ref{Lem5.6} and let $T \to \infty$, we have
    \begin{align*}
        \mathscr{E}^{an}_{\mu,W^*}(B, \Phi) & = -2\mathcal{L}_{\eta}(A,\psi) + \dfrac{1}{4}\int_Y (A-A_0)\wedge \eta - \dfrac{1}{2}\int_{W^*}\la \Phi, \rho_{W^*}(\mu^+)\Phi\ra \\
        & - \dfrac{1}{2}\int_{W^*}F_B \wedge \mu + \dfrac{1}{4}\int_{W^*}\mu \wedge \mu +C.
    \end{align*}
    In the case that $Y = -Y_1 \cup Y_2$ as in the hypothesis of the Proposition \ref{Prop5.7}, we let $B_0|_{Y_1}=A_0, B_0|_{Y_2}=A'_0$. Then the above formula expands to become
    \begin{align}\label{eq:5.8}
        \dfrac{1}{2}\mathscr{E}^{an}_{W^*, \mu} &= \mathcal{L}_{\eta_1}(A_-,\psi_-) - \mathcal{L}_{\eta_2}(A_+,\psi_+) + \dfrac{1}{4}\left(\int_{Y_2} (A_+ - A'_0)\wedge \eta_2 - \int_{Y_1}(A_- -A_0)\wedge \eta_1 \right) \nonumber \\
        & - \dfrac{1}{2}\int_{W^*}\la \Phi, \rho_{W^*}(\mu^+)\Phi\ra - \dfrac{1}{2}\int_{W^*}F_B \wedge \mu \nonumber \\
        & + \dfrac{1}{4}\int_{W^*} \mu \wedge \mu + C.
    \end{align}
    Recall from \eqref{eq:4.8}, we have the following obvious estimate
    \begin{align}\label{eq:5.9}
        \mathscr{E}^{an}_{W^*, \mu}(B,\Phi) &= \lim_{T \to \infty} \left(\dfrac{1}{4}\norm{F_B - \mu}^2_{L^2}+\norm{\nabla_B \Phi}^2_{L^2}+\dfrac{1}{4}\norm{|\Phi|^2+s/2}^2_{L^2}-\norm{s/4}^2_{L^2}\right) \nonumber \\
        & \geq -\dfrac{1}{16}\int_{W} s^2.
    \end{align}
    Let $\epsilon >0$, for $\norm{\eta_1}, \norm{\eta_2} < \epsilon$, combine \eqref{eq:5.8} and \eqref{eq:5.9}, we obtain
    $$-\dfrac{1}{32}\int_W s^2 \leq \mathcal{L}_{\eta_1}(A_-,\psi_-) - \mathcal{L}_{\eta_2}(A_+,\psi_+) + C + O(1)\epsilon.$$
    The above estimate tells us that for $\eta_i$ small enough, we have the estimate in the proposition as claimed.
\end{proof}

In preparation for the main theorem of this subsection, we introduce the following definition. Recall in Subsection \ref{Sub4.3}, we set $d(W) = (c_1(det\,\mathfrak{s}_W)^2-\sigma(W))/4 + b_1(W) - b^+(W)$, and $k = m_{\mathfrak{s}_1} - m_{\mathfrak{s}_2} - d(W)/2$, where $m_{\mathfrak{s}_i}$ is a chamber associated with the $spin^c$ structure $\mathfrak{s}_i$ on $Y_i$.

\begin{Def}\label{Def5.8}
    Let $W: Y_1 \to Y_2$ a cobordism between smooth, closed, connected oriented rational homology three-spheres. For $b^+(W) \leq 1$ and $k\leq -1$, we say that $W$ is an \textit{injective cobordism} if the induced map
    $$CM^{\circ}_\ast : HM^{\circ}_q(Y_1,m_{\mathfrak{s}_1}; \mathbf{F}) \to HM^{\circ}_{q-d(W)}(Y_2, m_{\mathfrak{s}_2}; \mathbf{F})$$
    is injective when $q \leq 2m_{\mathfrak{s}_1}-1$ or $q \geq 2m_{\mathfrak{s}_2}$.
\end{Def}

\begin{Th}\label{Th5.9}
    Let $W : Y_1 \to Y_2$ be a cobordism between smooth, closed, connected, oriented rational homology three-sphere. Suppose $W$ is spin, and let $\mathfrak{s}_W$ be a spin structure on $W$ that, when restricted to $Y_i$, gives a spin structure $\mathfrak{s}_i$ on $Y_i$. Let $g$ be a metric on $W$ and denote $s$ by its scalar curvature. If $W$ is an injective cobordism, then for $q \leq 2m_{\mathfrak{s}_1}-1$ or $q \geq 2m_{\mathfrak{s}_2}$ we have 
    $$\dfrac{1}{32}\int_{W} s^2 \geq \rho^{\circ}_{q - d(W)}(Y_2; m_{\mathfrak{s}_2}, g_2) - \rho^{\circ}_{q}(Y_1, m_{\mathfrak{s}_1}, g_1) - C.$$
    Here $g_i$ is the induced metric on the boundary components of $Y$, $C$ is a constant that only depends on the canonical (trivial) $spin^c$ connection $B_0$ on $W$ and the spin structure $\mathfrak{s}_W$.
\end{Th}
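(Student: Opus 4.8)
The plan is to combine the energy estimate of Proposition \ref{Prop5.7} with the filtered version of the pull-up-push-down construction of Subsection \ref{Sub4.3}, exactly as in the abstract argument of Section \ref{Sec2}. First I would set up the filtered cobordism map: since $W$ is spin, $c_1(\det\mathfrak{s}_W)^2 = 0$ so each $\mathfrak{s}_i$ is torsion and Theorem \ref{Th5.4} applies, making $\rho^\circ_q(Y_i, m_{\mathfrak{s}_i}, g_i)$ genuinely an invariant of $(Y_i, g_i)$. For $\eta_1, \eta_2$ small and generic on $Y_1, Y_2$, elongate $W$ to $W^*$ with a metric $g_{W^*}$ that restricts to $g_i \times 1$ on the ends, and take $\mu$ on $W^*$ interpolating $\eta_1, \eta_2$; then the moduli spaces $\mathscr{M}(W^*; \alpha_1, \alpha_2)$ define the $\mathbf{F}$-linear cobordism map $CM^\circ(W)\colon CM^\circ_q(Y_1, m_{\mathfrak{s}_1}, g_1, \eta_1) \to CM^\circ_{q-d(W)}(Y_2, m_{\mathfrak{s}_2}, g_2, \eta_2)$ of Subsection \ref{Sub4.3} (valid since $b^+(W)\le 1$ and $k \le -1$, and for $q$ in the allowable range).

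Second, I would prove the filtration-shift estimate, which is the analogue of Lemma \ref{Lem1.4}: if $\langle CM^\circ(W)\alpha_1, \alpha_2\rangle \ne 0$, then $\mathscr{M}(W^*;\alpha_1,\alpha_2)$ is nonempty, so there is a solution $(B,\Phi)$ of \eqref{eq:4.5} asymptotic to $(A_-, \psi_-) \in \alpha_1$ and $(A_+,\psi_+)\in\alpha_2$. Proposition \ref{Prop5.7} then gives
$$\mathcal{L}_{\eta_2}(A_+, \psi_+) \le \mathcal{L}_{\eta_1}(A_-, \psi_-) + \frac{1}{32}\int_W s^2 + C + O(1)\epsilon,$$
where $\epsilon$ bounds $\|\eta_1\|, \|\eta_2\|$. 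Writing $E = \frac{1}{32}\int_W s^2 + C$, this says $\ell_{\eta_2}(CM^\circ(W)\mathbf{x}) \le \ell_{\eta_1}(\mathbf{x}) + E + O(1)\epsilon$ for any chain $\mathbf{x}$; in other words $CM^\circ(W)$ maps $CM^{\circ t}_q(Y_1, m_{\mathfrak{s}_1}, g_1, \eta_1)$ into $CM^{\circ, t+E+O(1)\epsilon}_{q-d(W)}(Y_2, m_{\mathfrak{s}_2}, g_2, \eta_2)$. (For the $\circ = \sim$ version one also needs to check the reducible piece: $\delta_1(W), \delta_2(W)$ and the identity block involving $\Theta_i$ respect the filtration too, which follows the same way since $\mathcal{L}_{\eta_i}(\Theta_i)$ enters only through these same energy bounds, and the value $\mathcal{L}_{\eta_i}(\Theta_i)\to m_{\mathfrak{s}_i}\cdot(\text{const})$ is controlled as $\epsilon\to 0$.)

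Third, I would run the commutative-diagram argument of Theorem \ref{Th1.5}/Theorem \ref{Th5.4}. Because $W$ is an injective cobordism, $CM^\circ(W)_\ast\colon HM^\circ_q(Y_1, m_{\mathfrak{s}_1}) \to HM^\circ_{q-d(W)}(Y_2, m_{\mathfrak{s}_2})$ is injective for $q$ in the allowable range. Fix $t$ with $\iota^t_\ast\colon HM^{\circ t}_q(Y_1, m_{\mathfrak{s}_1}, g_1, \eta_1) \to HM^\circ_q(Y_1, m_{\mathfrak{s}_1}, g_1, \eta_1)$ nontrivial, and pick $0 \ne a \in \operatorname{Im}\iota^t_\ast$. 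Then $CM^\circ(W)_\ast(a) \ne 0$ by injectivity, and by the filtered commutative square
$$\begin{array}{ccc}
HM^{\circ t}_q(Y_1, m_{\mathfrak{s}_1}, g_1, \eta_1) & \longrightarrow & HM^{\circ, t+E+O(1)\epsilon}_{q-d(W)}(Y_2, m_{\mathfrak{s}_2}, g_2, \eta_2) \\
\downarrow & & \downarrow \\
HM^\circ_q(Y_1, m_{\mathfrak{s}_1}, g_1, \eta_1) & \longrightarrow & HM^\circ_{q-d(W)}(Y_2, m_{\mathfrak{s}_2}, g_2, \eta_2)
\end{array}$$
the element $CM^\circ(W)_\ast(a)$ lies in the image of $\iota^{t+E+O(1)\epsilon}_\ast$ for $Y_2$, so $\rho^\circ_{q-d(W)}(Y_2, m_{\mathfrak{s}_2}, g_2, \eta_2) \le t + E + O(1)\epsilon$. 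Taking the infimum over such $t$ gives $\rho^\circ_{q-d(W)}(Y_2, m_{\mathfrak{s}_2}, g_2, \eta_2) \le \rho^\circ_q(Y_1, m_{\mathfrak{s}_1}, g_1, \eta_1) + E + O(1)\epsilon$, and letting $\epsilon = \|\eta_1\| + \|\eta_2\| \to 0$ and invoking Definition \ref{Def5.2} together with Theorem \ref{Th5.4} yields
$$\rho^\circ_{q-d(W)}(Y_2, m_{\mathfrak{s}_2}, g_2) \le \rho^\circ_q(Y_1, m_{\mathfrak{s}_1}, g_1) + \frac{1}{32}\int_W s^2 + C,$$
which rearranges to the claimed inequality. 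The last sentence of the theorem is immediate: if $\rho^\circ_{q-d(W)}(Y_2, m_{\mathfrak{s}_2}, g_2) > \rho^\circ_q(Y_1, m_{\mathfrak{s}_1}, g_1) + C$, then for any metric $g$ on $W$ (restricting to the given $g_i$) the right side above is $> C$, forcing $\int_W s^2 > 0$, so $s$ cannot be everywhere positive — in fact cannot be everywhere nonnegative and not identically zero... more precisely $s$ cannot be $\ge 0$ with $s \not\equiv 0$, hence in particular $W$ admits no psc metric compatible with the $g_i$; since the bound holds for all such $g$, $W$ has no psc metric at all.

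The main obstacle I anticipate is the bookkeeping at the reducible locus in the $\circ = \sim$ case and, relatedly, pinning down the constant $C$. The estimate from Proposition \ref{Prop5.7} as stated controls $\mathcal{L}_{\eta_1}(A_-,\psi_-) - \mathcal{L}_{\eta_2}(A_+,\psi_+)$ only up to $C(B_0, \mathfrak{s}_W)$ and an $O(1)\epsilon$ error that vanishes in the limit; one must check that when $\Theta_1$ or $\Theta_2$ appears as an asymptotic limit (for the $\delta_1(W), \delta_2(W)$, and $u(W)$ blocks of $\widetilde{CM}(W)$), the same inequality holds with the same $C$, i.e. that $\mathcal{L}_{\eta_i}(\Theta_i)$ behaves compatibly under the limit $\epsilon\to 0$ — this uses that $\Theta_i$ is the unique reducible and that the index normalization $I(g_i, \eta_i) = m_{\mathfrak{s}_i}$ controls its Chern-Simons-Dirac value. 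The energy and transversality inputs (Lemmas \ref{Lem4.3}, \ref{Lem4.4}, and the construction of $CM^\circ(W)$ from Subsection \ref{Sub4.3}) are already in place, so apart from this reducible-locus check the argument is a direct transcription of the finite-dimensional template in Section \ref{Sec2}.
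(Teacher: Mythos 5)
Your proof of the stated inequality is correct and follows the same route as the paper's: you use Proposition \ref{Prop5.7} to get the filtration shift $\tfrac{1}{32}\int_W s^2 + C + O(1)\epsilon$ for the cobordism map at the $\R$-filtered level, then run the commutative-diagram argument and invoke injectivity to pass the spectral invariant bound across the cobordism, sending $\epsilon \to 0$ at the end. Your explicit handling of the $O(1)\epsilon$ error and your flagged concern about the reducible block of $\widetilde{CM}(W)$ are careful amplifications of the same argument (the paper's proof leaves those points implicit, so your caution there is warranted rather than a divergence).

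The final paragraph of your proposal, however, contains an error. You claim: ``forcing $\int_W s^2 > 0$, so $s$ cannot be everywhere positive.'' This is backwards: $\int_W s^2 > 0$ is \emph{consistent} with $s > 0$ everywhere; indeed any psc metric trivially satisfies $\int_W s^2 > 0$. Nothing in the inequality of Theorem \ref{Th5.9} alone obstructs psc. In the paper this claim appears separately as Theorem \ref{Th5.10} and Corollary \ref{Cor5.11}, and its proof uses a genuinely different estimate: when $s > 0$, the completion-of-square expression for $\mathscr{E}^{an}_{W^*,\mu}$ in \eqref{eq:5.9} yields $\mathscr{E}^{an}_{W^*,\mu}(B,\Phi) \geq \tfrac{1}{4}\lVert s/2\rVert^2_{L^2} - \lVert s/4\rVert^2_{L^2} = 0$, so the filtration shift is just $C$ with no $\int s^2$ term at all; the obstruction then comes from comparing this shift with a hypothesized gap $\rho^{\circ}_{q-d(W)}(Y_2) > \rho^{\circ}_q(Y_1) + C$. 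Since the psc assertion is not part of Theorem \ref{Th5.9} as stated, this misstep does not affect the proof of the inequality itself, but you should be aware that the obstruction needs the sharpened estimate and does not follow from the $L^2$-bound of Theorem \ref{Th5.9} directly.
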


\begin{proof}
    Let $\alpha_- \in \mathcal{R}(Y_1,\mathfrak{s}_1,g_1,\eta_1)$ and $\alpha_+ \in \mathcal{R}(Y_2, \mathfrak{s}_2, g_2, \eta_2)$, where $\eta_i$ is small enough and $I(g_i, \eta_i) = m_{\mathfrak{s}_i}$. By Proposition \ref{Prop5.7}, if $\mathcal{L}_{\eta_1}(\alpha_-) \leq t$, then
    $$\mathcal{L}_{\eta_2}(\alpha_+) \leq t + \dfrac{1}{32}\int_{W}s^2 + C.$$
    As a result, at the filtered level, the cobordism $W$ induces a homomorpshim $CM^{\circ t}(W)_\ast : HM^{\circ t}_q(Y_1, m_{\mathfrak{s}_1}; \mathbf{F}) \to H^{\circ t+\int_W s^2/32 + C}_{q - d(W)}(Y_2, m_{\mathfrak{s}_2}; \mathbf{F})$. This map fits into the following commutative diagram
    \[
    \begin{tikzcd}
        HM^{\circ}_q(Y_1, m_{\mathfrak{s}_1}; \mathbf{F}) \arrow[r] & HM^{\circ}_{q-d(W)}(Y_2, m_{\mathfrak{s}_2};\mathbf{F}) \\
        HM^{\circ t}_q(Y, m_{\mathfrak{s}_2}; \mathbf{F}) \arrow[u] \arrow[r] & HM^{\circ t + \int_{W}s^2/32 + C}_{q-d(W)}(Y_2, m_{\mathfrak{s}_2};\mathbf{F}) \arrow[u]
    \end{tikzcd}
    \]
    Recall that by the hypothesis, the top map is injective. Thus, if the left vertical map is non-trivial, then the right vertical map is also non-trivial. As a result, 
    $$\rho^{\circ}_{q-d(W)}(Y_2, m_{\mathfrak{s}_2}, g_2) \leq \rho^{\circ}_q(Y_1, m_{\mathfrak{s}_1}, g_1) + C + \dfrac{1}{32}\int_{W} s^2.$$
    After re-arranging the above inequality, we arrive at the claim of the theorem.
\end{proof}

From Theorem \ref{Th5.9}, we see that the spectral invariants give us a lower bound for the scalar curvature functional on a certain type of $4$-manifold with boundary. Under an appropriate assumption of the geometry of the cobordism, a refinement of the estimate \eqref{eq:5.9} allows us to say more. 

\begin{Th}\label{Th5.10}
    Let $W: Y_1 \to Y_2$ be a cobordism with the same conditions as in Theorem \ref{Th5.9}. If $W$ is an injective cobordism and the metric $g$ on $W$ has positive scalar curvature, then for $q \leq 2m_{\mathfrak{s}_1}-1$ or $q \geq 2m_{\mathfrak{s}_2}$ we have
    $$\rho^{\circ}_{q-d(W)}(Y_2, m_{\mathfrak{s}_2},g_2) \leq \rho^\circ_q(Y_1, m_{\mathfrak{s}_1}, g_1) +C.$$
    Here, the constant $C$ depends on the canonical trivial $spin^c$ connection $B_0$ on $W$ and the spin structure $\mathfrak{s}_W$.
\end{Th}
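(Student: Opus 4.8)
The plan is to reproduce the argument of Proposition~\ref{Prop5.7} and Theorem~\ref{Th5.9} almost verbatim, the only new input being a sharpening of the energy estimate \eqref{eq:5.9} that uses positivity of $s$ on the compact piece $W$ to remove the $\tfrac1{32}\int_W s^2$ term from the final inequality. First I would recall, from Lemma~\ref{Lem5.6} and the expansion leading to \eqref{eq:5.8}, that for a solution $(B,\Phi)$ of \eqref{eq:4.5} on $W^*$ with asymptotic limits $\alpha_-\in\mathcal R(Y_1,\mathfrak s_1,g_1,\eta_1)$ and $\alpha_+\in\mathcal R(Y_2,\mathfrak s_2,g_2,\eta_2)$ one has, after letting $T\to\infty$,
\[
\tfrac12\,\mathscr E^{an}_{W^*,\mu}(B,\Phi)=\mathcal L_{\eta_1}(\alpha_-)-\mathcal L_{\eta_2}(\alpha_+)+C+O(1)\epsilon,\qquad \epsilon:=\max(\|\eta_1\|,\|\eta_2\|),
\]
with $C=C(B_0,\mathfrak s_W)$, the $O(1)\epsilon$ coming from the perturbation terms bounded via Lemmas~\ref{Lem4.3} and~\ref{Lem4.4}. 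Next I would bound $\mathscr E^{an}_{W^*,\mu}$ from below more carefully than in \eqref{eq:5.9}: using the completion of square $\tfrac14\big\||\Phi|^2+s/2\big\|^2_{L^2}-\|s/4\|^2_{L^2}=\tfrac14\|\Phi\|^4_{L^4}+\tfrac14\int|\Phi|^2 s$, the analytic energy becomes
\[
\mathscr E^{an}_{W^*,\mu}(B,\Phi)=\lim_{T\to\infty}\Big(\tfrac14\|F_B-\mu\|^2_{L^2}+\|\nabla_B\Phi\|^2_{L^2}+\tfrac14\|\Phi\|^4_{L^4}+\tfrac14\textstyle\int|\Phi|^2 s\Big),
\]
all integrals over $W_T$, and on $W$ every summand is $\ge 0$ because $s>0$ there.

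To handle the cylindrical ends I would argue that the analytic energy density there converges exponentially to the density of the translation-invariant critical solution, which has zero $Y_i$-average: a slab $Y_i\times[t_1,t_2]$ of such a solution has vanishing topological energy, since $H=0$ and the Dirac boundary terms in \eqref{eq:4.8} cancel and vanish on critical points, whence its analytic energy also vanishes. Consequently the ends contribute a bounded amount $\ge -C'$ with $C'=C'(Y_i,g_i,\mathfrak s_i,\eta_i)$ and $C'\to C''(Y_i,g_i,\mathfrak s_i)$ as $\|\eta_i\|\to 0$, so $\mathscr E^{an}_{W^*,\mu}(B,\Phi)\ge -C'$ — this replaces the bound $\ge-\tfrac1{16}\int_W s^2$ of \eqref{eq:5.9}. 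Combining with the energy identity above and absorbing $C'$ into the constant gives, for $\|\eta_i\|$ small,
\[
\mathcal L_{\eta_2}(\alpha_+)\le\mathcal L_{\eta_1}(\alpha_-)+C+O(1)\epsilon .
\]

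The remainder would then be identical to the proof of Theorem~\ref{Th5.9}: for each $t\in\mathbf R$ the inequality above shows the cobordism map refines to $CM^{\circ t}(W)_\ast:HM^{\circ t}_q(Y_1,m_{\mathfrak s_1};\mathbf F)\to HM^{\circ\,t+C+O(1)\epsilon}_{q-d(W)}(Y_2,m_{\mathfrak s_2};\mathbf F)$, fitting into the commutative square
\[
\begin{tikzcd}
HM^{\circ}_q(Y_1,m_{\mathfrak s_1};\mathbf F)\arrow[r] & HM^{\circ}_{q-d(W)}(Y_2,m_{\mathfrak s_2};\mathbf F)\\
HM^{\circ t}_q(Y_1,m_{\mathfrak s_1};\mathbf F)\arrow[u]\arrow[r] & HM^{\circ\,t+C+O(1)\epsilon}_{q-d(W)}(Y_2,m_{\mathfrak s_2};\mathbf F)\arrow[u]
\end{tikzcd}
\]
Since $W$ is an injective cobordism the top map is injective, so nontriviality of the left vertical map forces nontriviality of the right one; hence $\rho^{\circ}_{q-d(W)}(Y_2,m_{\mathfrak s_2},g_2)\le\rho^{\circ}_q(Y_1,m_{\mathfrak s_1},g_1)+C+O(1)\epsilon$ for all admissible small $\eta_i$, and letting $\|\eta_i\|\to 0$ gives the claim, with $C$ depending only on $B_0$ and $\mathfrak s_W$ (and the induced boundary data).

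The part I expect to be the main obstacle is the refined energy bound: making rigorous that the noncompact cylindrical ends contribute only a bounded amount to $\mathscr E^{an}_{W^*,\mu}$, uniformly over solutions, despite the product scalar curvature $s_{g_i}$ on the ends not being nonnegative in general. This rests on the exponential-decay and finite-energy structure of Seiberg--Witten trajectories on cylindrical ends (the compactness and gluing theory of \cite{MR2465077,MR2388043}) together with the vanishing of the analytic energy density per unit length of a translation-invariant critical solution — the same analytic input already implicit in the proof of Proposition~\ref{Prop5.7}. Everything else is a direct transcription of the proofs of Proposition~\ref{Prop5.7} and Theorem~\ref{Th5.9}.
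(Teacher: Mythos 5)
Your overall plan is correct and mirrors the paper's: sharpen the lower bound on $\mathscr{E}^{an}_{W^*,\mu}$ using the positive scalar curvature hypothesis, then run the filtered-homology commutative-diagram argument from Theorem \ref{Th5.9} unchanged. The algebraic identity you invoke, $\tfrac14\||\Phi|^2+s/2\|^2_{L^2}-\|s/4\|^2_{L^2} = \tfrac14\|\Phi\|^4_{L^4}+\tfrac14\int|\Phi|^2 s$, is equivalent to the paper's pointwise observation $\tfrac14(|\Phi|^2+s/2)^2\geq(s/4)^2$ when $s>0$, and the nonnegativity it yields is precisely the key estimate. So the core idea is the same.

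However, you spend most of your effort on, and identify as the main obstacle, a difficulty that does not actually arise. You treat the cylindrical ends of $W^*$ separately because you fear that $s_{g_i}$ on $Y_i$ might fail to be positive. But the metric $g$ on $W$ is taken to be a product $g_i\times dt^2$ in a collar of $\partial W$ (this is built into the setup of Subsection \ref{Sub4.3}, where $g_{W^*}$ is required to restrict to $g_i\times 1$ on the cylindrical ends, forcing $g|_W$ to be a product near the boundary for $g_{W^*}$ to be smooth), and the scalar curvature of a product metric equals $s_{g_i}$. Hence the hypothesis $s>0$ on $W$ already forces $s_{g_i}>0$, so $s>0$ holds on all of $W^*$. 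The pointwise estimate then gives $\mathscr{E}^{an}_{W_T,\mu}\geq 0$ for every $T$ with no end-by-end decomposition, which is exactly the paper's one-line argument. Your end analysis is therefore not needed; worse, it contains a slip: the topological (and hence analytical) energy of a slab $Y_i\times[t_1,t_2]$ of a translation-invariant critical solution is $-\tfrac12\int_{\text{slab}}\langle\Phi,\rho_W(\mu^+)\Phi\rangle$, which is $O(\|\eta_i\|)$ per unit length and not zero, so the ``vanishing slab energy'' claim you rely on is only an approximation. The slip is moot once you observe $s>0$ on all of $W^*$, but it means your alternative route, if pursued, would need a more careful $T$-vs.-$\eta$ bookkeeping than what you sketched.
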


\begin{proof}
    If the scalar curvature $s$ of $g$ is always positive, then \eqref{eq:5.9} can be modified in the simplest way as follows
    \begin{align*}
        \mathscr{E}^{an}_{W^*, \mu}(B,\Phi) &= \lim_{T \to \infty} \left(\dfrac{1}{4}\norm{F_B - \mu}^2_{L^2}+\norm{\nabla_B \Phi}^2_{L^2}+\dfrac{1}{4}\norm{|\Phi|^2+s/2}^2_{L^2}-\norm{s/4}^2_{L^2}\right)  \\
        & \geq \dfrac{1}{4}\norm{s/2}^2_{L^2}- \norm{s/4}^2_{L^2} = 0.
    \end{align*}
    As a result, if $\alpha_- \in \mathcal{R}(Y_1, \mathfrak{s}_1, g_1, \eta_1)$ and $\alpha_+ \in \mathcal{R}(Y_2, \mathfrak{s}_2, g_2, \eta_2)$ with $\norm{\eta_i}< \epsilon$ and there is a gauge equivalence class of a solution $(B,\Phi)$ of \eqref{eq:4.5} that is asymptotic to $\alpha_\pm$ at the ends, then 
    $$0 \leq \mathcal{L}_{\eta_1}(\alpha_-) - \mathcal{L}_{\eta_2}(\alpha_+) + C + O(1)\epsilon.$$
    Hence if $\eta_i$ is small enough, we must have $\mathcal{L}_{\eta_2}(\alpha_+) \leq \mathcal{L}_{\eta_1}(\alpha_-) + C$. From this, we see that the cobordism $W$ induces the following homomorphism $CM^{\circ t}(W)_\ast : HM^{\circ t}_{q}(Y_1, m_{\mathfrak{s}_1}; \mathbf{F}) \to H^{\circ t + C}_{q-d(W)}(Y_2, m_{\mathfrak{s}_2}; \mathbf{F})$. Now, using a similar commutative diagram as in the proof of Theorem \ref{Th5.9} and remembering that $W$ is an injective cobordism, we arrive at the result as claimed.
\end{proof}

Theorem \ref{Th5.10} gives us an obstruction of the existence of a positive scalar curvature on a cobordism in terms of the spectral invariants of the boundary components. Specifically, from the above theorem, we immediately have the following corollary.

\begin{Cor}\label{Cor5.11}
    Let $W : Y_1 \to Y_2$ be a cobordism with the same conditions as in Theorem \ref{Th5.9}. If $W$ is an injective cobordism and for $q \leq 2m_{\mathfrak{s}_1}-1$ or $q \geq 2m_{\mathfrak{s}_2}$ we have $\rho^{\circ}_{q-d(W)}(Y_2,m_{\mathfrak{s}_2}, g_2) > \rho^{\circ}_{q}(Y_1, m_{\mathfrak{s}_1}, g_1) + C$, then $W$ can never have a metric with positive scalar curvature.
\end{Cor}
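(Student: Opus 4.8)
The plan is simply to take the contrapositive of Theorem \ref{Th5.10}. I would suppose, toward a contradiction, that $W$ carries a metric $g$ with everywhere positive scalar curvature, and write $g_i := g|_{Y_i}$ for the induced metrics on the two boundary components. All the hypotheses needed to invoke Theorem \ref{Th5.10} are already in force by assumption: $W$ is a spin cobordism between rational homology three-spheres, $W$ is an injective cobordism (so in particular $b^+(W)\le 1$ and $k\le -1$), $\mathfrak{s}_W$ restricts to spin structures $\mathfrak{s}_i$ on $Y_i$, and $q$ lies in the allowable range $q\le 2m_{\mathfrak{s}_1}-1$ or $q\ge 2m_{\mathfrak{s}_2}$. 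Theorem \ref{Th5.10} then yields
$$\rho^{\circ}_{q-d(W)}(Y_2, m_{\mathfrak{s}_2}, g_2) \;\le\; \rho^{\circ}_{q}(Y_1, m_{\mathfrak{s}_1}, g_1) + C,$$
with $C$ the constant attached to the canonical trivial $spin^c$ connection $B_0$ on $W$ and to $\mathfrak{s}_W$ — exactly the same constant as in the statement of the corollary. This contradicts the standing hypothesis $\rho^{\circ}_{q-d(W)}(Y_2, m_{\mathfrak{s}_2}, g_2) > \rho^{\circ}_{q}(Y_1, m_{\mathfrak{s}_1}, g_1) + C$. Hence no such $g$ exists, which is the assertion.

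There is one point of bookkeeping worth flagging rather than a genuine obstacle: the spectral invariants $\rho^{\circ}_{\bullet}(Y_i, m_{\mathfrak{s}_i}, g_i)$ depend on the boundary metric $g_i$ (Theorem \ref{Th5.4}), so what the argument literally rules out is a psc metric on $W$ whose boundary restriction equals the fixed pair $(g_1,g_2)$ appearing in the hypothesis. To read off the cleaner statement ``$W$ can never have a metric with positive scalar curvature'' one should either interpret the hypothesis as imposed for all admissible boundary metrics, or — as in the hyperbolic situation discussed after Theorem \ref{mainTh5}, where Mostow rigidity pins down $g_i$ and $\rho^{\circ}$ becomes a topological invariant — observe that the boundary metric is then not a free parameter. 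Beyond this remark the proof is a one-line deduction from Theorem \ref{Th5.10}, so I do not anticipate any substantive difficulty: all the analytic content (the energy identity of Lemma \ref{Lem5.6}, the curvature estimate of Proposition \ref{Prop5.7}, and the refined bound \eqref{eq:5.9}) has already been carried out there.
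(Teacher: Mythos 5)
Your proof is correct and follows exactly the paper's (implicit) argument: the corollary is stated immediately after Theorem \ref{Th5.10} with no proof, being the contrapositive of that theorem. Your bookkeeping remark about the dependence of $\rho^{\circ}$ on the induced boundary metrics $g_i$ is a fair and worthwhile observation that the paper glosses over; it is the correct reading of what the corollary actually establishes.
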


\begin{Rem}
    The definition of injective cobordism seems algebraic for now, in the sense that for us to arrive at the various statements of Theorem \ref{Th5.9} and Theorem \ref{Th5.10}, we need the top map in the commutative diagram that appears in the proof of theorems to be injective. In a later section, we will show this definition is entirely geometric and natural. In particular, we prove that ribbon rational homology cobordism is an example of such an injective cobordism.
\end{Rem}

\begin{Rem}
    Note that the statements of Theorem \ref{Th5.9}, Theorem \ref{Th5.10}, and Corollary \ref{Cor5.11} are for both spectral invariants $\rho_q$ and $\lambda_q$ (cf. Definition \ref{Def5.2}). In the next subsection, we explicitly provide a relationship between these spectral invariants under some assumptions about the $u$-map of the $\mathcal{S}$-complex of monopoles.
\end{Rem}

\subsection{Comparison of the spectral invariants associated with $HM$ and $\widetilde{HM}$}\label{Sub5.3}

We recall the algebraic set-up of the category $\mathscr{S}$ of $\mathcal{S}$-complexes (cf. Subsection \ref{Sub3.4}). Let $\mathbf{F}$ denote the field of characteristic 2. Consider the following $\mathcal{S}$-complex $\widetilde{C}_\ast = C_\ast \oplus \mathbf{F} \oplus C_{\ast-1}$ over $\mathbf{F}$, where $(C,d)$ is a chain complex that is $\mathbf{Z}$-graded and the degree of the differential $d$ is 1. Let $\widetilde{d}$ be the differential of $\widetilde{C}$ of degree $1$, which in matrix form written as the following
$$\widetilde{d} = \begin{bmatrix} d & 0 & 0 \\ \delta_1 & 0 & 0 \\ u & \delta_2 & d \end{bmatrix}.$$
Here $\delta_1 : C_\ast \to \mathbf{F}$, $\delta_2: \mathbf{F} \to C_\ast$, and $u : C_\ast \to C_{\ast-2}$ are maps such that $\widetilde{d}^2 = 0$. There is a finite filtration of $\widetilde{C}$ that is given as follows
$$\mathscr{F}^0 \widetilde{C}_\ast = 0 \subset \mathscr{F}^1 \widetilde{C}_\ast = C_{\ast-1} \subset \mathscr{F}^2\widetilde{C}_\ast= \mathbf{F} \oplus C_{\ast -1} \subset \mathscr{F}^3\widetilde{C}_\ast = \widetilde{C}_\ast.$$
The differential for each sub-complex is induced by $\widetilde{d}$, which respectively is
$$\mathscr{F}^0 \widetilde{d} = 0, \quad  \mathscr{F}^1 \widetilde{d} = d, \quad \mathscr{F}^2 \widetilde{d} = \begin{bmatrix} 0 & 0 \\ \delta_2 & d \end{bmatrix}, \quad \mathscr{F}^3 \widetilde{d} = \widetilde{d}.$$

Since the above filtration is of finite length, the associated spectral sequence $\mathscr{E}^{r}_{p,q}$ has a limit page. In what follows, we explicitly write down the limit page $\mathscr{E}^\infty_{p,q}$ of $\mathscr{E}^r_{p,q}$. We start with the zeroth page $\mathscr{E}^0_{p,q} = \mathscr{F}^p \widetilde{C}_{p+q}/ \mathscr{F}^{p-1} \widetilde{C}_{p+q}$. Since the filtration only goes from $0, \cdots, 3$, we only have to consider 
$$\mathscr{E}^0_{1,q} = \mathscr{F}^{1} \widetilde{C}_{1+q} = C_{q}, \quad \mathscr{E}^0_{2,q} = \mathscr{F}^2 \widetilde{C}_{2+q}/\mathscr{F}^1 \widetilde{C}_{2+q} = \mathbf{F},$$
$$\mathscr{E}^0_{3,q} = \mathscr{F}^3 \widetilde{C}_{3+q}/\mathscr{F}^2 \widetilde{C}_{3+q} = C_{q+3}.$$
It is not difficult to see that the differential of the complexes in the first, second, and third columns of the zeroth page is $d, 0, d$, respectively. Next, consider the first page $\mathscr{E}^1_{p,q} = H(\mathscr{E}^0_{p,q})$. Similarly, we only have to write down $\mathscr{E}^1_{p,q}$ for $p = 1, 2, 3$:
$$\mathscr{E}^1_{1,q} = H_q(C), \quad \mathscr{E}^1_{2,q} = \mathbf{F}, \quad \mathscr{E}^1_{3,q} = H_{q+3}(C).$$
Each $q^{th}$ row of the first page is a complex that is given by
\[
\begin{tikzcd}
H_q(C)  & \arrow[l, "\delta_{2\ast}"] \mathbf{F}  & \arrow[l, "\delta_{1\ast}"] H_{q+3}(C),
\end{tikzcd}
\]
where $\delta_{i\ast}$ is the induced map from the chain maps $\delta_i$ above. For the second page $\mathscr{E}^2_{p,q} = H(\mathscr{E}^1_{p,q})$, once again, we only consider those with $p = 1, 2, 3$:
$$\mathscr{E}^2_{1,q} = \dfrac{H_q(C)}{im( \delta_{2\ast}: \mathbf{F} \to H_q(C))}, \quad \mathscr{E}^2_{2,q} = \dfrac{ ker( \delta_{2\ast}: \mathbf{F} \to H_q(C))}{ im (\delta_{1\ast}: H_{q+3}(C) \to \mathbf{F})},$$
$$\mathscr{E}^2_{3,q} = ker (\delta_{1\ast}: H_{q+3}(C) \to \mathbf{F}).$$
The differential $\mathscr{E}^2_{p,q} \to \mathscr{E}^2_{p-2, q+1}$ of the second page is only non-trivial for $p=3$. As a result, the complexes of the second page are given by
\[
\begin{tikzcd}
    \mathscr{E}^2_{3,q} = ker (\delta_{1\ast}: H_{q+3}(C) \to \mathbf{F}) \arrow[r, "u_\ast"] & \dfrac{H_{q+1}(C)}{im( \delta_{2\ast}: \mathbf{F} \to H_{q+1}(C))} = \mathscr{E}^2_{1, q+1},
\end{tikzcd}
\]
where $u_\ast$ is the induced map from the map $u : C_\ast \to C_{\ast - 2}$ above. Finally, we look at the third page $\mathscr{E}^3_{p,q} = H(\mathscr{E}^2_{p,q})$. The third page is supported for $p= 1, 2, 3$. With that in mind, we have
$$\mathscr{E}^3_{1,q} = \dfrac{\mathscr{E}^2_{1,q}}{im\, u_\ast|_{ker\, \delta_{1\ast}}}, \quad \mathscr{E}^3_{2,q} = \mathscr{E}^2_{2,q} = \dfrac{ker \, \delta_{2\ast}}{\im \, \delta_{1\ast}}, \quad \mathscr{E}^3_{3,q} = ker\, u_{\ast}|_{ker\, \delta_{1\ast}} .$$

\begin{Lemma}\label{HomAlgelem1}
    For every $r \geq 3$, the spectral sequence $\mathscr{E}^r_{p,q}$ abuts to $\mathscr{E}^\infty_{p,q}$. As a result, $\mathscr{E}^\infty_{p,q} = \mathscr{E}^3_{p,q}$. Thus, $\mathscr{E}^2_{p,q} \Rightarrow H(\widetilde{C})$.
\end{Lemma}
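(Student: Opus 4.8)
The plan is to exploit the fact that the filtration $0 = \mathscr{F}^0\widetilde{C} \subset \mathscr{F}^1\widetilde{C} \subset \mathscr{F}^2\widetilde{C} \subset \mathscr{F}^3\widetilde{C} = \widetilde{C}$ has length three, so that in each total degree it is a bounded filtration by subcomplexes and the associated spectral sequence is concentrated in the three columns $p \in \{1,2,3\}$, vanishing for all other values of $p$. First I would record the bidegree of the page-$r$ differential: $d^r$ decreases the filtration degree $p$ by $r$ and raises the complementary degree $q$ by $r-1$, which is the convention already visible in the computation of the page-$2$ differential $\mathscr{E}^2_{3,q} \to \mathscr{E}^2_{1,q+1}$ displayed just above the statement.

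Next, for $r \geq 3$ and any $p \in \{1,2,3\}$, the target of $d^r$ on $\mathscr{E}^r_{p,q}$ lies in column $p - r \leq 0$, hence is zero; likewise the only differential that could land in $\mathscr{E}^r_{p,q}$ originates in column $p + r \geq 4$, hence is also zero. Therefore $d^r = 0$ for all $r \geq 3$, which forces $\mathscr{E}^{r+1}_{p,q} = \mathscr{E}^r_{p,q}$ for every $r \geq 3$; combining this with the identification of the page-$3$ terms $\mathscr{E}^3_{p,q}$ computed just before the statement gives $\mathscr{E}^\infty_{p,q} = \mathscr{E}^3_{p,q}$, i.e.\ the spectral sequence collapses at the third page.

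Finally, for the convergence assertion $\mathscr{E}^2_{p,q} \Rightarrow H(\widetilde{C})$ I would invoke the standard convergence theorem for the spectral sequence of a bounded filtration of a chain complex: since each $\mathscr{F}^p\widetilde{C}$ is a subcomplex (this is precisely the content of the formulas for $\mathscr{F}^p\widetilde{d}$ recorded above) and the filtration in each degree is finite, the spectral sequence converges, with $\mathscr{E}^\infty_{p,q}$ canonically isomorphic to the associated graded $\mathrm{gr}_p H_{p+q}(\widetilde{C})$ of the induced filtration on $H(\widetilde{C})$. Together with the previous paragraph this shows that the spectral sequence already converges from page $2$. I do not anticipate a genuine obstacle here; the only care needed is the indexing bookkeeping --- matching the homological grading convention used in this subsection with the bidegree of $d^r$, so that the vanishing range ``column $\leq 0$ or $\geq 4$'' is applied to the correct slots --- and noting explicitly that $\widetilde{C}$ itself need not be bounded, since only the filtration needs to be, which it is.
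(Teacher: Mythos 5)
Your proposal is correct and follows essentially the same route as the paper: both arguments hinge on the observation that every page is supported in the three columns $p\in\{1,2,3\}$, so every $d^r$ for $r\geq 3$ has source or target outside that range and hence vanishes, forcing collapse at $\mathscr{E}^3$. You spell out the bidegree bookkeeping and the convergence theorem for bounded filtrations a bit more explicitly, but this is the paper's argument.
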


\begin{proof}
    Since a differential of a complex on the third page is given by $\mathscr{E}^3_{p,q} \to \mathscr{E}^3_{p-3,q+2}$ and any given page is supported in the first three columns, all differentials are trivial. Subsequently, for any $r \geq 3$, $\mathscr{E}^r_{p,q} = H(\mathscr{E}^3_{p,q}) = \mathscr{E}^3_{p,q}$. This proves the claim.
\end{proof}

Denote $\mathscr{F}^i H(\widetilde{C})$ by the induced finite filtration on the homology of the $\mathcal{S}$-complex $(\widetilde{C}, \widetilde{d})$. Since $\mathscr{E}^3_{p,q} = \mathscr{E}^\infty_{p,q} \cong \mathscr{F}^p H(\widetilde{C}_{p+q})/\mathscr{F}^{p-1}H(\widetilde{C}_{p+q})$ by Lemma \ref{HomAlgelem1}, we have the following (non-canonical) isomorphism
\begin{align}\label{eq:isomoofhomologyofscomplex}
    H(\widetilde{C}_{3+q}) &\cong \dfrac{\mathscr{F}^3 H(\widetilde{C}_{3+q})}{\mathscr{F}^2 H(\widetilde{C}_{3+q})} \oplus \dfrac{\mathscr{F}^2 H(\widetilde{C}_{3+q})}{\mathscr{F}^1 H(\widetilde{C}_{3+q})} \oplus \mathscr{F}^1 H(\widetilde{C}_{3+q}) \nonumber \\
    & \cong \mathscr{E}^3_{3,q} \oplus \mathscr{E}^3_{2,q+1}\oplus \mathscr{E}^3_{1,q+2} \nonumber \\
    & \cong ker\, u_{\ast}|_{ker\, \delta_{1\ast}} \oplus \dfrac{ker \, \delta_{2\ast}}{\im \, \delta_{1\ast}} \oplus \dfrac{\mathscr{E}^2_{1,q+2}}{im\, u_\ast|_{ker\, \delta_{1\ast}}}
\end{align}

Suppose we have a non-archimedean norm $\ell$ on $(C,d)$ that induces an $\R$-filtration $\widetilde{C}^t$ on $(\widetilde{C},\widetilde{d})$ where $\widetilde{d}$ respects this $\R$-filtration. The discussion above also applies to each subcomplex $\widetilde{C}^t$ of the $\mathcal{S}$-complex $\widetilde{C}$. As a result, after re-indexing, we obtain
\begin{align}\label{eq:filteredisomofhomologyofscomplex}
    H^t_q(\widetilde{C}) \cong ker\, u^t_{\ast}|_{ker\, \delta^t_{1\ast}} \oplus \dfrac{ker \, \delta^t_{2\ast}}{\im \, \delta^t_{1\ast}} \oplus \dfrac{H^t_{q-1}(C)/ im (\delta^t_{2\ast}: \mathbf{F} \to H^t_{q-1}(C))}{im\, u^t_\ast|_{ker\, \delta^t_{1\ast}}},
\end{align}
where $H^t(\widetilde{C})$ is the induced $\R$-filtration on $H(\widetilde{C})$.

With the $\R$-filtration on $C$ and $\widetilde{C}$, we can define the following spectral-like invariant of each of the respective chain complexes.

\begin{Def}[cf. Definition \ref{Def5.2}]\label{2spectralinvariants}
    For each $q \in \mathbf{Z}$, we define $\rho_q(\widetilde{C})$ to be the infimum of all $t \in \R$ such that the induced map of $\iota^t : \widetilde{C}^t_q \to \widetilde{C}_q$ is non-trivial at the homology level. Similarly, we define 
    $$\lambda_q(C) = \inf \{t \in \R : \iota^t_\ast : H^t_q(C) \to H_q(C) \text{ is non-trivial}\}.$$
\end{Def}

The following propositions describes a relationship between $\lambda_q(C)$ and $\rho_q(\widetilde{C})$.

\begin{Prop}\label{Prop1stinequality}
    Suppose $(C,d)$ is a $\mathbf{Z}$-graded chain complex equipped with a nonarchimedean norm $\ell: C \to \R \cup \{-\infty\}$. Let $(\widetilde{C},\widetilde{d})$ be the associated $\mathcal{S}$-complex of $(C,d)$, where the differential $\widetilde{d}$ is defined by the maps $\delta_1: C_\ast \to \mathbf{F}$, $\delta_2: \mathbf{F} \to C_\ast$, $u : C_\ast \to C_{\ast - 2}$, and $d$ that satisfy
    $$\delta_{1} d = 0, \quad d\delta_2 =0, \quad ud + du + \delta_2\delta_1 = 0.$$ 
    If $\delta_{2\ast} = 0$ and $u_\ast : ker(\delta_{1\ast}: H_{q+1}(C) \to \mathbf{F}) \to H_{q-1}(C)$ is trivial, then 
    $$\lambda_{q-1}(C) \geq \rho_q(\widetilde{C}).$$
\end{Prop}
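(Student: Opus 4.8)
The plan is to read off the conclusion directly from the filtered decomposition \eqref{eq:filteredisomofhomologyofscomplex} together with its unfiltered counterpart \eqref{eq:isomoofhomologyofscomplex}. First I would unwind the hypotheses: if $\delta_{2\ast}=0$ and $u_\ast|_{\ker\delta_{1\ast}}$ is trivial, then in the unfiltered decomposition \eqref{eq:isomoofhomologyofscomplex} the first summand $\ker u_\ast|_{\ker\delta_{1\ast}}$ is all of $\ker(\delta_{1\ast}\colon H_{q+3}(C)\to\mathbf F)$ and the third summand $\mathscr E^2_{1,q+2}/\operatorname{im}u_\ast|_{\ker\delta_{1\ast}}$ is just $H_{q+2}(C)$ (since $\operatorname{im}\delta_{2\ast}=0$). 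The point I want to extract is the embedding of $H_{q+2}(C)$ as the bottom filtration piece $\mathscr F^1 H(\widetilde C_{q+3})$: an element of $H_{q+2}(C)$, viewed as a cycle $\alpha\in C_{q+2}$, sits inside $\widetilde C_{q+3}=C_{q+3}\oplus\mathbf F\oplus C_{q+2}$ in the last slot, and $\widetilde d(0,0,\alpha)=(0,0,d\alpha)=0$, so it is a $\widetilde d$-cycle; its class in $H(\widetilde C_{q+3})$ is nonzero precisely because the spectral sequence computation shows $\mathscr F^1 H(\widetilde C_{q+3})\cong H_{q+2}(C)$ injects (here the hypothesis $\delta_{2\ast}=0$ is what guarantees no further collapsing of this summand in passing from $\mathscr E^2$ to $\mathscr E^\infty$).

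Next I would run the same bookkeeping at the filtered level. Write $t_0=\lambda_{q-1}(C)$ (reindexing: $\lambda_{q-1}(C)=\lambda_{(q+3)-2}$... — more precisely the third summand of \eqref{eq:filteredisomofhomologyofscomplex} for $H^t_{q+3}(\widetilde C)$ involves $H^t_{q+2}(C)$, so the relevant spectral invariant is $\lambda_{q+2}$; I will match indices carefully against Definition \ref{2spectralinvariants} so that the statement $\lambda_{q-1}(C)\ge\rho_q(\widetilde C)$ comes out with the shifts as written). For any $t>\lambda_{q-1}(C)$ there is a nonzero class $a\in H^t_{q-1}(C)$ whose image in $H_{q-1}(C)$ is nonzero, represented by a cycle $\alpha\in C^t_{q-1}$ with $\ell(\alpha)\le t$. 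Under the hypotheses, $(0,0,\alpha)\in \widetilde C^t_{q}$ (the filtration level of $(0,0,\alpha)$ equals $\ell(\alpha)\le t$, since the $\mathbf F$-summand contributes $\ell(\Theta)$ which is $\le t$ once $t$ is large enough — this needs a small remark, or one restricts to $t\ge\ell(\Theta)$, which is harmless since the spectral invariants in question are $\ge\ell(\Theta)$ anyway) is a $\widetilde d^t$-cycle, and by the filtered decomposition \eqref{eq:filteredisomofhomologyofscomplex} its class in $H^t_q(\widetilde C)$ is nonzero. Then the commuting square
\[
\begin{tikzcd}
H^t_{q-1}(C)\arrow[r]\arrow[d] & H^t_q(\widetilde C)\arrow[d]\\
H_{q-1}(C)\arrow[r] & H_q(\widetilde C)
\end{tikzcd}
\]
where the horizontal maps are the inclusion $\alpha\mapsto(0,0,\alpha)$ and the vertical maps are the filtered-to-unfiltered maps $\iota^t_\ast$, shows that the image of $[\,(0,0,\alpha)\,]$ under $\iota^t_\ast\colon H^t_q(\widetilde C)\to H_q(\widetilde C)$ equals the image of $a$ under the bottom map followed by the inclusion $H_{q-1}(C)\hookrightarrow H_q(\widetilde C)$, which is nonzero by the previous paragraph (the inclusion is injective on the relevant summand by \eqref{eq:isomoofhomologyofscomplex}). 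Hence $\iota^t_\ast$ is nontrivial, so $\rho_q(\widetilde C)\le t$; letting $t\downarrow\lambda_{q-1}(C)$ gives the claim.

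The main obstacle, and the step I would be most careful about, is checking that the map $H_{q-1}(C)\to H_q(\widetilde C)$, $\alpha\mapsto(0,0,\alpha)$, is genuinely injective under the stated hypotheses, and that the filtered version of this injectivity holds uniformly — i.e. that the identification of $\mathscr F^1H^t(\widetilde C)$ with (a subquotient of) $H^t_{q-1}(C)$ behaves well as $t$ varies and does not introduce spurious boundaries. This is exactly where $\delta_{2\ast}=0$ (so nothing kills the bottom summand from the $\mathscr E^2$-page) and the triviality of $u_\ast|_{\ker\delta_{1\ast}}$ (so the $d_2$-differential out of the top column does not land in and quotient the bottom summand) get used; I would phrase this cleanly by quoting the decomposition \eqref{eq:filteredisomofhomologyofscomplex} and Lemma \ref{HomAlgelem1} rather than re-deriving the spectral sequence, and handle the filtered case by applying the same lemma to each subcomplex $\widetilde C^t$ as the excerpt already suggests. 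A secondary, more routine point is bookkeeping the degree shift between $q$, $q-1$, and the $q+3$ appearing in \eqref{eq:isomoofhomologyofscomplex}, which I would pin down once at the start so the rest reads off mechanically.
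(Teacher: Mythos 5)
Your proposal is correct and follows essentially the same route as the paper: quote the spectral sequence decomposition of Lemma \ref{HomAlgelem1} (in its filtered and unfiltered forms \eqref{eq:isomoofhomologyofscomplex}, \eqref{eq:filteredisomofhomologyofscomplex}), use the hypotheses to see that $H_{q-1}(C)$ injects into $H_q(\widetilde C)$ as the bottom filtration piece, and chase a nonzero class through the commuting square $\iota^t_\ast$. In fact you are slightly more careful than the printed proof about the fact that the filtered third summand is a priori only a subquotient of $H^t_{q-1}(C)$ rather than $H^t_{q-1}(C)$ itself — the paper elides this, but your observation that the unfiltered hypotheses $\delta_{2\ast}=0$ and $u_\ast|_{\ker\delta_{1\ast}}=0$ are exactly what make the induced map $H^t_{q-1}(C)\to H_{q-1}(C)$ factor through that quotient is the right way to close the loop, so the argument is sound.
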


\begin{proof}
    By \eqref{eq:isomoofhomologyofscomplex} and \eqref{eq:filteredisomofhomologyofscomplex} and the hypothesis of the proposition, we have the following commutative diagram
    \begin{equation*}
\begin{tikzcd}
H_q(\widetilde{C})   & \arrow[l, "\varphi"] ker\, u_{\ast}|_{ker\, \delta_{1\ast}} \oplus \dfrac{ker \, \delta_{2\ast}}{\im \, \delta_{1\ast}} \oplus H_{q-1}(C) \\
H^t_q(\widetilde{C})  \arrow[u, "\iota^t_\ast"] & \arrow[l, "\varphi^t"] ker\, u^t_{\ast}|_{ker\, \delta^t_{1\ast}} \oplus \dfrac{ker \, \delta^t_{2\ast}}{\im \, \delta^t_{1\ast}} \oplus H^t_{q-1}(C) \arrow[u, "\iota^t_\ast"]
\end{tikzcd}
    \end{equation*}
Here $\varphi$ is an isomorphism. Suppose $\lambda_{q-1}(C) < \infty$ and let $t:=\lambda_{q-1}(C)$. By definition, $\iota^t_\ast : H^t_{q-1}(C) \to H_{q-1}(C)$ is non-trivial. As a result, the right-hand side vertical map of the above diagram is also non-trivial. Therefore, by the commutative of the above diagram and the fact that $\varphi$ is an isomorphism, we must have $\iota^t_\ast : H^t_q(\widetilde{C}) \to H_q(\widetilde{C})$ non-trivial also. Hence, $\lambda_{q-1}(C) = t \geq \rho_q(\widetilde{C})$ as claimed.
\end{proof}

\begin{Prop}\label{Prop2ndinequality}
    Suppose $(C,d)$ is a $\mathbf{Z}$-graded chain complex and $(\widetilde{C},\widetilde{d})$ is the associated $\mathcal{S}$-complex defined as in Proposition \ref{Prop1stinequality}. If $\delta_{1\ast} = 0$ and $u_\ast : H_{q-3}(C) \to H_{q-5}(C)/ im(\delta_{2\ast}: \mathbf{F} \to H_{q-5}(C))$ is trivial, then
    $$\lambda_{q-3}(C) \geq \rho_q(\widetilde{C}).$$
\end{Prop}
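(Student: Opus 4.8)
The plan is to run the proof of Proposition~\ref{Prop1stinequality} again, but extracting the \emph{top} graded piece of the finite filtration $\mathscr F^\bullet\widetilde C$ in place of the bottom one. First I would note that the whole discussion of this subsection is compatible with the $\mathbf R$-filtration: $\ell$ restricts to each $\widetilde C^t$, $\widetilde d$ preserves $\mathscr F^\bullet$, so $\mathscr F^\bullet$ induces a finite filtration of $\widetilde C^t$, and re-running the spectral-sequence computation verbatim for $\widetilde C^t$ produces the filtered splitting \eqref{eq:filteredisomofhomologyofscomplex}, which fits into a commuting square with \eqref{eq:isomoofhomologyofscomplex} under the comparison maps $\iota^t_\ast$. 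To make that square honest one must pin down the a priori non-canonical splitting of \eqref{eq:isomoofhomologyofscomplex}; I would do this by choosing a basis of the $C$-part adapted to $\ell$, so that one and the same splitting serves for every $\widetilde C^t$.

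Next I would feed in the hypotheses. With $\delta_{1\ast}=0$ the column-$3$ homology of the first page survives to the second page as $H_\bullet(C)$, the column-$1$ homology as $H_\bullet(C)/\operatorname{im}\delta_{2\ast}$, and the second-page differential between them is (a shift of) $u_\ast\colon H_\bullet(C)\to H_{\bullet-2}(C)/\operatorname{im}\delta_{2\ast}$. The triviality hypothesis on $u_\ast$ then kills the differential out of the top summand occurring in \eqref{eq:isomoofhomologyofscomplex}, so after matching indices that summand is identified with a copy of $H_{q-3}(C)$. Equivalently, the quotient chain map $\pi\colon\widetilde C\to\widetilde C/\mathscr F^2\widetilde C\cong C$ (projection onto the first $C$-factor) — which is filtration nonincreasing, hence restricts to $\widetilde C^t\to C^t$ — becomes, after the identifications, a split surjection onto that summand.

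The conclusion then proceeds exactly as in Proposition~\ref{Prop1stinequality}: assuming $\lambda_{q-3}(C)<\infty$, set $t=\lambda_{q-3}(C)$, so $\iota^t_\ast\colon H^t_{q-3}(C)\to H_{q-3}(C)$ is nontrivial; chasing a nonzero class through the split inclusion of the top summand $H_{q-3}(C)\hookrightarrow H_q(\widetilde C)$, its filtered refinement, and the commuting square shows $\iota^t_\ast\colon H^t_q(\widetilde C)\to H_q(\widetilde C)$ is nontrivial, whence $\rho_q(\widetilde C)\le t=\lambda_{q-3}(C)$.

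The step I expect to be the main obstacle is the filtration compatibility underlying the second paragraph — concretely, that a cycle $\alpha\in C^t$ representing a nonzero class lifts to a cycle of $\widetilde C$ of filtration level $\le t$ lying over it. Here $\delta_{1\ast}=0$ forces $\delta_1\alpha=0$ on the nose; the $u_\ast$-hypothesis, together with the fact that $\operatorname{im}\delta_{2\ast}$ is at most one-dimensional over $\mathbf F_2$ (this is where characteristic $2$ genuinely enters, making an $\mathbf F_2$-valued bookkeeping map additive), lets one cancel the term $u\alpha$ by a $\Theta$-contribution plus a $d$-boundary; and the remaining point is that neither the boundary chosen nor the filtration value of the reducible generator $\Theta$ raises the level past $t$. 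Controlling $\ell(\Theta)$ is exactly why $\rho_q$ and $\lambda_q$ in Definition~\ref{Def5.2} are defined with the limit $\lim_{\|\eta\|\to0}$, which normalizes $\mathcal L_\eta(\Theta)$.
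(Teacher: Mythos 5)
Your proposal runs the same argument as the paper's one-line proof: feed $\delta_{1\ast}=0$ and the triviality of $u_\ast$ into the spectral sequence of the length-three filtration $\mathscr F^\bullet\widetilde C$, identify the surviving summand of $H_q(\widetilde C)$, and chase a nonzero filtered class through a square comparing $H^t$ and $H$. The paper simply exhibits the commutative diagram and asserts that nontriviality of $\iota^t_\ast$ on $H^t_{q-3}(C)$ forces nontriviality of $\iota^t_\ast$ on $H^t_q(\widetilde C)$.

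The obstacle you single out at the end is, however, a real gap, and it is one that the paper's own proof also passes over. The asymmetry with Proposition~\ref{Prop1stinequality} is that there the distinguished summand, $\mathscr E^\infty_{1,\cdot}\cong\mathscr F^1H_q(\widetilde C)$, is a genuine \emph{subgroup} of $H_q(\widetilde C)$, induced by the inclusion of subcomplexes $\mathscr F^1\widetilde C^t\hookrightarrow\widetilde C^t$, so a nonzero filtered class in $C^t$ maps into $\widetilde C^t$ on the nose and the chase is automatic. Here the distinguished summand is $\mathscr E^\infty_{3,\cdot}\cong H_q(\widetilde C)/\mathscr F^2H_q(\widetilde C)$, a \emph{quotient}, and what the chase needs is that the filtered projection $p^t_\ast\colon H^t_q(\widetilde C)\to H^t(C)$ hits the given class — precisely the lifting statement you describe. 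Your outline of that lift is correct as far as it goes: for a filtered cycle $\alpha$, $\delta_{1\ast}=0$ gives $\delta_1\alpha=0$ on the nose (the $\mathbf F$ factor carries zero differential), and the $u_\ast$-hypothesis gives $u\alpha=\delta_2(a)+d\beta$ for some $a\in\mathbf F$, $\beta\in C$, so $(\alpha,a\Theta,\beta)$ is a cycle of $\widetilde C$ over $\alpha$. But the hypotheses are vanishing statements about $\delta_{1\ast}$, $u_\ast$ on the \emph{unfiltered} homology, and they give no control on $\ell(\beta)$: the relation $u\alpha-\delta_2(a)=d\beta$ fixes $\beta$ only up to a cycle, and nothing forces a representative with $\ell(\beta)\le t$ to exist; nor, in the purely algebraic setting of this subsection, is $\ell(\Theta)$ controlled. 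A complete argument would require either that the vanishing of $\delta_{1\ast}$ and $u_\ast$ persist at every filtration level $t$, or an independent estimate bounding $\ell(\beta)$; as written, neither your proposal nor the paper's one-sentence proof supplies this, so you are right to flag it as the main obstacle rather than a routine step.
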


\begin{proof}
    The proof is similar to the proof of Proposition \ref{Prop1stinequality} by considering the following commutative diagram 
    \begin{equation*}
\begin{tikzcd}
H_q(\widetilde{C})   & \arrow[l, "\varphi"] H_{q-3}(C) \oplus \dfrac{ker \, \delta_{2\ast}}{\im \, \delta_{1\ast}} \oplus \dfrac{H_{q-1}(C)}{ im (\delta_{2\ast}: \mathbf{F} \to H_{q-1}(C))} \\
H^t_q(\widetilde{C})  \arrow[u, "\iota^t_\ast"] & \arrow[l, "\varphi^t"] H^t_{q-3}(C) \oplus \dfrac{ker \, \delta^t_{2\ast}}{\im \, \delta^t_{1\ast}} \oplus \dfrac{H^t_{q-1}(C)}{ im (\delta^t_{2\ast}: \mathbf{F} \to H^t_{q-1}(C))} \arrow[u, "\iota^t_\ast"]
\end{tikzcd}
    \end{equation*}
    and the fact that if $H^t_{q-3}(C) \to H_{q-3}(C)$ is non-trivial, then the right vertical map of the diagram is also non-trivial.
\end{proof}

We apply the general formalism above to our specific setting of $\mathcal{S}$-complex of monopoles.

\begin{Prop}[Proposition 16 of Section 12, \cite{MR2738582}]\label{Prop5.18}
    Let $Y$ be a rational homology three-sphere equipped with a $spin^c$ structure $\mathfrak{s}$. Let $m_{\mathfrak{s}} \in \mathfrak{m}(Y)$ be a chamber. For $HM_\ast(Y, m_\mathfrak{s}; \mathbf{F})$, we always have either $\delta_{1\ast}$ or $\delta_{2\ast}$ vanishes. 
\end{Prop}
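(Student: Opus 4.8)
The plan is to derive the statement \emph{formally} from the $\mathcal{S}$-complex identities already established in Proposition~\ref{Prop4.8} (see also Remark~\ref{Rem3.16}); no input from Seiberg--Witten theory is needed beyond what went into proving those identities. First I would fix the bookkeeping: $\delta_1$ is nonzero only in the single degree $2m_\mathfrak{s}+1$, i.e.\ $\delta_1\colon CM_{2m_\mathfrak{s}+1}(Y,\mathfrak{s};\mathbf{F})\to\mathbf{F}$, while $\delta_2\colon\mathbf{F}\to CM_{2m_\mathfrak{s}-2}(Y,\mathfrak{s};\mathbf{F})$, and the relevant relations are
\[
\delta_1 d = 0,\qquad d\delta_2 = 0,\qquad ud + du + \delta_2\delta_1 = 0 .
\]
The first gives that $\delta_1$ annihilates boundaries, so it descends to $\delta_{1\ast}\colon HM_{2m_\mathfrak{s}+1}\to\mathbf{F}$; the second gives that $\delta_2(1)$ is a cycle, so $\delta_{2\ast}\colon\mathbf{F}\to HM_{2m_\mathfrak{s}-2}$, $1\mapsto[\delta_2(1)]$, is well defined. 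It then suffices to show these two maps cannot both be nonzero.

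I would then argue as follows. Assume $\delta_{1\ast}\neq0$, and choose a cycle $\alpha\in CM_{2m_\mathfrak{s}+1}$ representing a class with $\delta_{1\ast}[\alpha]\neq0$, so that $d\alpha = 0$ and $\delta_1(\alpha) = 1\in\mathbf{F}$. Feeding $\alpha$ into the last relation and using $d\alpha = 0$,
\[
\delta_2(1) \;=\; \delta_2\bigl(\delta_1(\alpha)\bigr) \;=\; (\delta_2\delta_1)(\alpha) \;=\; -\bigl(ud(\alpha)+du(\alpha)\bigr) \;=\; -\,d(u\alpha)
\]
(over $\mathbf{F}=\mathbf{F}_2$ the sign is immaterial). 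Hence $\delta_2(1)$ is a boundary, $[\delta_2(1)] = 0$ in $HM_{2m_\mathfrak{s}-2}$, and therefore $\delta_{2\ast} = 0$. Symmetrically, $\delta_{2\ast}\neq0$ would force $\delta_{1\ast} = 0$, so at least one of the two vanishes.

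I do not anticipate a genuine obstacle here: the entire content sits in the relation $ud + du + \delta_2\delta_1 = 0$, which is precisely the assertion that the ``double'' $\delta_2\delta_1$ (counting broken trajectories factoring through the reducible $\Theta$) is null-homotopic via the $u$-map, and this is already proved in Proposition~\ref{Prop4.8} from the gluing analysis. The only things left to check are the degree matching ($\delta_2\delta_1$, $ud$ and $du$ all lower degree by $3$ and act on $CM_{2m_\mathfrak{s}+1}$) and the well-definedness of $\delta_{1\ast},\delta_{2\ast}$ on homology, both immediate from the displayed identities. In particular the argument is purely formal and holds for an arbitrary object of the category $\mathscr{S}$ of Definition~\ref{Def3.15}; alternatively, one may simply invoke Fr\o yshov's Proposition~16 of Section~12 in \cite{MR2738582}, whose proof proceeds in the same spirit.
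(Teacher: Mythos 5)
Your proof is correct, and it is self-contained where the paper is not: the paper states Proposition~\ref{Prop5.18} as a citation to Fr\o yshov (Proposition~16 of Section~12 of \cite{MR2738582}) and gives no argument of its own, whereas you derive it formally from the $\mathcal{S}$-complex relations in Proposition~\ref{Prop4.8}. The key computation is right: with $d\alpha=0$ and $\delta_1(\alpha)=1$ in degree $2m_\mathfrak{s}+1$, the relation $ud+du+\delta_2\delta_1=0$ collapses over $\mathbf{F}_2$ to $\delta_2(1)=d(u\alpha)$, so $[\delta_2(1)]=0$ and hence $\delta_{2\ast}=0$; and the degree bookkeeping ($\delta_2\delta_1$, $ud$, $du$ all drop degree by~$3$) checks out. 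Two small remarks. First, your closing ``symmetrically'' sentence is logically redundant: the implication $\delta_{1\ast}\neq 0\Rightarrow\delta_{2\ast}=0$ is already equivalent to the stated disjunction, and the ``symmetric'' version is merely its contrapositive, not a genuinely dual argument (the roles of $\delta_1\colon CM\to\mathbf{F}$ and $\delta_2\colon\mathbf{F}\to CM$ are not interchangeable without passing to the dual complex). Second, as you observe, the argument holds verbatim for any object of the category $\mathscr{S}$ of Definition~\ref{Def3.15}, which makes it transparent that the statement depends only on the algebraic $\mathcal{S}$-complex structure and not on any further Seiberg--Witten input beyond what went into Proposition~\ref{Prop4.8}. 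This is a clean and correct way to fill in the step the paper delegates to Fr\o yshov.
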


By Proposition \ref{Prop5.18}, the first condition in the hypothesis of either Proposition \ref{Prop1stinequality} or Proposition \ref{Prop2ndinequality} is satisfied. Thus, a way to compare $\rho_q(Y, m_{\mathfrak{s}}, g)$ and $\lambda_q(Y, m_{\mathfrak{s}}, g)$ is to assume some addition information about the induced $u$-maps. In particular, we have

\begin{Th}\label{Th5.19}
    Let $Y$ be a rational homology three-sphere equipped with a $spin^c$ structure $\mathfrak{s}$. Suppose $m_{\mathfrak{s}} \in \mathfrak{m}(Y)$ is a chamber of $Y$ and $m_{\mathfrak{s}} \geq 0$. Fix a metric $g$ on $Y$ associated with $m_{\mathfrak{s}}$. For each $q \leq -1$ or $q \geq 2m_{\mathfrak{s}}$, 
    \begin{enumerate}
        \item If $\delta_{2\ast} = 0$ and $u_\ast : ker(\delta_{1\ast} : HM_{q+1}(Y, m_\mathfrak{s}; \mathbf{F}) \to \mathbf{F}) \to HM_{q-1}(Y, m_\mathfrak{s}; \mathbf{F})$ is trivial, then $\lambda_{q-1}(Y, m_{\mathfrak{s}}, g) \geq \rho_q (Y, m_{\mathfrak{s}}, g)$.
        \item If $\delta_{1\ast} = 0$ and $u_{\ast} : HM_{q-3}(Y, m_{\mathfrak{s}}; \mathbf{F}) \to HM_{q-5}(Y, m_{\mathfrak{s}}; \mathbf{F})/im(\delta_{2\ast})$ is trivial, then $\lambda_{q-3}(Y, m_\mathfrak{s}, g) \geq \rho_{q}(Y, m_\mathfrak{s},g)$.
    \end{enumerate}
\end{Th}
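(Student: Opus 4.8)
The statement is an application of the abstract Propositions \ref{Prop1stinequality} and \ref{Prop2ndinequality} to the $\mathcal{S}$-complex of monopoles, followed by a passage to the limit $\|\eta\|\to 0$; the substantive homological algebra has already been carried out in Subsection \ref{Sub5.3}, so the proof is essentially a matter of matching hypotheses. First I would fix a generic closed $\eta\in i\Omega^2(Y)$ with $\|\eta\|$ small and $I(g,\eta)=m_{\mathfrak{s}}$, so that all critical points of $\mathcal{L}_\eta$ are non-degenerate (Proposition \ref{Prop4.2}). By Theorem \ref{Th4.9} the pair $\bigl(CM(Y,m_{\mathfrak{s}},g,\eta),\widetilde{CM}(Y,m_{\mathfrak{s}},g,\eta)\bigr)$ with differential $\widetilde d$ is an object of $\mathscr{S}$ (after the harmless reordering of the summand $\mathbf{F}$), and by Proposition \ref{Prop5.1} the functional $\mathcal{L}_\eta$ induces on $CM$, hence on $\widetilde{CM}$, a non-archimedean norm $\ell_\eta$ that is respected by $\widetilde d$. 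Thus the data $(C,d,\delta_1,\delta_2,u,\ell)$ required as input to Propositions \ref{Prop1stinequality}--\ref{Prop2ndinequality} is exactly what Section \ref{Sec4} produces, and the spectral-like quantities $\lambda_{q'}(CM(Y,m_{\mathfrak{s}},g,\eta))$, $\rho_{q}(\widetilde{CM}(Y,m_{\mathfrak{s}},g,\eta))$ of Definition \ref{2spectralinvariants} are defined.

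Next I would check that the hypotheses of Theorem \ref{Th5.19} supply the hypotheses of the abstract propositions. For $q$ in the range of Corollary \ref{Cor4.11} the groups $HM_\ast(Y,\mathfrak{s};\mathbf{F})$ together with the induced maps $\delta_{1\ast},\delta_{2\ast},u_\ast$ are independent of $(g,\eta)$; under the canonical identification of Corollary \ref{Cor4.11} the chain maps $\delta_1,\delta_2,u$ of the fixed complex $CM(Y,m_{\mathfrak{s}},g,\eta)$ induce precisely these maps, so the conditions imposed in case (1) ($\delta_{2\ast}=0$ and $u_\ast|_{\ker\delta_{1\ast}}$ trivial) or in case (2) ($\delta_{1\ast}=0$ and $u_\ast\colon HM_{q-3}\to HM_{q-5}/\operatorname{im}\delta_{2\ast}$ trivial) are exactly the hypotheses of Proposition \ref{Prop1stinequality}, resp.\ Proposition \ref{Prop2ndinequality}, for this complex. (By Proposition \ref{Prop5.18} one of $\delta_{1\ast},\delta_{2\ast}$ always vanishes, which is why only one of the two $\delta$-conditions ever needs to be assumed.) Applying the abstract propositions then gives, for every admissible small $\eta$,
\begin{equation*}
\lambda_{q-1}\bigl(CM(Y,m_{\mathfrak{s}},g,\eta)\bigr)\ \geq\ \rho_{q}\bigl(\widetilde{CM}(Y,m_{\mathfrak{s}},g,\eta)\bigr)
\end{equation*}
in case (1), and the same inequality with $q-1$ replaced by $q-3$ in case (2).

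Finally I would let $\|\eta\|\to 0$. By Definition \ref{Def5.2} and Definition \ref{2spectralinvariants} the left-hand side converges to $\lambda_{q-1}(Y,m_{\mathfrak{s}},g)$ (resp.\ $\lambda_{q-3}$) and the right-hand side to $\rho_{q}(Y,m_{\mathfrak{s}},g)$, the limits existing by the Cauchy estimate established in the proof of Theorem \ref{Th5.4}; since the displayed inequality holds for all sufficiently small $\eta$, it passes to the limit, which is the assertion of Theorem \ref{Th5.19}.

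The step I expect to require the most care is the hypothesis matching: one must ensure that the degree shifts $q,\,q\pm 1,\,q-3,\,q-5$ appearing in the two cases all lie in the allowable range of Corollary \ref{Cor4.11} (so that the relevant maps are genuinely $(g,\eta)$-independent and $\lambda,\rho$ are themselves invariants of $(Y,g)$), shrinking the stated range of $q$ if necessary, and that the homology-level triviality of $u_\ast$ is exactly the form of the hypothesis used in Propositions \ref{Prop1stinequality}--\ref{Prop2ndinequality}. Everything else is a direct quotation of results already established in Sections \ref{Sec4}--\ref{Sec5}.
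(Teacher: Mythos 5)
Your proposal is correct and takes exactly the route the paper intends: the paper gives no explicit proof of Theorem~\ref{Th5.19}, but it is clearly meant to follow by applying Proposition~\ref{Prop1stinequality} and Proposition~\ref{Prop2ndinequality} to the filtered $\mathcal{S}$-complex $\widetilde{CM}(Y,m_{\mathfrak{s}},g,\eta)$ (whose $\R$-filtration is respected by $\widetilde d$ by Proposition~\ref{Prop5.1}) for each sufficiently small generic $\eta$, and then letting $\|\eta\|\to 0$ as in Definition~\ref{Def5.2}. The caveat you raise about the degree shifts $q\pm 1,\,q-3,\,q-5$ possibly leaving the range of Corollary~\ref{Cor4.11} is a fair point about the precision of the paper's statement, but it does not affect the validity of the inequality for a fixed chamber and metric.
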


\begin{Rem}\label{Rem5.20}
    At this moment, we are not sure if there is an example of $(Y,g)$ where the vanishing conditions on $u_\ast$ in Theorem \ref{Th5.19} are satisfied. However, it is possible to relax such conditions on the $u_\ast$-map and obtain an analogous relationship between the spectral invariants $\rho, \lambda$ by appealing to the general framework of barcodes that appear in, e.g., Section 7 of \cite{MR4368349}. 
\end{Rem}

\section{Ribbon homology cobordism}\label{Sec6}
Recall from Definition \ref{Def5.8} in Subsection \ref{Sub5.2}, we say that $W: Y_1 \to Y_2$ is an injective cobordism between rational homology three-spheres if the induced map $CM(W)_\ast$ is injective. In this section, we show that a ribbon rational homology cobordism is an example of an injective cobordism. In general, if $W : Y_1 \to Y_2$ is a cobordism and we wish to show that $CM(W)_\ast : HM(Y_1) \to HM(Y_2)$ is injective, a straightforward strategy is to show that $CM(W)_\ast$ has a left-inverse. Taking advantage of the TQFT formalism, we consider the $D(W) := -W \cup_{Y_2} W : Y_1 \to Y_1$, which is called the \textit{double of} $W$ (cf. Section 5 of \cite{MR4467148}). If we can show that $CM(D(W))$ is either exactly the identity or chain homotopic to the identity map at the level of chain complex, then $CM(W)_\ast$ must be injective. The point of considering a ribbon homology cobordism $W$ is that there is a nice topological description of $D(W)$ in terms of surgery that allows us to prove $CM(D(W)) \sim$ the identity map. Once the basic definition and properties of ribbon cobordism are set up, the proof of the injectivity assertion is an application of gluing theory.

\subsection{Basic definition and properties}
We first begin with the definition of ribbon cobordism. Most of what is presented here follows Section 5 of \cite{MR4467148}. In general, a $2n$-dimensional cobordism is ribbon if and only if it has a handle decomposition that contains only $k$-handles, where $k\leq n$. In particular, a $4$-dimensional ribbon cobordism can be constructed by attaching only $1$- and $2$-handles. Here is one way a ribbon cobordism between $3$-manifolds can show up naturally: Let $K_1$ and $K_2$ be ribbon concordance knots inside $S^3$, i.e., there is a surface $S$ inside $S^3 \times I$ such that $S : K_1 \to K_2$ is a cobordism with only $0$- and $1$-handles. Define $W = S^3 \times I \setminus S$. Then $W : S^3\setminus K_1 \to S^3 \setminus K_2$ is a ribbon \textit{homology} cobordism. Recall that being a homology cobordism means that the inclusion map from each boundary component into $W$ induces an isomorphism at the level of homology. Essentially, $W$ has no topology in the interior detected by homology. Note that if $W$ is a ribbon homology cobordism, then it has the same number of $1$- and $2$-handles.

As mentioned previously, if $W: Y_1 \to Y_2$ is a ribbon cobordism, then there is a nice topological description of $D(W)$.

\begin{Prop}[Proposition 5.1, \cite{MR4467148}]\label{Prop6.1}
    Suppose $Y_1, Y_2$ are compact $3$-manifolds. Let $W: Y_1 \to Y_2$ be a ribbon cobordism with $m$ $1$-handles and $\ell$ $2$-handles. Then $D(W)$ can be described by a surgery on $X \cong (Y_1 \times I) \# m(S^1\times S^3)$ along $\ell$ disjoint simple closed curves $\gamma_1, \cdots, \gamma_\ell$. 

    If $W$ is a ribbon $\mathbf{Q}$-homology cobordism and we write
    $$[\gamma_i] = \sigma_i + \sum_{j=1}^{m} c_{ij} \alpha_j,$$
    where $\sigma_i \in H_1(Y_1), \alpha_j \in H_1(X)$ is the homology class of the core of the $j^{th}$ $S^1\times S^3$ summand, then $(c_{ij})\otimes_\mathbf{Z} \mathbf{Q}$ is invertible over $\mathbf{Q}$. Furthermore, 
    $$[\gamma_1]\wedge \cdots \wedge [\gamma_m] = \det(c_{ij})\, \alpha_1\wedge \cdots \wedge \alpha_m \in (\Lambda^* (H_1(X)/Tors)/\la H_1(Y)/Tors\ra)\otimes_{\mathbf{Z}} \mathbf{Q}.$$
\end{Prop}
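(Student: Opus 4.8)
The plan is to run the handle-theoretic argument behind Proposition 5.1 of \cite{MR4467148}. Since $W$ is a ribbon cobordism, fix a handle decomposition of $W$ relative to $Y_1\times I$ consisting of $m$ one-handles and $\ell$ two-handles. Let $V := (Y_1\times I)\cup(\text{the }m\text{ one-handles})$, a cobordism from $Y_1$ to $Z := Y_1\#m(S^1\times S^2)$; since attaching a one-handle to $Y_1\times\{1\}$ is boundary connected sum with $S^1\times D^3$, we have $V\cong(Y_1\times I)\,\natural\,m(S^1\times D^3)$. Let $L = K_1\cup\cdots\cup K_\ell\subset Z$ be the framed attaching link of the two-handles, so $W = V\cup_Z\mathcal{T}(L)$, where $\mathcal{T}(L)$ is the trace of the corresponding surgery. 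Turning the decomposition upside down, the reverse cobordism $-W : Y_2\to Y_1$ is $(Y_2\times I)$ together with $\ell$ dual two-handles (carrying $Y_2$ back to $Z$) followed by $m$ dual three-handles (carrying $Z$ back to $Y_1$); concretely $-W = \mathcal{T}(L')\cup_Z(-V)$ for a framed link $L'\subset Y_2$ dual to $L$.

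Next I would isolate two model computations. First, the double of $V$ along $Z$: because doubling distributes over boundary connected sum, $V\cup_Z(-V)\cong(Y_1\times I)\,\#\,m(S^1\times S^3) =: X$, with the $j$-th $S^1\times S^3$ summand produced by the $j$-th one-handle together with its dual three-handle; write $\alpha_j\in H_1(X)$ for the class of the $S^1$-core of this summand, so $H_1(X)\cong H_1(Y_1)\oplus\bigoplus_j\mathbf{Z}\alpha_j$, the $\alpha_j$ corresponding to the generators $\beta_j\in H_1(Z)$ coming from the one-handles. Second, for one framed knot $K\subset Z$ with dual knot $K^*\subset Z_K$, the union $\mathcal{T}(K)\cup_{Z_K}\mathcal{T}(K^*)$ is diffeomorphic, rel boundary, to the result of removing a tubular neighbourhood $S^1\times D^3$ of a circle isotopic to $K$ from $Z\times I$ and regluing $D^2\times S^2$, i.e. a surgery along one curve. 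Combining: writing $D(W) = V\cup_Z\bigl(\mathcal{T}(L)\cup_{Y_2}\mathcal{T}(L')\bigr)\cup_Z(-V)$, the bracketed cobordism $Z\to Z$ is, by the second model applied to each component of $L$, surgery on $Z\times I$ along $\ell$ disjoint circles, each isotopic to a component $K_i$; absorbing $V$ and $-V$ then exhibits $D(W)$ as the result of performing these $\ell$ surgeries inside $V\cup_Z(Z\times I)\cup_Z(-V) = X$, i.e. surgery on $X$ along $\gamma_1,\dots,\gamma_\ell$ with $\gamma_i$ the image of $K_i$. This is the first assertion.

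For the homological statements, assume now $W$ is a ribbon $\mathbf{Q}$-homology cobordism, so $H_*(W,Y_1;\mathbf{Q}) = 0$. The relative cellular chain complex of $(W,Y_1)$ over $\mathbf{Q}$ is concentrated in degrees $1,2$ and is $0\to\mathbf{Q}^\ell\xrightarrow{\partial_2}\mathbf{Q}^m\to 0$; acyclicity forces $\ell = m$ and $\partial_2$ an isomorphism over $\mathbf{Q}$. In the bases given by the two- and one-handles, the $i$-th row of $\partial_2$ records the class of $K_i$ in the free summand $\mathbf{Z}^m = \langle\beta_1,\dots,\beta_m\rangle\subset H_1(Z)$, hence equals $(c_{i1},\dots,c_{im})$; therefore $(c_{ij})\otimes_{\mathbf{Z}}\mathbf{Q}$ is invertible. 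Under the identification $\beta_j\mapsto\alpha_j$ and the fact that $\gamma_i$ is a copy of $K_i$, we obtain $[\gamma_i] = \sigma_i + \sum_j c_{ij}\alpha_j$ with $\sigma_i\in H_1(Y_1)$. Finally, in $\bigl(\Lambda^*(H_1(X)/\mathrm{Tors})/\langle H_1(Y_1)/\mathrm{Tors}\rangle\bigr)\otimes_{\mathbf{Z}}\mathbf{Q}$ the classes $\sigma_i$ vanish, so
\[
[\gamma_1]\wedge\cdots\wedge[\gamma_m] = \Bigl(\textstyle\sum_j c_{1j}\alpha_j\Bigr)\wedge\cdots\wedge\Bigl(\textstyle\sum_j c_{mj}\alpha_j\Bigr) = \det(c_{ij})\,\alpha_1\wedge\cdots\wedge\alpha_m,
\]
as claimed.

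The main obstacle is the second model computation: verifying carefully that $\mathcal{T}(K)\cup_{Z_K}\mathcal{T}(K^*)$ is surgery on $Z\times I$ along a single circle with the correct framing, that the $\ell$ circles coming from the two-handles of $W$ may be taken pairwise disjoint and unknotted away from one another, and that these modifications do not interfere with the one-/three-handle pairing producing the $m(S^1\times S^3)$ summands. Once the handle picture is pinned down, the homology bookkeeping and the exterior-algebra identity are routine; a little care with orientations is also needed to identify $-W$ with the upside-down $W$, but this does not affect the surgery description or the homological conclusions.
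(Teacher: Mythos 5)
Your proposal is correct and takes essentially the same approach as the paper's sketch: you decompose $W$ into the one-handle cobordism $V$ (the paper's $W_1$) followed by the two-handle trace $\mathcal{T}(L)$ (the paper's $W_2$), identify $V\cup_Z(-V)\cong(Y_1\times I)\#m(S^1\times S^3)$, and recognize $\mathcal{T}(L)\cup_{Y_2}\mathcal{T}(L')$ as surgery on $Z\times I$ along the $\ell$ attaching circles. The paper only sketches the surgery description and cites \cite{MR4467148} for the homological statements, whereas you supply the routine but correct relative chain-complex argument giving $\ell=m$, the invertibility of $(c_{ij})\otimes\mathbf{Q}$, and the exterior-algebra identity, so your write-up is a legitimate filling-in of the reference rather than a different route.
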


We give a sketch of proof for the first part of the above proposition. The idea is rather straightforward. Firstly, taking advantage of the fact that $W$ is a ribbon cobordism, we decompose $D(W)$ as follows
$$D(W) = W_1 \cup W_2 \cup -W_2 \cup -W_1,$$
where $W_1 : Y_1 \to Y_1 \# m(S^1 \times S^2)$ is a cobordism given by $1$-handles attachment and $W_2 : Y_1 \# m(S^1\times S^2) \to Y_2$ is a cobordism given by $2$-handles attachment. It can be shown that $W_2 \cup -W_2$ is the result of a surgery on $Y_1\# m(S^1\times S^2) \times [-1,1]$ along some $\ell$ attaching circles of the $2$-handles. On the other hand, $W_1 \cup -W_1$ can be shown to be diffeomorphic to $(Y_1 \times I) \# m(S^1\times S^3)$. As a result, by comparing $D(W)$ and $W_1 \cup -W_1$, one arrives at the surgery result.

\begin{Rem}\label{Rem6.2}
    The result of Proposition \ref{Prop6.1} is true for any field other than $\mathbf{Q}$. However, in this present paper, since we are mainly interested in rational homology situations, we state the result in $\mathbf{Q}$ for applications later.
\end{Rem}

\subsection{Ribbon homology cobordism and monopole Floer homology}
Given a cobordism $W : Y_1 \to Y_2$, where $Y_i$ is a closed, oriented smooth rational homology $3$-sphere and $\gamma \subset int(W)$ is a closed loop in the interior of $W$. Consider the following surgery on $W$ along $\gamma$ as follows: We remove a tubular neighborhood $\nu(\gamma) \cong S^1 \times D^3$ and glue back $D^2 \times S^2$ to $W_0 := W\setminus \nu(\gamma)$ along the boundary $S^1 \times S^2$ to obtain $W'$, which is also viewed as a cobordism from $Y_1$ to $Y_2$. Note that
$$W = W_0 \cup_{S^1\times S^2} S^1 \times D^3, \quad W' = W_0 \cup_{S^1 \times S^2} D^2 \times S^2.$$
We would like to understand how such surgery on $W$ along $\gamma$ affects the induced map at the level of monopole Floer homology. Roughly, this amounts to understanding how the moduli space of finite energy monopoles on $W$ and $W'$ are factored through a $1$-dimensional moduli space of finite energy monopoles on $W_0$ via gluing theory. 

To this end, firstly, let's consider the case of closed manifolds. Suppose for now that $W$ is closed. Let $\mathfrak{s}_W$ be a $spin^c$ structure on $W$ and fix a metric $g$ on $W$. Consider $\mathcal{B}(W, \mathfrak{s}_W)$ to be configuration space of the $\mu$-perturbed Seiberg-Witten equations on $W$, where $\mu$ is some generic purely imaginary $2$-form on $W$. Since $\mu$ is chosen to be generic, the moduli space $\mathscr{M}(W, \mathfrak{s}_W)$ of the perturbed Seiberg-Witten equations on $W$ contains no reducible and is a compact finite oriented manifold. The associated fundamental class $[\mathscr{M}(W, \mathfrak{s}_W)]$ is an element of $H_\ast (\mathcal{B}(W, \mathfrak{s}_W))$. For the sake of the exposition, we assume that $\dim \mathscr{M}(W, \mathfrak{s}_W) = 1$. Now, we let $\mathbb{L}$ be a $U(1)$-bundle over $\mathcal{B}(W,\mathfrak{s}_W) \times W$ induced by the pulled-back of the determinant line bundle of the $spin^c$ structure over $W$. Denote by $c_1(\mathbb{L}) \in H^2(\mathcal{B}(W, \mathfrak{s}_W)\times W)$ by the first chern class of $\mathbb{L}$. Via K\"unneth decomposition, given $\gamma \in H_1(W)$, we obtain $s(\gamma):= c_1(\mathbb{L})/\gamma \in H^{1}(\mathcal{B}(W,\mathfrak{s}_W))$. Then the pairing
$$\la s(\gamma), [\mathscr{M}(W, \mathfrak{s}_W)] \ra \in \mathbf{Z}.$$
Equivalently, the pairing above can be interpreted as the degree of the holonomy map around $\gamma$ from $\mathscr{M}(W, \mathfrak{s}_W) \to S^1$. More generally, if $\dim \mathscr{M}(W, \mathfrak{s}_W) = k$ so that $[\mathscr{M}(W,\mathfrak{s}_W)]\in H_k(\mathcal{B}(W,\mathfrak{s}_W))$, then one can still get a numerical number out of this situation by considering $\gamma_1 \wedge \cdots \gamma_k \in \Lambda^k H_1(W)$ which gives the pairing
$$\la s(\gamma_1) \cup \cdots \cup s(\gamma_2), [\mathscr{M}(W, \mathfrak{s}_W)]\ra \in \mathbf{Z}.$$
Note that if $b^+(W) > 1$, the pairing defined above is nothing but the classical Seiberg-Witten invariants (cf. e.g., \cite{MR2492194} for more details on this interpretation of the Seiberg-Witten invariants). 

An analogous construction as the above carries over in the case where $W$ is a compact $4$-manifold with boundary. Now we are assuming that $W$ is a compact manifold with boundary, where the boundary is not necessarily connected. As in Subsection \ref{Sub4.1}, we denote by $Y$  the boundary of $W$ and let $\mathscr{M}(W;\alpha)$ be the moduli space of gauge equivalence classes of solutions of \eqref{eq:4.5} that are asymptotic to $\alpha \in \mathcal{R}(Y, \mathfrak{s}, g, \eta)$. If $\dim \mathscr{M}(W;\alpha) = 0$, we obtain the induced chain map $CM(W) : CM(Y) \to \mathbf{F}$ of degree $-d(W)$ (cf. \eqref{eq:4.22}). If $\gamma \subset int(W)$ a closed loop, then using the holonomy map similarly defined in the closed case, we have another chain map $CM(W, \gamma) : CM(Y) \to \mathbf{F}$ given by 
$$CM(W, \gamma)(\alpha) = \la s(\gamma), [\mathscr{M}(W; \alpha)]\ra \mod 2,$$
whenever $\dim \mathscr{M}(W; \alpha) = 1$. Note that it is not difficult to see that changing $\gamma$ by another loop in the same homology class produces a chain homotopic chain map. Thus, the result of the induced map at the homology level only depends on $[\gamma] \in H_1(W)$. In other words, for each $[\gamma] \in H_1(W)$, we have a homomorphism $CM(W, [\gamma])_\ast : HM(Y) \to \mathbf{F}$. 

The following proposition should be compared with the instanton versions in, e.g., Proposition 6.5 of \cite{MR4467148} and Theorem 7.16 of \cite{MR1883043}.

\begin{Prop}\label{Prop6.3}
    Suppose $W$ is a smooth compact oriented $4$-manifold with boundary $Y$, where $Y$ is not necessarily connected. Let $\mathfrak{s}_W$ be a $spin^c$ structure on $W$ and $\mathfrak{s}$ is its restriction to $Y$. Suppose $\gamma \subset int(W)$ is a closed loop and $W'$ is the result of the surgery on $W$ along $W$. We extend $\mathfrak{s}_W$ over $W'$ trivially and denote by $\mathfrak{s}_{W'}$ the induced $spin^c$ structure. Then over $\mathbf{F}$, we must have
    $$CM(W', \mathfrak{s}_{W'}) = CM(W, \gamma, \mathfrak{s}_W).$$
    This implies that $CM(W', \mathfrak{s}_{W'})_\ast = CM(W, [\gamma], \mathfrak{s}_W)_\ast$ at the level of homology.
\end{Prop}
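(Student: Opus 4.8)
The plan is to compare the moduli spaces $\mathscr{M}(W';\alpha)$ and $\mathscr{M}(W;\alpha)$ by cutting both $W$ and $W'$ along the common piece $W_0 = W\setminus\nu(\gamma)$. Write $W = W_0\cup_{S^1\times S^2} (S^1\times D^3)$ and $W' = W_0\cup_{S^1\times S^2}(D^2\times S^2)$, and elongate the neck $S^1\times S^2$ so that gluing theory applies. The key point is that both pieces $S^1\times D^3$ and $D^2\times S^2$ are simple: on each, the relevant finite-energy Seiberg--Witten moduli space (with asymptotic limit the unique reducible on $S^1\times S^2$, in the $spin^c$ structure that bounds) is a single point, i.e.\ the reducible solution, and there is no contribution from irreducibles for dimension and energy reasons (the positive scalar curvature metric on $S^1\times S^2$ forces reducibility via Lemma~\ref{Lem4.4}). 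So in both cases the count of monopoles on the closed-up manifold reduces, via a gluing/neck-stretching argument, to a weighted count of monopoles on $W_0$ with a prescribed limit on the $S^1\times S^2$ end.

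The difference between $W$ and $W'$ is entirely captured by the boundary behavior on $S^1\times S^2$: in the $W' = W_0\cup(D^2\times S^2)$ case the gluing parameter that survives is an $S^1$ (the framing/holonomy around the core), and matching this to the moduli on $W_0$ produces exactly the degree of the holonomy map around $\gamma$, i.e.\ $\langle s(\gamma),[\mathscr{M}(W_0;\alpha,\mathrm{red})]\rangle$, which by definition is $CM(W,\gamma,\mathfrak{s}_W)(\alpha)$. In the $W = W_0\cup(S^1\times D^3)$ case the filling kills that $S^1$ and one is left simply counting the $0$-dimensional moduli on $W$, which is $CM(W',\mathfrak{s}_{W'})(\alpha)$ — wait, one must be careful to set up the indexing so that the dimension count on $W_0$ is $1$ exactly when $\dim\mathscr{M}(W;\alpha) = 0$ and $\dim\mathscr{M}(W';\alpha)=0$; this is the content of the trivial extension of the $spin^c$ structure not changing $d(\cdot)$. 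Granting that, the chain-level identity $CM(W',\mathfrak{s}_{W'}) = CM(W,\gamma,\mathfrak{s}_W)$ follows, and passing to homology gives the stated equality of induced maps; the chain-homotopy independence of $CM(W,\gamma,\mathfrak{s}_W)$ on the representative $\gamma$ in its homology class was already noted above, so the homology statement depends only on $[\gamma]$.

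The main obstacle is the gluing analysis along the $S^1\times S^2$ neck: one needs that stretching the neck yields no new broken configurations that factor through irreducible monopoles on the ends or through spurious reducibles on $W_0$, and that the obstruction bundles / orientations behave as expected so the mod-$2$ counts match. Here the positivity of scalar curvature on $S^1\times S^2$ (which bounds the spinor in the limit, by the argument of Lemma~\ref{Lem4.3} and Lemma~\ref{Lem4.4}) and a dimension count are what rule out the bad strata; I would cite the gluing theory of Kronheimer--Mrowka (as in \cite{MR2465077}) and the analogous instanton arguments (Proposition~6.5 of \cite{MR4467148}, Theorem~7.16 of \cite{MR1883043}) for the precise form. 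A secondary technical point is checking that the perturbation $\mu$ can be chosen compatibly on $W$, $W'$, and $W_0$ so that all moduli spaces are simultaneously cut out transversally; this is routine but must be stated. Everything else — the Künneth definition of $s(\gamma)$, the interpretation as a holonomy degree, the reduction of $CM$ to a point-count — is formal once the gluing step is in place.
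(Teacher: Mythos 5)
Your overall strategy is the same one the paper uses: cut along $W_0 = W\setminus\nu(\gamma)$, stretch the neck $S^1\times S^2$, use the positive scalar curvature on $S^1\times S^2$ to force all limits to be reducible, and reduce both $CM(W,\gamma,\mathfrak{s}_W)$ and $CM(W',\mathfrak{s}_{W'})$ to counts on $W_0$ via gluing. The functoriality argument you gesture at and the identification of the holonomy pairing $\langle s(\gamma),\cdot\rangle$ as a degree of an $S^1$-valued map are exactly what the paper does. However, there are real errors in the execution that would not survive as a proof.

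First, the reducibles on $S^1\times S^2$ do not form a point. Since $b_1(S^1\times S^2) = 1$, the space of gauge-equivalence classes of flat $U(1)$-connections (after perturbing away any spinor solutions, which psc forces) is the Jacobian torus $S^1$. Your claim of a ``unique reducible on $S^1\times S^2$'' glosses over the Morse--Bott issue entirely; the paper handles it by choosing a Morse--Smale function on this $S^1$ with two critical points $\theta_1,\theta_2$ and working with $\mathscr{M}(W_0;\alpha,\theta_1)$. Without this reduction the gluing along the $S^1\times S^2$ neck is not set up correctly. Relatedly, the moduli space of finite-energy monopoles on $(S^1\times D^3)^*$ is also the Jacobian torus $S^1$ (since $b_1(S^1\times D^3)=1$), not a single point. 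Only the $D^2\times S^2$ side contributes a single point, because $b_1(D^2\times S^2)=0$. Your statement that ``on each'' piece the moduli is a point is wrong, and the asymmetry between the two fillings is precisely what makes the proposition true: the $S^1\times D^3$ end lets the asymptotic value roam over all of $S^1$, while $D^2\times S^2$ pins it to a single value and thereby implements the fiber/holonomy constraint.

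Second, your dimension bookkeeping is incorrect. Since $\chi(W') = \chi(W)+2$, one has $\dim\mathscr{M}(W';\alpha) = \dim\mathscr{M}(W;\alpha) - 1$, equivalently $d(W') = d(W)+1$. Your remark that the ``trivial extension of the $spin^c$ structure'' leaves $d(\cdot)$ unchanged, and that one should arrange $\dim\mathscr{M}(W;\alpha)=0$ and $\dim\mathscr{M}(W';\alpha)=0$ simultaneously, is therefore impossible and also not what one wants: $CM(W,\gamma,\mathfrak{s}_W)$ is defined by pairing $s(\gamma)$ against a \emph{one}-dimensional moduli space $\mathscr{M}(W;\alpha)$, while $CM(W',\mathfrak{s}_{W'})$ counts a \emph{zero}-dimensional $\mathscr{M}(W';\alpha)$. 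These two maps automatically have the same degree shift $-d(W')=-d(W)-1$, which is the right consistency check, and it is this $1$-vs-$0$ discrepancy that the gluing of $D^2\times S^2$ (a codimension-one constraint on the $S^1$-worth of asymptotic values) is accounting for. As written, your ``wait'' paragraph conflates which filling produces which map and fixes neither problem.

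So: correct overall decomposition and correct identification of the key technical obstacle (gluing through $S^1\times S^2$), but the Morse--Bott reduction is missing and the index/dimension relations are wrong, and both are load-bearing.
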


\begin{proof}
    Firstly, note that we may view $W$ as a cobordism from $Y \to \emptyset$ that is decomposed by $W_0 \cup_{S^1\times S^2} S^1\times D^3$, where $W_0$ is a cobordism from $Y \to S^1\times S^2$ and $S^1 \times D^2$ is a cobordism from $S^1\times S^2 \to \emptyset$. Let $\alpha$ be a monopole on $Y$. We would like to understand how $\mathscr{M}(W;\alpha)$ is factored through the respective moduli spaces on $W_0$ and $S^1\times D^3$. We attach a cylinder to $W_0$ and $S^1\times D^3$ along $S^1\times S^2$. The result manifolds are denoted by $W^*_0$ and $(S^1\times D^3)^*$, respectively.
    
    Since $S^1 \times S^2$ has positive scalar curvature, the moduli space $\mathscr{M}(S^1\times S^2)$ contains only reducible monopoles, and thus can be identified with $T^{b_1(S^1\times S^2)}= S^1$. This puts us in a Morse-Bott situation. So, we choose a Morse-Smale function on $\mathscr{M}(S^1\times S^2) = S^1$ that has two critical points $\theta_1, \theta_2$. Now, on $(S^1 \times D^3)^*$ we put a metric of positive scalar curvature. Thus, the moduli space of finite energy monopoles on $(S^1\times D^3)^*$ contains only reducible solutions and also can be identified with the Jacobian torus $T^{b_1(S^1\times D^3)} = S^1$. Let $\mathscr{M}(W_0; \alpha, S^1)$ be the moduli space of monopoles on $W^*_0$ that are asymptotic to $\alpha$ toward $-\infty$ and to an element of $\mathscr{M}(S^1\times S^1)$ toward $+\infty$. Similarly, we define $\mathscr{M}(S^1\times D^3; S^1)$. The asymptotic maps give us a map
    $$\mathscr{M}(W_0; \alpha, S^1) \times \mathscr{M}(S^1\times D^3; S^1) \to S^1 \times S^1$$
    that is transversal to the diagonal of $S^1\times S^1$. Thus, we obtain $\mathscr{M}(W; \alpha) \cong \mathscr{M}(W_0; \alpha, S^1) \times_{S^1} S^1$. As a result, $\dim \mathscr{M}(W;\alpha) = \dim \mathscr{M}(W_0; \alpha, S^1)$. We are only interested in the case where $\mathscr{M}(W;\alpha) = 1$, and since any monopoles inside $\mathscr{M}(W_0; \alpha, S^1)$ limit to either $\theta_1$ or $\theta_2$ at $+\infty$, we choose whichever that results in a dimension-one moduli space, say, $\mathscr{M}(W_0; \alpha, \theta_1)$.

    Let $\gamma'$ be a parallel circle with $\gamma$ in $W_0$. By functoriality, we have
    \begin{align*}
    CM(W, \gamma, \mathfrak{s}_W)(\alpha) &= CM(S^1\times D^3, \gamma) CM(W_0, \gamma')(\alpha)\\
    & = CM(S^1\times D^3, \gamma) \left( \la s(\gamma'), \mathscr{M}(W_0; \alpha, \theta_1) \ra \theta_1 \right) \\
    & = \la s(\gamma'), \mathscr{M}(W_0; \alpha, \theta_1) \ra CM(S^1\times D^3, \gamma) \theta_1.
    \end{align*}
    Since $\theta_1$ can only be extended to the trivial flat connection on $S^1\times D^3$, we obtain 
    $$CM(W, \gamma, \mathfrak{s}_W)(\alpha) = \la s(\gamma'), \mathscr{M}(W_0; \alpha, \theta_1) \ra.$$

    We move on to $W' = W_0 \cup_{S^1\times S^2} D^2 \times S^2$, which is another cobordism from $Y \to \emptyset$. It is not hard to see that $\chi(W') = \chi(W) + 2$. As a result, $\dim \mathscr{M}(W'; \alpha) = \dim \mathscr{M}(W; \alpha) - 1 = 0$. Note that the moduli space of finite energy monopoles on $D^2\times S^2$ is just a single point. As a result, similar to the argument above, we see that $\mathscr{M}(W',\alpha)\cong \mathscr{M}(W_0; \alpha, \theta_1) \times_{S^1} \{\text{point}\}$. Thus, $CM(W',\mathfrak{s}_{W'})(\alpha) = \la s(\gamma'), \mathscr{M}(W_0; \alpha, \theta_1)\ra$. Finally, change $\gamma$ by another representative in the homology class $[\gamma]\in H_1(W)$ produces chain homotopy maps. Therefore, we arrive at the result as claimed.
\end{proof}

We are now ready to prove one of the main theorems of this section. The following theorem is regarding $HM$.

\begin{Th}\label{Th6.4}
    Suppose $W: Y_1 \to Y_2$ is a ribbon rational homology cobordism, where $Y_i$ is a smooth, closed, oriented rational homology three-sphere. Let $\mathfrak{s}_W$ be a $spin^c$ structure on $W$ and its restriction to $Y_i$ is denoted by $\mathfrak{s}_i$. Let $m_{\mathfrak{s}_i} \in \mathfrak{m}(Y_i)$ be a chamber. Then, the induced cobordism map $CM(D(W))_{\ast} : HM(Y_1, m_{\mathfrak{s}_1}; \mathbf{F}) \to HM(Y_1, m_{\mathfrak{s}_2}; \mathbf{F})$ satisfies
    $$CM(D(W))_{\ast} = |H_1(W, Y_1)| CM(Y_1 \times I)_{\ast}.$$
    The map on the right-hand side is some non-zero scalar multiple of the identity map on $HM(Y_1, m_{\mathfrak{s}_1}; \mathbf{F})$. Consequently, $CM(W)_\ast$ is injective.
\end{Th}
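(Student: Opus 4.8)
The plan is to establish the displayed identity $CM(D(W))_\ast = |H_1(W,Y_1)|\,CM(Y_1\times I)_\ast$ by decomposing the double $D(W)$ using the surgery description of Proposition \ref{Prop6.1}. First I would invoke Proposition \ref{Prop6.1}: since $W:Y_1\to Y_2$ is a ribbon $\mathbf{Q}$-homology cobordism with, say, $m$ one-handles and $m$ two-handles (equal numbers, as noted in the text), $D(W)$ is obtained by surgery on $X\cong (Y_1\times I)\#m(S^1\times S^3)$ along $\ell=m$ disjoint simple closed curves $\gamma_1,\dots,\gamma_m$, and the matrix $(c_{ij})$ recording the $H_1(X)$-components of $[\gamma_i]$ is invertible over $\mathbf{Q}$. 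The key idea is then to apply Proposition \ref{Prop6.3} iteratively, once for each surgery curve: each surgery on a curve $\gamma_i$ converts $CM(\text{current cobordism})$ into $CM(\text{cobordism before surgery}, [\gamma_i])$, i.e.\ replaces a cobordism map by the holonomy-twisted cobordism map around $[\gamma_i]$. Doing this for all $m$ curves expresses $CM(D(W))$ in terms of $CM(X;\,\gamma_1,\dots,\gamma_m)$, the cobordism map for $X=(Y_1\times I)\#m(S^1\times S^3)$ twisted by the cup product $s(\gamma_1)\cup\cdots\cup s(\gamma_m)$ evaluated on the relevant moduli space.

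Next I would compute this twisted cobordism map on $X$. The connect-sum cobordism $(Y_1\times I)\#m(S^1\times S^3)$ should be handled by a gluing/connected-sum argument: the Seiberg-Witten moduli space on $X$ (with appropriate $spin^c$ structure extended trivially over the $S^1\times S^3$ summands, which carry positive scalar curvature metrics) factors as the product of the moduli space on $Y_1\times I$ with the reducible moduli spaces on the $m$ copies of $S^1\times S^3$, each of which is a Jacobian torus $T^1=S^1$. The holonomy classes $s(\gamma_i)$ pair with these torus factors, and by the last displayed formula in Proposition \ref{Prop6.1},
\begin{equation*}
[\gamma_1]\wedge\cdots\wedge[\gamma_m] = \det(c_{ij})\,\alpha_1\wedge\cdots\wedge\alpha_m
\end{equation*}
in the appropriate exterior algebra modulo $H_1(Y_1)$-classes. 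Since the $s(\alpha_j)$ are precisely the generators dual to the torus factors, evaluating $s(\gamma_1)\cup\cdots\cup s(\gamma_m)$ on the product moduli space picks out $\det(c_{ij})$ times the pairing on $Y_1\times I$ alone — and that pairing is just $CM(Y_1\times I)_\ast$, the identity. Finally I would identify $\det(c_{ij})$ (as an integer, after clearing denominators, working over $\mathbf{F}=\mathbf{F}_2$ so only the parity matters) with $|H_1(W,Y_1)|\bmod 2$: the long exact sequence of the pair $(W,Y_1)$ together with the ribbon handle decomposition identifies $H_1(W,Y_1)$ with the cokernel of the intersection matrix $(c_{ij})$, whose order is $|\det(c_{ij})|$ when this cokernel is finite. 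Reducing mod $2$ gives the scalar claimed, and since it is nonzero in $\mathbf{F}_2$ precisely when $|H_1(W,Y_1)|$ is odd, $CM(D(W))_\ast$ is an isomorphism (a nonzero multiple of the identity), hence has a left inverse, hence $CM(W)_\ast$ is injective.

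The main obstacle I anticipate is making the iterated gluing/factorization rigorous — in particular justifying that each surgery truly produces the holonomy-twisted map as in Proposition \ref{Prop6.3} even when performed in sequence and in the presence of cylindrical ends and the Morse-Bott reducibles coming from the positive-scalar-curvature pieces $S^1\times S^2$, and tracking the degree (dimension) bookkeeping so that at each stage the relevant moduli space has exactly the dimension needed for the holonomy pairing to make sense. A secondary subtlety is the precise identification of $\det(c_{ij})$ with $|H_1(W,Y_1)|$ including the case where this group could have free part; but for a ribbon \emph{rational} homology cobordism $H_1(W,Y_1)$ is finite by the homology-cobordism hypothesis, so $(c_{ij})$ is genuinely invertible over $\mathbf{Q}$ and the determinant is the order of the (finite) cokernel up to sign. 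Everything after these gluing and linear-algebra inputs — deducing injectivity of $CM(W)_\ast$ from $D(W)=-W\cup_{Y_2}W$ and functoriality $CM(D(W))_\ast = CM(-W)_\ast\circ CM(W)_\ast$ — is formal.
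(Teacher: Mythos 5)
Your proposal follows the same skeleton as the paper's proof — surgery description of $D(W)$ via Proposition \ref{Prop6.1}, iterated application of Proposition \ref{Prop6.3}, the exterior-algebra relation $[\gamma_1]\wedge\cdots\wedge[\gamma_m]=\det(c_{ij})\,\alpha_1\wedge\cdots\wedge\alpha_m$ — but the final computational step differs in a way worth noting. After reducing to $CM(X,\gamma_1\wedge\cdots\wedge\gamma_m)_\ast=\det(c_{ij})\,CM(X,\alpha_1\wedge\cdots\wedge\alpha_m)_\ast$, you evaluate $CM(X,\alpha_1\wedge\cdots\wedge\alpha_m)_\ast$ directly by analysing the connected-sum structure of $X=(Y_1\times I)\# m(S^1\times S^3)$, arguing that the Seiberg--Witten moduli factor into a product with $S^1$-Jacobian factors on which the $s(\alpha_j)$ evaluate tautologically. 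The paper avoids this gluing computation entirely: it observes that $Y_1\times I$ is itself the result of surgery on $X$ along $\alpha_1,\dots,\alpha_m$, so Proposition \ref{Prop6.3} applies a second time (in the ``reverse'' direction) to give $CM(Y_1\times I)_\ast = CM(X,\alpha_1\wedge\cdots\wedge\alpha_m)_\ast$ with no further moduli analysis. This is exactly the step you flag as your ``main obstacle''; the paper's route sidesteps it, so the two proofs really differ in effort rather than idea. Your approach is more hands-on and would require making the connected-sum neck-stretching rigorous (with the cylindrical ends and Morse--Bott reducibles you mention), which is doable but not cheaper than simply invoking Proposition \ref{Prop6.3} again.

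One concern you raise deserves emphasis, because it applies to the paper's argument as well: everything is over $\mathbf{F}=\mathbf{F}_2$, so the scalar $|H_1(W,Y_1)|$ must be reduced mod $2$, and the claim that $CM(D(W))_\ast$ is a \emph{nonzero} multiple of the identity — hence that $CM(W)_\ast$ is injective — requires $|H_1(W,Y_1)|$ to be odd. Proposition \ref{Prop6.1} only guarantees that $(c_{ij})\otimes\mathbf{Q}$ is invertible, not that $\det(c_{ij})$ is odd, and a rational homology cobordism does not automatically have $H_1(W,Y_1)$ of odd order. Your write-up acknowledges this (``nonzero in $\mathbf{F}_2$ precisely when $|H_1(W,Y_1)|$ is odd'') but then asserts injectivity anyway; the paper's proof makes the same silent assumption. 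This is a shared gap rather than an error you introduce, but you should either add the hypothesis that $|H_1(W,Y_1)|$ is odd, or supply an argument for why it must be.
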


\begin{proof}
    Let $m$ be the number of $1$-handles in $W$. By Proposition \ref{Prop6.3}, we know that $D(W)$ can be described as a surgery on $X\cong (Y_1 \times I) \# m (S^1\times S^3)$ along $m$ disjoint simple closed curves $\gamma_1, \cdots , \gamma_m$. Then, by Proposition \ref{Prop6.3}, we have
    $$CM(D(W))_\ast = CM(X, [\gamma_1]\wedge\cdots \wedge [\gamma_2])_\ast = \det (c_{ij}) CM(X, \alpha_1\wedge \cdots \wedge \alpha_m)_\ast,$$
    where $\alpha_j \in H_1(X)$ is the homology class of the core of the $j^{th}$ $S^1\times S^3$ summand, and $|\det(c_{ij})| = |H_1(W, Y_1)|$, which is non-zero. On the other hand, $Y_1 \times I$ can be viewed as a result of surgery on $X$ along $\alpha_1, \cdots, \alpha_m$. Thus, by Proposition \ref{Prop6.3}, we also have $CM(Y_1 \times I)_\ast = CM(X, \alpha_1 \wedge\cdots \wedge \alpha_2)_\ast$.
    Therefore, by combining the two formulae derived above, we arrive at the result as desired.
\end{proof}

This next theorem is regarding the homology of the $\mathcal{S}$-complex of monopoles and its interaction with rational homology cobordism.

\begin{Th}\label{Th6.5}
    Suppose $Y_1, Y_2$ are rational homology three-spheres and $W : Y_1 \to Y_2$ is a ribbon rational homology cobordism. Fix a $spin^c$ structure $\mathfrak{s}_W$ on $W$ such that its restriction to $Y_i$ is denoted by $\mathfrak{s}_i$. Let $m_{\mathfrak{s}_i} \in \mathfrak{m}(Y_i)$ be a chamber. Then $\widetilde{CM}(D(W)) : \widetilde{CM}(Y_1, m_{\mathfrak{s}_1}; \mathbf{F}) \to \widetilde{CM}(Y_2, m_{\mathfrak{s}_2}; \mathbf{F})$ is $\mathcal{S}$-chain homotopy equivalent to an $\mathcal{S}$-isomorphism. In particular, $\widetilde{CM}(W)_\ast$ is an injective homomorphism.
\end{Th}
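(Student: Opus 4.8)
The plan is to deduce Theorem~\ref{Th6.5} from Theorem~\ref{Th6.4} by a purely homological-algebra argument inside the category $\mathscr{S}$ of $\mathcal{S}$-complexes, exactly as anticipated in the remark following Theorem~\ref{mainTh4}. First I would record that $D(W)$ is again a ribbon rational homology cobordism, so $b^{+}(D(W))=0$, $\sigma(D(W))=0$, $b_{1}(D(W))=0$, and $d(D(W))=0$; hence, working throughout in the allowable range of degrees $q\le-1$ or $q\ge 2m_{\mathfrak{s}}$ in which the $\mathcal{S}$-chain homotopy type is an invariant (Corollary~\ref{Cor4.11}) and the cobordism maps of Subsection~\ref{Sub4.3} are defined (Theorem~\ref{Th4.13} and the remark after it), one obtains an honest morphism of $\mathcal{S}$-complexes
$$\widetilde{CM}(D(W))=\begin{bmatrix} CM(D(W)) & 0 & 0 \\ u(D(W)) & CM(D(W)) & \delta_{2}(D(W)) \\ \delta_{1}(D(W)) & 0 & 1 \end{bmatrix},$$
whose diagonal block is precisely the chain map $CM(D(W))$ analysed in Theorem~\ref{Th6.4}.

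The second and main step is the following algebraic statement, which is part of Daemi--Scaduto's formalism (cf.\ \cite{MR4742808}) and which I would either quote or reprove: \emph{a morphism $\widetilde{\lambda}$ in $\mathscr{S}$ whose diagonal component $\lambda\colon C\to C'$ is a quasi-isomorphism is an $\mathcal{S}$-chain homotopy equivalence.} To reprove it I would first check that $\widetilde{\lambda}_{\ast}$ is an isomorphism on homology by applying the five lemma (or the spectral-sequence comparison theorem) to the three-step finite filtration $\mathscr{F}^{\bullet}\widetilde{C}$ of Subsection~\ref{Sub5.3}: $\widetilde{\lambda}$ is filtered by its triangular shape, and on the associated graded pieces it restricts to $\lambda$, to $\mathrm{id}_{\mathbf{F}}$, and to $\lambda$ again, all quasi-isomorphisms. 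Since $\mathbf{F}$ is a field, $\lambda$ is then a chain homotopy equivalence; choosing a homotopy inverse $\mu$ one builds an $\mathcal{S}$-morphism $\widetilde{\mu}$ with diagonal block $\mu$ together with $\mathcal{S}$-homotopies $\widetilde{\mu}\widetilde{\lambda}\simeq\mathrm{id}$ and $\widetilde{\lambda}\widetilde{\mu}\simeq\mathrm{id}$ by solving, degree by degree, the chain-level identities of Remark~\ref{Rem3.16} and Remark~\ref{Rem3.18} for the off-diagonal entries; all the relevant obstructions vanish because we are over a field.

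Granting this, Theorem~\ref{Th6.4} says the diagonal block $CM(D(W))$ induces $|H_{1}(W,Y_{1})|$ times the identity on $HM(Y_{1},m_{\mathfrak{s}};\mathbf{F})$, a nonzero scalar multiple of the identity, hence an isomorphism; so $CM(D(W))$ is a quasi-isomorphism and therefore $\widetilde{CM}(D(W))$ is $\mathcal{S}$-chain homotopy equivalent to an $\mathcal{S}$-isomorphism, which is the first assertion. For the ``in particular'', I would invoke functoriality of the $\mathcal{S}$-complex cobordism maps (Theorem~\ref{Th4.13}) to write $\widetilde{CM}(D(W))\simeq\widetilde{CM}(-W)\circ\widetilde{CM}(W)$ as $\mathcal{S}$-chain maps (the degree shifts add to $d(D(W))=0$), so on homology $\widetilde{CM}(-W)_{\ast}\circ\widetilde{CM}(W)_{\ast}=\widetilde{CM}(D(W))_{\ast}$ is an isomorphism; hence $\widetilde{CM}(W)_{\ast}$ has a left inverse and is injective.

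The step I expect to be the main obstacle is the upgrade in the algebraic lemma from ``$\widetilde{\lambda}_{\ast}$ is a homology isomorphism'' to ``$\widetilde{\lambda}$ is an $\mathcal{S}$-homotopy equivalence'': one must produce the homotopy-inverse $\mathcal{S}$-morphism and the $\mathcal{S}$-homotopies with the prescribed triangular shapes, i.e.\ verify that the natural obstruction classes in the relevant $\mathrm{Hom}$-complexes vanish. A secondary technical point is the bookkeeping of the allowable range of degrees so that $\widetilde{CM}(D(W))$, $\widetilde{CM}(W)$ and $\widetilde{CM}(-W)$ are simultaneously defined and the composition law holds there; this is handled exactly as in the proof of Theorem~\ref{Th4.13} and the discussion around Corollary~\ref{Cor4.11}.
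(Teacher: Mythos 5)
Your proof is correct in spirit but takes a genuinely different route from the paper. The paper's proof is short and explicit: it extracts from the \emph{proof} of Theorem~\ref{Th6.4} (not merely its homology-level statement) a chain-level homotopy $L$ with $Ld + dL = CM(D(W)) - \mathrm{id}_{CM(Y_1)}$, promotes it to the $\mathcal{S}$-chain homotopy $h = \mathrm{diag}(L, L, 0)$, and observes that $\widetilde{CM}(D(W)) - (\widetilde{d}h + h\widetilde{d})$ is then a lower-triangular $\mathcal{S}$-morphism with identities on the diagonal, which is manifestly an $\mathcal{S}$-isomorphism. This delivers exactly the stated conclusion with no appeal to general nonsense. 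Your proof instead treats Theorem~\ref{Th6.4} as a black box (only using that $CM(D(W))_\ast$ is an isomorphism) and relies on the abstract lemma that an $\mathcal{S}$-morphism whose irreducible block $\lambda$ is a quasi-isomorphism is an $\mathcal{S}$-chain homotopy equivalence. That lemma is indeed in the Daemi--Scaduto formalism, and your proposed argument via the three-step filtration and five lemma, followed by obstruction-theoretic construction of the homotopy inverse, is a reasonable way to reprove it; but you correctly identify this as the main technical obstacle, and the paper sidesteps it entirely by working at the chain level from the start. Your approach is more modular (it separates the topology from the algebra cleanly), while the paper's is more self-contained and elementary.

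Two small cautions. First, you pass silently from ``$\widetilde{CM}(D(W))$ is an $\mathcal{S}$-chain homotopy equivalence'' to the stated ``$\widetilde{CM}(D(W))$ is $\mathcal{S}$-chain homotopic to an $\mathcal{S}$-isomorphism''; these are closely related (and the latter implies the former whenever $\mathcal{S}$-homotopy is compatible with composition), but they are not the same assertion, and the theorem is phrased in terms of the second. If you go this route you should either justify the converse implication in this setting or, more simply, observe---as the paper does---that the explicit homotopy lands you on a triangular matrix that you can invert by hand. Second, for the ``in particular'' clause you invoke the composition law $\widetilde{CM}(D(W)) \simeq \widetilde{CM}(-W)\circ\widetilde{CM}(W)$ at the $\mathcal{S}$-level; this functoriality is not stated as a theorem in the paper (Theorem~\ref{Th4.13} only gives that $\widetilde{CM}(W)$ is an $\mathcal{S}$-chain map), so it should be flagged as an input, though it is standard.
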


\begin{proof}
    Following the same argument as in the proof of Theorem \ref{Th6.4}, we see that there is a chain homotopy $L : CM(Y_1, m_{\mathfrak{s}_1}; \mathbf{F}) \to CM(Y_1, m_{\mathfrak{s}_1}; \mathbf{F})$ that satisfies $L \, d + d \, L = CM(D(W)) - \text{id}_{CM(Y_1)}$. Now define $h : \widetilde{CM}(Y_1) \to \widetilde{CM}(Y_1)$ to be
    $$h = \begin{bmatrix} L & 0 & 0 \\ 0 & L & 0 \\ 0 & 0 & 0 \end{bmatrix}.$$
    A straightforward calculation shows that $h$ is an $\mathcal{S}$-chain homotopy between $\widetilde{CM}(D(W))$ with
    $$ \begin{bmatrix}
        \text{id}_{CM(Y_1)} & 0 & 0 \\ \star & \text{id}_{CM(Y_1)} & \star \\ \star & 0 & 1
    \end{bmatrix}.$$
    Since the above matrix is clearly invertible over $\mathbf{F}$, this proves the claim.
\end{proof}

With Theorem \ref{Th6.4} and Theorem \ref{Th6.5} combined with Theorem \ref{Th5.10} and Corollary \ref{Cor5.11}, using the spectral invariants, we obtain some information about the the geometry of ribbon homology cobordism.

\begin{Th}\label{Th6.6}
    Let $W : Y_1 \to Y_2$ be a ribbon rational cobordism between smooth, closed, connected, oriented rational homology three-sphere. Suppose $W$ is spin, and let $\mathfrak{s}_W$ be a spin structure on $W$ that, when restricted to $Y_i$, gives a spin structure $\mathfrak{s}_i$ on $Y_i$. Let $g$ be a metric on $W$ and denote $s$ by its scalar curvature. Then for allowable range of degree $q$ (cf. Corollary \ref{Cor4.11}) we have 
    $$\dfrac{1}{32}\int_{W} s^2 \geq \rho^{\circ}_{q - d(W)}(Y_2; \mathfrak{s}_2, g_2) - \rho^{\circ}_{q}(Y_1, \mathfrak{s}_1, g_1) - C.$$
    Here $g_i$ is the induced metric on the boundary components of $Y$, $d(W)$ is a topological constant of $W$, $C$ is a constant that only depends on the canonical (trivial) $spin^c$ connection $B_0$ on $W$ and the spin structure $\mathfrak{s}_W$.

    Furthermore, if we have $\rho^{\circ}_{q-d(W)}(Y_2,\mathfrak{s}_2, g_2) > \rho^{\circ}_{q}(Y_1, \mathfrak{s}_1, g_1) + C$, then $W$ can never have a metric with positive scalar curvature.
\end{Th}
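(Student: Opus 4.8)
The plan is to deduce Theorem~\ref{Th6.6} from the general cobordism estimates of Subsection~\ref{Sub5.2}, once one knows that a ribbon rational homology cobordism is an \emph{injective cobordism} in the sense of Definition~\ref{Def5.8}. The first step is to record the topology of $W$: since $W$ is a $\mathbf{Q}$-homology cobordism, $H_\ast(W;\mathbf{Q})\cong H_\ast(Y_1\times I;\mathbf{Q})$, so $b_1(W)=b^+(W)=0$ and $\sigma(W)=0$; and because $W$ is spin, $\det\mathfrak{s}_W$ is trivial, whence $c_1(\det\mathfrak{s}_W)^2=0$. Therefore $d(W)=\tfrac14\bigl(c_1(\det\mathfrak{s}_W)^2-\sigma(W)\bigr)+b_1(W)-b^+(W)=0$. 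Shifting chamber representatives by integers I may choose $m_{\mathfrak{s}_1},m_{\mathfrak{s}_2}\ge 0$ with $m_{\mathfrak{s}_2}\ge m_{\mathfrak{s}_1}+1$, so that $k=m_{\mathfrak{s}_1}-m_{\mathfrak{s}_2}-d(W)/2\le -1$ and $b^+(W)\le 1$; this is exactly the regime in which the cobordism maps $C(W),u(W),\delta_i(W)$ of Subsection~\ref{Sub4.3} are defined and in which Theorems~\ref{Th6.4} and \ref{Th6.5} apply. By Theorem~\ref{Th6.4} (resp. Theorem~\ref{Th6.5}) the induced map $CM(W)_\ast$ (resp. $\widetilde{CM}(W)_\ast$) is injective on the allowable range of degrees, so $W$ is an injective cobordism for both $\circ\in\{\emptyset,\sim\}$.

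Granting this, the estimate is immediate from Theorem~\ref{Th5.9} applied to the spin injective cobordism $W$: for $q\le 2m_{\mathfrak{s}_1}-1$ or $q\ge 2m_{\mathfrak{s}_2}$ one obtains
$$\dfrac{1}{32}\int_W s^2 \;\ge\; \rho^{\circ}_{q-d(W)}(Y_2,m_{\mathfrak{s}_2},g_2)-\rho^{\circ}_q(Y_1,m_{\mathfrak{s}_1},g_1)-C,$$
with $C$ depending only on the reference connection $B_0$ and the spin structure $\mathfrak{s}_W$. Since $q$ (and, as $d(W)=0$, also $q-d(W)=q$) lies in the range where, by Corollary~\ref{Cor4.11} and Theorem~\ref{Th5.4}, the spectral invariant is an invariant of $(Y_i,g_i)$ independent of the chosen chamber, one may drop $m_{\mathfrak{s}_i}$ from the notation and write $\rho^{\circ}_q(Y_i,\mathfrak{s}_i,g_i)$; this yields the displayed inequality of the theorem.

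For the final assertion I would argue by contradiction. If $W$ carried a metric of positive scalar curvature, then Theorem~\ref{Th5.10} --- equivalently Corollary~\ref{Cor5.11} --- applied to the injective cobordism $W$ would give $\rho^{\circ}_{q-d(W)}(Y_2,\mathfrak{s}_2,g_2)\le\rho^{\circ}_q(Y_1,\mathfrak{s}_1,g_1)+C$, contradicting the standing hypothesis $\rho^{\circ}_{q-d(W)}(Y_2,\mathfrak{s}_2,g_2)>\rho^{\circ}_q(Y_1,\mathfrak{s}_1,g_1)+C$. Hence no positive scalar curvature metric on $W$ can exist.

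The only point requiring care is the bookkeeping in the first step: one needs a \emph{single} choice of chamber representatives $(m_{\mathfrak{s}_1},m_{\mathfrak{s}_2})$, and hence a single range of ``allowable'' degrees $q$, simultaneously satisfying $b^+(W)\le 1$ and $k\le -1$ (so the Subsection~\ref{Sub4.3} cobordism maps are defined), $m_{\mathfrak{s}_i}\ge 0$ (so the spectral invariants of Definition~\ref{Def5.2} make sense), the hypotheses of Theorems~\ref{Th6.4} and \ref{Th6.5} (so injectivity holds), and the requirement that $q-d(W)$ remain allowable on the $Y_2$ side. Once $d(W)=0$ is checked this is elementary --- taking $m_{\mathfrak{s}_1}\ge 0$ minimal in its chamber and $m_{\mathfrak{s}_2}\ge m_{\mathfrak{s}_1}+1$, the range $q\le -1$ (and likewise $q\ge 2m_{\mathfrak{s}_2}$) is allowable for both boundary components, and none of the invariants in the statement depends on the choices made within the allowable range. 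Everything else is a direct citation of Theorems~\ref{Th5.9}, \ref{Th5.10}, \ref{Th6.4}, \ref{Th6.5} and Corollary~\ref{Cor5.11}.
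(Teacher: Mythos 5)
Your proof is correct and follows the same route the paper implicitly uses: Theorem \ref{Th6.6} is stated without a formal proof, immediately after the remark that it follows by combining Theorems \ref{Th6.4}, \ref{Th6.5}, \ref{Th5.9}, \ref{Th5.10}, and Corollary \ref{Cor5.11}, and you invoke exactly these. Your extra observations---that $d(W)=0$ because a spin ribbon $\mathbf{Q}$-homology cobordism has $b_1(W)=b^+(W)=\sigma(W)=0$ and $c_1(\det\mathfrak{s}_W)^2=0$, and that chamber representatives $m_{\mathfrak{s}_1},m_{\mathfrak{s}_2}\ge 0$ can be chosen so that $k\le -1$, $b^+(W)\le 1$, and the allowable degree ranges on both ends coincide---correctly supply the bookkeeping the paper leaves implicit.
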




\bibliography{refs}

@book {MR2388043,
    AUTHOR = {Kronheimer, Peter and Mrowka, Tomasz},
     TITLE = {Monopoles and three-manifolds},
    SERIES = {New Mathematical Monographs},
    VOLUME = {10},
 PUBLISHER = {Cambridge University Press, Cambridge},
      YEAR = {2007},
     PAGES = {xii+796},
      ISBN = {978-0-521-88022-0},
   MRCLASS = {57R57 (53C27 57N10 57R58)},
  MRNUMBER = {2388043},
MRREVIEWER = {Vicente\ Mu\~noz},
       DOI = {10.1017/CBO9780511543111},
       URL = {https://doi.org/10.1017/CBO9780511543111},
}

@book {MR2465077,
    AUTHOR = {Fr\o yshov, Kim A.},
     TITLE = {Compactness and gluing theory for monopoles},
    SERIES = {Geometry \& Topology Monographs},
    VOLUME = {15},
 PUBLISHER = {Geometry \& Topology Publications, Coventry},
      YEAR = {2008},
     PAGES = {viii+198},
   MRCLASS = {57R57 (53C27 57R58)},
  MRNUMBER = {2465077},
MRREVIEWER = {Michael\ J.\ Usher},
}

@article {MR648355,
    AUTHOR = {Uhlenbeck, Karen K.},
     TITLE = {Removable singularities in {Y}ang-{M}ills fields},
   JOURNAL = {Comm. Math. Phys.},
  FJOURNAL = {Communications in Mathematical Physics},
    VOLUME = {83},
      YEAR = {1982},
    NUMBER = {1},
     PAGES = {11--29},
      ISSN = {0010-3616},
   MRCLASS = {53C05 (58E20 81E10)},
  MRNUMBER = {648355},
MRREVIEWER = {Wolfgang L\"{u}cke},
       URL = {http://projecteuclid.org/euclid.cmp/1103920742},
}

@article {MR2738582,
    AUTHOR = {Fr\o yshov, Kim A.},
     TITLE = {Monopole {F}loer homology for rational homology 3-spheres},
   JOURNAL = {Duke Math. J.},
  FJOURNAL = {Duke Mathematical Journal},
    VOLUME = {155},
      YEAR = {2010},
    NUMBER = {3},
     PAGES = {519--576},
      ISSN = {0012-7094,1547-7398},
   MRCLASS = {57R58 (57R57)},
  MRNUMBER = {2738582},
MRREVIEWER = {Brendan\ E.\ Owens},
       DOI = {10.1215/00127094-2010-060},
       URL = {https://doi.org/10.1215/00127094-2010-060},
}

@misc{daemi2020equivariantaspectssingularinstanton,
      title={Equivariant aspects of singular instanton Floer homology}, 
      author={Aliakbar Daemi and Christopher Scaduto},
      year={2020},
      eprint={1912.08982},
      archivePrefix={arXiv},
      primaryClass={math.GT},
      url={https://arxiv.org/abs/1912.08982}, 
}

@article {MR4742808,
    AUTHOR = {Daemi, Aliakbar and Scaduto, Christopher},
     TITLE = {Chern-{S}imons functional, singular instantons, and the
              four-dimensional clasp number},
   JOURNAL = {J. Eur. Math. Soc. (JEMS)},
  FJOURNAL = {Journal of the European Mathematical Society (JEMS)},
    VOLUME = {26},
      YEAR = {2024},
    NUMBER = {6},
     PAGES = {2127--2190},
      ISSN = {1435-9855,1435-9863},
   MRCLASS = {57K18 (57R42)},
  MRNUMBER = {4742808},
       DOI = {10.4171/jems/1320},
       URL = {https://doi.org/10.4171/jems/1320},
}

@article {MR1895135,
    AUTHOR = {Marcolli, Matilde and Wang, Bai-Ling},
     TITLE = {Equivariant {S}eiberg-{W}itten {F}loer homology},
   JOURNAL = {Comm. Anal. Geom.},
  FJOURNAL = {Communications in Analysis and Geometry},
    VOLUME = {9},
      YEAR = {2001},
    NUMBER = {3},
     PAGES = {451--639},
      ISSN = {1019-8385,1944-9992},
   MRCLASS = {57R58 (57M27 57R57 58J30)},
  MRNUMBER = {1895135},
MRREVIEWER = {Liviu\ I.\ Nicolaescu},
       DOI = {10.4310/CAG.2001.v9.n3.a1},
       URL = {https://doi.org/10.4310/CAG.2001.v9.n3.a1},
}

@misc{daemi2022instantonsrationalhomologyspheres,
      title={Instantons and rational homology spheres}, 
      author={Aliakbar Daemi and Mike Miller Eismeier},
      year={2022},
      eprint={2210.14071},
      archivePrefix={arXiv},
      primaryClass={math.GT},
      url={https://arxiv.org/abs/2210.14071}, 
}

@book {MR1883043,
    AUTHOR = {Donaldson, S. K.},
     TITLE = {Floer homology groups in {Y}ang-{M}ills theory},
    SERIES = {Cambridge Tracts in Mathematics},
    VOLUME = {147},
      NOTE = {With the assistance of M. Furuta and D. Kotschick},
 PUBLISHER = {Cambridge University Press, Cambridge},
      YEAR = {2002},
     PAGES = {viii+236},
      ISBN = {0-521-80803-0},
   MRCLASS = {57R58 (57R57 58J10)},
  MRNUMBER = {1883043},
MRREVIEWER = {Vicente\ Mu\~noz},
       DOI = {10.1017/CBO9780511543098},
       URL = {https://doi.org/10.1017/CBO9780511543098},
}

@article {MR1910040,
    AUTHOR = {Fr\o yshov, Kim A.},
     TITLE = {Equivariant aspects of {Y}ang-{M}ills {F}loer theory},
   JOURNAL = {Topology},
  FJOURNAL = {Topology. An International Journal of Mathematics},
    VOLUME = {41},
      YEAR = {2002},
    NUMBER = {3},
     PAGES = {525--552},
      ISSN = {0040-9383},
   MRCLASS = {57R58 (57R57)},
  MRNUMBER = {1910040},
MRREVIEWER = {Vicente\ Mu\~noz},
       DOI = {10.1016/S0040-9383(01)00018-0},
       URL = {https://doi.org/10.1016/S0040-9383(01)00018-0},
}

@article {MR3590354,
    AUTHOR = {Usher, Michael and Zhang, Jun},
     TITLE = {Persistent homology and {F}loer-{N}ovikov theory},
   JOURNAL = {Geom. Topol.},
  FJOURNAL = {Geometry \& Topology},
    VOLUME = {20},
      YEAR = {2016},
    NUMBER = {6},
     PAGES = {3333--3430},
      ISSN = {1465-3060,1364-0380},
   MRCLASS = {53D40 (55U15)},
  MRNUMBER = {3590354},
MRREVIEWER = {Sonja\ Hohloch},
       DOI = {10.2140/gt.2016.20.3333},
       URL = {https://doi.org/10.2140/gt.2016.20.3333},
}

@article {MR4158669,
    AUTHOR = {Daemi, Aliakbar},
     TITLE = {Chern-{S}imons functional and the homology cobordism group},
   JOURNAL = {Duke Math. J.},
  FJOURNAL = {Duke Mathematical Journal},
    VOLUME = {169},
      YEAR = {2020},
    NUMBER = {15},
     PAGES = {2827--2886},
      ISSN = {0012-7094,1547-7398},
   MRCLASS = {57R58 (57R57)},
  MRNUMBER = {4158669},
MRREVIEWER = {Jianfeng\ Lin},
       DOI = {10.1215/00127094-2020-0017},
       URL = {https://doi.org/10.1215/00127094-2020-0017},
}

@article{Nozaki2023,
   title={Filtered instanton Floer homology and the homology cobordism group},
   volume={26},
   ISSN={1435-9863},
   url={http://dx.doi.org/10.4171/JEMS/1371},
   DOI={10.4171/jems/1371},
   number={12},
   journal={Journal of the European Mathematical Society},
   publisher={European Mathematical Society - EMS - Publishing House GmbH},
   author={Nozaki, Yuta and Sato, Kouki and Taniguchi, Masaki},
   year={2023},
   month=aug, pages={4699--4761} }

@article {MR4368349,
    AUTHOR = {Biran, Paul and Cornea, Octav},
     TITLE = {Bounds on the {L}agrangian spectral metric in cotangent
              bundles},
   JOURNAL = {Comment. Math. Helv.},
  FJOURNAL = {Commentarii Mathematici Helvetici. A Journal of the Swiss
              Mathematical Society},
    VOLUME = {96},
      YEAR = {2021},
    NUMBER = {4},
     PAGES = {631--691},
      ISSN = {0010-2571,1420-8946},
   MRCLASS = {53D40 (53D12 57R17 57R58)},
  MRNUMBER = {4368349},
MRREVIEWER = {David\ E.\ Hurtubise},
       DOI = {10.4171/cmh/522},
       URL = {https://doi.org/10.4171/cmh/522},
}

@article {MR4467148,
    AUTHOR = {Daemi, Aliakbar and Lidman, Tye and Vela-Vick, David Shea and
              Wong, C.-M. Michael},
     TITLE = {Ribbon homology cobordisms},
   JOURNAL = {Adv. Math.},
  FJOURNAL = {Advances in Mathematics},
    VOLUME = {408},
      YEAR = {2022},
     PAGES = {Paper No. 108580, 68},
      ISSN = {0001-8708,1090-2082},
   MRCLASS = {57R90 (57K18 57K31 57M05 57R58)},
  MRNUMBER = {4467148},
MRREVIEWER = {Dave\ Auckly},
       DOI = {10.1016/j.aim.2022.108580},
       URL = {https://doi.org/10.1016/j.aim.2022.108580},
}

@article {MR2492194,
    AUTHOR = {Sung, Chanyoung},
     TITLE = {Surgery, {Y}amabe invariant, and {S}eiberg-{W}itten theory},
   JOURNAL = {J. Geom. Phys.},
  FJOURNAL = {Journal of Geometry and Physics},
    VOLUME = {59},
      YEAR = {2009},
    NUMBER = {2},
     PAGES = {246--255},
      ISSN = {0393-0440,1879-1662},
   MRCLASS = {53C21 (53C20 57R57)},
  MRNUMBER = {2492194},
MRREVIEWER = {Tedi\ C.\ Draghici},
       DOI = {10.1016/j.geomphys.2008.11.005},
       URL = {https://doi.org/10.1016/j.geomphys.2008.11.005},
}

@book {MR2572029,
    AUTHOR = {Edelsbrunner, Herbert and Harer, John L.},
     TITLE = {Computational topology},
      NOTE = {An introduction},
 PUBLISHER = {American Mathematical Society, Providence, RI},
      YEAR = {2010},
     PAGES = {xii+241},
      ISBN = {978-0-8218-4925-5},
   MRCLASS = {00-02 (05C10 52-02 55-02 57-02 65D18 68U05)},
  MRNUMBER = {2572029},
MRREVIEWER = {Andrzej\ Kozlowski},
       DOI = {10.1090/mbk/069},
       URL = {https://doi.org/10.1090/mbk/069},
}

@article {MR3774757,
    AUTHOR = {Wasserman, Larry},
     TITLE = {Topological data analysis},
   JOURNAL = {Annu. Rev. Stat. Appl.},
  FJOURNAL = {Annual Review of Statistics and its Application},
    VOLUME = {5},
      YEAR = {2018},
     PAGES = {501--535},
      ISSN = {2326-8298,2326-831X},
   MRCLASS = {62-07},
  MRNUMBER = {3774757},
       DOI = {10.1146/annurev-statistics-031017-100045},
       URL = {https://doi.org/10.1146/annurev-statistics-031017-100045},
}

@article {MR4413744,
    AUTHOR = {Shelukhin, Egor},
     TITLE = {On the {H}ofer-{Z}ehnder conjecture},
   JOURNAL = {Ann. of Math. (2)},
  FJOURNAL = {Annals of Mathematics. Second Series},
    VOLUME = {195},
      YEAR = {2022},
    NUMBER = {3},
     PAGES = {775--839},
      ISSN = {0003-486X,1939-8980},
   MRCLASS = {53D40},
  MRNUMBER = {4413744},
MRREVIEWER = {Roman\ Golovko},
       DOI = {10.4007/annals.2022.195.3.1},
       URL = {https://doi.org/10.4007/annals.2022.195.3.1},
}

@article {MR956166,
    AUTHOR = {Floer, Andreas},
     TITLE = {An instanton-invariant for {$3$}-manifolds},
   JOURNAL = {Comm. Math. Phys.},
  FJOURNAL = {Communications in Mathematical Physics},
    VOLUME = {118},
      YEAR = {1988},
    NUMBER = {2},
     PAGES = {215--240},
      ISSN = {0010-3616,1432-0916},
   MRCLASS = {57N10 (58G05 58G10 58G25)},
  MRNUMBER = {956166},
MRREVIEWER = {Ronald\ J.\ Stern},
       URL = {http://projecteuclid.org/euclid.cmp/1104161987},
}

@article {MR2113019,
    AUTHOR = {Ozsv\'ath, Peter and Szab\'o, Zolt\'an},
     TITLE = {Holomorphic disks and topological invariants for closed
              three-manifolds},
   JOURNAL = {Ann. of Math. (2)},
  FJOURNAL = {Annals of Mathematics. Second Series},
    VOLUME = {159},
      YEAR = {2004},
    NUMBER = {3},
     PAGES = {1027--1158},
      ISSN = {0003-486X,1939-8980},
   MRCLASS = {57M27 (32Q65 57R58)},
  MRNUMBER = {2113019},
MRREVIEWER = {Thomas\ E.\ Mark},
       DOI = {10.4007/annals.2004.159.1027},
       URL = {https://doi.org/10.4007/annals.2004.159.1027},
}

@article {MR2739000,
    AUTHOR = {Seidel, Paul and Smith, Ivan},
     TITLE = {Localization for involutions in {F}loer cohomology},
   JOURNAL = {Geom. Funct. Anal.},
  FJOURNAL = {Geometric and Functional Analysis},
    VOLUME = {20},
      YEAR = {2010},
    NUMBER = {6},
     PAGES = {1464--1501},
      ISSN = {1016-443X,1420-8970},
   MRCLASS = {53D40 (55M35 57M27 57R58)},
  MRNUMBER = {2739000},
MRREVIEWER = {Michael\ L.\ Hutchings},
       DOI = {10.1007/s00039-010-0099-y},
       URL = {https://doi.org/10.1007/s00039-010-0099-y},
}

@article {MR4322393,
    AUTHOR = {Lin, Francesco and Lipnowski, Michael},
     TITLE = {The {S}eiberg-{W}itten equations and the length spectrum of
              hyperbolic three-manifolds},
   JOURNAL = {J. Amer. Math. Soc.},
  FJOURNAL = {Journal of the American Mathematical Society},
    VOLUME = {35},
      YEAR = {2022},
    NUMBER = {1},
     PAGES = {233--293},
      ISSN = {0894-0347,1088-6834},
   MRCLASS = {57R58 (57K32)},
  MRNUMBER = {4322393},
MRREVIEWER = {Benjamin\ Linowitz},
       DOI = {10.1090/jams/982},
       URL = {https://doi.org/10.1090/jams/982},
}

@article {MR4165316,
    AUTHOR = {Lin, Francesco},
     TITLE = {Monopole {F}loer homology and the spectral geometry of
              three-manifolds},
   JOURNAL = {Comm. Anal. Geom.},
  FJOURNAL = {Communications in Analysis and Geometry},
    VOLUME = {28},
      YEAR = {2020},
    NUMBER = {5},
     PAGES = {1211--1219},
      ISSN = {1019-8385,1944-9992},
   MRCLASS = {57K30 (53C21 57K41)},
  MRNUMBER = {4165316},
MRREVIEWER = {Yi\ Ni},
       DOI = {10.4310/CAG.2020.v28.n5.a2},
       URL = {https://doi.org/10.4310/CAG.2020.v28.n5.a2},
}

@article {MR2450204,
    AUTHOR = {Ghiggini, Paolo},
     TITLE = {Knot {F}loer homology detects genus-one fibred knots},
   JOURNAL = {Amer. J. Math.},
  FJOURNAL = {American Journal of Mathematics},
    VOLUME = {130},
      YEAR = {2008},
    NUMBER = {5},
     PAGES = {1151--1169},
      ISSN = {0002-9327,1080-6377},
   MRCLASS = {57M25 (57R58)},
  MRNUMBER = {2450204},
       DOI = {10.1353/ajm.0.0016},
       URL = {https://doi.org/10.1353/ajm.0.0016},
}

@article {MR2168576,
    AUTHOR = {Ozsv\'ath, Peter and Szab\'o, Zolt\'an},
     TITLE = {On knot {F}loer homology and lens space surgeries},
   JOURNAL = {Topology},
  FJOURNAL = {Topology. An International Journal of Mathematics},
    VOLUME = {44},
      YEAR = {2005},
    NUMBER = {6},
     PAGES = {1281--1300},
      ISSN = {0040-9383},
   MRCLASS = {57R58 (57M27 57R65)},
  MRNUMBER = {2168576},
MRREVIEWER = {Thomas\ E.\ Mark},
       DOI = {10.1016/j.top.2005.05.001},
       URL = {https://doi.org/10.1016/j.top.2005.05.001},
}

@article {MR2299739,
    AUTHOR = {Kronheimer, P. and Mrowka, T. and Ozsv\'ath, P. and Szab\'o,
              Z.},
     TITLE = {Monopoles and lens space surgeries},
   JOURNAL = {Ann. of Math. (2)},
  FJOURNAL = {Annals of Mathematics. Second Series},
    VOLUME = {165},
      YEAR = {2007},
    NUMBER = {2},
     PAGES = {457--546},
      ISSN = {0003-486X,1939-8980},
   MRCLASS = {57R58 (57M27 57R57)},
  MRNUMBER = {2299739},
MRREVIEWER = {Vicente\ Mu\~noz},
       DOI = {10.4007/annals.2007.165.457},
       URL = {https://doi.org/10.4007/annals.2007.165.457},
}

@book{gromov2022perspectives,
  title={Perspectives In Scalar Curvature (In 2 Volumes)},
  author={Gromov, Mikhail L and Lawson Jr, H Blaine},
  year={2022},
  publisher={World Scientific}
}

@article {MR3911569,
    AUTHOR = {Lin, Jianfeng},
     TITLE = {The {S}eiberg-{W}itten equations on end-periodic manifolds and
              an obstruction to positive scalar curvature metrics},
   JOURNAL = {J. Topol.},
  FJOURNAL = {Journal of Topology},
    VOLUME = {12},
      YEAR = {2019},
    NUMBER = {2},
     PAGES = {328--371},
      ISSN = {1753-8416,1753-8424},
   MRCLASS = {57R57 (53C21 57K16 57R58)},
  MRNUMBER = {3911569},
MRREVIEWER = {Stefano\ Vidussi},
       DOI = {10.1112/topo.12090},
       URL = {https://doi.org/10.1112/topo.12090},
}

@article {MR4637974,
    AUTHOR = {Konno, Hokuto and Taniguchi, Masaki},
     TITLE = {Positive scalar curvature and homology cobordism invariants},
   JOURNAL = {J. Topol.},
  FJOURNAL = {Journal of Topology},
    VOLUME = {16},
      YEAR = {2023},
    NUMBER = {2},
     PAGES = {679--719},
      ISSN = {1753-8416,1753-8424},
   MRCLASS = {53C21 (57R58)},
  MRNUMBER = {4637974},
MRREVIEWER = {Georg\ Frenck},
}

@misc{chodosh2024completeriemannian4manifoldsuniformly,
      title={Complete Riemannian 4-manifolds with uniformly positive scalar curvature}, 
      author={Otis Chodosh and Davi Maximo and Anubhav Mukherjee},
      year={2024},
      eprint={2407.05574},
      archivePrefix={arXiv},
      primaryClass={math.DG},
      url={https://arxiv.org/abs/2407.05574}, 
}

@article{DemetreKazaras24,
	AUTHOR = {Kazaras, Demetre},
	TITLE = {Desingularizing positive scalar curvature 4-manifolds},
	JOURNAL = {Mathematische Annalen},
	YEAR = {2024},
	DOI = {10.1007/s00208-024-02829-5},
	URL = {https://doi.org/10.1007/s00208-024-02829-5},
	MRCLASS = {53C21, 53D23, 53C80, 57R90},
}
\bibliographystyle{amsplain}
\Addresses
	
	\end{document}